\definecolor{light-gray}{gray}{0.9}
\font\myfont=cmr6
\def\st#1 {  {\hbox{\myfont \begin{tabular}{c} {\, #1}  \end{tabular}  }} }
\def\state#1#2 {  {\hbox{\myfont \begin{tabular}{c}\noalign{\vskip -0.1cm} {\; #1}
\\[-0.5mm] \noalign{\vskip -0.1cm} {\; #2} \end{tabular}  }} }
\def\statev#1#2#3 {  {\hbox{\myfont \begin{tabular}{c}\noalign{\vskip -0.0cm} {\; #1}
\\[-2.0mm] \noalign{\vskip -0.0cm} {\; #2} \\[-2.0mm] \noalign{\vskip -0.0cm} {\; #3} \end{tabular}  }} }
\font\myfonta=cmr10
\def\vect#1 {  {\hbox{\myfonta \begin{tabular}{c} {\, #1}  \end{tabular}  }} }
\font\myfontb=cmr10
\def\vectv#1#2#3 {  {\hbox{\myfontb \begin{tabular}{c}\noalign{\vskip 0.1cm} {\; #1}
\\[-1.1mm] \noalign{\vskip -0.0cm} {\; #2} \\[-1.1mm] \noalign{\vskip -0.0cm} {\; #3} \end{tabular}  }} }
\newtheorem{theorem}{Theorem}[section]
\newtheorem{proposition}[theorem]{Proposition}
\newtheorem{corollary}[theorem]{Corollary}
\newtheorem{lemma}[theorem]{Lemma}
\newtheorem{conjecture}{Conjecture}
\theoremstyle{definition}
\newtheorem{definition}[theorem]{Definition}
\theoremstyle{remark}
\newtheorem{example}[theorem]{Example}
\newtheorem{remark}[theorem]{Remark}
\let\c@equation\c@theorem
\numberwithin{equation}{section}
\newcommand{\eff}{q}
\newcommand{\complex}[1]{{\mathcal{K}}{(#1)}}
\newcommand{\complexzero}[1]{{\mathcal{K}}{(#1)}_0}
\newcommand{\faces}[2]{\mathcal{F}_{{#2}}({#1})}
\newcommand{\geometric}[1]{{\overline{{#1}}}}
\newcommand{\cell}[1]{{\langle{{#1}}\rangle}}
\newcommand{\cellstar}[1]{{\cell{#1}}_\start}
\newcommand{\cellstark}{{\cell{S}}_{{\start}_{k}}}
\newcommand{\boundarystarstar}[2]{\partial\operatorname{-star}_{#2}({#1})}
\newcommand{\inv}{^{-1}}
\newcommand{\digits}{\mathcal{D}}
\newcommand{\zn}{\mathbb{Z}^n}
\newcommand{\rn}{\mathbb{R}^n}
\newcommand{\canonical}{{Q}}
\newcommand{\repmap}[1]{{{#1}_T}}
\newcommand{\repmapf}[1]{{{#1}}_\start}
\newcommand{\stage}[2]{{#1}_{#2}}
\newcommand{\stagestar}[2]{{#1}_{M,{#2}}}
\newcommand{\limpt}[1]{{#1}_T}
\newcommand{\limptstar}[1]{{{{#1}}_\start}}
\newcommand{\Bee}{F}
\newcommand{\start}{M}
\title{Self-affine Manifolds}
\author{Gregory R.\ Conner}
\address{Department of Mathematics, Brigham Young University, Provo, UT 84602, USA}
\author[J\"org M. Thuswaldner]{J\"org M. Thuswaldner$^\ast$}
\address{Department of Mathematics and Information Technology, University of Leoben, Franz-Josef-Strasse 18, A-8700 Leoben, Austria}
\thanks{The first author was supported by the Simons Foundation Collaboration Grant \#246221}
\thanks{The second author was supported by project I1136 funded by the Agence National de la Recherche and the Austrian Science Fund, and by project W1230  funded by the Austrian Science Fund}
\thanks{$ ^\ast$ Corresponding author. Tel.: +43 3842 402 3805. Fax: +43 3842 402 3802. \\  {\em E-mail address:} joerg.thuswaldner@unileoben.ac.at}
\subjclass[2010]{28A80, 57M50, 57N45, 55U10, 57Q25}
\keywords{Self-affine tile, tiling, 3-manifold, geometric topology}
\date{\today}
\begin{document}

\begin{abstract}
This paper studies closed 3-manifolds which are the attractors of a system of finitely many affine contractions that tile $\mathbb{R}^3$.  Such attractors are called self-affine tiles. Effective characterization and recognition theorems for these 3-manifolds as well as theoretical generalizations of these results to higher dimensions are established. The methods developed build a bridge linking geometric topology with iterated function systems and their attractors.

A method to model self-affine tiles by simple iterative systems is developed in order to study their topology.  The model is functorial in the sense that there is an easily computable map that induces isomorphisms between the natural subdivisions of the attractor of the model and the self-affine tile.  It has many beneficial qualities including ease of computation allowing one to determine topological properties of the attractor of the model such as connectedness and whether it is a manifold.  The induced map between the attractor of the model and the self-affine tile is a quotient map and can be checked in certain cases to be monotone or cell-like. Deep theorems from geometric topology are applied to characterize and develop algorithms to recognize when a self-affine tile is a topological or generalized manifold in all dimensions. These new tools are used to check that several self-affine tiles in the literature are 3-balls.  An example of a wild 3-dimensional self-affine tile is given whose boundary is a topological 2-sphere but which is not itself a 3-ball.  The paper describes how any 3-dimensional handlebody can be given the structure of a self-affine 3-manifold.  It is conjectured that every self-affine tile which is a manifold is a handlebody.
\end{abstract}

\maketitle

\setcounter{tocdepth}{1}
\tableofcontents

\section{Introduction}
A great deal of work in the literature has concentrated on tilings of
$\mathbb{R}^n$ whose tiles are defined by a finite collection of contractions.  One of
the most prevalent examples are tilings by \emph{self-affine tiles}
where the contractions are affine translates of a single linear
contraction. A long-standing open question is whether there exists a closed 3-manifold which is a nontrivial self-affine tile, a so-called {\em self-affine 3-manifold}. To settle this question in the affirmative the current paper effectively characterizes and recognizes self-affine 3-manifolds and gives theoretical generalizations of these results to higher dimensions. The methods developed in this paper build a bridge linking two previously unrelated areas of mathematics: geometric topology on the one side and iterated function systems and their attractors on the other side.

Much research is devoted to how a subset of the Euclidean space can admit a tiling by self-affine tiles. In the planar case, the topology of these tiles has been studied thoroughly. Much less is known about the topology of self-affine tiles of Euclidean 3-space.  In particular it has been an open question as to which (if any) 3-manifolds admit a nontrivial self-affine tiling of $\mathbb{R}^3$.  A number of examples have appeared in the literature which were conjectured to be self-affine tilings of $\mathbb{R}^3$ by 3-balls.  In the current paper we address these questions by describing an often effective method of determining that a given 3-dimensional self-affine tile is a tamely embedded 3-manifold.  The method gives affirmative answers for the previously conjectured examples, and is also used to give examples of 3-dimensional self-affine tiles which are handlebodies of higher genus.  Examples are also given of self-affine tiles in $\mathbb{R}^3$ whose boundaries are wildly embedded surfaces and thus are not 3-manifolds. Our method also has potential to allow effective computations in higher dimensions.  The proofs of our results require a careful formulation of the problem in terms of a certain type of algebro-geometric complexes which are used to approximate the tile and allow for arbitrarily fine computations due to their recursive structure.  Deep tools of geometric topology developed by Cannon and Edwards are then used to determine the homeomorphism type of the boundary and check if it is a tamely embedded surface in the case of dimension 3.  We offer Conjectures \ref{3dimconj} and \ref{ndimconj} stating that every self-affine manifold is homeomorphic to a handlebody.
  
The study of self-affine tiles and their tiling properties goes back to the work of Thurs\-ton~\cite{Thurston:89} and Kenyon~\cite{Kenyon:92}.  In the 1990s Lagarias and Wang~\cite{LW:96,LW:96a,LW:97} proved fundamental properties of self-affine tiles. Wang~\cite{Wang:99} surveys these early results on self-affine tiles like tiling properties, Hausdorff dimension of the boundary and relations to wavelets. By now there exists a vast literature on self-affine tiles. The topics of research include their geometric, topological, and fractal properties, characterization problems, relations to number systems, and wavelet theory (see {\it e.g.}~\cite{AL:11,BBLT:06,Curry:06,GY:06,GM:92,Lai-Lau-Rao:10,SW:99}). The topology of (mainly planar) self-affine tiles is the topic of considerable study ({\it cf.} for instance~\cite{BG:94,GH:94,HSV:94,KL:00,LW:96a}). In particular, the case where the self-affine tile is a 2-manifold ({\it i.e.}, a closed disk) has been well understood from early on ({\it cf.}~\cite{Akiyama-Thuswaldner:05,BG:94,BW:01,LL:07,LRT:02,Luo-Zhou:04}). The question of determining the topology of higher dimensional self-affine tiles arose naturally; in particular, if they could be manifolds in a nontrivial way and if there is a method to recognize whether a self-affine tile (or its boundary) is a manifold (see  Gelbrich~\cite{Gelbrich:96} who first raised this question in 1996, and more recent work in~\cite{Bandt:12,BM:09,DJN:12,LL:07}). We will call a 3-dimensional self-affine tile which is a topological 3-manifold with boundary a {\em self-affine 3-manifold}. Numerous specific examples of nontrivial self-affine tiles were conjectured in the literature to be topological 3-dimensional balls (see for instance~\cite{BM:09,Gelbrich:96}). However, until this point, no example of a nontrivial self-affine 3-manifold has been exhibited (in \cite{Malone:00} self-affine tiles that are $n$-dimensional parallelepipeds are characterized).

In the current article we give conditions that characterize self-affine 3-mani\-folds. We go on to give an effective algorithm to decide whether a 3-dimensional self-affine tile is a manifold with boundary. Finally we apply our algorithm to conjectured examples and show that they are topological 3-dimensional balls. Many of our results generalize to the $n$-dimensional case. As a self-affine manifold tiles itself by arbitrarily small copies of itself, it seems that its topology cannot be very complicated.  Indeed, we conjecture that
every self-affine manifold is homeomorphic to a handlebody (see Conjecutres~\ref{3dimconj} and~\ref{ndimconj}).

\subsection{Classical conjectures and solutions} We start with the exact definition of the fundamental objects studied in the present paper.

\begin{definition}[Self-affine tile]\label{tiledef}
Let $A$ be an expanding $n\times n$ integer matrix, that is, a matrix each of whose eigenvalues has modulus strictly greater than one. Let $\digits \subset \mathbb{Z}^n$ be a complete set of residue class representatives of $\mathbb{Z}^n/A\mathbb{Z}^n$, called the {\em digit set}.  We define the {\em self-affine tile} $T=T(A,\digits)$ as the unique nonempty compact set satisfying
\begin{equation}\label{setequation}
AT = T+ \digits.
\end{equation}
If the self-affine tile $T$ tiles $\mathbb{R}^n$ with respect to the lattice $\mathbb{Z}^n$ we say that $T$ induces a {\em self-affine tiling} and call $T$ a {\em self-affine $\mathbb{Z}^n$-tile}.
\end{definition}
The self-affine tile $T$ is well-defined because it is the unique solution of the {\it iterated function system}  
$\{\varphi_d \mid d\in \digits\}$ with $\varphi_d(x)=A\inv(x+d)$ ({\it cf.} Hutchinson~\cite{Hutchinson:81} and note that there is a norm $||\cdot||$ on $\rn$ that makes $A\inv$ a contraction; see \eqref{eq:norm} below).   In view of the results in \cite{LW:97} the tiling property in Definition~\ref{tiledef} is not a strong restriction and can easily be checked algorithmically (see for instance \cite{Vince:00}). Moreover, without loss of generality, we will assume that $0\in \digits$.

Until now it has not been known whether a nontrivial self-affine $\mathbb{Z}^n$-tile, $n>2$, could be homeomorphic to an $n$-manifold with boundary. For instance, Gelbrich~\cite{Gelbrich:96} as well as Bandt and Mesing~\cite{BM:09} give examples of self-affine $\mathbb{Z}^3$-tiles which are conjectured to be homeomorphic to 3-dimensional balls.   In this paper we give a method of checking that a self-affine tile is a manifold.  We check for the presence of an \emph{ideal tile} (Definition \ref{def:approx}) and use Theorems \ref{upperthm5} and \ref{bondingtheorem} as well as Theorem~\ref{cor:ball-algorithm} to check that classically conjectured self-affine 3-manifolds are indeed 3-manifolds.

\subsection{The characterization and recognition problems}
One of the hallmarks of a complete theory of a class of examples in mathematics is the solution of the characterization and recognition problem.  In other words, when can one formally characterize a class of examples and furthermore effectively recognize those examples given only simple data that defines them? 
The methods of the present paper go considerably beyond checking examples and allow one to characterize and recognize self-affine 3-manifolds.  In what follows we give a brief outline of our main results. Exact definitons and statements will follow from Section~\ref{sec:tilemodel} onwards.

In the case of 3-dimensional \emph{tame} tiles we give a characterization of those which are manifolds. A central object in this characterization is the notion of a {\em monotone model} $M$ of a self-affine $\mathbb{Z}^n$-tile $T$ (see Definition~\ref{def:model2}). Such a model is a compact (often polyhedral) set that tiles $\mathbb{R}^n$ by $\mathbb{Z}^n$-translates and has many topological properties in common with $T$. It admits a natural {\it quotient map} $Q:M\to T$ defined in \eqref{hdef} that is used to transfer topological properties from $M$ to $T$. 

Recall that a set $X\subset \mathbb{R}^n$ is said to be 1-locally complementary connected ({\em 1-LCC} for short) if each small loop in $\mathbb{R}^n\setminus X$ is contractible in a small subset of $\mathbb{R}^n\setminus X$ ({\it cf}.~Definition~\ref{def:1lcc} for a precise statement). Moreover, semi-contractibility is a contraction property of the point preimages of $Q$ which is defined in Definition~\ref{def:semicontr}.

\begingroup
\def\thetheorem{\ref{thm:ballchar}}
\begin{theorem}
Let $T$ be a self-affine $\mathbb{Z}^3$-tile with connected interior and $1$-LCC boundary. Then $T$ is a self-affine 3-manifold if and only if it admits a semi-contractible monotone model with a boundary that is a closed 2-manifold.
\end{theorem}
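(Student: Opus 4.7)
The plan is to prove the two implications separately; the nontrivial direction is sufficiency.

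For necessity, suppose $T$ is a self-affine 3-manifold with $1$-LCC boundary. The most economical monotone model here is $M=T$ itself with $Q=\mathrm{id}_T$: each point preimage is a singleton, so semi-contractibility (Definition~\ref{def:semicontr}) is immediate, and $\partial M=\partial T$ is a closed 2-manifold by hypothesis. Verifying the formal axioms of Definition~\ref{def:model2} in this degenerate case reduces to the observation that $T$ tiles $\mathbb{R}^{3}$ by $\mathbb{Z}^{3}$-translates by the standing assumption that $T$ is a self-affine $\mathbb{Z}^{3}$-tile.

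For sufficiency, let $M$ be a semi-contractible monotone model of $T$ with closed 2-manifold boundary, and let $Q\colon M\to T$ be the quotient map from \eqref{hdef}. First, I would show that $Q$ restricts to a continuous surjection $Q|_{\partial M}\colon \partial M\to\partial T$ whose point preimages are cell-like continua; cell-likeness should come directly from the nested neighborhood system provided by semi-contractibility, which is precisely the defining condition for cell-like sets. This makes $Q|_{\partial M}$ a cell-like (hence monotone) map from a closed 2-manifold. Next, invoke two-dimensional decomposition theory---Moore's theorem and its generalizations, which say that a cell-like upper semi-continuous decomposition of a 2-manifold is shrinkable and yields a homeomorphic 2-manifold---to conclude that $\partial T$ is abstractly homeomorphic to $\partial M$, hence a closed topological 2-manifold. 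Finally, upgrade this intrinsic structure to a tame embedding: the $1$-LCC hypothesis on $\partial T$ in $\mathbb{R}^{3}$, together with the connected interior of $T$ supplying $1$-LCC from the inside, feeds into the Bing--Cannon tameness criteria formalized in Theorems~\ref{upperthm5} and~\ref{bondingtheorem}. The resulting bicollared tame embedding of $\partial T$, combined with the connected interior, then yields that $T$ is a compact 3-manifold with boundary by the standard bicollaring argument.

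The hardest step is the last: upgrading an intrinsic 2-manifold structure on $\partial T$ to a bicollared tame embedding in $\mathbb{R}^{3}$. This is the geometric-topological heart of the argument and is exactly where the deep theorems of Cannon and Edwards cited throughout the paper must be invoked; the entire role of the $1$-LCC hypothesis is to enable this passage from intrinsic to extrinsic structure. By contrast, the decomposition-theoretic components---cell-likeness of the fibers of $Q|_{\partial M}$ and shrinkability of the induced decomposition on $\partial M$---should follow routinely once the formalism of monotone models and semi-contractibility has been set up. A subtle point to watch in the necessity direction is that the trivial model $M=T$ may not be the one canonically produced by the constructions of the paper; if the definition of monotone model is restrictive enough to exclude it, one instead appeals to the canonical polyhedral model built from the affine data and checks that, when $T$ is already a manifold, this canonical $Q$ is automatically semi-contractible and $\partial M$ is a 2-manifold.
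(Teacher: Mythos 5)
Your overall architecture (reduce to showing $\partial T$ is a closed surface via decomposition theory, then use the $1$-LCC hypothesis to get a locally flat embedding and hence a manifold with boundary) matches the paper's, and your final tameness step is essentially what the paper does with Proposition~\ref{prop:bing}. But there is a genuine gap in the decomposition-theoretic step. You assert that cell-likeness of the point preimages of $Q|_{\partial M}$ ``comes directly from the nested neighborhood system provided by semi-contractibility.'' It does not: semi-contractibility only places each fiber inside a neighborhood that is contractible in $\partial M$ (e.g.\ a disk), and a connected compactum inside a disk need not be cell-like --- a circle bounding a disk in $\partial M$ is the standard counterexample, and collapsing such a fiber would pinch $\partial M$ and change its homeomorphism type. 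In dimension $2$, cell-likeness of such a fiber is essentially equivalent to the fiber not separating $\partial M$, and establishing this non-separation is precisely the hard content of the paper's Theorem~\ref{upperthm5}: one first proves that $\partial T$ has no separating point (Proposition~\ref{prop:nocut}, via Mayer--Vietoris and Alexander duality, using the connectedness of $\operatorname{int}(T)$), then runs a chain-connectedness argument to show that no fiber of $Q|_{\partial M}$ can be a path separator of $\partial M$, and only then applies the Roberts--Steenrod/Moore theorem (Proposition~\ref{RSb}), whose hypotheses are exactly ``monotone, upper semi-continuous, each element in a disk, each element non-separating.'' Your proposal skips this entirely, so the claim $\partial T\cong\partial M$ is unsupported as written. (Note also that Theorem~\ref{bondingtheorem}, which you cite for the tameness step, concerns constructing monotone models from ideal tiles and plays no role in that step; the relevant inputs are Proposition~\ref{prop:cc} and Proposition~\ref{prop:bing}.)

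A secondary issue: in the necessity direction you take $M=T$ with $Q=\mathrm{id}$, but Definition~\ref{def:model2} requires every cell $\cell{S}$ to be connected, which is not automatic for a self-affine $3$-manifold; you flag this as a subtle point but do not resolve it. The paper's own (very terse) proof is silent on the ``only if'' direction, so you are no worse off there, but the missing non-separation argument in the sufficiency direction is the substantive problem.
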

\addtocounter{theorem}{-1}
\endgroup

For detecting self-affine 3-manifolds we offer the following results. Checking that the boundary of a self-affine $\mathbb{Z}^3$-tile is a 2-sphere can be done by investigating point preimages of the quotient map $Q$. This leads to the following theorem.

\begingroup
\def\thetheorem{\ref{uppercor}}
\begin{theorem}
Let $T$ be a self-affine $\mathbb{Z}^3$-tile with connected interior which admits a monotone model $M$ whose boundary is homeomorphic to the 2-sphere $\mathbb{S}^2$. Then $\partial T$ is homeomorphic to $\mathbb{S}^2$.
\end{theorem}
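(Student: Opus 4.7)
The plan is to reduce the statement to a classical application of Moore's decomposition theorem for the $2$-sphere, which says that if $G$ is an upper semicontinuous decomposition of $\mathbb{S}^2$ into non-separating continua, then the quotient $\mathbb{S}^2/G$ is again homeomorphic to $\mathbb{S}^2$. The quotient map $Q\colon M\to T$ provided by the monotone model structure is the natural candidate, and the content is to show that its restriction to $\partial M$ gives exactly such a decomposition of $\mathbb{S}^2$ whose quotient is $\partial T$.

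First I would establish that $Q$ sends $\partial M$ onto $\partial T$ and that $Q(\operatorname{int}M)=\operatorname{int}T$. Since $M$ and $T$ both tile $\mathbb{R}^3$ by $\mathbb{Z}^3$-translates and $Q$ is compatible with this tiling structure, a point of $M$ lies on the boundary precisely when it lies in some translate $M+v$ with $v\in\mathbb{Z}^3\setminus\{0\}$; the same holds for $T$. Together with the hypothesis that $\operatorname{int}T$ is connected (and hence, via $Q$, that $\operatorname{int}M$ is connected as well), this lets me identify $\partial T=Q(\partial M)$ and view the induced map $q:=Q\restriction_{\partial M}\colon\partial M\to\partial T$ as a continuous surjection between compacta.

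Next I would verify that $q$ satisfies the hypotheses of Moore's theorem. Because $Q$ is monotone, each fibre $Q^{-1}(x)$ is a continuum in $M$; the fibre $q^{-1}(x)=Q^{-1}(x)\cap\partial M$ therefore needs to be shown to be itself a continuum, which follows from the fact that $Q^{-1}(x)$ meets $\partial M$ in a connected set whenever $x\in\partial T$ (since an interior point of $T$ has interior preimage by the previous paragraph, so a boundary fibre cannot stretch through $\operatorname{int}M$). The decomposition of $\partial M\cong\mathbb{S}^2$ into the fibres of $q$ is upper semicontinuous because $Q$ is a closed map from a compact space. The serious step is to check the non-separation condition: no fibre $q^{-1}(x)$ disconnects $\mathbb{S}^2=\partial M$. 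This is where I expect the main obstacle to lie; the proof should use that $Q$, being monotone, has fibres that are cell-like or at least tree-like within $M$, and combine this with the fact that $\operatorname{int}T$ is connected to rule out a fibre splitting $\partial M$ into two components that would each have to map onto a non-trivial piece of $\partial T$ and produce disconnected complementary interior in $T$.

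Once non-separation of fibres is established, Moore's theorem applies verbatim to the decomposition $\{q^{-1}(x):x\in\partial T\}$ of $\mathbb{S}^2$, yielding $\partial T\cong\mathbb{S}^2$. The rest of the write-up should only need to cite the identification of $\partial T$ with the decomposition space of $q$, together with the standard statement of Moore's theorem. The technical heart is thus entirely the non-separation argument for monotone-fibre cross-sections of $\partial M$, which I expect to handle by a connectedness-in-$T$ argument combined with the monotonicity of $Q$ throughout $M$.
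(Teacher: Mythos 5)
Your overall strategy -- realize $\partial T$ as a monotone upper semi-continuous decomposition of $\partial M\cong\mathbb{S}^2$ and invoke Moore's theorem -- is exactly the paper's (it uses the Roberts--Steenrod form of Moore's theorem via Corollary~\ref{RS}). The surjectivity $Q(\partial M)=\partial T$, the connectedness of the fibres of $Q|_{\partial M}$, and upper semi-continuity via closedness are all available from the monotone-model machinery (Theorems~\ref{hprop} and~\ref{all:connectedness}, Lemma~\ref{lem:upperCrit}). However, the step you yourself flag as ``the serious step'' -- that no fibre of $Q|_{\partial M}$ separates $\partial M$ -- is precisely the technical heart of the paper's proof, and your sketch of it does not work. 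Monotonicity of $Q$ does \emph{not} give you cell-like or tree-like fibres (that is a much stronger property, only obtained in Section~\ref{sec:cellike} under additional hypotheses on boundary stars), and the suggested mechanism ``a separating fibre would produce disconnected complementary interior in $T$'' has no evident implication: the two complementary components of a fibre in $\partial M$ map into $\partial T$, and nothing forces their images to disconnect $\operatorname{int}(T)$ or even to be disjoint.

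What the paper actually does is the following. First it proves that $\partial T$ has no separating point (Proposition~\ref{prop:nocut}): this uses connectedness of $\operatorname{int}(T)$, the fact that $\partial T$ is the common boundary of exactly two complementary components of $\mathbb{S}^3$ (Proposition~\ref{prop:nosep}), and then a Mayer--Vietoris sequence in \v{C}ech cohomology combined with Alexander duality to derive a contradiction from a hypothetical cut point. Then, since $\partial T$ is a locally connected continuum, any two points of $\partial T\setminus\{x\}$ can be joined by an arc $\ell$ missing $x$ (Lemma~\ref{lem:simpleclosed}); the preimage of $\ell$ under $Q|_{\partial M}$ is a continuum disjoint from the fibre over $x$ (Lemma~\ref{lem:preimageconnected}), which is then upgraded to an actual path in $\partial M$ avoiding the fibre by a chain/local-connectedness argument; finally, Kuratowski's equivalence of cutting and separating in locally connected continua (Lemma~\ref{lem:cutcut}) converts ``not a path separator'' into ``not a separator.'' None of this is present in your proposal, so as written the argument has a genuine gap at its central point. (A smaller issue: your justification that the fibres of $Q|_{\partial M}$ are connected leans on the unproved claims $Q(\operatorname{int}M)=\operatorname{int}T$ and $Q^{-1}(\partial T)\subset\partial M$; the paper instead derives fibre connectedness directly from the walk structure and the connectedness of the cells $\cell{S}_M$ in Theorem~\ref{all:connectedness}.)
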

\addtocounter{theorem}{-1}
\endgroup

As a second step, we algorithmically recognize which 3-dimensional tile is a 3-ball. Indeed, let $T$ be a self-affine $\mathbb{Z}^3$-tile. If $\partial T$ is a 2-sphere in $\mathbb{R}^3$ then there is an algorithmically checkable sufficient condition for $\partial T$ to be tamely embedded and, hence, for $T$ to be homeomorphic to a closed 3-ball (see Theorem~\ref{cor:ball-algorithm} for an exact statement).

Recall that a 3-dimensional handlebody is an orientable 3-manifold with boundary  containing pairwise disjoint, tamely embedded disks such that the manifold resulting from cutting along these disks is a 3-ball. Given the above, the fact that 2-dimensional self-affine manifolds are 2-disks and that Proposition~\ref{prop:g} (see also the paragraph after it) shows that every 3-dimensional handlebody is homeomorphic to a self-affine manifold, we offer the following conjecture:

\begin{conjecture}\label{I}\label{3dimconj}
Every self-affine 3-manifold is homeomorphic to a handlebody.
\end{conjecture}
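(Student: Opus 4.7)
The plan is to combine the model--theoretic machinery developed in later sections of this paper with classical $3$--dimensional manifold theory. The handlebody recognition theorem I rely on is the classical fact that a compact, connected, orientable $3$--manifold with non--empty boundary is a handlebody if and only if its fundamental group is free, the genus then being the rank of $\pi_1$. Since $T$ is by hypothesis a self--affine $3$--manifold, it is compact and, being embedded in $\mathbb{R}^3$, orientable; I may assume it is connected. The conjecture thus reduces to proving that $\pi_1(T)$ is free.

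The first main step is to apply the characterization Theorem~\ref{thm:ballchar} to extract a semi--contractible monotone model $M$ of $T$ whose boundary $\partial M$ is a closed $2$--manifold. Such a model is an explicit polyhedral set built from lattice translates of finitely many polytopes, and the set of local adjacency types $M\cap(M+v)$ with $v\in\mathbb{Z}^3$ is finite and computable from the substitution data $(A,\digits)$. I would then prove that every such $M$ is itself a handlebody, either by exhibiting an explicit collapse of $M$ onto a $1$--complex or by running van Kampen on a CW decomposition adapted to the finite adjacency types and checking that the resulting presentation of $\pi_1(M)$ is free.

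The second main step is to transport the conclusion from $M$ to $T$ along the quotient map $Q\colon M\to T$ defined in \eqref{hdef}. Semi--contractibility (Definition~\ref{def:semicontr}) is designed so that each point--preimage of $Q$ contracts inside arbitrarily small neighbourhoods; under the manifold hypothesis this upgrades $Q$ to a cell--like map between compact ANRs, and hence to a fine homotopy equivalence inducing an isomorphism $\pi_1(M)\cong\pi_1(T)$. Combined with the previous step and handlebody recognition, this yields that $T$ is a handlebody. Alternatively, one could invoke Armentrout's theorem that a cell--like map between $3$--manifolds is approximable by homeomorphisms to deduce $M\cong T$ directly, sidestepping the appeal to handlebody recognition.

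The main obstacle will be the handlebody verification for the model $M$. Although the combinatorics of $M$ are finite and algorithmic, the intersections $M\cap(M+v)$ can encode nontrivial $3$--dimensional topology through the substitution, and turning the finite adjacency data into a free presentation of $\pi_1(M)$---equivalently, ruling out essential closed incompressible surfaces inside $M$---will require a careful structural argument on the neighbour graph of the substitution, likely exploiting its tree--like/contracting nature under iteration of $A^{-1}$. A secondary technical point is bootstrapping semi--contractibility of $Q$ to the cell--likeness needed in Step~2; this should follow for tame models via the $1$--LCC hypothesis carried in Theorem~\ref{thm:ballchar} but would need to be verified in general before the homotopy--equivalence argument can be applied.
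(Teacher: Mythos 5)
The statement you are addressing is stated in the paper as Conjecture~\ref{3dimconj} and is left \emph{open} there: the paper offers only supporting evidence (the planar case in Theorem~\ref{planar}, and the realization of every handlebody as a self-affine $3$-manifold via Proposition~\ref{prop:g}), so there is no proof of record to compare against. Your proposal is a plan rather than a proof, and both of its main steps contain genuine gaps. In the first step, Theorem~\ref{thm:ballchar} only produces a semi-contractible monotone model under the additional hypotheses that $T$ has connected interior and $1$-LCC boundary, neither of which is part of the definition of a self-affine $3$-manifold; and even granting the model, the assertion ``prove that every such $M$ is itself a handlebody'' is essentially the whole conjecture transplanted to the polyhedral setting. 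A monotone model is only required to have connected cells $\cell{S}_M$ and the correct neighbor structure; nothing in the definition forces $\pi_1(M)$ to be free, and you offer no mechanism for extracting a free presentation from the finite adjacency data beyond the hope that the substitution's contracting nature supplies one.

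The second step is where the argument actually breaks. Semi-contractibility (Definition~\ref{def:semicontr}) constrains only the point preimages of $Q|_{\partial M}$ inside the surface $\partial M$; it says nothing about preimages of interior points and does not imply that $Q$ is cell-like or a homotopy equivalence. The paper's own example in Section~\ref{sec:wild} is a monotone model $M$ homeomorphic to $\mathbb{D}^3$ (hence semi-contractible, with trivial --- in particular free --- fundamental group) whose tile $T$ is asserted to be not even simply connected. So the implication chain ``semi-contractible monotone model $\Rightarrow$ $Q$ cell-like $\Rightarrow$ $\pi_1(M)\cong\pi_1(T)$'' is false as stated; to rescue it you would have to prove that the manifold hypothesis on $T$ forces $Q$ to be cell-like, which is itself an unresolved problem in this framework and is not addressed by the $1$-LCC condition (a condition on $\partial T$ in $\mathbb{R}^3$, not on fibers of $Q$). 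A further, smaller gap: the recognition theorem you invoke (compact orientable $3$-manifold with nonempty boundary and free $\pi_1$ is a handlebody) requires irreducibility, or at least the absence of spurious sphere boundary components, which also needs an argument for $T$.
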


For generalizations of this theory to arbitrary dimensions we refer to Section~\ref{sec:n} and Theorem~\ref{thm:ball-algorithm}.

\subsection{An illustrative example}

Although we mainly deal with self-affine $\mathbb{Z}^3$-tiles it is convenient to use a 2-dimensional example to illustrate concepts and proofs throughout the paper. We choose {\it Knuth's  twin-dragon} (see {\it e.g.}~\cite[p.~608]{Knuth:98}) which is a well-known self-affine $\mathbb{Z}^2$-tile given as the unique non-empty compact set  $K=K(A,\mathcal{D})\subset \mathbb{R}^2$ satisfying
\begin{equation}\label{eq:twin}
AK=K+\mathcal{D}
\qquad\hbox{with}\qquad 
A=\begin{pmatrix}
-1&-1\\
1&-1
\end{pmatrix} \quad\hbox{and}\quad \mathcal{D}=\left\{\begin{pmatrix}
0\\
0
\end{pmatrix},\begin{pmatrix}
1\\
0
\end{pmatrix}\right\}.
\end{equation}

Knuth's twin-dragon $K$ is depicted in Figure~\ref{fig:KT}.
\begin{figure}[ht]
\includegraphics[height=4.5cm]{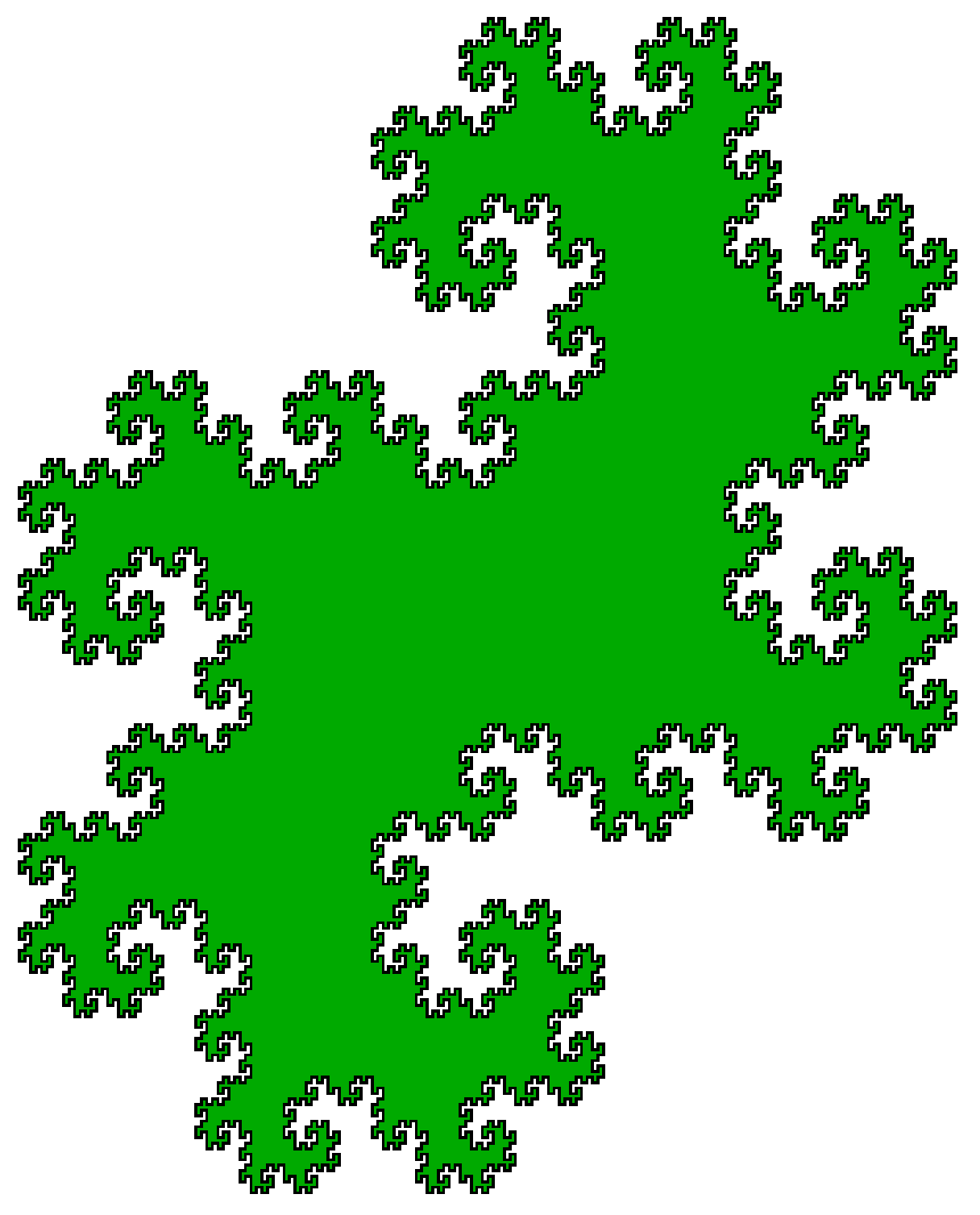}\hskip 1cm    
\caption{Knuth's twin-dragon $K$. \label{fig:KT}}
\end{figure}

\subsection{Outline of the paper}
The underlying idea behind our theory is to ``model'' a given self-affine $\mathbb{Z}^n$-tile $T$ by a set $M\subset \mathbb{R}^n$ that tiles $\mathbb{R}^n$ by $\mathbb{Z}^n$-translates and retains as many topological properties of $T$ as possible.  To understand the topology of $T$ it is then sufficient to study $M$. We give ways that allow to construct $M$ as a finite simplicial complex. This strategy will enable us to derive various new results on topological properties of self-affine $\mathbb{Z}^n$-tiles.

The definition of a \emph{model} $M$ of a self-affine $\zn$-tile $T$ is given in Section~\ref{sec:tilemodel}. In \eqref{hdef} we also define a natural \emph{canonical quotient map} $Q:M \to T$ from the model to the tile. Basic properties of $Q$ are listed in Theorem~\ref{hprop}. 

In Section~\ref{sec:nsw} we use intersections of a self-affine $\zn$-tile $T$ (or its model) with its $\zn$-translates in order to define a combinatorial complex. This \emph{tiling complex} $\complex{T}$ contains the full information on the intersection structure of the underlying tiling by $\zn$-translates of $T$ and will be of importance to derive topological properties of $T$. Moreover, we prove set equations for intersections of $T$ with its $\zn$-translates (Theorem~\ref{th:genset}) and define \emph{walks} that give coordinates to points contained in such intersections (Definition~\ref{df:walk}). Analogous definitions and results are provided for the model of $T$ (Section~\ref{sec:setm}).

In Section~\ref{sec:prop} we refine the definition of model and introduce \emph{monotone models} (Definition~\ref{def:model2}). If $M$ is a monotone model of a self-affine $\zn$-tile $T$ then the canonical quotient map $Q$ behaves nicely on intersections of tiles (Theorem~\ref{all:connectedness}). 
In this section we also introduce \emph{combinatorial} tiles: if a self-affine $\zn$-tile enjoys this general position property we can derive even more mapping properties of $Q$ (Theorem~\ref{tilingcomplex}).

Section~\ref{sec:sphere} contains our first main results. After a short section on planar tiles that are homeomorphic to a closed disk (Section~\ref{planar}), we give a criterion for a self-affine $\mathbb{Z}^3$-tile $T$ to have a boundary that is homeomorphic to a 2-manifold (Theorem~\ref{upperthm5}; see also the special case of spherical boundary in Theorem~\ref{uppercor}). If $T$ is combinatorial we can even determine the topological nature of its intersections with its $\zn$-translates (Theorem~\ref{th:complex}). In the proofs we use properties of the canonical quotient map $\canonical$ together with Moore's theorem on decompositions of 2-manifolds (Proposition~\ref{RSb}) and a criterion for a self-affine $\mathbb{Z}^3$-tile to have no separating point (Proposition~\ref{prop:nocut}).

In Section~\ref{sec:aa} we define \emph{ideal tiles} (Definition~\ref{def:approx}). As they can be constructed quite easily, together with Theorem~\ref{bondingtheorem} they provide a way to construct monotone models.

Section~\ref{sec:ball} contains the characterization theorem for self-affine 3-manifolds (Theorem~\ref{thm:ballchar}). Moreover, in this section we give an algorithm that allows to check if a given self-affine $\zn$-tile with spherical boundary is homeomorphic to an $n$-ball (Theorems~\ref{thm:ball-algorithm} and~\ref{cor:ball-algorithm}).

Section~\ref{sec:examples} contains examples for our theory. We provide two examples of self-affine $\mathbb{Z}^3$-tiles that are homeomorphic to a 3-ball (Sections~\ref{sec:tame} and~\ref{sec:Gelbrich}). Another example shows the existence of a self-affine $\mathbb{Z}^3$-tile $T$ that is a \emph{crumpled cube}, \emph{i.e.}, $\partial T$ is homeomorphic to a 2-sphere but $T$ itself is not homeomorphic to a 3-ball (Section~\ref{sec:wild}). Section~\ref{sec:torus} provides self-affine $\mathbb{Z}^3$-tiles whose boundaries are homeomorphic to a surface fo genus $g$ ($g\ge 1$).

Finally, in Section~\ref{sec:n} we show how our theory can be extended to dimension $n\ge 4$. 

\section{Models for self-affine $\mathbb{Z}^n$-tiles}\label{sec:tilemodel}

In this section we define \emph{models} for self-affine $\mathbb{Z}^n$-tiles and establish some basic results related to them. 

\subsection{Models}\label{sec:iterate} 
We will need the following notations. Let $M \subset \mathbb{R}^n$ be a compact set that is the closure of its interior and whose boundary has $\mu(\partial M)=0$, where $\mu$ denotes the $n$-dimensional Lebesgue measure. If the $\mathbb{Z}^n$-translates of $M$ cover $\mathbb{R}^n$ with disjoint interiors we say that $M$ is a \emph{$\mathbb{Z}^n$-tile}. Note that a $\mathbb{Z}^n$-tile always has $\mu(M)=1$. As a self-affine $\zn$-tile $T$ is equal to the closure of its interior and $\mu(\partial T)=0$ ({\it cf.}~\cite[Theorem~2.1]{Wang:99}), $T$ is a $\zn$-tile. 

Recall that a function $f:\mathbb{R}^n\to \mathbb{R}^n$ is called {\em $\mathbb{Z}^n$-equivariant} if $f(x+z)=f(x)+z$ holds for each $x\in\mathbb{R}^n$, $z\in\mathbb{Z}^n$.
If, in addition, we have $f(0)=0$ then $f(z)=z$ holds for each $z\in \zn$.

We can now give a fundamental definition.

\begin{definition}[Model]\label{def:model1}
A {\em model} $(M,F)$ for the self-affine $\zn$-tile $T=T(A,\digits)$ is a $\zn$-tile $\start$ equipped with a homeomorphism $\Bee:\mathbb{R}^n\to\mathbb{R}^n$ satisfying 
\begin{equation}\label{modeldef}
\Bee\start =\start + \digits
\end{equation}
such that $A\inv \Bee$ is $\mathbb{Z}^n$-equivariant and $\Bee(0)=0$.  
\end{definition}

Often the explicit knowledge of $F$ is not important. In these situations we will just write $M$ instead of $(M,F)$ for a model. Note that $\Bee$ is not assumed to be expanding in Definition~\ref{def:model1}. Thus in general, $\start$ is not uniquely defined by the set equation~\eqref{modeldef}.

\begin{example}\label{ex:model}
Let $K=K(A,\mathcal{D})$ be Knuth's twin-dragon defined in \eqref{eq:twin} and let $M$ be the parallelogram with vertices $(-\frac34,-\frac12)^t,(-\frac14,\frac12)^t,(\frac34,\frac12)^t,(\frac14,-\frac12)^t$. 
\begin{figure}[ht]
\includegraphics[height=3.5cm]{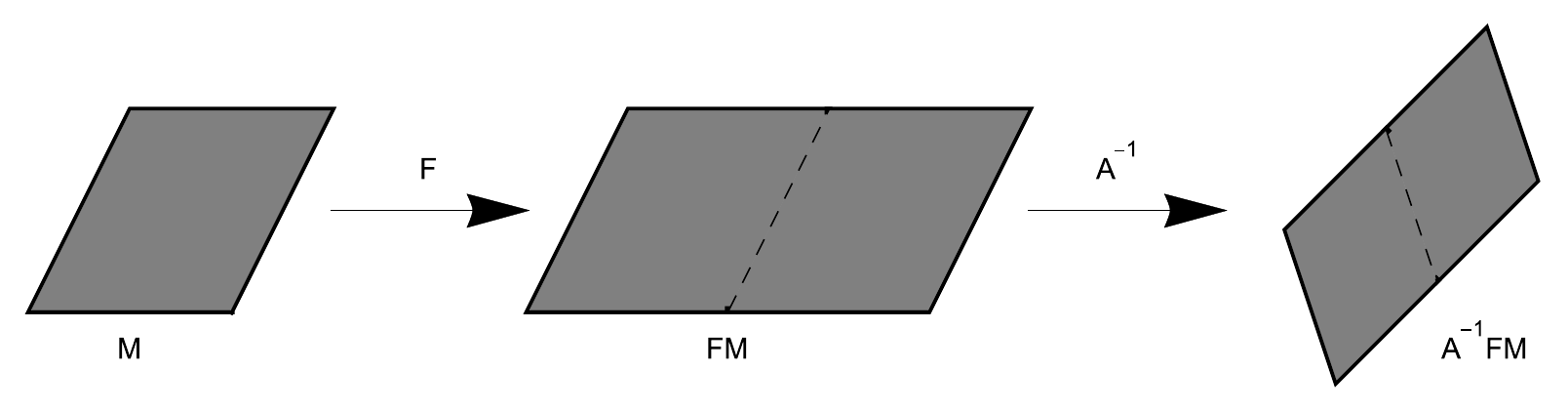}\hskip 1cm    
\caption{A model for Knuth's twin-dragon $K$. \label{fig:twinmodel}}
\end{figure}
It is possible to construct a homeomorphism $F$ with the required equivariance property explicitly in this case, so $(M,F)$ is a model for $K$ ($M$ as well as $FM$ and $A^{-1}FM$ are depicted in Figure~\ref{fig:twinmodel}). 
\end{example}

As we shall see later, the explicit construction of $F$ will not be necessary in order to find a set $M$ that makes $(M,F)$ a model for a self-affine $\mathbb{Z}^n$-tile. Indeed, it is an important feature of our theory that we can avoid explicit constructions of $F$ since such constructions are very tedious particularly for higher dimensional self-affine $\zn$-tiles. For concrete sets $M$ with certain properties (so-called \emph{ideal tiles}, see Definition~\ref{def:approx}) we are able to prove the existence of a homeomorphism $F$ that makes $(M,F)$ a model.

Let $(M,F)$ be a model for the self-affine tile $T=T(A,\digits)$. To transfer topological information from $M$ to $T$ we will make extensive use of the 
\emph{canonical quotient map} $\canonical: \mathbb{R}^n \to \mathbb{R}^n$ defined by
\begin{equation}\label{hdef}
\canonical = \lim_{k\to\infty} A^{-k}{\Bee}^{(k)}
\end{equation}
(${\Bee}^{(k)}$ denotes the $k$-th iterate of $F$).

\subsection{Basic properties of the canonical quotient map} \label{sec:basicf} 
Let $T=T(A,\digits)$ be a self-affine $\zn$-tile and $(M,F)$ a model for it.  We now give some results on models and study properties of the canonical quotient map $\canonical:\mathbb{R}^n\to\mathbb{R}^n$ defined in \eqref{hdef}. Before we start we equip $\mathbb{R}^n$ with a norm $||\cdot||$ such that the associated operator norm (also denoted by $||\cdot||$) satisfies 
\begin{equation}\label{eq:norm}
\displaystyle{||A\inv|| = \sup_{x \in \rn\setminus\{0\}} \frac{||A\inv x||}{||x||}<1.}
\end{equation}
As $A$ is expanding such a choice is possible ({\it cf.\ e.g.}~\cite[Section~3]{LW:96a}).

Our first aim is to  associate with a given model $(M,F)$ a sequence of models which converges to the tile $T$. For this reason define the set 
\begin{equation}\label{effdef}
\start_k = \eff_k \start \quad\hbox{with}\quad {{\eff}_k}= A^{-k}{\Bee}^{(k)}
\end{equation}
and let $\Bee_k=A^{-k} \Bee A^k$ for each $k\in \mathbb{N}$.

\begin{lemma}\label{efflemma}
For each $k\in\mathbb{N}$ the functions $A^{-1}\Bee_k$  and ${{\eff}_k}= A^{-k}{\Bee}^{(k)}$ are $\zn$-equivariant and fix each element of $\zn$.
\end{lemma}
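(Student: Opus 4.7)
The plan is to exploit the two standing hypotheses on $\Bee$ from Definition~\ref{def:model1}, namely that $A^{-1}\Bee$ is $\zn$-equivariant and $\Bee(0)=0$, together with the fact that $A$ is an integer matrix, so $A^k \zn\subseteq \zn$ for every $k\geq 0$. All three assertions reduce to a short bookkeeping argument, and I expect no serious obstacle.

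First I would handle the map $A^{-1}\Bee_k = A^{-(k+1)}\Bee A^k$ directly. For $x\in\rn$ and $z\in\zn$, I would compute
$$A^{-1}\Bee_k(x+z) = A^{-(k+1)}\Bee\bigl(A^k x + A^k z\bigr),$$
push the shift $A^kz\in\zn$ through $A^{-1}\Bee$ using its $\zn$-equivariance to get $A^{-k}\bigl(A^{-1}\Bee(A^k x)+A^k z\bigr)$, and finally observe that the $A^{-k}$ cancels the $A^k$ on the second summand, leaving $A^{-1}\Bee_k(x)+z$. The point where integrality of $A$ enters is precisely the step $A^kz\in\zn$.

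Next I would treat $\eff_k$ by induction on $k$. The base case $k=1$ is immediate, since $\eff_1 = A^{-1}\Bee$ is equivariant by hypothesis. For the inductive step I would use the factorization
$$\eff_{k+1} = A^{-(k+1)}\Bee^{(k+1)} = \bigl(A^{-1}\Bee_k\bigr)\circ \eff_k,$$
which follows from a direct expansion: $A^{-1}\Bee_k\circ \eff_k = A^{-(k+1)}\Bee A^k\circ A^{-k}\Bee^{(k)} = A^{-(k+1)}\Bee^{(k+1)}$. Since the composition of two $\zn$-equivariant maps is again $\zn$-equivariant, combining the preceding paragraph with the inductive hypothesis gives the claim for $\eff_{k+1}$.

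Finally, to see that both maps fix each element of $\zn$, I would first note that $\Bee(0)=0$ implies by iteration $\Bee^{(k)}(0)=0$, so $\eff_k(0)=A^{-k}\Bee^{(k)}(0)=0$, and similarly $A^{-1}\Bee_k(0) = A^{-(k+1)}\Bee(0)=0$. Any $\zn$-equivariant map $f$ with $f(0)=0$ satisfies $f(z)=f(0)+z=z$ for $z\in\zn$, so both $A^{-1}\Bee_k$ and $\eff_k$ restrict to the identity on $\zn$, completing the proof.
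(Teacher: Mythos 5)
Your proof is correct and follows essentially the same route as the paper: the direct equivariance computation for $A^{-1}\Bee_k$ is identical, and your induction via $\eff_{k+1}=(A^{-1}\Bee_k)\circ\eff_k$ is just an inductive rephrasing of the paper's factorization $\eff_k=A^{-1}\Bee_{k-1}\circ\cdots\circ A^{-1}\Bee_0$. The treatment of the fixed points of $\zn$ via $f(0)=0$ plus equivariance also matches the paper.
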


\begin{proof}
Let $k\in \mathbb{N}$, $x\in \rn$, and $z\in \zn$. The $\zn$-equivariance of $A^{-1}F$ implies that 
\[
A^{-1}F_k(x+z)=A^{-k}A^{-1}F(A^kx + A^kz) = A^{-k}(A\inv F A^k x +  A^kz)=A^{-1}F_k(x) + z
\]
and $A^{-1}F_k$ is $\zn$-equivariant. Because $\Bee(0)=0$ we also have  $A^{-1}F_k(0)=0$ and, hence, by $\zn$-equivariance, $A^{-1}F_k$ fixes each element of $\zn$. Finally, as  $\eff_k=A\inv\Bee_{k-1}\circ\cdots\circ A\inv F_{0}$, also $\eff_k$ has the required properties.
\end{proof}

We now are in a position to prove the following convergence result for models. The limit of a sequence of models in its statement is taken with respect to the product of the Hausdorff metric and the metric of uniform convergence.

\begin{proposition}\label{itermodel}
Let $(M,F)$ be a model for the self-affine $\zn$-tile $T=T(A,\digits)$.
Then $(\start_k,\Bee_k)$ is a model for $T$ for each $k\in\mathbb{N}$ and $\lim_{k\to\infty}(\start_k,\Bee_k)=(T,A)$.
\end{proposition}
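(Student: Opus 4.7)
The plan is to verify that $(\start_k,\Bee_k)$ satisfies all the clauses of Definition~\ref{def:model1} and then handle the two-coordinate convergence separately. Lemma~\ref{efflemma} already provides $\Bee_k(0)=0$ and the $\zn$-equivariance of $A\inv\Bee_k$, and $\Bee_k=A^{-k}\Bee A^k$ is a homeomorphism as a composition of homeomorphisms. The nontrivial piece of the defining set equation is $\Bee_k\start_k = \start_k+\digits$, which I would obtain by iterating $\Bee\start=\start+\digits$. Since $A\inv\Bee$ is $\zn$-equivariant, $\Bee(x+z)=\Bee(x)+Az$ for every $z\in\zn$, and a short induction yields
\[
\Bee^{(k)}\start \;=\; \start + S_k, \qquad S_k \;=\; \digits + A\digits + \cdots + A^{k-1}\digits \;\subset\; \zn.
\]
Plugging this into the definitions of $\start_k$ and $\Bee_k$ gives $\Bee_k\start_k = A^{-k}\Bee^{(k+1)}\start = A^{-k}(\start+S_{k+1}) = \start_k+\digits$.

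Next I would verify that $\start_k=\eff_k\start$ is itself a $\zn$-tile. Since $\eff_k$ is a homeomorphism, $\start_k$ is compact and equals the closure of its interior, and the $\zn$-equivariance of $\eff_k$ transfers the disjoint-interior cover of $\rn$ from $\start$ to $\start_k$. The one delicate point is $\mu(\partial\start_k)=0$: here I would observe that $\Bee^{(k)}\partial\start = \partial(\start+S_k)\subset\bigcup_{s\in S_k}(\partial\start+s)$ is a finite union of translates of $\partial\start$ and hence Lebesgue-null, and then push this nullset through the linear map $A^{-k}$ to conclude.

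For the convergence, consider the Hutchinson operator $\Phi(K)=A\inv(K+\digits)$ attached to the IFS $\{\varphi_d\mid d\in\digits\}$; it has $T$ as its unique nonempty compact fixed point and is a contraction in the Hausdorff metric induced by $\|\cdot\|$ from \eqref{eq:norm}. A short induction gives $\Phi^k(\start) = A^{-k}\start + A^{-k}S_k = A^{-k}\Bee^{(k)}\start = \start_k$, so $\start_k\to T$ in Hausdorff distance by the standard IFS convergence theorem. For the homeomorphism coordinate, write $\Bee(x)=Ax+g(x)$; the $\zn$-equivariance of $A\inv\Bee$ forces $g(x+z)=g(x)$ for all $z\in\zn$, so $g$ is continuous and $\zn$-periodic, hence bounded by some $C>0$. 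A direct computation then gives
\[
\Bee_k(x) - Ax \;=\; A^{-k}\bigl(\Bee(A^k x) - A^{k+1}x\bigr) \;=\; A^{-k}g(A^k x),
\]
whose norm is at most $C\|A\inv\|^k$ and therefore tends to $0$ uniformly in $x\in\rn$.

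I expect the main obstacle to be the measure-zero boundary condition for $\start_k$, since it is the only part of the $\zn$-tile property that is not automatically preserved by an arbitrary homeomorphism; the explicit decomposition $\Bee^{(k)}\start=\start+S_k$ is what reduces it to the known $\mu(\partial\start)=0$.
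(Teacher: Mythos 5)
Your proof is correct and follows essentially the same route as the paper: both hinge on the identity $\Bee^{(k)}\start=\start+\digits_k$ (equivalently $\start_k=A^{-k}(\start+\digits_k)$), verify the set equation $\Bee_k\start_k=\start_k+\digits$ from the same commutation, and get $\Bee_k\to A$ from the $\zn$-periodicity (hence boundedness) of $\Bee-A$. The only cosmetic differences are that the paper deduces the $\zn$-tile property of $\start_k$ from $\digits_k$ being a complete residue system for $\zn/A^k\zn$ where you use the $\zn$-equivariant homeomorphism $\eff_k$ plus an explicit null-boundary estimate, and that you package the Hausdorff convergence as Hutchinson contraction rather than the paper's direct $\varepsilon$-estimate; both variants are sound.
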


\begin{proof}
We first prove that $(\start_k, F_k)$ is a model for each $k\in \mathbb{N}$. We begin by showing that $M_k$ is a $\zn$-tile. The properties of $\eff_k$ asserted in Lemma~\ref{efflemma} yield that $\start_{k}= A^{-1}\eff_{k-1} F(\start) = A^{-1}(\eff_{k-1} \start + \digits)=A^{-1}(\start_{k-1}+\digits)$. Iterating this for $k$ times ($\eff_0$ is the identity) and setting 
\begin{equation}\label{digitsk}
\digits_k =\digits + A\digits + \cdots + A^{k-1}\digits 
\end{equation}
we gain 
\begin{equation}\label{eq:Mkiterate}
M_{k} = A^{-k}(M + \digits_k) \qquad (k \in \mathbb{N}).
\end{equation} 
Since $\digits$ is a complete set of residue class representatives of $\zn/ A\zn$,  $\digits_k$ is a complete set of residue class representatives of $\zn/ A^k\zn$. Thus, as $M$ is a $\zn$-tile, equation \eqref{eq:Mkiterate} implies that also $M_k$ is a $\zn$-tile. 
Next we show that $(M_k,F_k)$ satisfies the set equation \eqref{modeldef}. Indeed, as $F_k\eff _k = \eff_k F$ holds by the definition of $\eff_k$ and $F_k$, we have
\begin{equation*}
\Bee_k \start_k = \Bee_k \eff_k M =\eff_k \Bee M= \eff_k(M+ \digits)=\eff_k M + \digits=M_k+\digits.
\end{equation*}
As Lemma~\ref{efflemma} implies that $A^{-1}F_k$ is $\zn$-equivariant and $F_k(0)=0$, we have obtained that $(M_k,F_k)$ is a model for $T$ for each $k\in\mathbb{N}$.

To prove the convergence result let $\mathbf{D}(\cdot,\cdot)$ be the Hausdorff metric. For each $\varepsilon > 0$ we may choose $k \in \mathbb{N}$ in a way that $A^{-k}T$ and $A^{-k}\start$ are contained in a ball of diameter $\varepsilon$ around the origin. Using \eqref{eq:Mkiterate} and the $k$-th iterate of the set equation \eqref{setequation}, we gain
\begin{align}
\mathbf{D}(\start_{k}, T) & = \mathbf{D}(A^{-k}(\start + \digits_k), A^{-k}(T + \digits_k) ) \nonumber \\
&\le \max_{d\in \digits_k}\{\mathbf{D}(A^{-k}(\start + d), A^{-k}(T + d)) \} \label{epsdist} \\
&< \varepsilon. \nonumber
\end{align}
As $||A||>1$ and $A^{-1}F$ is continuous and $\zn$-equivariant, the maps $A^{-k} (A^{-1}F) A^{k}$ uniformly converge to the identity. Thus $\Bee_k \to A$ uniformly for $k\to\infty$ and the proof is finished.
\end{proof}

We mention that the sequence $(M_k,F_k)_{k \ge 0}$ of models yields approximations not only of the tile $T$, but also of the dynamical system defined by its set equation (see {\it e.g.} Theorem~\ref{eq:starset}). 

The following result contains basic properties of $\canonical$.

\begin{theorem}\label{upperthm}\label{hprop}
Let $(\start,F)$ be a model for the self-affine $\zn$-tile $T$. Then the {\em canonical quotient map} 
$\canonical =  \lim_{k\to\infty} A^{-k}F^{(k)}$ is continuous, $\zn$-equivariant, and satisfies the following properties.
\begin{itemize}
\item[(i)]
$ \canonical(\start) = T$, {\em i.e.}, $T$ is a quotient space of $\start$.
\item[(ii)]
$ \canonical(\partial \start) = \partial T$, {\em i.e.}, $\partial T$ is a quotient space of $\partial \start$.
\end{itemize}
\end{theorem}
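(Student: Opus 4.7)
My plan is to establish the existence and regularity of $\canonical$ as a uniform limit of the maps $q_k = A^{-k}F^{(k)}$, and then to deduce (i) and (ii) from the Hausdorff convergence $\start_k \to T$ of Proposition~\ref{itermodel} together with the fact that each $q_k$ is a homeomorphism of $\rn$. For the uniform convergence I would note that, since $A\inv F$ is $\zn$-equivariant by hypothesis, the map $g := A\inv F - \mathrm{id}$ is $\zn$-periodic and hence bounded, say $\|g\|_\infty \le C$. A short telescoping computation gives
\[
q_k - q_{k-1} \;=\; A^{-(k-1)}\bigl(g \circ F^{(k-1)}\bigr),
\]
and combined with $\|A\inv\| < 1$ from \eqref{eq:norm} this yields $\|q_k - q_{k-1}\|_\infty \le \|A\inv\|^{k-1} C$. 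Hence $(q_k)$ is uniformly Cauchy and converges uniformly to a continuous map $\canonical$; the $\zn$-equivariance of $\canonical$ is inherited from that of each $q_k$ (Lemma~\ref{efflemma}).

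For (i), Proposition~\ref{itermodel} gives $\start_k = q_k(\start) \to T$ in the Hausdorff metric, while uniform convergence $q_k \to \canonical$ on the compact set $\start$ forces $q_k(\start) \to \canonical(\start)$ in the Hausdorff metric; comparing the two limits yields $\canonical(\start) = T$. The restriction $\canonical|_\start$ is then automatically a quotient map, being a continuous surjection between compact Hausdorff spaces. For the inclusion $\canonical(\partial \start) \subseteq \partial T$ in (ii), the tiling identity $\partial \start = \bigcup_{z \ne 0}(\start \cap (\start + z))$ combined with $\zn$-equivariance of $\canonical$ and part (i) gives $\canonical(\start \cap (\start + z)) \subseteq T \cap (T + z) \subseteq \partial T$ for every $z \ne 0$.

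The reverse inclusion in (ii) is the main obstacle. My plan is to prove $\partial \start_k \to \partial T$ in the Hausdorff metric; once this is established, note that $\partial \start_k = q_k(\partial \start)$ (because $q_k$ is a homeomorphism of $\rn$, hence maps boundaries to boundaries), and then uniform convergence of $q_k$ on the compact set $\partial \start$ gives $\canonical(\partial \start) = \lim q_k(\partial \start) = \lim \partial \start_k = \partial T$. The upper semicontinuity $\limsup \partial \start_k \subseteq \partial T$ follows from a compactness argument mirroring the first inclusion. For lower semicontinuity, given $y \in \partial T$ I pick $z_0 \ne 0$ with $y \in T \cap (T + z_0)$; from $\start_k \to T$ and $\start_k + z_0 \to T + z_0$ one can find $a_k \in \start_k$ and $b_k \in \start_k + z_0$ with $a_k, b_k \to y$. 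The straight segment from $a_k$ to $b_k$ lies in an arbitrarily small ball around $y$ and, since the interiors of $\start_k$ and $\start_k + z_0$ are disjoint by the tiling property, must meet $\partial \start_k$. This produces points of $\partial \start_k$ converging to $y$. The hard part is precisely this last step: Hausdorff convergence of sets does not in general pass to boundaries, so the tiling structure of $\start_k$ must be used essentially via this segment-crossing argument.
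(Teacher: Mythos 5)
Your proposal is correct and follows essentially the same route as the paper's proof: the same telescoping/equivariance estimate giving uniform convergence of $q_k=A^{-k}F^{(k)}$, part (i) read off from the Hausdorff convergence $M_k\to T$ of Proposition~\ref{itermodel}, and part (ii) obtained by proving $\partial M_k=q_k(\partial M)\to\partial T$ in the Hausdorff metric via the description of the boundary as the union of two-translate intersections. Your segment-crossing step just makes explicit the detail the paper compresses into ``as $M_k$ is a $\mathbb{Z}^n$-tile this implies there is $y\in\partial M_k$ with $||x-y||<\varepsilon$'' (to run it cleanly, first perturb $b_k$ into $\operatorname{int}(M_k+z_0)$, which is disjoint from $M_k$, so the segment genuinely crosses $\partial M_k$).
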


\begin{proof}
We first show that $(\eff_k(x))_{k\ge 0}=(A^{-k}F^{(k)}(x))_{k\ge 0}$ is a Cauchy sequence for each $x\in\mathbb{R}^n$. By continuity and $\zn$-equivariance of $A\inv F$ there is an absolute constant $c>0$ such that
\begin{align*}
||\eff_kx-\eff_{k-1}x||&=||A^{-k}{\Bee}^{(k)}x - A^{-k+1}{\Bee}^{(k-1)}x||
\\
&=
||A^{-k+1}A\inv\Bee{\Bee}^{(k-1)}x - A^{-k+1}{\Bee}^{(k-1)}x||\\
&\leq
||A\inv||^{k-1} ||A\inv\Bee({\Bee}^{(k-1)}x) - ({\Bee}^{(k-1)}x)|| \\
&\leq c||A\inv||^{k-1}\qquad \forall x \in \rn,\; k \in \mathbb{N}.
\end{align*}
By \eqref{eq:norm} we have $||A\inv|| <1$ and, by the triangle inequality and a geometric series consideration
\begin{equation}\label{eq:Quniform}
||\eff_kx-\eff_lx||
< ||A\inv||^{l}\left(\frac{c}{1-||A\inv||}\right)\qquad \forall x \in \rn,\, k \geq l \in \mathbb{Z}.
\end{equation}
Thus  $(q_k(x))_{k\ge 0}$  is Cauchy and, hence, converges for each $x\in \mathbb{R}^n$. This defines the function $\canonical$ for each $x\in\mathbb{R}^n$. As the convergence in \eqref{eq:Quniform} is uniform in $x$ we conclude that $\canonical=\lim_{k\to \infty} q_k$ is continuous. $\zn$-equivariance of $\canonical$ follows from $\zn$-equivariance of $\eff_k$ (see Lemma~\ref{efflemma}), and (i) is an immediate consequence of the convergence statement in Proposition~\ref{itermodel}.

To prove (ii) we first note that, since $T$, $M$, and $M_k$ for $k\in\mathbb{N}$ are $\zn$-tiles, we have
\begin{equation}\label{deltapartial}
\partial T = \bigcup_{s\in\zn\setminus\{0\}} (T \cap (T+s)) \end{equation}
and (note that $q_k$ is a $\zn$-equivariant homeomorphism with $q_kM=M_k$)
\begin{equation}\label{deltapartial2}
\partial M_k = \bigcup_{s\in\zn\setminus\{0\}}(M_k \cap (M_k+s)) = q_k\bigcup_{s\in\zn\setminus\{0\}}(M \cap (M+s))=q_k\partial M.
\end{equation}
As $Q=\lim_{k\to\infty} q_k$ uniformly, (ii) follows if we show that for each $\varepsilon > 0$ there is $k_0\in \mathbb{N}$ such that 
\begin{equation}\label{boundaryhausdorff}
\mathbf{D}(\partial T, q_k \partial M) < \varepsilon
\end{equation}
holds for each $k\ge k_0$.
To prove this let $\varepsilon >0$ be arbitrary and choose $k_0$ in a way that \eqref{epsdist} holds for $k\ge k_0$. 
By \eqref{deltapartial} for each $x\in \partial T$ there is $s\in \zn\setminus\{0\}$ such that \ $x\in T$ and $x \in T + s$. From \eqref{epsdist} we gain that there exist $y_1 \in \start_k$, $y_2 \in \start_k+s$ with $||x-y_i||<\varepsilon$ ($i=1,2$). As $\start_k$ is a $\zn$-tile this implies that there is an element $y \in \partial \start_k=q_k\partial M$ with $||x-y||<\varepsilon$. By analogous reasoning, for each $x \in q_k \partial M=\partial\start_k$ there exists $y\in \partial T$ with $||x-y||<\varepsilon$. This proves \eqref{boundaryhausdorff} and, hence, also~(ii).
\end{proof}

Note that in general $\canonical$ is not injective. For any $X \subset \start$ we shall call $Q(X)$ the \emph{canonical quotient} of $X$.  In particular, Theorem~\ref{upperthm} shows that $T$ and $\partial T$ are the canonical quotients of $M$ and $\partial M$, respectively.

Although we will later often use $\canonical$ to get homeomorphisms between (subsets of) models and self-affine $\zn$-tiles we will never show that $\canonical$ itself (or certain restrictions of it) is injective. Thus $Q$ cannot serve as a homeomorphism between a model and a self-affine $\zn$-tile.

The following lemma will be needed in some computations.

\begin{lemma}\label{hproplem}
If $(\start,F)$ is a model for the self-affine $\zn$-tile $T$ then 
$A \canonical=\canonical{\Bee}$ and ${\canonical\inv}A={\Bee}{\canonical\inv}$.
\end{lemma}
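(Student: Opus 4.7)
The plan is to derive both identities directly from the defining limit $Q=\lim_{k\to\infty}A^{-k}F^{(k)}$ of Equation~\eqref{hdef}, exploiting the fact that $F$ is a homeomorphism and that the limit converges uniformly by \eqref{eq:Quniform} in the proof of Theorem~\ref{upperthm}. The first identity is the substantive one; the second is a formal consequence.

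First I would establish the identity $AQ=QF$. For any $x\in\mathbb{R}^n$, the composition $QF(x)$ can be written as
\[
QF(x)=\lim_{k\to\infty}A^{-k}F^{(k)}(F(x))=\lim_{k\to\infty}A^{-k}F^{(k+1)}(x).
\]
Factoring out one copy of $A$ and reindexing,
\[
QF(x)=A\,\lim_{k\to\infty}A^{-(k+1)}F^{(k+1)}(x)=A\,\lim_{j\to\infty}A^{-j}F^{(j)}(x)=AQ(x).
\]
The only thing requiring justification is that the reindexed limit indeed equals $Q(x)$ and that applying the continuous linear map $A$ commutes with the limit; both are immediate since the sequence $(A^{-k}F^{(k)})_{k\ge 0}$ converges (indeed converges uniformly by \eqref{eq:Quniform}) and $A$ is continuous. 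This gives $AQ=QF$ as functions $\mathbb{R}^n\to\mathbb{R}^n$.

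For the second identity $Q^{-1}A=FQ^{-1}$, I would note that since $Q$ need not be injective (as remarked just after Theorem~\ref{upperthm}), the symbol $Q^{-1}$ must be read as the set-valued preimage operator; the claim is the equality of subsets $Q^{-1}(A(x))=F(Q^{-1}(x))$ for every $x\in\mathbb{R}^n$. For the inclusion $\supseteq$, if $z\in Q^{-1}(x)$, then by the first identity $Q(Fz)=AQ(z)=Ax$, so $F(z)\in Q^{-1}(Ax)$. For the reverse inclusion $\subseteq$, let $y\in Q^{-1}(Ax)$. Since $F$ is a homeomorphism of $\mathbb{R}^n$, there is a unique $z\in\mathbb{R}^n$ with $y=F(z)$, namely $z=F^{-1}(y)$. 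Applying $A^{-1}$ to both sides of $AQ=QF$ yields $Q=A^{-1}QF$, hence $QF^{-1}=A^{-1}Q$; therefore $Q(z)=QF^{-1}(y)=A^{-1}Q(y)=A^{-1}(Ax)=x$, so $z\in Q^{-1}(x)$ and $y\in F(Q^{-1}(x))$.

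There is no real obstacle here: the first identity is essentially a one-line limit manipulation, and the second is bookkeeping that leverages the bijectivity of the homeomorphism $F$. The only subtlety worth flagging explicitly in the write-up is the interpretation of $Q^{-1}$ as a multi-valued map, so that the identity $Q^{-1}A=FQ^{-1}$ is understood as the equality of preimage sets, in harmony with the cautionary remark preceding the lemma that $Q$ itself is not a homeomorphism.
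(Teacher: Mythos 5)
Your proof is correct and follows essentially the same route as the paper: the first identity by the limit/reindexing manipulation that the paper dismisses as immediate from the definition of $Q$, and the second by interpreting $Q^{-1}$ as the set-valued preimage and using the first identity together with the invertibility of $A$ and the bijectivity of $F$ (the paper writes this as a single chain of set equalities rather than two inclusions, but the content is identical).
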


\begin{proof}
The first identity follows immediately from the definition of $\canonical$. The second one follows as
\begin{align*}
{\Bee}\canonical\inv x&=\{{\Bee}y \mid \canonical y=x\}= \{{\Bee}y \mid A\canonical y=Ax\}=\{{\Bee}y\mid \canonical{{\Bee}y}=Ax\}\\
&= \{z\mid \canonical z=Ax\}={\canonical\inv}Ax. \qedhere
\end{align*}
\end{proof}

\section{Neighbor structures, set equations, and walks}\label{sec:nsw}

In this section we study neighbors of self-affine $\zn$-tiles and their models. In particular, we define \emph{neighbor structures} of these tiles and associate them with combinatorial cell complexes. We establish set equations for intersections of self-affine $\zn$-tiles (and their models) with their $\zn$-translates and define \emph{walks} which are used to address points in these tiles and their intersections.

\subsection{Cells, neighbor structures, and the tiling complex}\label{sec:tcx}
Let $M$ be a $\mathbb{Z}^n$-tile. In all what follows, the intersection of $M$  with subsets of its $\mathbb{Z}^n$-translates will play an important role.  We consider sets of $n$-tuples of integers to be coordinates for unoriented \emph{cells} in a \emph{tiling complex} defined below and accordingly introduce the notation
\begin{equation}\label{def:cell}
\cell{S}_M=\bigcap_{s \in S}(M+s) \qquad (S\subset\mathbb{Z}^n).
\end{equation}
We extend the range of $\cell{\cdot}_M$ to complexes, {\it i.e.}, to sets $\mathcal{C}$ of sets in $\mathbb{Z}^n$ by setting 
$$
\cell{\mathcal{C}}_M= \bigcup_{S\in \mathcal{C}} \cell{S}_M.
$$ 
If $M$ is the self-affine $\mathbb{Z}^n$-tile $T$,  we often omit the subscript $T$  and write $\cell{S}$ instead of $\cell{S}_T$.

\begin{definition}[Neighbor structure and tiling complex]\label{def:ns}
The \emph{neighbor structure} of a $\mathbb{Z}^n$-tile $M$ is the set 
\[
\complex{M} = \{S \subset \zn \mid S\not=\emptyset \hbox{ and } \cell{S}_M \not=\emptyset\},
\] 
which reflects the underlying intersection structure of the tiling induced by $M$ and will also be considered a formal simplicial complex. 
The \emph{faces} of $S \in \complex{M}$ are the elements of 
\[
\faces{S}{M}=\{S' \in \complex{M} \mid S \subset S' \}.
\]  
A face of $S \in \complex{M} $ is \emph{proper} if it is not equal to $S$.  We consider the set $\complex{M}$ of cells along with the notion of faces induced by $\faces{S}{M}$ to be the \emph{tiling complex} for $M$. Moreover, we set
\begin{equation*}
\complex{M}^{i} := \{ S \in \complex{M} \mid |S|=i \}
\end{equation*} 
for the set of all $i$-cells of the tiling complex $\complex{M}$.
\end{definition}

Given a simplex $S \in \complex{M}$ we define the \emph{simplicial boundary operator}
\begin{equation}\label{xstar1}
{\delta}S=\{ S\cup\{x\} \mid  x \in \mathbb{Z}^n\setminus S \} \cap \complex{M},
\end{equation}
that is, the set of maximal proper faces of $S$ in $\complex{M}$.   
We think of $\cell{{\delta}S}_M$ as a form of simplicial boundary of $\cell{S}_M$. The operator $\delta$ can be extended to a collection $\mathcal{C}$ of $i$-cells by setting
\begin{equation}\label{xstar1a}
{\delta}\mathcal{C}=\Big\{ S\cup\{x\} \mid  S \in \mathcal{C},\, x \in \mathbb{Z}^n\setminus \bigcup_{S \in \mathcal{C}}S \Big\} \cap \complex{M}.
\end{equation}

We intuit the sets $S$ as formal unoriented \emph{simplices} and $\cell{S}_M$ as their (dual) \emph{geometric realizations}. Note that it can happen that $S$ is a proper face of $S'$ but that $\cell{S}_M=\cell{S'}_M$. 

\begin{example}\label{ex:twinNeighbor}
Again we illustrate these concepts with help of Knuth's twin-dragon $K=K(A,\digits)$ defined in \eqref{eq:twin} and its model $M$ taken from Example~\ref{ex:model}. Figure~\ref{fig:twinNeighbors} shows the twin-dragon as well as its model together with its neighbors. From the left side of this figure it is easy to guess that
\begin{align*}
\complex{K}^{1} =& \; \mathbb{Z}^2, \\
\complex{K}^{2} =& \left \{  
\left \{  \tiny \begin{pmatrix} 0\\0 \end{pmatrix},\begin{pmatrix} 1\\0 \end{pmatrix}  \right\}\!,\!
\left \{  \tiny  \begin{pmatrix} 0\\0 \end{pmatrix},\begin{pmatrix} -1\\0 \end{pmatrix} \right\}\!,\!
\left \{   \tiny \begin{pmatrix} 0\\0 \end{pmatrix},\begin{pmatrix} 0\\1 \end{pmatrix}  \right\}\!,\!
\left \{   \tiny \begin{pmatrix} 0\\0 \end{pmatrix},\begin{pmatrix} 0\\ -1\end{pmatrix} \right\}\!,\!
\left \{   \tiny \begin{pmatrix} 0\\0 \end{pmatrix},\begin{pmatrix} 1\\1 \end{pmatrix}  \right\}\!,\!
\left \{   \tiny \begin{pmatrix} 0\\0 \end{pmatrix},\begin{pmatrix} -1\\-1 \end{pmatrix} \right\}\!
\right\}+\mathbb{Z}^2, \\
\complex{K}^{3} =& \Big \{   
\left \{  \tiny\begin{pmatrix} 0\\0 \end{pmatrix},\begin{pmatrix} 1\\0 \end{pmatrix},\begin{pmatrix} 1\\1 \end{pmatrix}  \right\},
\left \{  \tiny\begin{pmatrix} 0\\0 \end{pmatrix},\begin{pmatrix} 1\\1 \end{pmatrix},\begin{pmatrix} 0\\1 \end{pmatrix}  \right\},
\left \{  \tiny\begin{pmatrix} 0\\0 \end{pmatrix},\begin{pmatrix} 0\\1 \end{pmatrix},\begin{pmatrix} -1\\0 \end{pmatrix}  \right\}, \\
&\left \{ \tiny \begin{pmatrix} 0\\0 \end{pmatrix},\begin{pmatrix} -1\\0 \end{pmatrix},\begin{pmatrix} -1\\-1 \end{pmatrix}  \right\},
\left \{ \tiny \begin{pmatrix} 0\\0 \end{pmatrix},\begin{pmatrix} -1\\-1 \end{pmatrix},\begin{pmatrix} 0\\-1 \end{pmatrix}  \right\},
\left \{ \tiny \begin{pmatrix} 0\\0 \end{pmatrix},\begin{pmatrix} 0\\-1 \end{pmatrix},\begin{pmatrix} 1\\0 \end{pmatrix}  \right\}
\Big\}+\mathbb{Z}^2,
\end{align*}
and $\complex{K}^{i} = \emptyset$ for $i\ge 4$ (it is explained in Remark~\ref{rem:gifs} how to prove these well-known equalities, see also~\cite{Akiyama-Thuswaldner:05} for a proof). 
\begin{figure}[ht]
\includegraphics[height=6cm]{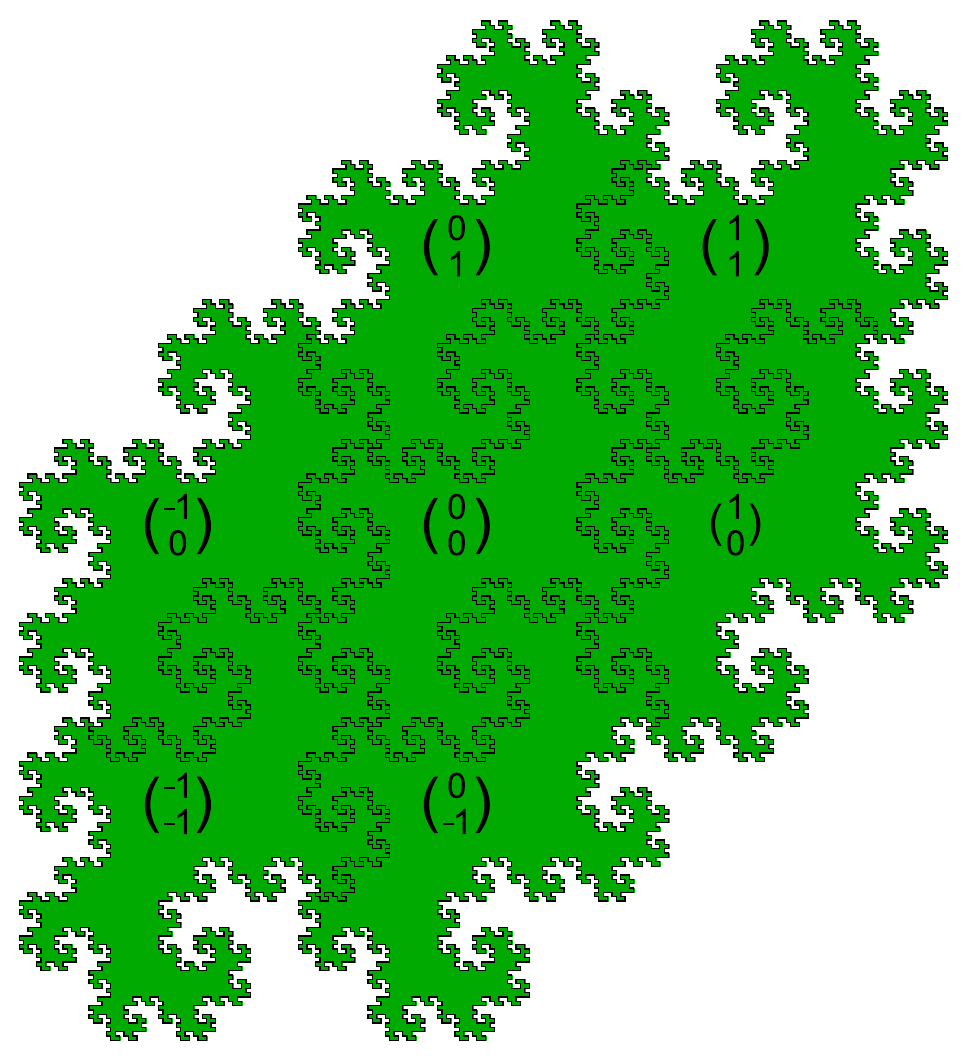}\hskip 1cm    
\includegraphics[height=6cm]{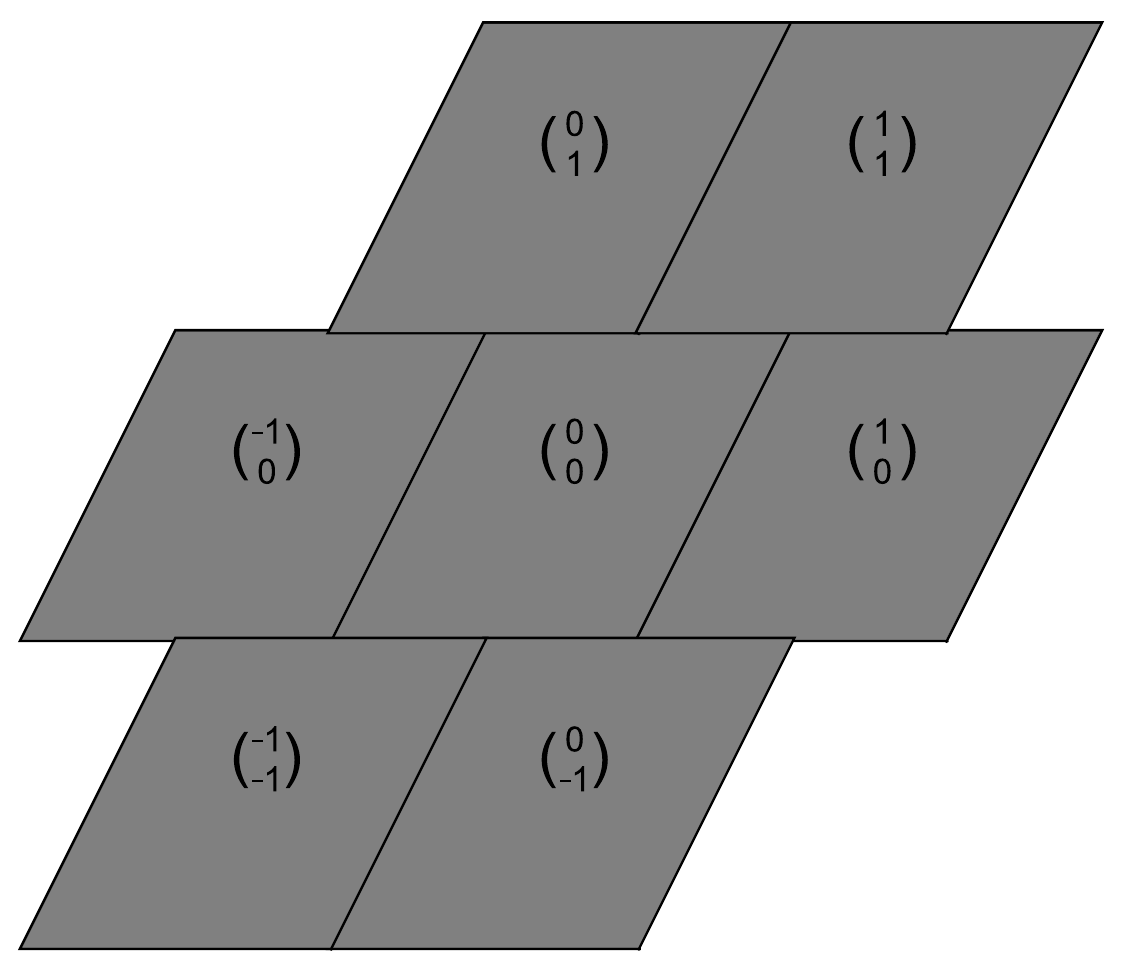}\hskip 1cm    
\caption{Illustration of Knuth's twin-dragon $K$ with its neighbors (left) and its model $M$ with its neighbors (right). The vectors inside a tile indicate its translation. \label{fig:twinNeighbors}}
\end{figure}
Since the model $M$ is just a parallelogram the sets $\complex{M}^{i}$ can be easily read off from the right hand side of Figure~\ref{fig:twinNeighbors}. In particular, it turns out that $\complex{K}^{i}=\complex{M}^{i}$ holds for each $i\in\mathbb{N}$ and, hence, $\complex{K}=\complex{M}$.

In \cite{Akiyama-Thuswaldner:05}  topological properties of $K$ are established. For instance, from \cite[Lemma~6.5]{Akiyama-Thuswaldner:05} we gain that each 2-fold intersection, {\it i.e.}, each set of the form $\cell{S}_K$ with $S\in \complex{K}^{2}$, is an arc. The fact that each 3-fold intersection, {\it i.e.}, each set of the form $\cell{S}_K$ with $S\in \complex{K}^{3}$ is a single point is contained in  \cite[Lemma~6.5]{Akiyama-Thuswaldner:05}\footnote{Observe that in \cite{Akiyama-Thuswaldner:05} Knuth's twin-dragon is defined by the matrix $\begin{pmatrix}0&-2\\1&-2\end{pmatrix}$ instead of $\begin{pmatrix}-1&-1\\1&-1\end{pmatrix}$ used in the present paper. It is easy to verify that this yields an attractor that is the same as $K$ apart from an affine transformation which also relates the corresponding tilings.}.
\end{example}

\subsection{Subdivision}\label{sec:subdivision}
Let $T=T(A,\digits)$ be a self-affine $\zn$-tile and $(M,F)$ a model for $T$ satisfying $\complex{T}=\complex{\start}$. We now describe a generalization of the set equations \eqref{setequation} and \eqref{modeldef} to the cells $\cell{S}$ and $\cell{S}_M$ for $S\in\complex{T}$, respectively. For example, if we consider $S=\{s_1, s_2\}$ then a set equation for the cell $\cell{S}$ can be derived from  \eqref{setequation} by
\begin{align*}
\cell{S} &=  (T+s_1) \cap (T+s_2) \\
&= \bigcup_{d_1\in\digits} A^{-1}(T + d_1+ As_1) \cap  \bigcup_{d_2\in\digits} A^{-1}(T + d_2 + As_2)\\
&= \bigcup_{d_1\in\digits}\bigcup_{d_2\in\digits}A^{-1}( (T + d_1 + As_1) \cap (T + d_2 + As_2) ).
\end{align*}
The unions in the last line run over $\digits$ for each element of $S=\{s_1,s_2\}$, {\it i.e.}, they run through the pairs $\digits^S=\digits^{\{s_1,s_2\}}$. The set $\digits^S$ can also be regarded as the set of functions $p: S\to \digits$. Using this interpretation we may write
\begin{align*}
\cell{S} &= \bigcup_{p\in \digits^{\{s_1,s_2\}}}A^{-1}( (T + p(s_1)+ As_1) \cap (T + p(s_2) + As_2) )\\
&=\bigcup_{p\in \digits^{\{s_1,s_2\}}}A^{-1}( (T + (p+A)(s_1)) \cap (T + (p+A)(s_2) ) \\
&= \bigcup_{p\in \digits^{\{s_1,s_2\}}}A^{-1} \cell{ \{ (p+A)(s_1),(p+A)(s_2)  \} } \\
&= \bigcup_{p\in \digits^{S}}A^{-1} \cell{  (p+A)(S)  }.
\end{align*}
As we may confine ourselves to extend the union over all intersections that are nonempty, {\it i.e.}, over all $p$ with $(p+A)(S)\in \complex{T}$, this motivates the following definition.

\begin{definition}[Subdivision operator]\label{def:subdivision} Let $T=T(A,\mathcal{D})$ be a self-affine $\zn$-tile. Then the {\em subdivision operator} $P$ is given by 
\[
P(S) =\{(p+A)(S) \in \complex{T} \mid p \in \digits^S\}\qquad (S\in\complex{T}),
\]
where $\digits^S$ denotes the set of functions from $S$ to $\digits$.
\end{definition}

It is unnecessary to define a similar operator for the model $\start$ since $\complex{T}=\complex{\start}$ and $\Bee(z)=Az$ holds for each $z \in \zn$ which implies that $P(S) =\{(p+F)(S)\in\complex{\start} \mid p \in \digits^S\}$. 

As $P(\{0\})=\{\{d\}\mid d \in \digits\}$, using the operator $P$ the set equations
in \eqref{setequation} and \eqref{modeldef}
become 
\[
A\cell{\{0\}}=\cell{P(\{0\})} \qquad\text{and}\qquad \Bee\cellstar{\{0\}}=\cellstar{P(\{0\})},
\]
respectively.
The terminology {\it subdivision operator} is further sustained by the set equations which we shall prove in Theorems~\ref{th:genset} and~\ref{th:genset2}. 

\subsection{The generalized set equation}\label{sec:set}
Let $T=T(A,\digits)$ be a self-affine $\mathbb{Z}^n$-tile. Using the subdivision operator $P$ from Definition~\ref{def:subdivision} we will now extend the standard notion of set equation \eqref{setequation} to intersections $\cell{S}$ (see also \cite{ST:03} where this is done by using so-called {\it boundary graphs}).   
To this end we need iterates of $P$ which we define inductively by $P^{(1)}(S)=P(S)$ and
\[
P^{(k)}(S) = \{ S'' \in P(S') \mid S' \in P^{(k-1)}(S) \} \qquad (k\ge 2). 
\]

\begin{theorem}[The generalized set equation]\label{th:genset}
Let $T$ be a self-affine $\mathbb{Z}^n$-tile and $S\in\complex{T}$. Then
\begin{equation}\label{sset}
\cell{S} =
A^{-k} \cell{P^{(k)}(S)} \qquad(k\in\mathbb{N}).
\end{equation}
\end{theorem}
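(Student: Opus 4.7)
The plan is to prove the theorem by induction on $k$, with the base case $k=1$ being essentially the motivating computation given just before Definition~\ref{def:subdivision}, generalized from $|S|=2$ to arbitrary $S\in\complex{T}$.

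For the base case, I would start from the defining set equation $T=A^{-1}(T+\digits)$ and write
\begin{equation*}
\cell{S}=\bigcap_{s\in S}(T+s)=\bigcap_{s\in S}\bigcup_{d\in\digits}A^{-1}(T+d+As).
\end{equation*}
The key manipulation is to swap the intersection over $s\in S$ with the unions over $\digits$; a choice of one digit per element of $S$ is exactly a function $p\in\digits^{S}$, so
\begin{equation*}
\cell{S}=\bigcup_{p\in\digits^{S}}A^{-1}\bigcap_{s\in S}\bigl(T+(p+A)(s)\bigr)=\bigcup_{p\in\digits^{S}}A^{-1}\cell{(p+A)(S)}.
\end{equation*}
Since empty intersections contribute nothing, one may restrict the union to those $p$ for which $(p+A)(S)\in\complex{T}$, and by Definition~\ref{def:subdivision} this union is exactly $A^{-1}\cell{P(S)}$. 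This settles $k=1$.

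For the inductive step, I would assume $\cell{S}=A^{-k}\cell{P^{(k)}(S)}$ and expand, using the definition $\cell{\mathcal{C}}=\bigcup_{S'\in\mathcal{C}}\cell{S'}$ together with the base case applied to each $S'\in P^{(k)}(S)$:
\begin{equation*}
A^{-k}\cell{P^{(k)}(S)}
=A^{-k}\bigcup_{S'\in P^{(k)}(S)}\cell{S'}
=A^{-k-1}\bigcup_{S'\in P^{(k)}(S)}\cell{P(S')}.
\end{equation*}
By the inductive definition of $P^{(k+1)}$ in Section~\ref{sec:set}, this last union is precisely $\cell{P^{(k+1)}(S)}$, completing the induction.

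The only step requiring real care is the interchange of intersection and union in the base case; the fact that this produces the set of functions $p\in\digits^{S}$ rather than some more complicated object is what makes the operator $P$ the right formalization. A minor cleanup point is to verify that when one drops the summands with $(p+A)(S)\notin\complex{T}$, one really is only throwing away empty sets — but this is immediate from the definition of $\complex{T}$. No analytic input beyond the original set equation~\eqref{setequation} is needed; the result is purely combinatorial.
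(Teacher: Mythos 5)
Your proof is correct and follows essentially the same route as the paper: the paper proves Theorem~\ref{th:genset2} for a general model by exactly your base-case computation (distributing the intersection over the unions to index by functions $p\in\digits^S$) and then iterates, and it obtains Theorem~\ref{th:genset} as the special case where $(T,A)$ is viewed as a trivial model of itself. Your direct induction on $k$ is just an unpacked version of that same argument, and the one delicate step you flag --- the interchange of $\bigcap_{s\in S}$ with $\bigcup_{d\in\digits}$ --- is unproblematic since $S$ is finite (compactness of $T$ forces any $S$ with $\cell{S}\neq\emptyset$ to have bounded diameter).
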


As $(T,A)$ trivially is a model of $T$ having the same neighboring structure as $T$, Theorem~\ref{th:genset} is a special case of the first equality in Theorem~\ref{th:genset2}. Thus we refrain from proving Theorem~\ref{th:genset} here.

\begin{remark}\label{rem:gifs}
For $S\in\complex{T}$ with $0\in S$ equation \eqref{sset} yields $\cell{S} = \bigcup_{S' \in P(S)} A^{-1}\cell{S'}$. In each $S'$ occurring on the right hand side of this equation choose a fixed element $s' \in S'$. Then it becomes
\begin{equation}\label{sset:explicit}
\cell{S} = \bigcup_{S' \in P(S)} A^{-1}(\cell{S'-s'} + s') 
\end{equation}
and each shifted set $S'-s'$ is an element of $\complex{T}$ containing $0$. To each $S\in \complex{T}$ with $0\in S$ we associate an indeterminate $X_S$ whose range of values is the space of nonempty compact subsets of $\rn$. Using \eqref{sset:explicit}
we define the (finite) collection 
\begin{equation}\label{sset:explicit2}
X_S = \bigcup_{S' \in P(S)} A^{-1}(X_{S'-s'} + s')\qquad (S \in \complex{T},\, 0\in S)
\end{equation}
of set equations which defines a {\it graph directed iterated function system} whose unique solution is given by $X_S=\cell{S}$ for each $S\in \complex{T}$ with $0\in S$. Note that $|S|=|S'|$ holds for each $S'\in P(S)$. Thus, following \cite{ST:03}, for each $i\ge 1$ we define the graph $\Gamma_i$ as follows. The set of nodes of $\Gamma_i$ is given by $\complex{T}^i_0$. Moreover, there is an edge from $S$ to $S''=S'-s'$ labelled by $s'$ if and only if $A^{-1}(X_{S''}+s')$ occurs on the right hand side of \eqref{sset:explicit2}. Using these graphs, 
\eqref{sset:explicit2} becomes
\[
X_S = \bigcup_{S\xrightarrow{s'} S''\in \Gamma_i} A^{-1}(X_{S''} + s')\qquad (S \in \Gamma_i,\, i\in \mathbb{N}).
\]
The algorithmic construction of the graphs $\Gamma_i$ is detailed in \cite{ST:03} (see also \cite[Chapter~3]{Falconer:97} for basic definitions and results on graph directed iterated function systems). The following Example~\ref{ex:kngamma} as well as Figures~\ref{double-graph} and~\ref{triple-graph} in Section~\ref{sec:tame} contain examples of the graphs $\Gamma_2$ and $\Gamma_3$. The graph $\Gamma_1$ is obviously always given by the single node $\{0\}$ with $|\digits|$ self-loops each labeled by a digit $d\in\digits$.
\end{remark}

\begin{example}\label{ex:kngamma}
For Knuth's twin-dragon $K$ the graphs $\Gamma_i$ are known and easy to construct (see {\it e.g.}~\cite{Akiyama-Thuswaldner:05}). \begin{figure}[ht]
\hskip 0.5cm
\xymatrix{
*[o][F-]{\st{10} } \ar[rd]_{0}&&*[o][F-]{\st{\=1\=1} }\ar[ll]_{1}\\
&*[o][F-]{\st{01} }\ar[ur]_{0,1} \ar@/^2ex/[d]^{0}&\\
&*[o][F-]{\st{0\=1} }\ar[ld]_{0,1} \ar@/^2ex/[u]^{1}&\\
*[o][F-]{\st{11} }\ar[rr]_{0}&&*[o][F-]{\st{\=10} }\ar[lu]_{1}
}
\hskip 3cm
\xymatrix{
*[o][F-]{\state{10}{11} } \ar[d]^{0} &*[o][F-]{\state{\=10}{\=1\=1} }  \ar[d]_{1} \\
*[o][F-]{\state{\=10}{01} }  \ar[d]{1} &*[o][F-]{\state{10}{0\=1} }  \ar[d]_{0} \\
*[o][F-]{\state{0\=1}{\=1\=1} }\ar@/^4ex/[uu]^{1} &*[o][F-]{\state{01}{11} } \ar@/_4ex/[uu]_{0} 
}
\caption{The directed graphs $\Gamma_2$ (left) and $\Gamma_3$ (right) for Knuth's twin-dragon $K$. In $\Gamma_2$ the pair $ab$ stands for the node $\{(0,0)^t,(a,b)^t\}$ and $\bar a=-a$. Thus $ab$ corresponds to the nonempty 2-fold intersection $K \cap (K+ (a,b)^t)$. In the graph $\Gamma_3$ a node $a_1b_1\atop a_2b_2$ corresponds to the intersection $K\cap (K+(a_1,b_1)^t) \cap (K+(a_2,b_2)^t)$.
\label{fig:twinGraphs}}
\end{figure}
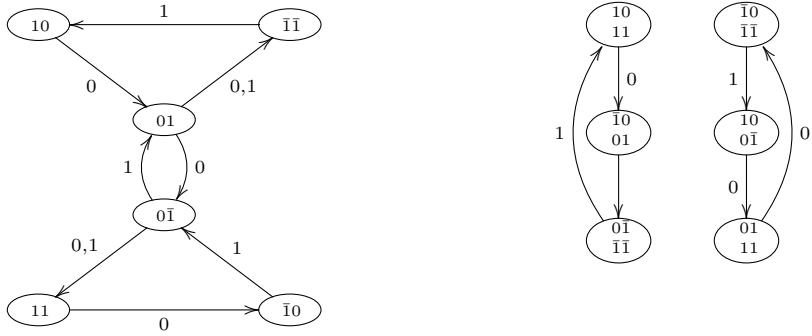
They are empty for $i\ge 4$. For $i=2,3$ they are depicted in Figure~\ref{fig:twinGraphs}. Note that these graphs (together with the trivial graph $\Gamma_1$) verify the formulas for $\complex{K}^{i}$ in Example~\ref{ex:twinNeighbor}. 
\end{example}

\subsection{Walks}
\label{walkz}
Let $\digits_k$ be as in \eqref{digitsk}. Iterating the set equation \eqref{setequation} for $T$ 
yields that
\[
T=\bigcup_{d_1\in \digits} A\inv(T+d_1) =\bigcup_{d_1\in \digits} A\inv \big(\bigcup_{d_2\in \digits} A\inv (T+d_2)+d_1\big)=\cdots.
\]
Note that each term is a subdivision of the previous one, {\it i.e.}, the tile $T$ can be subdivided finer and finer by the collections $\{A^{-k}(T+d) \mid d\in \digits_k\}$, $k\in\mathbb{N}$. As $\digits$ is a complete set of coset representatives of $\zn/A\zn$ each \emph{subtile} $A^{-k}(T+d_k)$, $d_k\in \digits_k$ lies in a unique \emph{ancestor} of the form $A^{-(k-1)}(T+d_{k-1})$, $d_{k-1}\in \digits_{k-1}$ if $k\ge 1$. Thus the set of collections $\{A^{-k}(T+d) \mid d\in \digits_k\}_{k\ge 0}$, and, equivalently, the set of collections $(\digits_k)_{k\ge 0}$ provides natural coordinates for points in $T$. We will call each element $(d_k)_{k\ge 0}\in(\digits_k)_{k\ge 0}$ a \emph{walk} in $T$. Thus each walk corresponds to a nested sequence of subtiles of $T$.

More generally, the set equation in \eqref{sset} gives coordinates for $\cell{S}$ with $S\in\complex{T}$ in a similar way. Indeed, it induces a sequence of subdivisions
\[
\cell{S} = A\inv\cell{P(S)} = A^{-2} \cell{P^{(2)}(S)}=A^{-3} \cell{P^{(3)}(S)}=\cdots.
\]
In analogy to the last paragraph we say that a \emph{walk} in $\cell{S}$ is a sequence $(S_k)$ where  $S_k \in P^{(k)}(S)$ so that $S_{k+1}$ is in $P(S_k)$ or rather $S_k \in P\inv(S_{k+1})$. According to \eqref{sset} this walk yields the nested sequence $(A^{-k}\cell{S_k})_{k\ge 0}$ whose intersection contains a single element of $\cell{S}$.  
As $\digits$ is a complete set of cosets of $\zn/A\zn$, one can check that $S \not= S'$ implies $P(S) \cap P(S') = \emptyset$.  For this reason there is again an \emph{ancestor function} $R$ from $P(S)$ to the set of finite subsets of ${\zn}$ such that $P = R\inv$, {\it i.e.}, $R(S')=S$ if $S'\in P(S)$. This motivates the following precise definitions of walks and their limit points (see Example~\ref{ex:walk} for an illustration).

\begin{definition}[Walks]\label{df:walk}
For $S\in \complex{T}$, the set $W(S)$ of \emph{walks} in $S$ is the inverse limit of the sequence ($P^{(k)}(S))_{k\in \mathbb{N}}$ with bonding map $R:P^{(k)}(S)\to P^{(k-1)}(S)$, \emph{i.e.},
\[
W(S)=\varprojlim_{k\in\mathbb{N}} P^{(k)}(S)  = \left\{ 
w=(a_k)\in\prod_{k\in\mathbb{N}} P^{(k)}(S) \; \mid \; R(a_k)=a_{k-1} \hbox{ for } k\in\mathbb{N}
\right\}.
\]
 Let $\pi_k:W(S)\to P^{(k)}(S)$ be the canonical projection and define $$\stage{w}{k}=A^{-k}\cell{\pi_k(w)}$$ for a walk $w$ and 
$$
\stage{C}{k}=A^{-k}\cell{\pi_k(C)}
$$
for a set of walks $C$.  
\end{definition}

Walks in $W(S)$ are \emph{coordinates} for a nested collection of sets. They can be regarded as \emph{codings} of points in $\cell{S}$. The set $W(S)$ can also be described as the set of infinite walks in a finite graph (see {\em e.g.} \cite{ST:03}); this motivates the terminology \emph{walk}. 

Recall that the image of $P^{(k)}$ is contained in $\complex{T}$ for each $k$ by definition. Thus for a walk $w$, the sequence $(\stage{w}{k})$ is a nested sequence of {\em nonempty} compact sets. The diameter of $\stage{w}{k}$ approaches zero (since $A \inv$ is a contraction) and consequently defines a unique limit point. 

\begin{definition}[Limit points of walks]
Let $S\in\complex{T}$ and $w\in W(S)$ be a walk. The mapping 
$$
w \mapsto \repmap{w}=\bigcap_{k\ge 1} \stage{w}{k} 
$$ 
is a continuous map of the Cantor set $W(S)$ to $\cell{S}$.
If $C$ is a set of walks, we will use the notation 
$$
\repmap{C}=\bigcap_{k\ge 1} C_k.
$$ 
\end{definition}

In particular, set $W=W(\{0\})$ and $\partial W = \bigcup_{s\in\mathbb{Z}^n\setminus\{0\}}W(\{0,s\})$. Theorem~\ref{th:genset} implies
\begin{equation}\label{decompcell}
T =\repmap{W}, \quad
\partial T = \repmap{\partial W}, \quad\hbox{and} \quad
\cell{S} = \repmap{W(S)}.
\end{equation}

We illustrate walks and their limit points by the following simple example.

\begin{example}\label{ex:walk}
Let $A=(2)$ and $\mathcal{D}=\{0,1\}$, hence, $T(A,\mathcal{D}) = [0,1]$. Consider the walk
\[
w=(a_k)=(1\cdot 2^0, 0\cdot 2^0 +1\cdot 2^1,  1\cdot 2^0 +0\cdot 2^1+1\cdot 2^2, 0\cdot 2^0 +1\cdot 2^1+0\cdot 2^2 + 1\cdot 2^3,\ldots)
\]
in $W=W(\{0\})$. This is indeed a walk in $W$ because $P^{k}(\{0\})=\left\{\sum_{i=0}^{k-1} d_i2^i \mid d_i\in \{0,1\}\right\}$ and $R(\sum_{i=0}^{k-1} d_i2^i)=\sum_{i=0}^{k-2} d_{i+1}2^i$ ($P$ adds and $R=P^{-1}$ forgets the first summand), so $R(a_k)=a_{k-1}$ holds as required by the definition of the inverse limit in Definition~\ref{df:walk}. For instance, we have $\pi_3(w)=1\cdot 2^0 +0\cdot 2^1+1\cdot 2^2 =5$ and, hence,
\[
w_3 = A^{-3}\cell{\pi_3(w)} = 2^{-3}(1\cdot 2^0 +0\cdot 2^1+1\cdot 2^2 +[0,1]) = 1\cdot 2^{-3} +0\cdot 2^{-2}+1\cdot 2^{-1} +2^{-3}[0,1].
\]
Thus, $w_3=0.101 + 2^{-3}[0,1]$ (written in binary expansion) is the subtile of the third subdivision of $T=[0,1]$ containing the element with binary expansion $0.10101010\cdots$. The walk $w$ thus can be regarded as nested sequence of $k$-th subdivisions of $T$ converging to the limit point $\repmap{w}=0.10101010\cdots$. So $w$ encodes the point $0.10101010\cdots\in T$.
\end{example}

\subsection{The generalized set equation and walks in a model}\label{sec:setm}
There are many similarities between a self-affine $\mathbb{Z}^n$-tile and its model, particularly if they have the same neighbor structure. We obtain the following analog of Theorem~\ref{th:genset} for models. Recall that $\start_{k}$ is defined in \eqref{effdef}.

\begin{theorem}\label{th:genset2}\label{eq:starset}
Let $(\start,\Bee)$ be a model for the self-affine $\mathbb{Z}^n$-tile $T$ satisfying
$\complex{T}=\complex{\start}$. 
Then for every $S\in \complex{T}$ and every $k\in\mathbb{N}$,
\begin{equation}\label{xstar3}
\Bee^{(k)}\cellstar{S} = \cellstar{P^{(k)}(S)}=A^{k}\cellstark.
\end{equation}
\end{theorem}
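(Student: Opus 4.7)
The plan is to prove the two equalities together by induction on $k$. The first equality $\Bee^{(k)}\cellstar{S} = \cellstar{P^{(k)}(S)}$ is the combinatorial heart of the statement and essentially repeats the informal derivation of $P$ from Section~\ref{sec:subdivision}; the second equality $\cellstar{P^{(k)}(S)} = A^{k}\cellstark$ is then a direct unpacking of the relation between $\start$ and $\start_k$ using Lemma~\ref{efflemma}.

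For the base case $k=1$ of the first equality, I would start from $\cellstar{S} = \bigcap_{s\in S}(\start+s)$ and apply $\Bee$. Because $\Bee$ is a homeomorphism it distributes over intersections, so $\Bee\cellstar{S} = \bigcap_{s\in S}\Bee(\start+s)$. The $\zn$-equivariance of $A\inv\Bee$ together with $\Bee(0)=0$ yields $\Bee(x+z) = \Bee(x) + Az$ for $z\in\zn$; combined with the model equation \eqref{modeldef} this gives $\Bee(\start+s) = \start + \digits + As$. Substituting and distributing the intersection over the union produces
\[
\Bee\cellstar{S} \;=\; \bigcup_{p\in\digits^S}\,\bigcap_{s\in S}\bigl(\start + (p+A)(s)\bigr) \;=\; \bigcup_{p\in\digits^S}\cellstar{(p+A)(S)}.
\]
Those $p\in\digits^S$ with $(p+A)(S)\notin\complex{\start}$ contribute empty sets, so the union reduces to the one indexed by $P(S)$ of Definition~\ref{def:subdivision}, giving $\Bee\cellstar{S}=\cellstar{P(S)}$.

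For the inductive step I would apply $\Bee$ to $\Bee^{(k-1)}\cellstar{S} = \cellstar{P^{(k-1)}(S)} = \bigcup_{S'\in P^{(k-1)}(S)}\cellstar{S'}$; since $\Bee$ is a bijection it commutes with the union, and the base case applied cell by cell yields $\bigcup_{S'\in P^{(k-1)}(S)}\cellstar{P(S')} = \cellstar{P^{(k)}(S)}$ by the recursive definition of $P^{(k)}$.

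For the second equality, Lemma~\ref{efflemma} tells us that $A^{-k}\Bee^{(k)}$ is $\zn$-equivariant and fixes $\zn$, so $\Bee^{(k)}(x+s)=\Bee^{(k)}(x)+A^{k}s$ for $s\in\zn$. By definition $\start_k = A^{-k}\Bee^{(k)}\start$, hence $\Bee^{(k)}\start = A^{k}\start_k$, and therefore $\Bee^{(k)}(\start+s) = A^{k}(\start_k+s)$. Factoring $A^k$ out of the intersection gives $\Bee^{(k)}\cellstar{S} = A^{k}\cellstark$, which closes the chain of equalities. The only technical care needed is in the distributive step and in verifying that the indices discarded by the definition of $P(S)$ coincide exactly with the ones giving empty intersections; this is pure bookkeeping, and I do not anticipate a genuine obstacle.
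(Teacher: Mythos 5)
Your proposal is correct and follows essentially the same route as the paper's proof: the $k=1$ case via $\Bee(x+z)=\Bee(x)+Az$, the model set equation, distributing the intersection over the union and discarding the empty cells using $\complex{T}=\complex{\start}$, then iterating; and the second equality via Lemma~\ref{efflemma} and $\start_k=\eff_k\start$. The only cosmetic difference is that you phrase the iteration as an explicit induction where the paper simply says ``iterating this $k$ times.''
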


\begin{proof}
As $A^{-1}\Bee$ is $\zn$-equivariant and fixes $\zn$ pointwise, we have $\Bee(x+z)=\Bee(x) + Az$ for each $x\in\rn$, $z\in\zn$. 
Together with the set equation \eqref{modeldef} for $\start$ this yields 
\begin{align*}
\Bee \cell{S}_\start&=\bigcap_{s\in S}\Bee(\start+s) = \bigcap_{s\in S}(\Bee(\start)+As)  \\
&=\bigcap_{s\in S} (\start+\digits+As) =\bigcap_{s\in S}\bigcup_{d\in\digits}(\start+d+As)\\
&=\bigcup_{p\in \digits^S}\bigcap_{s\in S}(\start + p(s)+As)=\bigcup_{p\in \digits^S}\cell{(p+A)(S)}_M.
\end{align*}
Since $\complex{T}=\complex{\start}$, this subdivision  is again governed by the function $P$ and we arrive at 
\[
\Bee \cell{S}_\start = \bigcup_{S' \in P(S)} \cell{S'}_M.
\] 
Iterating this for $k$ times proves the first equality. To prove the second one recall that ${{\eff}_k}= A^{-k}{\Bee}^{(k)}$ and $\start_k=\eff_k \start$.   By Lemma~\ref{efflemma} we have
\begin{equation}\label{qkS}
\eff_k\cell{S}_\start = \bigcap_{s\in S}\eff_k(\start+s) = \bigcap_{s\in S}(\eff_k(\start)+s) = \bigcap_{s\in S}(\start_k+s) = \cell{S}_{\start_k}
\end{equation}
and, hence, $\Bee^{(k)}\cellstar{S} =A^{k}\cellstark$.
\end{proof}

Let $(\start,\Bee)$ be a model of a self-affine $\zn$-tile $T$ satisfying $\complex{\start}=\complex{T}$. For $S\in\complex{\start}$ to a walk $w\in W(S)$ we associate the nested collection
\[
\stagestar{w}{k}= (\Bee\inv)^{(k)}\cellstar{\pi_k(w)} 
\] 
and, for a set $C\subset W(S)$, we define
$$
\stagestar{C}{k}=(\Bee\inv)^{(k)}\cellstar{\pi_k(C)}.
$$
Moreover, we set
$$
\limptstar{w}=\bigcap_{k\ge 1} \stagestar{w}{k}.
$$ 
Note that $\limptstar{w}$ may contain {\em more than one point} as $\Bee^{-1}$ is not necessarily a contraction. This is an important difference between a self-affine $\mathbb{Z}^n$-tile and its model. However, as $\complex{\start}=\complex{T}$, the definition of a walk assures that $\limptstar{w}$ cannot be empty. Again, this definition extends to sets $C$ of walks by setting $\repmapf{C}=\bigcap_{k\ge 1} \stagestar{C}{k}$.
Using this notation we obtain from Theorem~\ref{th:genset2} that
\begin{equation}\label{stardecomp}
\start=\repmapf{W},\quad
\partial \start = \repmapf{\partial W},\quad\hbox{and}\quad
\cellstar{S} =\repmapf{W(S)}.
\end{equation}

\section{Monotone models for self-affine $\mathbb{Z}^n$-tiles}\label{sec:prop}

In this section we investigate mapping properties of the canonical quotient map $\canonical$ under the condition that $(\start,\Bee)$ is a so-called \emph{monotone model} for a self-affine $\zn$-tile $T$ (see Definition~\ref{def:model2}). 

\subsection{Monotone models and canonical quotients of cells}\label{sec:quotientcells} The following proposition gives results on images of certain sets under $\canonical$. Recall that the model $M_k$ is defined in \eqref{effdef}.

\begin{proposition}\label{imageLemma}
If $(\start,\Bee)$ is a model for the self-affine $\zn$-tile $T$ with $\complex{M}=\complex{T}$ then the following assertions hold.
\begin{enumerate}
\item[(i)] The sequence $\cellstark$ converges to $\cell{S}$ in the Hausdorff metric $\mathbf{D}$ for each  $S\in\complex{T}$.
\item[(ii)] $ \canonical{\cellstar{S}}=\cell{S}$ holds for each $S\in\complex{T}$.
\item[(iii)] If $S\in\complex{T}$ then for each $w\in W(S)$ we have $ \canonical(\limptstar{w}) =  \limpt{w}$.
\end{enumerate}
\end{proposition}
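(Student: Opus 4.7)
The plan is to prove the three assertions in reverse order: (iii) first, then to use (iii) as the engine for (ii), and finally to derive (i) from (ii).

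For (iii), fix $w\in W(S)$ and $x\in \limptstar{w}$. By definition $F^{(k)}(x)\in \cell{\pi_k(w)}_\start$, so $F^{(k)}(x)-s\in \start$ for every $s\in \pi_k(w)$. Applying $Q$, using $Q(\start)=T$ from Theorem~\ref{hprop} together with $\zn$-equivariance, and iterating Lemma~\ref{hproplem} to rewrite $Q F^{(k)}=A^k Q$, I obtain $A^k Q(x)-s\in T$ for every $s\in\pi_k(w)$, i.e.\ $Q(x)\in A^{-k}\cell{\pi_k(w)}=\stage{w}{k}$. Intersecting over $k$ gives $Q(x)=\limpt{w}$, establishing $Q(\limptstar{w})\subseteq \limpt{w}$. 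To upgrade this to equality I need $\limptstar{w}\neq\emptyset$: the sets $(F\inv)^{(k)}\cell{\pi_k(w)}_\start$ form a nested sequence (using the intermediate identity $F\cell{S'}_\start=\bigcup_{S''\in P(S')}\cell{S''}_\start$ derived in the proof of Theorem~\ref{th:genset2} and the walk condition $\pi_{k+1}(w)\in P(\pi_k(w))$) of compact sets which are nonempty because $\pi_k(w)\in P^{(k)}(S)\subset\complex{T}=\complex{\start}$. Cantor's intersection theorem then closes the argument.

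For (ii) I would use the walk decompositions \eqref{stardecomp} and \eqref{decompcell}, which give $\cell{S}_\start=\bigcup_{w\in W(S)}\limptstar{w}$ and $\cell{S}=\bigcup_{w\in W(S)}\limpt{w}$, and apply (iii) term by term, distributing $Q$ over the union. For (i), the bridge is the identity $\cellstark=q_k(\cellstar{S})$ already recorded in the computation \eqref{qkS}; combining it with the uniform convergence $q_k\to Q$ on all of $\rn$ (which follows from \eqref{eq:Quniform} by letting $k\to\infty$) and with the compactness of $\cellstar{S}$ yields $\cellstark\to Q(\cellstar{S})$ in the Hausdorff metric. By (ii) this limit equals $\cell{S}$.

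The main obstacle is the nonemptiness step in (iii). The paper explicitly warns that $F\inv$ need not be a contraction, so the usual diameter-goes-to-zero argument that delivers a unique limit point for $\limpt{w}$ is unavailable here, and $\limptstar{w}$ can be a genuine continuum of positive diameter; this is also the very reason that (iii) must be phrased as a set equality rather than an equality of points. Consequently one cannot avoid invoking the purely topological Cantor compactness fact, and the bookkeeping with $F$, $P$, and the walk condition that guarantees monotonicity of the nested sequence is the one nontrivial step that must be performed with care.
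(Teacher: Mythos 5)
Your proof is correct, but it inverts the paper's logical order and runs on a different engine. The paper proves (i) first and directly: using the generalized set equations (Theorems~\ref{th:genset} and~\ref{th:genset2}) it writes $\cell{S}$ and $\cellstark$ as unions of pieces $A^{-k}\cell{S'}$ and $A^{-k}\cellstar{S'}$ over the same index set $P^{(k)}(S)$, and notes that each corresponding pair lies in a common ball of diameter $\varepsilon$ around $A^{-k}s$ because $A$ is expanding; then (ii) follows from (i) by exactly the bridge you use in the opposite direction ($\eff_k\cellstar{S}=\cellstark$ plus uniform convergence $\eff_k\to\canonical$), and (iii) follows from (ii) via $A\canonical=\canonical F$. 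You instead prove (iii) first by a pointwise computation from $\zn$-equivariance and Lemma~\ref{hproplem}; this is clean, and you correctly isolate the essential ingredient $\limptstar{w}\neq\emptyset$, which the paper itself asserts in Section~\ref{sec:setm} by the same nested-compact-sets reasoning. The one step you should justify more carefully is the passage from (iii) to (ii): equation \eqref{stardecomp} literally states $\cellstar{S}=\bigcap_{k}\stagestar{W(S)}{k}$, an intersection of unions, whereas you need the union of intersections $\cellstar{S}=\bigcup_{w\in W(S)}\limptstar{w}$. The inclusion $\supseteq$ is trivial, but $\subseteq$ requires a K\"onig's-lemma argument (a point lying in $(F^{-1})^{(k)}\cellstar{S_k}$ for some $S_k\in P^{(k)}(S)$ at every level lies on an infinite coherent branch of the finitely branching ancestor tree); the paper carries out precisely this argument later, in Lemma~\ref{walkextension}, for boundary stars. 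With that supplement your route is complete; it trades the paper's Hausdorff-metric estimate for a walk-by-walk argument that makes more transparent why $\complex{M}=\complex{T}$ is the operative hypothesis.
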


\begin{proof}
To prove (i) let $\varepsilon >0$ be arbitrary. Since $A$ is expanding and both $T$ and $\start$ are compact we may choose $k_0 \in \mathbb{N}$ in a way that $A^{-k}T$ and $A^{-k}\start$ are contained in a ball of diameter $\varepsilon$ around the origin for each $k\ge k_0$. Using Theorems~\ref{th:genset} and~\ref{th:genset2} we get
\begin{align*}
\mathbf{D}(\cell{S},\cell{S}_{M_k})& =
\mathbf{D}\Big(\bigcup_{S'\in P^{(k)}(S)} A^{-k}\cell{S'} ,      \bigcup_{S'\in P^{(k)}(S)} A^{-k}\cell{S'}_\start \Big)\\
&\le    \max\{ \mathbf{D}( A^{-k}\cell{S'} , A^{-k}\cell{S'}_\start)  \mid  S'\in P^{(k)}(S)\}.
\end{align*}
Note that $A^{-k}\cell{S'} \subset A^{-k}(T+s)$ and $A^{-k}\cell{S'}_\start \subset A^{-k}(\start+s)$ for each $S'\in \complex{T}$ and each $s \in S'$. Thus $A^{-k}\cell{S'}$ as well as $A^{-k}\cell{S'}_\start$ is contained in a ball of diameter $\varepsilon$ around $A^{-k}s$ implying that $\mathbf{D}( A^{-k}\cell{S'} , A^{-k}\cell{S'}_\start) < \varepsilon$ for each $S'\in \complex{T}$. Thus $\mathbf{D}(\cell{S},\cell{S}_{M_k})<\varepsilon$ and (i) is proved.

By \eqref{qkS} we have $\eff_k\cell{S}_\start = \cell{S}_{\start_k}$. Thus, as $\canonical=\lim_{k\to\infty}\eff_k$, assertion (ii) follows from (i).

To prove (iii), using Lemma~\ref{hproplem} and (ii) 
we derive
\[
 \canonical(\stagestar{w}{k})  = \canonical((\Bee\inv)^{(k)} \cellstar{\pi_k(w)}) 
= A^{-k} \canonical(\cellstar{\pi_k(w)}) 
= A^{-k}  \cell{\pi_k(w)} = \stage{w}{k}.
\]
Now (iii) follows from
\[
\canonical(\limptstar{w})
= \canonical\left(  \lim_{k\to\infty}\stagestar{w}{k} \right)
= \lim_{k\to\infty}  \canonical(\stagestar{w}{k}) 
= \lim_{k\to\infty}   w_k  
= \limpt{w}.\qedhere
\]
\end{proof}

We will now use the sets $\limptstar{w}$ to study preimages of $\canonical$.

\begin{lemma}\label{nonemptyintersection}
If $(\start,F)$ is a model for $T$ with $\complex{M}=\complex{T}$, for any 
nonempty set of walks $C$
\[
\bigcap_{w\in C} \limptstar{w} =\emptyset \quad\Longleftrightarrow\quad \bigcap_{w\in C} \limpt{w} =\emptyset \quad\Longleftrightarrow\quad |\limpt{C}|>1.
\]
\end{lemma}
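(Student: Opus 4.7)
The plan is to prove the two equivalences in turn, starting with the second one, $\bigcap_{w\in C}\limpt{w}=\emptyset \Longleftrightarrow |\limpt{C}|>1$, which lives purely on the tile side. Since $||A\inv||<1$, the diameter of $w_k=A^{-k}\cell{\pi_k(w)}$ is at most $||A\inv||^k\operatorname{diam}(T)$, so each $\limpt{w}$ collapses to a single point $x_w$. Because $x_w\in w_k\subseteq C_k$ for every $k$, we have $\{x_w:w\in C\}\subseteq\limpt{C}$, and two distinct $x_w$ immediately give $|\limpt{C}|\ge 2$. Conversely, if all $x_w$ equal a common point $x$, then any $y\in\limpt{C}$ lies in some $(w^{(k)})_k$ which also contains $x$, forcing $||y-x||\le ||A\inv||^k\operatorname{diam}(T)\to 0$, so $\limpt{C}=\{x\}$.

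For the first equivalence, the direction $\bigcap_w\limpt{w}=\emptyset\Rightarrow\bigcap_w\limptstar{w}=\emptyset$ is cheap: Proposition~\ref{imageLemma}(iii) gives $\canonical(\bigcap_w\limptstar{w})\subseteq\bigcap_w\canonical(\limptstar{w})=\bigcap_w\limpt{w}=\emptyset$. For the converse I would fix some $x\in\bigcap_w\limpt{w}$ and build a point of $\bigcap_w\limptstar{w}$ by a nested-intersection argument. Since $(F\inv)^{(k)}$ is a homeomorphism it commutes with intersections, so
\[
\bigcap_{w\in C}\stagestar{w}{k}=(F\inv)^{(k)}\bigcap_{w\in C}\cellstar{\pi_k(w)}=(F\inv)^{(k)}\cellstar{\textstyle\bigcup_{w\in C}\pi_k(w)}.
\]
The key observation is that $A^kx\in\cell{\pi_k(w)}$ for every $w\in C$ (because $x\in w_k$), hence $A^kx\in\cell{\bigcup_{w\in C}\pi_k(w)}$; this witnesses $\bigcup_{w\in C}\pi_k(w)\in\complex{T}$, and the hypothesis $\complex{T}=\complex{\start}$ transfers this nonemptyness to $\cellstar{\bigcup_{w\in C}\pi_k(w)}$. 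Thus each $\bigcap_{w\in C}\stagestar{w}{k}$ is compact, nonempty, and nested in $k$, and the finite intersection property yields $\bigcap_{w\in C}\limptstar{w}\ne\emptyset$.

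The main obstacle is this last $(\Rightarrow)$ direction, because $F\inv$ is not assumed to be a contraction and the fibres $\limptstar{w}$ may be nondegenerate, so one cannot import the diameter-shrinking argument from the tile side. The essential device is to reroute the existence argument through the combinatorial identification $\complex{\start}=\complex{T}$: the index set $\bigcup_{w\in C}\pi_k(w)$ which certifies a nonempty cell in $T$ automatically certifies a nonempty cell in $\start$, and a standard compactness step then closes the gap.
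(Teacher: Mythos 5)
Your proof is correct and follows essentially the same route as the paper's: the crucial direction is handled by transferring nonemptiness of the stage-$k$ intersections between tile and model via $\complex{M}=\complex{T}$ (applied to the union $\bigcup_{w\in C}\pi_k(w)$ of index sets) and then invoking nested compactness, and the second equivalence rests on the fact that each $\limpt{w}$ is a singleton. The only cosmetic difference is that you dispatch one direction of the first equivalence through $Q$ and Proposition~\ref{imageLemma}~(iii), whereas the paper first reduces to finite subsets of $C$ by the finite intersection property and then runs the same chain of stage-$k$ equivalences symmetrically in both directions.
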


\begin{proof}
As the sets $\limptstar{w}$ and $\limpt{w}$, $w\in C$, are compact it suffices to check the first equivalence for each finite subset $C'$ of $C$ by the finite intersection property for compact sets. Assume that  $\bigcap_{w\in C'}\limpt{w} \neq\emptyset$. As $\big( \bigcap_{w\in C'}\stage{w}{k}\big)_{k\in\mathbb{N}}$ is a nested sequence of compact sets this is equivalent to 
$\bigcap_{w\in C'} \stage{w}{k} \neq\emptyset$ for each $k\in\mathbb{N}$.
Because $\complex{\start}=\complex{T}$, this is in turn equivalent to
$\bigcap_{w\in C'} \stagestar{w}{k} \neq \emptyset$ 
for each $k\in\mathbb{N}$. As the sequence $\big( \bigcap_{w\in C'}\stagestar{w}{k}\big)_{k\in\mathbb{N}}$ is a nested sequence of compact sets, this is finally equivalent to $\bigcap_{w\in C'} \limptstar{w} \neq\emptyset$, proving the first equivalence. As for the second one note that the limit point $\limpt{w}$ is a singleton for each $w\in \limpt{C}$. Thus $\limpt{C}$ contains more than one element if and only if there exist $w,w'\in C$ having disjoint limit points which is equivalent to $\bigcap_{w\in C} \limpt{w} =\emptyset$. 
\end{proof}

We now refine the notion of model so that it preserves the neighbor structure of the modeled self-affine $\zn$-tile $T$ with connected intersections. Proposition~\ref{imageLemma} shows that for a model $\start$ having the same neighbor structure\footnote{Compare this with the definition of {\em respecting intersections} given in~\cite[Definition~1.2.5]{Rudnik:92}. } as $T$, the canonical quotient map $\canonical$ preserves intersections: $\canonical \cellstar{S}=\cell{S}$ for each $S\in\complex{T}$.  The following notion of {\em monotone model} enables us to say more about the point preimages of the canonical quotient map $\canonical$. 

\begin{definition}[Monotone model]\label{def:model2}
A model $(\start,F)$ is a {\em monotone model} for the self-affine $\zn$-tile $T$ if $\complex{T}=\complex{\start}$ and $\cellstar{S}$ is connected for each $S\subset \zn$. 
\end{definition}

For Knuth's twin-dragon $K$, we know from Example~\ref{ex:twinNeighbor} that the model $M$ given there satisfies $\complex{K}=\complex{\start}$. As all the sets $\cell{S}_\start$ are connected, $M$ is a monotone model of $K$.

Recall that a quotient is \emph{monotone} if point preimages of the quotient map are connected. 

\begin{theorem}\label{upperthm2}\label{all:connectedness}
Suppose that $(\start,F)$ is a monotone model for the self-affine $\zn$-tile $T$. 
\begin{itemize}
\item[(i)] $Q\cellstar{S} = \cell{S}$ is a monotone canonical quotient of ${\cellstar{S}}$ for each $S \in\complex{T}$. 
\item[(ii)] $Q\cellstar{\delta S} = \cell{\delta S}$ is a monotone canonical quotient of ${\cellstar{\delta S}}$ for each $S\in\complex{T}$. 
\item[(iii)] $Q\partial \start = \partial T$ is a monotone canonical quotient of $\partial \start$.
\end{itemize}
\end{theorem}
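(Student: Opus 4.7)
The image identities $Q\cellstar{S}=\cell{S}$, $Q\cellstar{\delta S}=\cell{\delta S}$, and $Q\partial \start = \partial T$ are already in hand: the first is Proposition~\ref{imageLemma}(ii), the second follows by taking unions over $S'\in\delta S$, and the third is Theorem~\ref{upperthm}(ii). All the work lies in showing that the relevant point preimages of $Q$ are connected.

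\emph{Part (i).} Fix $x\in\cell{S}$, set $C(x)=\{w\in W(S): \limpt{w}=x\}$ and $C_k(x)=\pi_k(C(x))$. The plan is to identify the fiber as a nested intersection of connected compact sets. First, $U_k:=\bigcup_{S'\in C_k(x)}S'$ lies in $\complex{T}=\complex{\start}$: indeed $x\in A^{-k}\cell{S'}$ for every $S'\in C_k(x)$, so $A^{k}x\in\cell{U_k}$. Monotonicity of the model therefore yields that $\cellstar{U_k}=\bigcap_{S'\in C_k(x)}\cellstar{S'}$ is a nonempty connected set contained in every $\cellstar{S'}$ with $S'\in C_k(x)$; since each such $\cellstar{S'}$ is itself connected, the union $\bigcup_{S'\in C_k(x)}\cellstar{S'}$ is connected. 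Applying the homeomorphism $(\Bee^{-1})^{(k)}$ makes $W_k:=\stagestar{C(x)}{k}$ connected, and the containment $(\Bee^{-1})^{(k)}\cellstar{S'}\subset (\Bee^{-1})^{(k-1)}\cellstar{R(S')}$ (derived from Theorem~\ref{eq:starset}) shows that $(W_k)_k$ is a decreasing sequence.

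The remaining step is to establish
\[
Q^{-1}(x)\cap\cellstar{S} \;=\; \bigcap_{k\ge 1} W_k.
\]
The inclusion $\subset$ follows from Proposition~\ref{imageLemma}(iii) together with the decomposition $\cellstar{S}=\repmapf{W(S)}$ from \eqref{stardecomp}. For $\supset$, given $y\in\bigcap_k W_k$ the finite sets
\[
E_k \;=\; \{S'\in C_k(x) : y\in (\Bee^{-1})^{(k)}\cellstar{S'}\}
\]
are nonempty, and $R\colon E_k\to E_{k-1}$ by the same nesting observation; the inverse limit of a system of nonempty finite sets is nonempty, so there exists a walk $w$ with $\pi_k(w)\in E_k$ for all $k$. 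Then $y\in\limptstar{w}$, and $\pi_k(w)\in C_k(x)$ for every $k$ forces $x\in\limpt{w}$, hence $w\in C(x)$ and $Q(y)=x$. Once this equality is in place, the fiber is a nested intersection of compact connected subsets of a Hausdorff space and is therefore connected.

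\emph{Part (ii).} Since $Q\cellstar{S'}=\cell{S'}$, the only nonempty contributions to $Q^{-1}(x)\cap\cellstar{\delta S}=\bigcup_{S'\in\delta S}\bigl(Q^{-1}(x)\cap\cellstar{S'}\bigr)$ come from $S'\in T(x):=\{S'\in\delta S : x\in\cell{S'}\}$. Each summand is connected by (i). Setting $S^{*}=\bigcup_{S'\in T(x)}S'$, the inclusion $x\in\cell{S^{*}}$ shows $S^{*}\in\complex{T}$, and Proposition~\ref{imageLemma}(ii) makes $Q^{-1}(x)\cap\cellstar{S^{*}}$ nonempty; since $\cellstar{S^{*}}\subset\cellstar{S'}$ for every $S'\in T(x)$, this nonempty set is common to all the summands, so the union is connected. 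For \emph{Part (iii)}, note $\partial\start=\cellstar{\delta\{0\}}$ and $\partial T=\cell{\delta\{0\}}$, so (iii) is just (ii) with $S=\{0\}$.

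The main obstacle is the equality $Q^{-1}(x)\cap\cellstar{S}=\bigcap_k W_k$; the delicate direction is $\supset$, where local choices of cells containing $y$ at each scale must be assembled into a single walk converging to $x$, which is exactly what the K\"onig-type compactness argument provides.
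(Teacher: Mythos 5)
Your proof is correct, and for the heart of the matter --- connectedness of the point preimages in (i) --- it takes a genuinely different route from the paper. The paper decomposes the fiber as the union $\bigcup_{w'}\limptstar{w'}$ over all walks $w'$ with $\limpt{w'}=x$, notes that each $\limptstar{w'}$ is a nested intersection of connected cells (hence connected), and invokes Lemma~\ref{nonemptyintersection} to conclude that these sets pairwise share a common point, so their union is connected. You instead exhibit the fiber as a \emph{nested intersection} $\bigcap_k W_k$ of compact connected sets, proving connectedness of each $W_k$ by the nice observation that the union simplex $U_k=\bigcup_{S'\in C_k(x)}S'$ again lies in $\complex{T}=\complex{\start}$ (because $A^kx\in\cell{U_k}$), so that monotonicity supplies the nonempty connected set $\cellstar{U_k}$ common to all the level-$k$ cells. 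The price is the inverse-limit (K\"onig) argument needed to identify the fiber with $\bigcap_k W_k$; the gain is that you never need Lemma~\ref{nonemptyintersection} --- in effect you re-derive its content from compactness of the walk space. (Your inclusion ``$\subset$'' is stated tersely, but it uses exactly the same threading argument you spell out for ``$\supset$'', so there is no gap.) In (ii) you again diverge: where the paper repeats the union-over-walks computation, you glue the fibers over the faces $S'\in\delta S$ at a point of $\cellstar{S^{*}}$ for the union simplex $S^{*}$, which is clean and correct; (iii) is handled identically in both proofs via $\partial\start=\cellstar{\delta\{0\}}$. Both approaches rest on the same two pillars --- $\complex{T}=\complex{\start}$ and connectedness of \emph{all} cells $\cellstar{S}$ --- so neither is more general, but yours makes the role of the ``union simplex'' in the monotonicity hypothesis more visible.
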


\begin{proof}
By Proposition~\ref{imageLemma}~(ii) we have $Q|_{\cellstar{S}}\cellstar{S}= \canonical\cellstar{S}=\cell{S}$. To show (i) we thus have to prove that $(\canonical|_{\cellstar{S}})\inv(x)$ is connected for each $x\in \cell{S}$. Let $x\in\cell{S}$ be fixed and choose $w\in W(S)$ with $x=\limpt{w}$. Proposition~\ref{imageLemma}~(iii) implies that
\begin{equation*}
(\canonical|_{\cellstar{S}})\inv(x)=\bigcup_{\substack{w' \in W(S)\\\limpt{w'} = \limpt{w} } } \limptstar{w'}.
\end{equation*}
By Lemma~\ref{nonemptyintersection} all the sets $\limptstar{w'}$ in the union on the right share a common point. It therefore remains to prove that $\limptstar{w'}$ is connected for each $w'\in W(S)$. However, since $\start$ is a monotone model for $T$, the cell $\cellstar{S'}$ is connected for each $S'\subset\mathbb{Z}^n$. Thus the set $\limptstar{w'}$ is a nested intersection of the connected sets $\stagestar{w'}{k}$ and therefore itself connected.

To prove (ii) first observe that 
\[
Q|_{\cellstar{\delta S}}\cellstar{\delta S}=
\canonical \cellstar{\delta S} = \bigcup_{S'\in\delta S }\canonical\cellstar{S'} = 
\bigcup_{s\in\mathbb{Z}^n\setminus S}\canonical\cellstar{S\cup\{s\}} =
\bigcup_{s\in\mathbb{Z}^n\setminus S}\cell{S\cup\{s\}} = \cell{\delta S}.
\]
Choose $x \in \cell{\delta S}$. Then, by Proposition~\ref{imageLemma}~(iii) we gain by choosing $w\in W(S)$ with $x=w_T$
\[
(\canonical|_{\cellstar{\delta S}})\inv(x)=
\bigcup_{s \in \zn\setminus S}(\canonical|_{\cellstar{S \cup \{s\} }})\inv(x)=
\bigcup_{s \in \zn\setminus S}\;\;\bigcup_{\substack{w' \in W(S \cup \{ s \})\\\limpt{w'} = \limpt{w} } } \limptstar{w'}
\]
and everything runs exactly as in (i). 

To show (iii) note that by \eqref{deltapartial} and \eqref{deltapartial2} we have $\partial T=\cell{\delta\{0\}}$ and $\partial M=\cellstar{\delta\{0\}}$. Thus (iii) is an immediate consequence of (ii).
\end{proof}

\subsection{Canonical quotients of boundaries} We saw in Theorem~\ref{upperthm} that $Q\partial \start = \partial T$ holds for a model $M$ of the self-affine $\zn$-tile $T$. Under certain conditions $\canonical$ behaves nicely also for boundaries of cells $\cell{S}$ with $|S|\ge 2$. To make this precise let $M$ be a $\zn$-tile and
set 
\begin{equation*}
\complexzero{M}^{i} := \{ S \in \complex{M}^i \mid 0\in S \}
\end{equation*} 
for the set of all $i$-cells of $\complex{M}$ containing $0$. Assume that the (dual) geometric realization $G_i(M):=\cell{\complexzero{M}^{i}}_M$ of $\complexzero{M}^{i}$ carries the subspace topology inherited from $\rn$ and, for $i\ge 2$, denote by $\partial_{i}=\partial_{i,M}$ (we omit the index $M$ as it is clear from the context) the boundary relative to $G_i(M)$, \emph{i.e.}, for $X\subset G_i(M)$ we have
\[
\partial_{i,M}X = \overline{X} \cap \overline{G_i(M)\setminus X}.
\]

\begin{example}\label{ex:relBd}
For Knuth's twin-dragon $K$, the set $G_2(K)$ is the union of all nonempty sets $\cell{\{(0,0)^t,s\}}_K$, where $s$ is one of the six nonzero translates depicted in  Figure~\ref{fig:twinNeighbors}. Thus $G_2(K)$ is equal to $\partial K$. We equip $\partial K$ with the subspace topology induced by the standard topology of $\mathbb{R}^2$. Then $\partial_{2}=\partial_{2,K}$ is the boundary operator w.r.t.\ this topology. For instance, the boundary of the wiggly line $\cell{\{(0,0)^t,(1,0)^t\}}_K$ which is an arc (see Example~\ref{ex:twinNeighbor}) is given by the two cells  $\cell{\{(0,0)^t,(1,0)^t,(1,1)^t\}}_K$ and $\cell{\{(0,0)^t,(1,0)^t,(0,-1)^t\}}_K$, which are single points ({\it cf.}~again Example~\ref{ex:twinNeighbor}). This also shows that in this instance we have $\partial_{2}\cell{\{(0,0)^t,(1,0)^t\}}_K=\cell{\delta \{(0,0)^t,(1,0)^t\}}_K$. 
\end{example}

The following definition is motivated by this last property, which will enable us to derive results on the behaviour of $Q$ with respect to these boundary operators.

\begin{definition}[Combinatorial tile]
Let $M$ be a $\mathbb{Z}^n$-tile. We say that $M$ is \emph{combinatorial} if $\partial_{i}\cell{S}_M=\cell{{\delta}S}_M$ for all $S \in \complexzero{M}^{i}$ and all $i \ge 2$.  
\end{definition}

The property of being combinatorial is a general position property. This is illustrated by the following easy example of a tile that is not combinatorial.

\begin{example}
Let $A=2I=\mathrm{diag}(2,2)$ be a $2\times 2$ diagonal matrix and $\mathcal{D}=\{(i,j)^t \,\mid\, 0\le i,j\le 1\}$. Then $T(A,\mathcal{D})=[0,1]^2$, the unit square, is not combinatorial. Indeed, the problem comes from the fact that 3-fold intersections are equal to 4-fold intersections. To make this precise choose $S=\{(0,0)^t,(0,1)^t,(1,0)^t\}$. Then $\cell{S}=\{(1,1)^t\}$ and thus $\partial_3 \cell{S} = \emptyset$. On the other hand $\cell{\delta S} = \cell{S \cup \{(1,1)^t\}} = \{(1,1)^t\}$. This implies that $\partial_3 \cell{S} \not=\cell{\delta S}$ for this choice of $S$. 
\end{example}
 
We can show the following result. 

\begin{theorem}\label{tilingcomplex}
Let $(\start,F)$ be a monotone model for a self-affine $\mathbb{Z}^n$-tile $T$, $i\ge 2$, and $S\in\complexzero{T}^i$.
\begin{enumerate}
\item[(i)] If $\start$ is combinatorial, then $\partial_{i} \cell{S} \subset Q \partial_{i} \cell{S}_M$.
\item[(ii)] If $T$ is combinatorial, then $Q \partial_{i} \cell{S}_M\subset \partial_{i} \cell{S} $.
\end{enumerate}
Thus, if both, $\start$ and $T$ are combinatorial we have that $Q \partial_{i} \cell{S}_M  = \partial_{i}Q \cell{S}_M$.
\end{theorem}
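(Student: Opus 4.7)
The approach is to reduce both parts of the theorem to a single tile-independent topological observation, together with the identity $Q\cellstar{\delta S}=\cell{\delta S}$, which is already established in the proof of Theorem~\ref{upperthm2}(ii) from $\complex{\start}=\complex{T}$ and Proposition~\ref{imageLemma}(ii). The plan is to prove the key inclusion
\[
\partial_i \cell{S}_X \subset \cell{\delta S}_X
\]
for every $\zn$-tile $X$ and every $S\in\complexzero{X}^i$, and then to combine it with the combinatorial hypothesis on the appropriate side.

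For the key inclusion, I would use that since $X$ is bounded, there are only finitely many $s\in\zn$ with $(X+s)\cap X\ne\emptyset$, so the set $\complexzero{X}^i$ is finite and both $G_i(X)$ and $\bigcup_{S''\in\complexzero{X}^i,\,S''\ne S}\cell{S''}_X$ are finite unions of closed sets, hence closed. If $x\in\partial_i\cell{S}_X$, then $x\in \cell{S}_X$ and $x\in\overline{G_i(X)\setminus\cell{S}_X}$. Since $G_i(X)\setminus \cell{S}_X\subset \bigcup_{S''\ne S}\cell{S''}_X$, the closedness of the right-hand side yields $x\in \cell{S''}_X$ for some $S''\in\complexzero{X}^i$ with $S''\ne S$. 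Thus $x\in \cell{S}_X\cap\cell{S''}_X=\cell{S\cup S''}_X$. Because $|S|=|S''|=i$ and $S''\ne S$, there is $s\in S''\setminus S$; the inclusion $S\cup\{s\}\subset S\cup S''$ gives $\cell{S\cup S''}_X\subset\cell{S\cup\{s\}}_X$, and $S\cup\{s\}\in\delta S$ by definition of the simplicial boundary operator, so $x\in\cell{\delta S}_X$.

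Assertion (i) then follows by using combinatoriality of $\start$ to rewrite $\partial_i\cell{S}_\start=\cell{\delta S}_\start$ and applying $Q\cellstar{\delta S}=\cell{\delta S}$: one obtains $Q\partial_i\cell{S}_\start=\cell{\delta S}$, and the key inclusion applied to $X=T$ provides $\partial_i\cell{S}\subset\cell{\delta S}=Q\partial_i\cell{S}_\start$. Assertion (ii) is symmetric: the key inclusion applied to $X=\start$ gives $\partial_i\cell{S}_\start\subset\cell{\delta S}_\start$, hence $Q\partial_i\cell{S}_\start\subset Q\cell{\delta S}_\start=\cell{\delta S}$, and combinatoriality of $T$ identifies $\cell{\delta S}$ with $\partial_i\cell{S}$. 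The final displayed equality is the combination of (i) and (ii), together with the identification $\partial_i Q\cell{S}_\start=\partial_i\cell{S}$ coming from $Q\cell{S}_\start=\cell{S}$ in Proposition~\ref{imageLemma}(ii). The only nontrivial step is the key inclusion above; the rest is routine bookkeeping, and I expect the main care to go into verifying that the closure argument inside the subspace $G_i(X)$ really picks out a cell $\cell{S''}_X$ with $S''\ne S$ rather than escaping into a higher-dimensional stratum, which is why the finiteness of $\complexzero{X}^i$ is essential.
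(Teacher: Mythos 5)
Your proposal is correct and follows essentially the same route as the paper: the ``key inclusion'' $\partial_i\cell{S}_X\subset\cell{\delta S}_X$ is exactly the paper's opening step (finiteness of $\complexzero{X}^i$ and closedness of cells force $x$ into some $\cell{S''}_X$ with $S''\neq S$, hence into $\cell{S\cup\{s\}}_X$), and the remainder combines $Q\cellstar{\delta S}=\cell{\delta S}$ with combinatoriality on the appropriate side, just as in the paper's proof. Your only departure is the tidy packaging of that step as a single tile-independent lemma applied to both $T$ and $\start$, which the paper instead invokes twice in-line.
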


\begin{proof}
Let $S\in\complexzero{T}^i$, $i\ge 2$, be arbitrary but fixed.
To prove (i) let $x\in \partial_{i}\cell{S}$ be given. Then, since $\complexzero{T}^{i}$ is a finite complex and cells are closed in $\cell{\complexzero{T}^i}$, there exists $S'\in\complexzero{T}^i$, $S'\not=S$ such that $x\in \cell{S'}$. As $x\in \cell{S} \cap \cell{S'}$ there exists $s\in S'\setminus S$ such that $x\in \cell{S \cup \{s\}}$.  Proposition~\ref{imageLemma}~(ii) implies that 
$Q \cell{S \cup \{s\}}_M = \cell{S \cup \{s\}}$. Thus, because $M$ is combinatorial, we obtain that
$
x \in Q \cell{S \cup \{s\}}_M \subset Q \cell{\delta S}_M = Q \partial_{i} \cell{S }_M.
$

To prove (ii) let $x\in Q \partial_{i}\cell{S}_M$ be given.  By the same argument as in (i) there is $s\in \zn\setminus S$ with $x\in Q\cell{S\cup \{s\}}_M $. Thus, as $T$ is combinatorial,
$
x\in Q\cell{S\cup \{s\}}_M  = \cell{S\cup \{s\}}  \subset  \cell{\delta S} = \partial_{i}\cell{S}. \qedhere
$  
\end{proof}

It is easy to check that Knuth's twin-dragon $K$ as well as its model $M$ are combinatorial. Thus they fulfill the conditions of Theorem~\ref{tilingcomplex}.

\section{Self-affine $\mathbb{Z}^3$-tiles whose boundaries are manifolds}\label{sec:sphere}

This section contains our first main results. After a short treatment of the planar case (Section~\ref{sec:planar}) we continue with statements and proofs of topological preliminaries (Section~\ref{subsec:sphere}) that are used in Section~\ref{sec:23} to prove results on self-affine $\mathbb{Z}^3$-tiles whose boundaries are 2-manifolds.

\subsection{The planar case}\label{sec:planar}
We start this section with an easy criterion for a self-affine $\mathbb{Z}^2$-tile $T\subset \mathbb{R}^2$ to be homeomorphic to the closed disk $\mathbb{D}^2$.

\begin{theorem}\label{planar}
A self-affine $\mathbb{Z}^2$-tile $T$ admits a monotone model which is homeomorphic to $\mathbb{D}^2$ if and only if $T$ is homeomorphic to $\mathbb{D}^2$.
\end{theorem}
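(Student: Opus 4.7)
The reverse direction is essentially for free. If $T\cong\mathbb{D}^2$, I would take $(T,A)$ as its own model: the set equation~\eqref{modeldef} is exactly~\eqref{setequation}, the required equivariance of $A^{-1}A=\mathrm{id}$ is trivial, and obviously $\complex{T}=\complex{T}$. The only non-trivial item is connectedness of every cell $\cell{S}_T$; this is a well-known topological property of disk-like planar self-affine tiles, where nonempty $2$-fold intersections are arcs and $k$-fold intersections for $k\ge 3$ are singletons (see {\it e.g.}~\cite{Akiyama-Thuswaldner:05,BW:01,LL:07}). Thus $(T,A)$ serves as a monotone model homeomorphic to $\mathbb{D}^2$.

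For the forward direction, suppose $(M,F)$ is a monotone model of $T$ with $M\cong\mathbb{D}^2$. The first step of the plan is to identify $\partial T$ as a Jordan curve. By Theorem~\ref{upperthm2}~(iii), the canonical quotient map restricts to a monotone continuous surjection $Q|_{\partial M}\colon\partial M\to\partial T$. Since $\partial M\cong S^1$, this exhibits $\partial T$ as a quotient of $S^1$ under an upper semi-continuous decomposition into proper continua (arcs and points); the decomposition is nontrivial because $T$ is a $\mathbb{Z}^2$-tile of nonempty interior, so $\partial T$ contains more than one point. I would then invoke the classical fact that a nontrivial monotone decomposition of $S^1$ into proper sub-continua has $S^1$ as its quotient to conclude $\partial T\cong S^1$.

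The second step passes from $\partial T$ to $T$. Applying the Jordan curve theorem I split $\mathbb{R}^2\setminus\partial T=U\sqcup V$ with $U$ bounded and $V$ unbounded. Because $T=\overline{\mathrm{int}(T)}$, the topological frontier of $\mathrm{int}(T)$ coincides with $\partial T$. Every connected component $W$ of $\mathrm{int}(T)$ is therefore open in the unique element of $\{U,V\}$ that contains it and, having its frontier in $\partial U=\partial V$, is also closed there, hence clopen. Thus $W\in\{U,V\}$, and boundedness of $T$ forces $W=U$. So $\mathrm{int}(T)=U$ and $T=\overline{U}$. The Schoenflies theorem then upgrades this to $T\cong\mathbb{D}^2$.

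The main obstacle is transferring topology across the canonical quotient map $Q$, which need not be injective and could, {\it a priori}, identify boundary points with interior points or collapse large pieces. The monotone model hypothesis is tailored precisely so that $Q|_{\partial M}$ has connected fibers, reducing the boundary analysis to a soft one-dimensional decomposition lemma for $S^1$; once $\partial T$ is known to be a Jordan curve, the passage to $T$ itself is standard planar topology. No delicate general-position, combinatoriality or $1$-LCC analysis is needed here, as it is exactly these complications that will appear in the $3$-dimensional and higher analogues treated later.
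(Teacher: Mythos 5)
Your proposal is correct and follows essentially the same route as the paper: the converse uses $(T,A)$ as a monotone model with connectedness of cells taken from the disk-like planar literature, and the forward direction uses Theorem~\ref{all:connectedness}~(iii) to realize $\partial T$ as a nontrivial monotone quotient of $\mathbb{S}^1$, hence a Jordan curve, and then concludes via the Jordan curve/Schoenflies theorem. You merely spell out in more detail the final step (that $\operatorname{int}(T)$ equals the bounded complementary component of $\partial T$), which the paper leaves implicit.
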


\begin{proof}
If $T$ admits a monotone model which is homeomorphic to $\mathbb{D}^2$, 
Theorem~\ref{all:connectedness}~(iii) implies that $\partial T$ is a monotone quotient of $\mathbb{S}^1$ and thus is either a singleton or homeomorphic to $\mathbb{S}^1$. As $T$ is the closure of its interior, $\partial T$ cannot be a singleton and the Jordan Curve Theorem implies that $T\cong\mathbb{D}^2$. For the converse just observe that $(T,A)$ is a monotone model for $T$ (it is not hard to check that $\cell{S}$ is always connected for a self-affine tile $T$ which is homeomorphic to $\mathbb{D}^2$, see {\it e.g.} \cite[proofs of Lemma~6.4 and~6.5]{Akiyama-Thuswaldner:05}).
\end{proof}

It is known that a $\mathbb{Z}^2$-tile $M$ which is a closed disk has either $6$ or $8$ \emph{neighbors} ({\it i.e.}, $\complex{M}^2_0$ has either $6$ or $8$ elements; see {\it e.g.} \cite[Lemma~5.1]{BG:94}). This makes it easy to find a suitable monotone model $\start$ to apply Theorem~\ref{planar} ({\it cf.}~Section~\ref{sec:aa}).  For a similar criterion see~\cite[Theorems~2.1 and~2.2]{BW:01}. 

\subsection{Topological preliminaries}\label{subsec:sphere}
We start with some notations and definitions. Let $A$ be a subset of some space. We denote by $H_i(A)=H_i(A;\mathbb{Z})$ the $i$-th homology group of $A$ with integer coefficients and by $H_i(A,B)=H_i(A,B;\mathbb{Z})$ with $B\subset A$ its relative versions.  For the reduced $i$-th homology group we write $\tilde H_i(A)$. As we will use Alexander Duality in the proofs of this section we will also exploit the $i$-th \v Cech cohomology group $\check H^i(A)$ of $A$ and its relative versions $\check H^i(A,B)$ for $B\subset A$. For more on (co)homology we refer the reader to text books like Hatcher~\cite{Hatcher:02} and Massey~\cite{Massey:78}.

We recall the following definitions.

\begin{definition}[Contractible]
A subset $Y$ of a space $X$ is called {\it contractible in $X$} if $Y$ can be deformed to a single point by a continuous homotopy in $X$. If $X$ is contractible in $X$, we just call $X$ \emph{contractible}.
\end{definition}

\begin{definition}[Separator and path separator]\label{def:ps}
A \emph{separator} of a connected space $X$ is a set $Y\subset X$ such that $X\setminus Y$ is not connected. A separator is called \emph{irreducible} if none of its proper subsets is a separator of $X$. A separator consisting of a single point is called {\it separating point}. (See \cite[\S46, VII]{Kuratowski:68}.)

A  {\it path separator} of a path connected space $X$ is a set $Y\subset X$ such that there are two points $u,v\in X\setminus Y$  with the property that there is no path\footnote{Recall that unlike an arc a path is allowed to have self-intersection.} $p\subset X\setminus Y$ leading from $u$ to $v$. In this case we say more precisely that $Y$ is a \emph{path separator between $u$ and $v$}.
\end{definition}

\begin{definition}[{Monotone upper semi-continuous decomposition, see {\em e.g.}~\cite[p.~7f]{Daverman:07}}]\label{def:usd}
A partition $G$ of a topological space $X$ is called a {\em decomposition} of $X$. $G\cong X/G$  inherits a topology from $X$ by the {\em decomposition map} $q: X\to G$ which sends each $x\in X$ to the unique element of $G$ containing $x$. Specifically, $G$ is equipped with the richest topology that makes $q$ continuous.

The decomposition $G$ is called {\it monotone}, if each of its elements is a connected subset of $X$.

The decomposition $G$ is said to be {\em upper semi-continuous} if each $g\in G$ is closed in $X$ and if, for each $g\in G$ and each open set $U\subset X$ containing $g$ there is an open set $V\subset X$ containing $g$ such that every $g'\in G$ with $g\cap V\not=\emptyset$ is contained in $U$.
\end{definition}

Recall that a map is \emph{closed} if it maps closed sets to closed sets. Continuous maps between compact Hausdorff spaces are always closed. We need the following criterion for upper semi-continuous decompositions.

\begin{lemma}[{{\em cf.\ e.g.}~\cite[I \S1, Proposition~1]{Daverman:07}}]\label{lem:upperCrit}
A decomposition $G$ of a space $X$ into closed subsets is upper semi-continuous if and only if the decomposition map $q:X\to G$ is closed.
\end{lemma}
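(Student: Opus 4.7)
The plan is to unfold both implications directly from the definitions of closedness of $q$, the quotient topology on $G$, and upper semi-continuity. Throughout, the key observation is that the quotient topology makes a set $U\subset G$ open exactly when $q^{-1}(U)$ is open in $X$, or equivalently, a set is closed in $G$ iff its preimage under $q$ is closed in $X$. So ``$q$ is closed'' is equivalent to saying that the saturation $q^{-1}(q(C))$ of any closed $C\subset X$ is itself closed. This reformulation is the bridge between the two conditions.

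For the forward direction, suppose $G$ is upper semi-continuous and let $C\subset X$ be closed. To see $q(C)$ is closed in $G$, it suffices to show the complement of $q^{-1}(q(C))$ is open in $X$. Pick $x\notin q^{-1}(q(C))$ and let $g\in G$ be the element containing $x$; by construction $g\cap C=\emptyset$, so $U:=X\setminus C$ is an open neighborhood of $g$. Upper semi-continuity then supplies an open $V\supset g$ such that every $g'\in G$ meeting $V$ satisfies $g'\subset U$. Any point of $V$ therefore lies in an element of $G$ disjoint from $C$, so $V\subset X\setminus q^{-1}(q(C))$, giving openness at $x$.

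For the backward direction, assume $q$ is closed, fix $g\in G$ (closed in $X$ by hypothesis), and let $U\supset g$ be open in $X$. Set $C=X\setminus U$; this is closed, and $g\cap C=\emptyset$. Because $q$ is closed, $q(C)$ is closed in $G$, hence $q^{-1}(q(C))$ is closed in $X$, so $V:=X\setminus q^{-1}(q(C))$ is open. Clearly $g\subset V$ (no element of $g$ maps into $q(C)$, since $g\notin q(C)$), and if $g'\in G$ meets $V$ then $g'\notin q(C)$, which forces $g'\cap C=\emptyset$, i.e.\ $g'\subset U$. This is exactly the upper semi-continuity condition.

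I do not expect any genuine obstacle here; the only bookkeeping subtlety is keeping straight the two equivalent forms of ``$q(C)$ closed'' versus ``$q^{-1}(q(C))$ closed,'' and being careful that the $V$ produced in the backward direction automatically has the required ``every $g'$ meeting $V$ lies in $U$'' property (rather than merely ``$V\subset U$''), which is why we build $V$ as the complement of $q^{-1}(q(X\setminus U))$ and not just as $U$ itself.
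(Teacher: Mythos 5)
Your argument is correct and complete: both directions are handled properly, and you correctly use the quotient-topology reformulation that $q(C)$ is closed in $G$ exactly when the saturation $q^{-1}(q(C))$ is closed in $X$. The paper itself offers no proof of this lemma --- it only cites Daverman's book --- and your proof is precisely the standard textbook argument given there, so there is nothing further to compare.
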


An important tool in our proofs will be the following result on monotone upper semi-continuous decompositions of 2-manifolds that was proved by Roberts and Steenrod~\cite{Roberts-Steenrod:38} generalizing a theorem of Moore~\cite{Moore:25}. 

\begin{proposition}[{{\it cf.} \cite[Theorem 1 and its proof]{Roberts-Steenrod:38}}]\label{RSb}
Let $\mathcal{S}$ be a compact 2-manifold and let $G$ be a monotone upper semi-continuous decomposition of $\mathcal{S}$. If $G$ contains at least 2 elements and for each $g\in G$ 
\begin{itemize}
\item[(i)] $g$ is contained in an open disk and
\item[(ii)] $g$ is not a separator of $\mathcal{S}$,
\end{itemize}
then $G$ is homeomorphic to $\mathcal{S}$.
\end{proposition}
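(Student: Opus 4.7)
The plan is to reduce Proposition~\ref{RSb} to the classical $\mathcal{S}=\mathbb{S}^2$ case of Moore~\cite{Moore:25} by showing that the quotient $X:=\mathcal{S}/G$ is a compact connected $2$-manifold and then invoking the classification of closed surfaces. First I would assemble the routine properties of the quotient: since $\mathcal{S}$ is compact metrizable and $G$ is upper semi-continuous, Lemma~\ref{lem:upperCrit} makes the decomposition map $q:\mathcal{S}\to X$ a closed continuous surjection, and standard arguments then give that $X$ is compact, Hausdorff, metrizable and connected, while each point preimage $q^{-1}([g])=g$ is a nontrivial continuum.

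The main step is to show that $X$ is locally Euclidean of dimension $2$ at every point $[g]$. By hypothesis~(i), $g$ sits inside an open disk $U_g\subset\mathcal{S}$, and by upper semi-continuity one can find a saturated open neighborhood of $g$ contained in $U_g$. Passing to the one-point compactification $\widehat U_g=U_g\cup\{\infty\}\cong\mathbb{S}^2$, I would form the decomposition $\widehat G$ whose non-singleton elements are precisely those $g'\in G$ contained in $U_g$, together with $\{\infty\}$. A short check shows $\widehat G$ is still monotone and upper semi-continuous on $\mathbb{S}^2$. The crucial observation is that each such $g'$ is non-separating in $\widehat U_g$: if some bounded component of $U_g\setminus g'$ existed, its closure in $\mathcal{S}$ would still lie in $U_g$, so it would be both open and closed in $\mathcal{S}\setminus g'$, contradicting hypothesis~(ii). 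Moore's classical theorem for $\mathbb{S}^2$ then gives $\widehat U_g/\widehat G\cong\mathbb{S}^2$; deleting the class of $\infty$ yields $U_g/(G|_{U_g})\cong\mathbb{R}^2$, and this open plane embeds as an open Euclidean neighborhood of $[g]$ in $X$ via the saturated neighborhood chosen at the outset. Hence $X$ is a $2$-manifold.

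Finally, one concludes $X\cong\mathcal{S}$ by surface classification. Each $g\in G$ is a connected non-separating compactum inside the plane $U_g$, so planar Alexander duality gives $\check H^0(g)=\mathbb{Z}$ and $\check H^i(g)=0$ for $i\geq 1$; in other words, every point preimage of $q$ is acyclic. The Vietoris–Begle mapping theorem applied to the closed surjection $q$ then yields an isomorphism $q^{\ast}:\check H^{\ast}(X)\xrightarrow{\sim}\check H^{\ast}(\mathcal{S})$, so $X$ and $\mathcal{S}$ share Euler characteristic and orientability; as both are compact connected closed $2$-manifolds, they are homeomorphic. The main obstacle is the middle step: one has to carry out the saturation argument carefully so that the local output of Moore's sphere theorem on $\widehat U_g$ glues back to a genuine open chart around $[g]$ in the global quotient $X$, and one has to verify that non-separation transfers cleanly between $\mathcal{S}$, $U_g$, and $\widehat U_g$. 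Once $X$ is known to be a $2$-manifold, the Vietoris–Begle and classification steps are essentially formal.
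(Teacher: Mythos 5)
First, a point of comparison: the paper does not prove Proposition~\ref{RSb} at all --- it is quoted from Roberts and Steenrod \cite{Roberts-Steenrod:38} as a known generalization of Moore's theorem \cite{Moore:25}, so there is no in-paper argument to measure yours against. Your overall strategy --- show the quotient $X=\mathcal{S}/G$ is a closed surface by localizing to Moore's theorem on $\mathbb{S}^2$, then pin down its homeomorphism type via acyclicity of the fibers, Vietoris--Begle, and the classification of surfaces --- is a legitimate modern route to the result, and most of the individual steps (the saturated open set $U_g^{\ast}=\bigcup\{g'\in G\mid g'\subset U_g\}$, the transfer of non-separation between $\mathcal{S}$, $U_g$, and $\widehat{U}_g$, and the cohomological endgame) are sound.

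The genuine gap is the ``short check'' that $\widehat{G}$ is upper semi-continuous on $\widehat{U}_g\cong\mathbb{S}^2$. Declaring non-degenerate \emph{all} $g'\in G$ with $g'\subset U_g$ does not in general yield an upper semi-continuous decomposition: a sequence of elements $g_n'\subset U_g$ with diameters bounded below may accumulate at a point $x$ lying on an element $h\in G$ that is \emph{not} contained in $U_g$; in $\widehat{G}$ that point $x$ is a singleton, and upper semi-continuity fails there. (Concretely, in $\mathbb{R}^2\subset\mathbb{S}^2$ take $h=\{0\}\times[0,2]$ and $g_n'=\{1/n\}\times[0,1]$, with $U_g$ a round disk containing every $g_n'$ but not all of $h$; this $G$ is monotone, usc, and non-separating, yet your $\widehat{G}$ is not usc at the singletons of $h\cap U_g$.) Without upper semi-continuity Moore's theorem cannot be invoked. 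The repair is standard but must be made explicit: since $X$ is compact Hausdorff, hence regular, choose a \emph{closed} saturated set $C$ with $g\subset\operatorname{int}(C)\subset C\subset U_g$, and let the non-degenerate elements of $\widehat{G}$ be only those elements of $G$ contained in $C$; trivial extension of a usc decomposition over a closed saturated set is always usc, and the chart at $[g]$ is then obtained from $q(\operatorname{int}(C))$ exactly as you describe. With that correction, together with the (correct) observation that the fibers are acyclic planar continua so that Vietoris--Begle and the classification of surfaces finish the argument, the proof goes through.
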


Before we relate these concepts to our setting we require the following notation.

\begin{definition}[Semi-contractible monotone model]\label{def:semicontr}
Let $\start$ be a monotone model for a self-affine $\mathbb{Z}^3$-tile with $\partial M$ being a closed surface and let $\canonical$ be the associated quotient map. We say that $M$ is {\em semi-contractible} if each point preimage of $\canonical|_{\partial M}$ is contained in a neighborhood which is contractible in $\partial M$ ({\em e.g.} a disk).
\end{definition}

The existence of such contractible neighborhoods is needed in order to exclude that the preimage of a point wraps around one of the handles of $\partial M$ in the case $\partial M$ is a surface of genus $g\ge 1$. A model whose boundary is homeomorphic to a sphere is always semi-contractible. 

\begin{corollary}\label{RS}
Let $\start$ be a semi-contractible monotone model for the self-affine $\mathbb{Z}^3$-tile $T$. Assume that $\partial\start$ is a 2-manifold $\mathcal{S}$. If for each $x\in \partial T$ the preimage $(\canonical|_{\partial \start})^{-1}(x)$ is not a separator of $\partial \start$, then $\partial T$ is homeomorphic to $\mathcal{S}$.
\end{corollary}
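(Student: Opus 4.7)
The plan is to apply Proposition~\ref{RSb} (the Roberts--Steenrod generalization of Moore's decomposition theorem) to the decomposition $G$ of $\partial\start$ whose classes are the point-preimages $(\canonical|_{\partial\start})^{-1}(x)$, $x\in\partial T$. Since $\canonical|_{\partial\start}\colon\partial\start \to \partial T$ is a continuous surjection (Theorem~\ref{upperthm}(ii)) from a compact space onto a Hausdorff one, it is automatically a closed quotient map, so the induced bijection $G \to \partial T$ is a homeomorphism; it is thus enough to prove that $G$ is homeomorphic to $\mathcal{S}$.

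To invoke Proposition~\ref{RSb} I would check its hypotheses in turn. Monotonicity of $G$ is immediate from Theorem~\ref{all:connectedness}(iii), and upper semi-continuity follows via Lemma~\ref{lem:upperCrit} from the fact that $\canonical|_{\partial\start}$ is closed. The set $G$ has at least two elements because $\partial T$ is not a singleton ($T$ is the closure of its nonempty interior). Condition~(ii) of Proposition~\ref{RSb} is precisely the non-separation hypothesis of the corollary. For condition~(i), semi-contractibility (Definition~\ref{def:semicontr}) provides for each $g\in G$ an open neighborhood $U\supset g$ that is contractible in $\partial\start$, and this neighborhood must be upgraded to an open disk of $\mathcal{S}$ containing $g$.

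The disk-upgrade is the main obstacle, since on a surface of positive genus a contractible-in-$\mathcal{S}$ open set need not itself be simply connected. I would handle it by lifting to the universal cover $p\colon\widetilde{\mathcal{S}}\to\mathcal{S}$: passing to the connected component $U_0\subset U$ containing $g$, the inclusion $U_0\hookrightarrow \mathcal{S}$ is null-homotopic, induces the trivial map on $\pi_1$, and therefore lifts to an embedding $\widetilde\iota\colon U_0\hookrightarrow \widetilde{\mathcal{S}}$. Since $\widetilde{\mathcal{S}}$ is homeomorphic to $S^2$ or $\mathbb{R}^2$, the compact connected set $\widetilde\iota(g)$ admits an open disk neighborhood $\widetilde D$ in $\widetilde{\mathcal{S}}$. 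Proper discontinuity of the deck-group action together with the compactness of $\widetilde\iota(g)$ allows $\widetilde D$ to be shrunk so that $p|_{\widetilde D}$ is injective; then $p(\widetilde D)$ is an open disk of $\mathcal{S}$ containing $g$. With condition~(i) in place, Proposition~\ref{RSb} gives $G\cong\mathcal{S}$, and composing with the identification $G\cong\partial T$ completes the proof.
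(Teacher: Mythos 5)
Your proof follows exactly the same skeleton as the paper's: apply Proposition~\ref{RSb} to the decomposition of $\partial\start$ into point preimages of $\canonical|_{\partial\start}$, with upper semi-continuity coming from closedness via Lemma~\ref{lem:upperCrit}, monotonicity from Theorem~\ref{all:connectedness}(iii), the two-element condition from $T$ being the closure of its nonempty interior, and condition~(ii) from the hypothesis. Where you go beyond the paper is condition~(i): the paper simply asserts that semi-contractibility yields it, whereas you try to justify the upgrade from ``contained in a neighborhood contractible in $\partial\start$'' to ``contained in an open disk.'' That extra care is warranted, but your final step has a gap. After lifting to the universal cover you take an open disk $\widetilde D$ containing the compact connected set $\widetilde\iota(g)$ and claim that proper discontinuity lets you shrink $\widetilde D$ until $p|_{\widetilde D}$ is injective. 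Proper discontinuity does produce an open set $V\supset\widetilde\iota(g)$ on which $p$ is injective, but if $\widetilde\iota(g)$ separates $\widetilde{\mathcal S}$ (for instance, if it is a circle), every open disk containing $\widetilde\iota(g)$ must also contain all bounded complementary components of $\widetilde\iota(g)$, so no disk neighborhood of $\widetilde\iota(g)$ need fit inside $V$; the shrinking you invoke is obstructed.

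The repair, in the only nontrivial case where $\mathcal S$ has positive genus (for $\mathcal S\cong\mathbb{S}^2$ one simply takes the complement of a point not in $g$), is to replace $\widetilde\iota(g)$ by its fill $K^{+}$, the union of $\widetilde\iota(g)$ with the bounded components of its complement in $\widetilde{\mathcal S}\cong\mathbb{R}^2$. The set $K^{+}$ is compact, connected, and has connected complement, hence admits arbitrarily small open disk neighborhoods (exhaust the simply connected complement by closed disks and take complements). One must then check that $p$ is still injective on $K^{+}$: if a deck transformation $\gamma\neq\mathrm{id}$ had $\gamma K^{+}\cap K^{+}\neq\emptyset$, then, since $\gamma\widetilde\iota(g)$ and $\widetilde\iota(g)$ are disjoint connected sets, one of the inclusions $\gamma K^{+}\subset K^{+}$ or $K^{+}\subset\gamma K^{+}$ would hold, and iterating $\gamma$ (or $\gamma^{-1}$) would trap the orbit of a point of $\widetilde\iota(g)$ in the compact set $K^{+}$, contradicting proper discontinuity of the torsion-free deck group. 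With $p$ injective on a neighborhood of $K^{+}$, a small disk neighborhood of $K^{+}$ embeds into $\partial\start$ and condition~(i) follows. With this patch your argument is complete; it is the paper's proof together with a justification of a step the paper leaves implicit.
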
 

\begin{proof}
Consider the quotient map $\canonical|_{\partial \start}: \partial M \to \partial T$ and let $G := \{ (\canonical|_{\partial \start})^{-1}(x) \mid x \in \partial T  \}$ be a decomposition of $\partial T$. Then $\partial T\cong \partial M/G$ and, by compactness, the decomposition map $\canonical|_{\partial \start}$ is closed. Thus Lemma~\ref{lem:upperCrit} implies that $\partial T$ is an upper semi-continuous decomposition of $\partial\start$. As Theorem~\ref{all:connectedness}~(iii) shows that $(\canonical|_{\partial \start})^{-1}(x)$ is connected for each $x\in \partial T$, this decomposition is monotone.  Moreover, since $T$ is the closure of its interior, $G$ certainly contains at least 2 elements. By the semi-contractibility of $M$, item (i) of Proposition~\ref{RSb} is satisfied,  and item (ii) is satisfied by assumption. Thus, Proposition~\ref{RSb} can be applied to $\partial T$ and the result is proved.
\end{proof}

The following easy lemma, which will be needed on several occasions, is the first in the following list of preparatory results.

\begin{lemma}\label{compcomp}
Let $M\subset\rn$ be compact and equal to the closure of its interior. Assume that $U$ is a nonempty bounded component of $\rn\setminus M$. Then $\operatorname{int}(M+a)$ is not contained in $U$ for each $a\in\rn$.
\end{lemma}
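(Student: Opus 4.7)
The plan is to argue by contradiction. Suppose some $a\in\rn$ satisfies $\operatorname{int}(M+a)\subset U$. Because $M$ equals the closure of its interior, so does $M+a$, and hence
\[
M+a=\overline{\operatorname{int}(M+a)}\subset\overline{U}.
\]

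Next I would bring in the unbounded component of the complement. Note first that $M$ must be nonempty, since otherwise $\rn\setminus M=\rn$ has no bounded components at all. Being compact, $M$ determines a unique unbounded component $V$ of $\rn\setminus M$ (for $n\ge 2$; the one-dimensional case is handled identically after choosing the unbounded ray compatible with the sign of $a$). The standard fact $\partial U\subset M$, together with $V\cap M=\emptyset$ and $V\cap U=\emptyset$ (distinct components), gives $V\cap\overline{U}=\emptyset$. Combined with $M+a\subset\overline{U}$ this forces $V\cap(M+a)=\emptyset$, so $V$, being connected and unbounded, is contained in the unique unbounded component of $\rn\setminus(M+a)$. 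Since translation by $a$ is a homeomorphism carrying components of $\rn\setminus M$ to components of $\rn\setminus(M+a)$, that component is exactly $V+a$. Thus $V\subset V+a$, equivalently $V-a\subset V$.

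To close, I would iterate and derive a contradiction. Iteration gives $V-Na\subset V$ for every $N\in\mathbb{N}$. If $a=0$, the hypothesis reads $\operatorname{int}(M)\subset U\subset\rn\setminus M$, forcing $\operatorname{int}(M)=\emptyset$ and hence $M=\emptyset$, contradicting the existence of a nonempty bounded component $U$. If $a\ne 0$, let $y\in\rn$ be arbitrary; since $M$ is bounded, $V$ contains the complement of any sufficiently large ball, so there is $N\in\mathbb{N}$ with $y+Na\in V$, whence
\[
y=(y+Na)-Na\in V-Na\subset V.
\]
Therefore $V=\rn$, which again forces $M=\emptyset$, the desired contradiction.

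The only delicate step will be the component bookkeeping in the middle paragraph, namely the three standard facts $\partial U\subset M$, that $V+a$ is the unbounded component of $\rn\setminus(M+a)$, and that the connected set $V$ must sit inside a single component of $\rn\setminus(M+a)$; once these are in hand the iterative flood argument on $V$ runs cleanly, and I do not anticipate any deeper obstruction.
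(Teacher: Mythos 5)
Your proof is correct, but it reaches the contradiction by a genuinely different route than the paper. Both arguments start the same way, using $M+a=\overline{\operatorname{int}(M+a)}$ to upgrade the assumed inclusion to $M+a\subset\overline{U}$. The paper then finishes in one line: since $\partial U\subset M$, translation gives $\partial(U+a)\subset M+a\subset\overline{U}$, which it declares ``absurd for a bounded set $U$'' --- the implicit reason being extremal: the point of the compact set $\overline{U+a}=\overline{U}+a$ maximizing $x\mapsto\langle x,a\rangle$ must lie on $\partial(U+a)$, yet its $\langle\cdot,a\rangle$-value strictly exceeds the supremum over $\overline{U}$, so it cannot belong to $\overline{U}$. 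You instead pass to the unbounded complementary component $V$ of $\rn\setminus M$, deduce $V-a\subset V$ from $V\cap\overline{U}=\emptyset$, and iterate to flood all of $\rn$; every step of this checks out for $n\ge 2$. What your route costs is length and a genuine dependence on $n\ge 2$ (uniqueness of the unbounded component, connectedness of complements of large balls), hence the side remark about $n=1$; there, with the ray chosen in the direction of $a$, the contradiction actually arrives already at the step $V\cap(M+a)=\emptyset$ (since $\max M+a$ lies in both sets), so ``handled identically'' is a slight overstatement, though the case does go through. In exchange, your argument is more topological and makes explicit the role of the unbounded complementary component, the same object that drives the application in Lemma~\ref{compconnected}, whereas the paper's extremal observation is shorter and uniform in the dimension.
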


\begin{proof}
As $\operatorname{int}(M)\not=\emptyset$ the result is true for $a=0$, and we may assume that $a\not=0$. If the assertion was wrong we had
$
\partial(U + a) \subset M + a \subset \overline{U}
$
which is absurd for a bounded set $U$.
\end{proof}

\begin{lemma}\label{compconnected}
Let $M$ be a $\mathbb{Z}^n$-tile with $\operatorname{int}(M)$ connected. Then
$\mathbb{R}^n \setminus M$ is connected.
\end{lemma}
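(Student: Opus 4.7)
The plan is to argue by contradiction. Assuming $\mathbb{R}^n\setminus M$ is disconnected and using that $M$ is bounded (hence $\mathbb{R}^n\setminus M$ has a unique unbounded component), there must exist a bounded component $U$ to which Lemma~\ref{compcomp} applies. $U$ is open, nonempty, bounded, and $U\cap M=\emptyset$.

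The two-step core of the argument is as follows. First, I would produce a nonzero $z_0\in\mathbb{Z}^n$ with $U\cap\operatorname{int}(M+z_0)\neq\emptyset$: the tiling identity $\mathbb{R}^n=\bigcup_{z}(M+z)$ together with $\mu(\partial M)=0$ and $U\cap\operatorname{int}(M)=\emptyset$ gives $\mu(U)=\sum_{z\neq 0}\mu(U\cap\operatorname{int}(M+z))$, so some term must be positive. Lemma~\ref{compcomp} then forces $\operatorname{int}(M+z_0)\not\subset U$. Second, I would locate a point $y\in\operatorname{int}(M+z_0)\cap\partial U$: since $\operatorname{int}(M+z_0)=\operatorname{int}(M)+z_0$ is connected and the disjoint open sets $U$ and $\mathbb{R}^n\setminus\overline U$ together exhaust $\mathbb{R}^n\setminus\partial U$, connectedness forbids $\operatorname{int}(M+z_0)$ from lying in either one alone (it meets $U$ but is not contained in $U$), so it must intersect $\partial U$. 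A short inclusion check shows $\partial U\subset\partial M$: indeed $\overline U\subset\overline{\mathbb{R}^n\setminus M}=\mathbb{R}^n\setminus\operatorname{int}(M)$, and since $U$ is a component of the open set $\mathbb{R}^n\setminus M$, the set $\overline U\setminus U$ lies outside every other component, hence inside $M\setminus\operatorname{int}(M)=\partial M$.

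The finishing blow exploits $M=\overline{\operatorname{int}(M)}$: since $y\in\partial M$ it is a limit point of $\operatorname{int}(M)$, so the open neighborhood $\operatorname{int}(M+z_0)$ of $y$ contains a point $p\in\operatorname{int}(M)$; then $p\in\operatorname{int}(M)\cap\operatorname{int}(M+z_0)$, which contradicts the disjointness of interiors of distinct $\mathbb{Z}^n$-translates of $M$ since $z_0\neq 0$. The only delicate step is producing $y\in\partial U$ inside $\operatorname{int}(M+z_0)$: one must rule out that $\operatorname{int}(M+z_0)$ merely straddles $U$ and $\mathbb{R}^n\setminus\overline U$ through a measure-zero gateway, and this is precisely where the connectedness of $\operatorname{int}(M)$ combines with Lemma~\ref{compcomp} to close the argument.
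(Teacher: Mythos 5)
Your proof is correct and follows essentially the same route as the paper's: argue by contradiction, pick a bounded complementary component $U$, find a translate $M+z_0$ whose (connected) interior meets $U$, and combine Lemma~\ref{compcomp} with the disjointness of interiors and $M=\overline{\operatorname{int}(M)}$. The only difference is the arrangement: the paper concludes directly that $\operatorname{int}(M+s)\subset U$ (since that connected interior is disjoint from $M$, hence lies in a single complementary component) and contradicts Lemma~\ref{compcomp}, whereas you invoke Lemma~\ref{compcomp} first and reach the contradiction through $\partial U\subset\partial M$ and the disjointness of interiors --- the same ingredients in a different order.
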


\begin{proof}
If this is wrong,  $M$ is a separator of $\mathbb{R}^n$ and we may choose a bounded complementary component $U$ of $M$. As $\operatorname{int}(M)$ is connected and $M$ tiles $\rn$ with $\zn$-translates, there exists $s\in\mathbb{Z}^n\setminus\{0\}$ such that $\operatorname{int}(M+s) \subset U$. This contradicts Lemma~\ref{compcomp}.
\end{proof}

\begin{lemma}[{see {\it e.g.}~\cite[\S 46, VII, Theorem~4]{Kuratowski:68}}]\label{lem:irredsep}
If $Y\subset \mathbb{R}^n$ is the common boundary of two components of $\rn\setminus Y$ then $Y$ is an irreducible separator of $\rn$.
\end{lemma}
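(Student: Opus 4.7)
The plan is a short direct argument in two steps. First, $Y$ is visibly a separator: by hypothesis $\mathbb{R}^n\setminus Y$ has two distinct components, so it is disconnected; and $Y$ is closed, since it equals the boundary of each of those components.

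For the harder half---irreducibility---I would fix an arbitrary proper subset $Y'\subsetneq Y$, pick a witness point $p\in Y\setminus Y'$, and show that $\mathbb{R}^n\setminus Y'$ is connected. The key observation is that $p$ lies in the common boundary $Y=\partial U_1=\partial U_2$ of the two components $U_1,U_2$ of $\mathbb{R}^n\setminus Y$, so $p$ is a limit point of both. A connected set together with a limit point is connected, so each $U_i\cup\{p\}$ is connected; they overlap at $p$, whence $U_1\cup U_2\cup\{p\}$ is connected.

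To upgrade this to the full complement $\mathbb{R}^n\setminus Y'=U_1\cup U_2\cup(Y\setminus Y')$, I would let $p$ range over $Y\setminus Y'$. For each such $p$ the set $U_1\cup U_2\cup\{p\}$ is connected by the previous step, and all these sets share the common subset $U_1\cup U_2$. A union of connected sets that meet in a common connected subset is connected, so $\mathbb{R}^n\setminus Y'$ is connected and $Y'$ is not a separator. Therefore $Y$ is irreducible.

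The anticipated obstacle is purely interpretational rather than technical: if $\mathbb{R}^n\setminus Y$ were permitted to have additional components $U_j$ whose boundary happened to lie entirely inside $Y'$, then $U_j$ would be a clopen proper subset of $\mathbb{R}^n\setminus Y'$ and the conclusion would fail. The standard reading of ``common boundary of two components'' (as in Kuratowski) either restricts the complement to two components or requires that \emph{every} complementary component have boundary equal to $Y$, which rules out this degenerate case and makes the argument above go through verbatim.
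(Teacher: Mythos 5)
The paper does not prove this lemma at all --- it is imported verbatim from Kuratowski --- so there is no internal proof to compare against; your argument serves as a self-contained elementary substitute, and it is correct under the reading you settle on. The two halves are fine: $\{U_1,U_2\}$ witnesses disconnection of $\mathbb{R}^n\setminus Y$, and for any proper $Y'\subsetneq Y$ the sets $U_1\cup U_2\cup\{p\}$ (for $p\in Y\setminus Y'$) are connected, pairwise meet along $U_1\cup U_2$, and exhaust $\mathbb{R}^n\setminus Y'$, so no proper subset separates. Your closing caveat is not merely interpretational, though: the literal statement really can fail when $\mathbb{R}^n\setminus Y$ has a third component. A Lakes-of-Wada--type construction in which canals are dug only from two of the lakes produces a set $Y$ that is the common boundary of two complementary components $U_1,U_2$, while a third, untouched component $U_3$ keeps its original circle $\partial U_3\subsetneq Y$ as boundary; that circle is itself a separator of $\mathbb{R}^2$, so $Y$ is not irreducible in the strong sense of the paper's Definition of irreducible separator. (Kuratowski's own theorem is the ``irreducible separator between the two regions'' version, which survives this example.) The reason this never bites in the paper is that the lemma is only ever applied --- in the propositions on $\partial M$ having no separating point and on finite unions $[S]$ --- after it has been established that the complement of the boundary in question has exactly the two components $\operatorname{int}(M)$ and $\mathbb{R}^n\setminus M$; in that setting your proof goes through exactly as written. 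It would be cleaner to state the hypothesis you actually use (the complement has precisely two components, each with boundary $Y$) rather than appeal to ``the standard reading.''
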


The next two propositions are of interest in their own right.

\begin{proposition}\label{prop:nosep}
Let $M \subset \mathbb{R}^n$ be a $\mathbb{Z}^n$-tile. If $\operatorname{int}(M)$ is connected then $\partial M$ is an irreducible separator of $\rn$.
\end{proposition}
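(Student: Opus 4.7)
The plan is to identify the two open pieces $\operatorname{int}(M)$ and $\mathbb{R}^n\setminus M$ as the two components of $\mathbb{R}^n\setminus \partial M$ and then invoke Lemma~\ref{lem:irredsep} to conclude irreducibility. The key input is that both pieces are connected open sets whose common topological boundary is exactly $\partial M$.

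First I would observe that $\mathbb{R}^n\setminus \partial M = \operatorname{int}(M)\,\cup\,(\mathbb{R}^n\setminus M)$, and that both terms are nonempty, open, and disjoint: nonemptiness of $\operatorname{int}(M)$ follows from the $\mathbb{Z}^n$-tile hypothesis (indeed $\mu(M)=1$ and $M=\overline{\operatorname{int}(M)}$), and nonemptiness of $\mathbb{R}^n\setminus M$ holds because $M$ is compact. The first set is connected by assumption, and the second is connected by Lemma~\ref{compconnected}. Hence these are precisely the (two) components of $\mathbb{R}^n\setminus\partial M$, so $\partial M$ is a separator of $\mathbb{R}^n$.

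Next I would compute the topological boundaries of the two components to apply Lemma~\ref{lem:irredsep}. Since $M=\overline{\operatorname{int}(M)}$, we have $\partial(\operatorname{int}(M))=M\setminus\operatorname{int}(M)=\partial M$. Similarly, because $M$ is closed and equals the closure of its interior, $\overline{\mathbb{R}^n\setminus M}=\mathbb{R}^n\setminus\operatorname{int}(M)$ while $\operatorname{int}(\mathbb{R}^n\setminus M)=\mathbb{R}^n\setminus M$, so $\partial(\mathbb{R}^n\setminus M)=M\setminus\operatorname{int}(M)=\partial M$ as well. Thus $\partial M$ is the common boundary of the two components of its complement, and Lemma~\ref{lem:irredsep} yields irreducibility.

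There is no substantial obstacle here; the statement is essentially a packaging of point-set topology once connectedness of $\mathbb{R}^n\setminus M$ is in hand. The only point to be careful about is not confusing the manifold-boundary of $M$ with the topological boundary $\partial M$ used throughout the paper, and making sure both equalities $\partial(\operatorname{int} M)=\partial M$ and $\partial(\mathbb{R}^n\setminus M)=\partial M$ use the hypothesis that $M$ is the closure of its interior (which is part of being a $\mathbb{Z}^n$-tile in the sense of Section~\ref{sec:iterate}).
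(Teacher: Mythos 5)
Your proposal is correct and follows exactly the paper's route: the paper proves this proposition by declaring it an immediate consequence of Lemma~\ref{compconnected} and Lemma~\ref{lem:irredsep}, and your argument simply supplies the (routine) point-set details of that deduction, namely that $\operatorname{int}(M)$ and $\mathbb{R}^n\setminus M$ are the two components of $\mathbb{R}^n\setminus\partial M$ and that each has topological boundary equal to $\partial M$.
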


\begin{proof}
This is an immediate consequence of Lemmas~\ref{compconnected} and~\ref{lem:irredsep}.  
\end{proof}

\begin{remark}\label{rem:cpct}
Let $\mathbb{S}^n=\mathbb{R}^n\cup\{\infty\}$ be the one-point compactification of $\mathbb{R}^n$. As $M$ is compact, Lemma~\ref{compconnected} and Proposition~\ref{prop:nosep} remain true if $\mathbb{R}^n$ is replaced by $\mathbb{S}^n$ in their statement.
\end{remark}

Before we state the second proposition we recall some notions from algebraic topology. If $M$ is a locally compact Hausdorff space and $A,B\subset M$ are two closed sets such that $M=A\cup B$ then the \emph{Mayer-Vietoris Sequence} for \v Cech cohomology
groups
\begin{equation}\label{Mayer1} 
\cdots\rightarrow\check H^{n-2}(A\cap B)\rightarrow \check H^{n-1}(A \cup B)
\rightarrow \check H^{n-1}(A) \oplus \check H^{n-1}(B) \rightarrow\check H^{n-1}(A\cap B)\rightarrow \cdots
\end{equation}
is an exact sequence (see {\it e.g.}~\cite[Theorem~3.13]{Massey:78}). Moreover, we recall that Alexander Duality ({\it cf. e.g.} Dold~\cite[Chapter VIII, 8.15]{Dold:72}) states that if $A$ is a compact subset of $\mathbb{S}^n=\mathbb{R}^n\cup\{\infty\}$ and
$x\in A$ then
\begin{equation}\label{alexander}
\check H^{n-i}(A,\{x\}) = \tilde H_{i-1}(\mathbb{S}^n\setminus A)
\end{equation}
holds for $1\le i\le n$ (note that for $k > 0$ we have $\check H^{k}(A,\{x\})=\check H^{k}(A)$ and $\tilde H_{k}(\mathbb{S}^n\setminus A)= H_{k}(\mathbb{S}^n\setminus A)$).

\begin{proposition}\label{prop:nocut}
For $n\ge 2$ let $M \subset \mathbb{R}^n$ be a $\mathbb{Z}^n$-tile. If $\operatorname{int}(M)$ is connected then  $\partial M$ has no separating point.
\end{proposition}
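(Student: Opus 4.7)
The plan is to argue by contradiction using Alexander duality and Mayer--Vietoris, paralleling the proof of Proposition~\ref{prop:nosep}. Suppose $p\in\partial M$ were a separating point, so that $\partial M\setminus\{p\}=U\sqcup V$ with $U,V$ nonempty and clopen in $\partial M\setminus\{p\}$. Set $A=U\cup\{p\}$ and $B=V\cup\{p\}$. Since $\partial M\setminus\{p\}$ is open in $\partial M$ and $U,V$ are open there, each of $A,B$ is closed in $\partial M$ (its complement in $\partial M$ is the other open set among $U,V$). Then $A\cup B=\partial M$ and $A\cap B=\{p\}$, which is exactly the setup for the Mayer--Vietoris sequence \eqref{Mayer1}.

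The first main computation is to show $\check H^{n-1}(\partial M)=\mathbb{Z}$. Working in $\mathbb{S}^n$, Lemma~\ref{compconnected} together with Remark~\ref{rem:cpct} yields that $\mathbb{S}^n\setminus\partial M$ has exactly two components, namely $\operatorname{int}(M)$ and $(\mathbb{R}^n\setminus M)\cup\{\infty\}$. Alexander duality \eqref{alexander} with $i=1$ then gives $\check H^{n-1}(\partial M,\{p\})=\tilde H_0(\mathbb{S}^n\setminus\partial M)=\mathbb{Z}$, and passing to reduced cohomology (which coincides with the unreduced group in positive degrees) yields $\tilde{\check H}^{n-1}(\partial M)=\mathbb{Z}$.

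The second computation is that $\tilde{\check H}^{n-1}(A)=\tilde{\check H}^{n-1}(B)=0$. By Alexander duality again, it suffices to prove $\mathbb{S}^n\setminus A$ is connected (and likewise for $B$). Here is the key geometric observation: every point of $V=\partial M\setminus A$ lies in $\overline{\operatorname{int}(M)}$ (since $M=\overline{\operatorname{int}(M)}$ and $V\subset\partial M\subset M$) and also in $\overline{\mathbb{R}^n\setminus M}$ (since each boundary point is a limit of exterior points, and $\mathbb{R}^n\setminus M$ is nonempty). Therefore adjoining $V$ to each of the two components of $\mathbb{S}^n\setminus\partial M$ keeps those components connected, and because $V$ is nonempty the two resulting sets share a point; thus their union $\mathbb{S}^n\setminus A=\operatorname{int}(M)\cup V\cup(\mathbb{R}^n\setminus M)\cup\{\infty\}$ is connected.

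Finally, the (reduced) Mayer--Vietoris sequence applied to $\partial M=A\cup B$ with $A\cap B=\{p\}$ degenerates, because the reduced cohomology of a point vanishes in every degree, to an isomorphism
\[
\tilde{\check H}^{n-1}(\partial M)\;\cong\;\tilde{\check H}^{n-1}(A)\oplus\tilde{\check H}^{n-1}(B)=0,
\]
contradicting $\tilde{\check H}^{n-1}(\partial M)=\mathbb{Z}$ from the first step. I expect the main obstacle to be the connectivity argument for $\mathbb{S}^n\setminus A$: $A$ is an arbitrary closed subset of $\partial M$ with no manifold structure, so the argument must rely solely on the topological facts that $M=\overline{\operatorname{int}(M)}$, $\operatorname{int}(M)$ is connected, and the exterior is connected. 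A minor secondary concern is handling $n=2$ uniformly with $n\geq 3$, which is why using reduced Čech cohomology throughout is preferable to the unreduced sequence displayed in \eqref{Mayer1}.
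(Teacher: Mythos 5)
Your proof is correct and follows essentially the same route as the paper's: a contradiction via the \v Cech--cohomology Mayer--Vietoris sequence for the decomposition $\partial M = A\cup B$ with $A\cap B=\{p\}$, combined with Alexander duality to compute $\check H^{n-1}(\partial M)=\mathbb{Z}$ while $\check H^{n-1}(A)=\check H^{n-1}(B)=0$. The only differences are cosmetic: you re-derive the connectedness of $\mathbb{S}^n\setminus A$ directly from $M=\overline{\operatorname{int}(M)}$ rather than invoking Proposition~\ref{prop:nosep} (which packages exactly that argument via Lemma~\ref{lem:irredsep}), and your use of reduced cohomology treats $n=2$ uniformly, whereas the paper disposes of $n=2$ separately by citing \cite{LRT:02}.
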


\begin{proof}
For $n=2$ the result follows from~\cite[Theorem~1.1~(iii)]{LRT:02}. To prove the result for $n\ge 3$ assume on the contrary that there is a separating point $\{x\}$ of $\partial M$. Then there are nonempty compact proper subsets $A,B\subset\partial M$ satisfying $\partial M = A \cup B$ with $A\cap B = \{x\}$.
We will now use the Mayer-Vietoris-sequence for \v Cech cohomology
groups \eqref{Mayer1} in order to prove our result.
Since $A,B$ are closed subsets of $\mathbb{R}^n$ and $M=A\cup B$ is a locally compact Hausdorff space the
Mayer-Vietoris sequence \eqref{Mayer1} is exact for this choice. We now apply Alexander Duality \eqref{alexander} together with standard results from
singular homology theory to derive that (see Remark~\ref{rem:cpct})
\[
\begin{array}{rll}
\check H^{n-2}(A \cap B)=& H_1(\mathbb{S}^n\setminus(A\cap B))=0   &\hbox{(as $A\cap B = \{x\}$, a singleton)},\\
\check H^{n-1}(A \cup B)=& \tilde H_0(\mathbb{S}^n\setminus(A\cup B))=\mathbb{Z}   &\hbox{(as $\mathbb{S}^n\setminus(A\cup B)=\mathbb{S}^n\setminus\partial M$ has}\\
&&\hbox{\, 2 components by Lemma~\ref{compconnected})},\\
\check H^{n-1}(A)=& \tilde H_0(\mathbb{S}^n\setminus A)= 0  &\hbox{(as $\mathbb{S}^n\setminus A$ has one component by Proposition~\ref{prop:nosep})},\\
\check H^{n-1}(B)=& \tilde H_0(\mathbb{S}^n\setminus B)= 0  &\hbox{(as $\mathbb{S}^n\setminus B$ has one component by Proposition~\ref{prop:nosep})}.\\
\end{array}
\]
Inserting this in \eqref{Mayer1} yields that the sequence
$\cdots\rightarrow 0\rightarrow \mathbb{Z}\rightarrow0\rightarrow\cdots$ is exact, which is absurd. This yields the desired contradiction and
the result is proved.
\end{proof}

We now recall some notions and results from point set topology that will be needed in Section~\ref{sec:23}.

\begin{definition}[Cut]\label{def:Cut}
Let $X$ be a topological space. A set $Y\subset X$ is said to \emph{cut} $X$ if there are points $x_1,x_2 \in X\setminus Y$ that cannot be joined by a continuum in $X\setminus Y$, {\it i.e.}, there does not exist a continuum $C\subset X\setminus Y$ such that $x_1,x_2 \in C$ ({\it cf}.~\cite[\S47, VIII]{Kuratowski:68}). 
\end{definition}

The following result states that under particular assumptions separations and cuts are the same.

\begin{lemma}[{{\it cf.}~\cite[\S50, II, Theorem~8]{Kuratowski:68}}]\label{lem:cutcut}
Let $X$ be a connected and locally connected space. A closed set Y is a separator of $X$ if and only if it cuts $X$.
\end{lemma}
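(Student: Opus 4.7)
The plan is to prove the two implications separately; the forward direction (separator $\Rightarrow$ cut) is routine, and the converse is the substantive direction where local connectedness enters.

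For the forward direction, assume $Y$ is a separator, so $X\setminus Y = U \sqcup V$ for disjoint nonempty sets open in $X\setminus Y$, which are open in $X$ since $Y$ is closed. Choose $x_1 \in U$ and $x_2 \in V$. Given any continuum $C \subset X\setminus Y$, the decomposition $C = (C \cap U) \sqcup (C \cap V)$ writes $C$ as a disjoint union of two relatively open subsets, so the connectedness of $C$ forces one of them to be empty. Hence no continuum in $X\setminus Y$ contains both $x_1$ and $x_2$, and $Y$ cuts $X$.

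For the converse, assume $X\setminus Y$ is connected. The goal is to show that any two points $x_1, x_2 \in X\setminus Y$ lie together in some continuum inside $X\setminus Y$. Since $Y$ is closed and $X$ is locally connected, $X\setminus Y$ is an open, locally connected subspace. Define $\sim$ on $X\setminus Y$ by $x \sim y$ iff there exists a continuum $C \subset X\setminus Y$ with $\{x,y\} \subset C$. This is reflexive (via $C = \{x\}$), symmetric, and transitive (the union of two continua sharing a common point is again a continuum). It then suffices to show each $\sim$-class is open in $X\setminus Y$: this partitions $X\setminus Y$ into pairwise disjoint open sets, and by connectedness into a single class, yielding the desired continuum containing $x_1$ and $x_2$.

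The main obstacle is this openness step. Local connectedness alone supplies only small connected \emph{open} neighborhoods, whereas one must upgrade these to small \emph{continua} inside $X\setminus Y$. I would combine local connectedness with the local compactness of $X\setminus Y$ (implicit in the Kuratowski metric-space framework and automatic whenever $X$ is a compact metric space, as in all intended applications to the boundary of a tile) to choose a connected open neighborhood $V$ of a given $y \in X\setminus Y$ with $\overline{V}$ compact and $\overline{V} \subset X\setminus Y$; then $\overline{V}$ is a continuum witnessing $z \sim y$ for all $z \in V$, so the $\sim$-class of $y$ contains the neighborhood $V$ and is therefore open.
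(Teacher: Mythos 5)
The paper offers no proof of this lemma at all: it is quoted directly from Kuratowski (\S50, II, Theorem~8), so there is no internal argument to measure yours against. Your proof is the standard one and is essentially correct. The forward direction is exactly right (a continuum in $X\setminus Y$ is connected, hence lies wholly in one piece of any separation). For the converse, the equivalence-class argument --- the relation ``joinable by a continuum in $X\setminus Y$'' has open classes, so connectedness of $X\setminus Y$ forces a single class --- is the textbook route, and you correctly isolate the one genuine difficulty: local connectedness by itself supplies small connected \emph{open} neighborhoods, not small continua. Your fix, choosing a connected open $V\ni y$ with $\overline{V}$ compact and $\overline{V}\subset X\setminus Y$ so that $\overline{V}$ is itself a continuum witnessing the relation on all of $V$, is valid, but it uses local compactness, which is not among the stated hypotheses. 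This is less a flaw in your argument than a looseness in the statement as transcribed: the standard proof (yours included) genuinely needs completeness or local compactness, which is part of Kuratowski's standing metric-space framework in the cited section, and it is available in every application in the paper, where $X=\partial M$ for a compact surface $M$. I would only ask that you state the extra hypothesis explicitly rather than calling it ``implicit,'' and that you record the two small facts you use silently: the closure of a connected set is connected, and the union of two continua sharing a point is a continuum (needed for transitivity of your relation).
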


We also need the following results on joining points by arcs as well as  connectedness of preimages.

\begin{lemma}[{see \cite[\S52, II, Theorem~16]{Kuratowski:68}}]\label{lem:simpleclosed}
Let $X$ be a locally connected continuum which has no separating point. Then each pair of points $x_1,x_2\in X$ is contained in a simple closed curve $c\subset X$.
Thus $x_1$ and $x_2$ can be joined by an arc in $X\setminus \{x\}$ for each $x\in X\setminus \{x_1,x_2\}$.
\end{lemma}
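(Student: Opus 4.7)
The statement has two parts: (a) $x_1$ and $x_2$ lie on a common simple closed curve $c\subset X$, and (b) for each third point $x$, they can be joined by an arc in $X\setminus\{x\}$. Part (b) is an immediate consequence of (a), because a simple closed curve through $x_1$ and $x_2$ decomposes into two arcs joining them whose intersection is exactly $\{x_1,x_2\}$, and any $x\in X\setminus\{x_1,x_2\}$ lies on at most one of the two. So the real content is (a), and the plan is to construct two arcs from $x_1$ to $x_2$ meeting only at those endpoints.

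First I would invoke the Hahn--Mazurkiewicz theorem to deduce that a locally connected continuum is arcwise connected and, being locally connected, is also locally arcwise connected. Fix an arc $\alpha\colon[0,1]\to X$ with $\alpha(0)=x_1$, $\alpha(1)=x_2$ and let $p=\alpha(\tfrac12)$. Since $p$ is not a separating point, $X\setminus\{p\}$ is connected, and as an open subset of $X$ it inherits local arcwise connectedness, hence is itself arcwise connected. Choose an arc $\gamma\colon[0,1]\to X\setminus\{p\}$ from $x_1$ to $x_2$.

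The main step is the splicing: produce from $\alpha$ and $\gamma$ a second arc $\beta$ from $x_1$ to $x_2$ such that $\alpha\cap\beta=\{x_1,x_2\}$. Because $p\notin\gamma$, the closed sets $\gamma\cap\alpha([0,\tfrac12])$ and $\gamma\cap\alpha([\tfrac12,1])$ are disjoint. Set
\[
a=\max\{u\in[0,1]:\gamma(u)\in\alpha([0,\tfrac12])\},\qquad b=\min\{u\in[a,1]:\gamma(u)\in\alpha([\tfrac12,1])\},
\]
write $\gamma(a)=\alpha(s)$ with $s\in[0,\tfrac12)$ and $\gamma(b)=\alpha(t)$ with $t\in(\tfrac12,1]$, and define
\[
\beta=\alpha([0,s])\cup\gamma([a,b])\cup\alpha([t,1]).
\]
By the extremality of $a$ and $b$, the middle piece $\gamma([a,b])$ meets $\alpha$ exactly in $\{\alpha(s),\alpha(t)\}$, and the three pieces $\alpha([0,s])$, $\gamma([a,b])$, $\alpha([t,1])$ are otherwise pairwise disjoint. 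Hence $\beta$ is a simple arc from $x_1$ to $x_2$ meeting $\alpha$ only in $\{x_1,x_2\}$, and $c:=\alpha\cup\beta$ is the desired simple closed curve.

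The main obstacle is the splicing argument in the third paragraph: one must check carefully that the maximality of $a$ and minimality of $b$, combined with $p\notin\gamma$, prevent $\gamma([a,b])$ from revisiting $\alpha([0,s))\cup\alpha((t,1])$, and that the resulting $\beta$ is injective rather than just a path. These verifications are elementary once the extremal parameters are set up correctly, but this is where all of the topological content of ``$X$ has no separating point'' is actually used; the remaining ingredients (arcwise connectivity, local arcwise connectivity of open sets in a Peano continuum) are standard consequences of Hahn--Mazurkiewicz.
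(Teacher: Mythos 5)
The paper does not prove this lemma at all; it is quoted verbatim from Kuratowski (\S52, II, Theorem~16), so there is no in-paper argument to compare against. Your reduction of the second assertion to the first is fine, and your preliminary facts (Hahn--Mazurkiewicz, arcwise connectivity of open subsets of a Peano continuum, the non-cut-point hypothesis giving an arc $\gamma$ from $x_1$ to $x_2$ avoiding $p$) are all correct. The problem is the splicing step, which contains a genuine gap rather than a mere verification left to the reader.

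Your set $\beta=\alpha([0,s])\cup\gamma([a,b])\cup\alpha([t,1])$ contains the subarcs $\alpha([0,s])$ and $\alpha([t,1])$ of $\alpha$, so $\alpha\cap\beta\supseteq\alpha([0,s])\cup\alpha([t,1])$, which equals $\{x_1,x_2\}$ only in the degenerate case $s=0$, $t=1$. Consequently $c=\alpha\cup\beta=\alpha\cup\gamma([a,b])$ is a theta-shaped set, not a simple closed curve: the only simple closed curve it contains is $\alpha([s,t])\cup\gamma([a,b])$, which passes through $p=\alpha(\tfrac12)$ but in general misses both $x_1$ and $x_2$. What your argument actually proves is that every point of $X$ lies on \emph{some} simple closed curve, not that two \emph{prescribed} points lie on a common one. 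The natural repair --- iterate the bypass construction on the residual pieces $\alpha([0,s])$ and $\alpha([t,1])$ --- does not obviously converge, since each new detour can again reattach at interior points arbitrarily close to $x_1$ and $x_2$; this is exactly why the cyclic connectivity theorem (equivalently, the $n=2$ case of Menger's theorem for Peano continua) requires the more delicate chain or limit arguments found in Whyburn and Kuratowski, and why the paper cites the result rather than proving it.
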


\begin{lemma}[{see \cite[\S46, I, Theorem~9]{Kuratowski:68}}]\label{lem:preimageconnected}
If the point preimages of a closed mapping are connected, then the preimages of connected sets are connected as well.
\end{lemma}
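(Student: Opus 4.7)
The plan is to argue by contradiction using a push-forward of a hypothetical separation. Let $f\colon X\to Y$ be the closed map with connected point-preimages and let $C\subset Y$ be connected. Since $f^{-1}(C)=f^{-1}(C\cap f(X))$, points of $C$ outside $f(X)$ contribute nothing, so I may replace $C$ by $C\cap f(X)$ (equivalently, assume $C\subset f(X)$ so that the restriction $f\colon f^{-1}(C)\to C$ is surjective).

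Now suppose for contradiction that $f^{-1}(C)=Z_1\cup Z_2$ is a separation into disjoint nonempty sets each closed in $f^{-1}(C)$. For every $y\in C$, the fiber $f^{-1}(y)$ is connected by hypothesis and is contained in $f^{-1}(C)$, so it must lie entirely in $Z_1$ or entirely in $Z_2$. Setting $C_i=f(Z_i)$ for $i=1,2$, this immediately yields a partition $C=C_1\sqcup C_2$ with both pieces nonempty.

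The crux is to show each $C_i$ is closed in $C$, after which $C=C_1\cup C_2$ becomes a genuine separation and contradicts connectedness. Since $Z_i$ is closed in $f^{-1}(C)$, write $Z_i=W_i\cap f^{-1}(C)$ for some $W_i$ closed in $X$. Because $f$ is closed, $f(W_i)$ is closed in $Y$, hence $f(W_i)\cap C$ is closed in $C$. The needed identity is $f(W_i)\cap C=f(Z_i)$: any $y\in f(W_i)\cap C$ has a preimage $x\in W_i$, and since $y\in C$ forces $x\in f^{-1}(C)$, one obtains $x\in W_i\cap f^{-1}(C)=Z_i$ and therefore $y\in C_i$; the reverse inclusion is immediate.

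The main obstacle is precisely the verification in the previous paragraph: closedness of $Z_i$ in the subspace $f^{-1}(C)$ is weaker than closedness in $X$, and one must choose the right representative $W_i$ so that the closed-map hypothesis can be transported to closedness of $C_i$ in $C$. Once this push-forward argument is in place, the rest is bookkeeping, and the initial reduction to $C\subset f(X)$ disposes of the edge case where $C$ meets the complement of $f(X)$.
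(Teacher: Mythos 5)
The paper offers no proof of this lemma at all---it is quoted verbatim from Kuratowski \cite{Kuratowski:68}---so there is nothing internal to compare against; your separation--push-forward argument is the standard proof, and its core is correct. The partition of $f^{-1}(C)$ into fiber-saturated pieces $Z_1,Z_2$, the identity $f(W_i)\cap C=f(Z_i)$, and the use of closedness of $f$ to conclude that each $C_i$ is closed in $C$ are all carried out correctly.

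However, your opening reduction is not the harmless bookkeeping you claim it is. Replacing $C$ by $C\cap f(X)$ preserves the preimage but can destroy the hypothesis that the set is \emph{connected}, and the remainder of your proof uses precisely that hypothesis to reach the contradiction. Indeed, without surjectivity the lemma as literally stated is false: take $X=\{0,1\}$ with $f$ the inclusion into $Y=[0,1]$. This map is closed, every point preimage is a singleton or empty (hence connected), and $C=[0,1]$ is connected, yet $f^{-1}(C)=\{0,1\}$ is disconnected. The correct repair is not a reduction but an added hypothesis: the lemma must be read, as in Kuratowski's setting, for a mapping \emph{onto} its target (or at least with $C\subset f(X)$ assumed, so that every fiber over $C$ is nonempty). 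That hypothesis holds in the paper's application, where the map in question is $\canonical|_{\partial \start}\colon\partial\start\to\partial T$, which is surjective by Theorem~\ref{hprop}~(ii). With surjectivity granted, your reduction becomes vacuous and the rest of the argument goes through without change.
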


\subsection{Self-affine tiles and manifolds}\label{sec:23}
We shall now prove results on boundaries of self-affine $\mathbb{Z}^3$-tiles that are homeomorphic to a closed surface. 
In the statement of the next result recall the definition of semi-contractibility given in Definition~\ref{def:semicontr}.

\begin{theorem}\label{upperthm5}\label{sphere2thm}\label{spherecor}
Let $T$ be a self-affine $\mathbb{Z}^3$-tile with connected interior which admits a semi-con\-tract\-ible monotone model $M$ whose boundary is the closed surface $\mathcal{S}$. Then the following assertions hold.
\begin{itemize}
\item[(i)] $\partial T$ is homeomorphic to $\mathcal{S}$.
\item[(ii)] Under the restriction $\canonical|_{\partial \start}$ 
a preimage of a point cannot be a path separator of $\partial \start$.
\end{itemize}
\end{theorem}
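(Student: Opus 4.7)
The plan is to establish (ii) first and then deduce (i) via Corollary~\ref{RS}. Because $\partial M=\mathcal{S}$ is a closed surface and hence locally path-connected, for any closed set $Y\subset\partial M$ the open complement $\partial M\setminus Y$ has coinciding components and path components. Consequently the conditions ``$Y$ is a path separator of $\partial M$'' and ``$Y$ is a separator of $\partial M$'' are equivalent for such $Y$. Applied to $Y=(Q|_{\partial M})^{-1}(x)$ for each $x\in\partial T$, statement (ii) will thus verify the hypothesis of Corollary~\ref{RS}, immediately yielding $\partial T\cong\mathcal{S}$ and hence (i).

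\textbf{Proof of (ii).} I argue by contradiction. Suppose some fiber $(Q|_{\partial M})^{-1}(x)$ path-separates $\partial M$, witnessed by $y_1,y_2\in\partial M\setminus (Q|_{\partial M})^{-1}(x)$ lying in distinct path components, and set $z_i=Q(y_i)\in\partial T\setminus\{x\}$. If $z_1=z_2$, then $y_1,y_2$ both lie in the fiber $(Q|_{\partial M})^{-1}(z_1)$, which is connected by Theorem~\ref{all:connectedness}(iii) and entirely contained in $\partial M\setminus (Q|_{\partial M})^{-1}(x)$; local path-connectedness of the closed surface $\partial M$ forces this connected set into a single path component of the open complement, contradicting the choice of $y_1,y_2$.

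If instead $z_1\neq z_2$, my goal is to produce a connected subset of $\partial M\setminus (Q|_{\partial M})^{-1}(x)$ containing both $y_i$, which gives the same contradiction. To obtain such a set I would first verify that $\partial T$ is a locally connected continuum without separating points: compactness and connectedness come from $\partial T=Q(\partial M)$ (working componentwise if $\mathcal{S}$ is disconnected); local connectedness follows because $Q|_{\partial M}$ is a monotone closed map from a locally connected space onto $\partial T$; and the absence of separating points is provided by Proposition~\ref{prop:nocut}. Lemma~\ref{lem:simpleclosed} then supplies an arc $\alpha\subset\partial T\setminus\{x\}$ joining $z_1$ to $z_2$. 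Since $Q|_{\partial M}$ is a closed map with connected point preimages (Theorem~\ref{all:connectedness}(iii)), Lemma~\ref{lem:preimageconnected} shows that $(Q|_{\partial M})^{-1}(\alpha)$ is connected; it is contained in $\partial M\setminus (Q|_{\partial M})^{-1}(x)$ and contains both $y_1$ and $y_2$, closing the contradiction.

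\textbf{Main obstacle.} The delicate step I anticipate is assembling the structural properties required for Lemma~\ref{lem:simpleclosed}, in particular verifying that $\partial T$ inherits local connectedness through the monotone closed quotient $Q|_{\partial M}$. A secondary subtlety is allowing $\mathcal{S}$ to be disconnected: in that case each component of $\partial M$ maps onto a component of $\partial T$ under the monotone closed map $Q|_{\partial M}$, and the argument above is executed on each component, with the resulting componentwise homeomorphisms assembling into the global homeomorphism $\partial T\cong\mathcal{S}$ claimed in (i).
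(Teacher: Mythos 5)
Your proposal is correct and follows the same overall architecture as the paper's proof: establish (ii) by contradiction using the absence of separating points in $\partial T$ (Proposition~\ref{prop:nocut}), Lemma~\ref{lem:simpleclosed} to produce an arc in $\partial T\setminus\{x\}$, and Lemma~\ref{lem:preimageconnected} to get a connected preimage avoiding the fiber over $x$; then deduce (i) from Corollary~\ref{RS} by showing a separating fiber would be a path separator. The one place you genuinely diverge is the final step of (ii): the paper converts the continuum $(\canonical|_{\partial\start})^{-1}(\ell)$ into an actual path avoiding $(\canonical|_{\partial\start})^{-1}(x)$ by an explicit $\varepsilon$-chain argument, using only that $\partial\start$ is a locally connected continuum and quoting Kuratowski for the existence of small connecting arcs. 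You instead invoke local path-connectedness of the closed surface $\partial\start$, so that in the open complement of the (closed) fiber, components coincide with path components, and a connected subset automatically lies in a single path component; the same observation lets you identify ``separator'' with ``path separator'' for closed subsets when passing to (i), where the paper routes through Lemma~\ref{lem:cutcut}. Your version is shorter and exploits the manifold hypothesis more directly; the paper's chain argument is marginally more general in that it would survive with $\partial\start$ merely a Peano continuum. Both are valid; your case split on $z_1=z_2$ versus $z_1\neq z_2$ is a harmless extra precaution, and the remark about disconnected $\mathcal{S}$ is moot since a closed surface is connected.
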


\begin{proof}
We start with the proof of assertion (ii). 
Assume on the contrary that there is $x\in \partial T$ such that $(\canonical|_{\partial \start})\inv(x)$ is a path separator of $\partial \start$ between two elements $u,v\in \partial \start$ in the sense of Definition~\ref{def:ps}. We first observe that being the continuous image of the locally connected continuum $\partial \start$ (by Theorem~\ref{hprop}~(ii)), the set $\partial T$ is a locally connected continuum. Moreover, as $\operatorname{int}(T)$ is connected, Proposition~\ref{prop:nocut} implies that $\partial T$ contains no separating point. Thus by Lemma~\ref{lem:simpleclosed} we may join $ \canonical(u)$ and $ \canonical(v)$ by an arc $\ell$ in $\partial T\setminus\{x\}$. Since $(\canonical|_{\partial \start})\inv(y)$ is connected for each $y\in \partial T$ by Theorem~\ref{all:connectedness}~(iii) and $\canonical|_{\partial \start}$ is a closed mapping, Lemma~\ref{lem:preimageconnected} implies that $(\canonical|_{\partial \start})\inv(\ell)$ is a continuum. By construction, $(\canonical|_{\partial \start})\inv(\ell)$ contains $u$ and $v$, and is disjoint from $(\canonical|_{\partial \start})\inv(x)$.  
As $(\canonical|_{\partial \start})\inv(\ell)$ and $(\canonical|_{\partial \start})\inv(x)$ are disjoint compact sets there is $\varepsilon > 0$ such that 
\begin{equation}\label{eq:proofstr}
\min\{||y-y'||\mid y\in(\canonical|_{\partial \start})\inv(\ell),\, y'\in(\canonical|_{\partial \start})\inv(x)\} > \varepsilon.
\end{equation}
Moreover, since $(\canonical|_{\partial \start})\inv(\ell)$ is a continuum, $u$ and $v$ are chain connected in $(\canonical|_{\partial \start})\inv(\ell)$, {\it i.e.}, for each $\eta > 0$ there exist $y_0, y_1,\ldots, y_m\in (\canonical|_{\partial \start})\inv(\ell)$ with $||y_i-y_{i-1}|| < \eta$ for each $i\in\{1,\ldots, m\}$ such that $y_0=u$ and $y_m=v$. As $\partial M$ is a locally connected continuum we may choose $\eta$ in a way that $y_{i-1}$ and $y_{i}$ can be connected by an arc $a_i$ that is entirely contained in an $\varepsilon$-ball around $y_i$ for each $i\in\{1,\ldots, m\}$ (see {\it e.g.}~\cite[\S50,~I,~Theorem~4]{Kuratowski:68}). Note that by \eqref{eq:proofstr}, each arc $a_i$ is disjoint from $(\canonical|_{\partial \start})\inv(x)$. Thus we can concatenate the arcs $a_1,\ldots,a_{m}$ to obtain a path $p$ connecting $u$ and $v$ in $\partial M$ which avoids $(\canonical|_{\partial \start})\inv(x)$. This contradicts our assumption and assertion~(ii) is proved.

As $\start$ is a semi-contractible monotone model for $T$ whose boundary $\partial T$ is a 2-manifold $\mathcal{S}$, we want to use Corollary~\ref{RS} to prove assertion~(i) of the theorem. To be able to do so, we have to show that for each $x\in \partial T$ the preimage $(\canonical|_{\partial \start})\inv(x)$ is not a separator of $\partial \start$. 
Suppose on the contrary that $(\canonical|_{\partial \start})\inv(x)$ is a separator of $\partial \start$. Then Lemma~\ref{lem:cutcut} implies that $(\canonical|_{\partial \start})\inv(x)$ cuts $\partial \start$, {\it i.e.}, 
there are $u,v\in\partial \start\setminus (\canonical|_{\partial \start})\inv(x)$ that cannot be joined by a continuum in $\partial \start\setminus (\canonical|_{\partial \start})\inv(x)$ and, {\it a fortiori}, cannot be joined by a path in $\partial \start\setminus (\canonical|_{\partial \start})\inv(x)$. Thus $(\canonical|_{\partial \start})\inv(x)$ is a path separator of $\partial M$ and we get a contradiction to assertion (ii) that was proved before. Thus assertion~(i) follows from Corollary~\ref{RS}. 
\end{proof}

We immediately obtain the following consequence (note that we do not have to assume semi-contractibility here in view of the paragraph after Definition~\ref{def:semicontr}). 

\begin{theorem} \label{uppercor}
Let $T$ be a self-affine $\mathbb{Z}^3$-tile with connected interior which admits a monotone model $M$  whose boundary is homeomorphic to the 2-sphere $\mathbb{S}^2$. Then $\partial T$ is homeomorphic to $\mathbb{S}^2$ and a point preimage of the quotient map $\canonical|_{\partial \start}$ cannot be a path separator of $\partial \start$.
\end{theorem}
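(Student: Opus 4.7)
The plan is to derive this as a direct corollary of Theorem~\ref{upperthm5} by showing that the semi-contractibility hypothesis is automatic when $\partial M \cong \mathbb{S}^2$. The theorem itself then supplies both conclusions simultaneously (the homeomorphism $\partial T \cong \mathbb{S}^2$ and the non-path-separator property), so all the real work has already been done upstream.

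First I would argue semi-contractibility in the sense of Definition~\ref{def:semicontr}. Pick any $x\in\partial T$ and let $K=(\canonical|_{\partial M})^{-1}(x)$. Because $T$ is a self-affine $\mathbb{Z}^3$-tile it is the closure of its interior, so $\partial T$ contains more than one point, and hence $K$ is a proper closed subset of $\partial M$. Since $\partial M\cong\mathbb{S}^2$, the complement $\partial M\setminus K$ contains some point $p$, and the open neighborhood $U:=\partial M\setminus\{p\}$ of $K$ is homeomorphic to $\mathbb{R}^2$ via stereographic projection. In particular $U$ is contractible in itself, hence contractible in $\partial M$. Thus every point preimage of $\canonical|_{\partial M}$ sits inside a contractible neighborhood of $\partial M$, so $M$ is a semi-contractible monotone model (this is exactly the observation flagged in the paragraph after Definition~\ref{def:semicontr}).

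Having established semi-contractibility, I would then invoke Theorem~\ref{upperthm5} with the closed surface $\mathcal{S}=\mathbb{S}^2$. Hypothesis~(i) there yields $\partial T\cong\mathbb{S}^2$, and hypothesis~(ii) yields that a point preimage of $\canonical|_{\partial M}$ cannot be a path separator of $\partial M$. Both assertions of Theorem~\ref{uppercor} follow. There is no genuine obstacle here, as the entire content is already packaged in Theorem~\ref{upperthm5}; the only subtlety worth being careful about is verifying that $K$ is a proper subset of $\partial M$, which is why one needs the connected (hence nonempty, nontrivial) interior of $T$ to rule out the degenerate case $K=\partial M$.
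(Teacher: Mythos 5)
Your proposal is correct and matches the paper's own route exactly: the paper derives Theorem~\ref{uppercor} as an immediate consequence of Theorem~\ref{upperthm5}, citing the observation after Definition~\ref{def:semicontr} that a monotone model with spherical boundary is automatically semi-contractible. Your explicit verification of that observation (each fiber of $\canonical|_{\partial \start}$ is a proper closed subset of $\partial\start\cong\mathbb{S}^2$, hence lies in the complement of a point, which is an open disk and therefore contractible in $\partial\start$) is exactly the argument the paper leaves implicit.
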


There is a version of Theorem~\ref{upperthm5} for finite unions of $\mathbb{Z}^3$-translates of $T$. Before we make this precise, for a $\zn$-tile $M$ we set 
\begin{equation}\label{eq:unions}
[S]_M= \bigcup_{s\in S} (M + s) \qquad (S\subset M).
\end{equation}
Again, we write $[S]$ instead of $[S]_T$ if $T$ is a self-affine $\zn$-tile.

\begin{proposition}\label{prop:finiteunion}
Let $T$ be a self-affine $\mathbb{Z}^3$-tile which admits a semi-contractible monotone model $M$ and let $S \subset \mathbb{Z}^3$ be nonempty and finite. Assume that $\operatorname{int}([S])$ and $\mathbb{R}^3\setminus [S]$ are connected. If $\partial[S]_M\cong\mathcal{S}$ for a closed surface $\mathcal{S}$ then $\partial[S]_M\cong\mathcal{S}\cong\partial[S]$.
\end{proposition}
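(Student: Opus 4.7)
The plan is to apply Proposition~\ref{RSb} to the decomposition of $\partial[S]_M\cong\mathcal{S}$ induced by $Q|_{\partial[S]_M}\colon \partial[S]_M \to \partial[S]$, thereby mirroring the proof of Theorem~\ref{upperthm5}. Since $Q$ is $\mathbb{Z}^n$-equivariant, $Q(M+s)=T+s$ for every $s\in\mathbb{Z}^3$, and a point lies in $\partial[S]_M$ (respectively $\partial[S]$) iff it lies in some $(M+s)\cap(M+t)$ (respectively $(T+s)\cap(T+t)$) with $s\in S$ and $t\in\mathbb{Z}^3\setminus S$. Applying Proposition~\ref{imageLemma}~(ii) cell by cell then gives
\[
Q(\partial[S]_M) \;=\; \bigcup_{s\in S,\,t\in\mathbb{Z}^3\setminus S} Q\cell{\{s,t\}}_M \;=\; \bigcup_{s\in S,\,t\in\mathbb{Z}^3\setminus S}\cell{\{s,t\}} \;=\; \partial[S].
\]
Closedness of the restriction (compact Hausdorff source and target) together with Lemma~\ref{lem:upperCrit} makes the induced decomposition upper semi-continuous.

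Next I would verify monotonicity and condition (i) of Proposition~\ref{RSb}. For fixed $x\in\partial[S]$ set $S_x=\{r\in\mathbb{Z}^3 \mid x\in T+r\}$; this is finite and meets both $S$ and $\mathbb{Z}^3\setminus S$. Because $Q$ is $\mathbb{Z}^n$-equivariant,
\[
(Q|_{\partial[S]_M})^{-1}(x) \;=\; \bigcup_{s\in S\cap S_x,\ t\in S_x\setminus S}\bigl(Q^{-1}(x)\cap\cell{\{s,t\}}_M\bigr),
\]
each term of which is connected by Theorem~\ref{all:connectedness}~(i), and all of which contain the common nonempty connected set $Q^{-1}(x)\cap\cell{S_x}_M$; hence the whole preimage is connected. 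Condition~(i) then follows by combining the semi-contractibility of $M$ with the fact that $\partial[S]_M$ is already assumed to be a 2-manifold: each preimage is a finite union of translates of contractible point preimages of $Q|_{\partial M}$ and so sits inside a disk in $\partial[S]_M$.

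The main obstacle is condition~(ii) of Proposition~\ref{RSb}, namely that no preimage $(Q|_{\partial[S]_M})^{-1}(x)$ separates $\partial[S]_M$. I plan to copy the argument of Theorem~\ref{upperthm5}~(ii) essentially verbatim: via Lemma~\ref{lem:cutcut} it suffices to rule out path separators, and the chain-of-arcs construction used there needs only that $\partial[S]$ be a locally connected continuum with no separating point. Local connectedness is automatic since $\partial[S]$ is the continuous image of $\partial[S]_M\cong\mathcal{S}$. The genuinely new ingredient is the absence of a separating point in $\partial[S]$, which I would obtain by recycling the Alexander duality / Mayer--Vietoris computation of Proposition~\ref{prop:nocut}, with $\partial M$ replaced by $\partial[S]$. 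The hypothesis that $\operatorname{int}([S])$ and $\mathbb{R}^3\setminus[S]$ are connected, together with $[S]=\overline{\operatorname{int}([S])}$, implies via Lemma~\ref{lem:irredsep} (applied in $\mathbb{S}^3$ as in Remark~\ref{rem:cpct}) that $\partial[S]$ is an irreducible separator of $\mathbb{S}^3$; hence $\mathbb{S}^3\setminus\partial[S]$ has exactly two components while $\mathbb{S}^3$ minus any proper closed subset of $\partial[S]$ is connected, which is precisely the input needed to rerun the exact-sequence contradiction. With separating points ruled out, Proposition~\ref{RSb} delivers $\partial[S]\cong\partial[S]_M\cong\mathcal{S}$.
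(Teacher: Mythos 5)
Your proposal is correct and follows the paper's own argument essentially step for step: the paper likewise uses Lemma~\ref{lem:irredsep} (with the connectedness hypotheses on $\operatorname{int}([S])$ and $\mathbb{R}^3\setminus[S]$) to make $\partial[S]$ an irreducible separator, reruns the Mayer--Vietoris/Alexander-duality computation of Proposition~\ref{prop:nocut} to exclude separating points, and then repeats the proof of Theorem~\ref{upperthm5} via Proposition~\ref{RSb}, noting that Theorem~\ref{all:connectedness} extends to give monotonicity of $Q|_{\partial[S]_M}$. You actually supply more of the details (the identification $Q(\partial[S]_M)=\partial[S]$, the common-subset argument for connected preimages) than the terse original, and your handling of condition~(i) of Proposition~\ref{RSb} is no less careful than the paper's.
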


\begin{proof}
As $\partial[S]$ is the boundary of the components $\operatorname{int}([S])$ and $\mathbb{R}^3\setminus [S]$, it is an irreducible separator of $\mathbb{R}^3$ by Lemma~\ref{lem:irredsep}. Thus $[S]$ satisfies analogs of Lemma~\ref{compconnected} and Proposition~\ref{prop:nosep} and we may literally imitate the proof of Proposition~\ref{prop:nocut} to see that $\partial[S]$ has no separating point. The result now follows by the same proof as the one of Theorem~\ref{upperthm5} (just note that Theorem~\ref{all:connectedness} ~(iii) can be extended immediately to show that $\partial[S]$ is a monotone quotient of $\partial[S]_M$).
\end{proof}

The next theorem shows that under certain conditions $\partial T=\bigcup_{s\in\mathbb{Z}^3\setminus\{0\}}\langle\{0,s\}\rangle$ admits a natural CW-structure defined by the intersections $\cell{S}$. Recall that a set is \emph{degenerate} if it contains fewer than 2 points.

\begin{theorem}\label{th:complex}
Let $T$ be a self-affine $\mathbb{Z}^3$-tile with connected interior which admits a semi-contractible combinatorial monotone model $M$ whose boundary is the closed surface $\mathcal{S}$.

Let $S\in\complex{T}$ be nondegenerate. If $\cell{S}_M$ is a closed topological manifold or a ball then its canonical quotient $\cell{S}$ is either homeomorphic to $\cell{S}_\start$ or degenerate.  
\end{theorem}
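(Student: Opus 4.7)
The plan is to invoke Moore-type decomposition theorems for the restriction $\canonical|_{\cell{S}_\start} : \cell{S}_\start\to\cell{S}$. By Theorem~\ref{all:connectedness}(i) this map is monotone, and as a continuous surjection between compact Hausdorff spaces it is closed, so Lemma~\ref{lem:upperCrit} shows that the decomposition $G$ of $\cell{S}_\start$ by point-preimages is upper semi-continuous. If $|G|=1$ then $\cell{S}$ is a single point and we are in the degenerate case, so from now on assume $|G|\ge 2$.

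Next, I would stratify by the topological type of $\cell{S}_\start$: for a point the conclusion is trivial, for an arc or a circle a classical one-dimensional version of Moore's theorem applies, and for a $2$-disk or a closed surface, Proposition~\ref{RSb} is the relevant tool. In every case the two hypotheses to check are that each $g\in G$ sits in a disk (or arc) inside $\cell{S}_\start$ and that no $g\in G$ separates $\cell{S}_\start$. The first follows from the assumed semi-contractibility of $\start$ combined with the combinatorial hypothesis $\partial_i\cell{S}_\start=\cell{\delta S}_\start$: semi-contractibility encloses each preimage of $\canonical|_{\partial\start}$ in a disk of $\partial\start$, and since $\cell{S}_\start$ is a ball or closed manifold sitting inside $\partial\start$, the combinatorial condition lets us replace that disk by one meeting $\cell{S}_\start$ in a disk of $\cell{S}_\start$.

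The main obstacle is the non-separation condition. Suppose for contradiction that $g=(\canonical|_{\cell{S}_\start})^{-1}(x)$ separates $\cell{S}_\start$ into nonempty components $A$ and $B$. Because $\canonical$ is monotone, the preimage of every $y\ne x$ in $\cell{S}$ lies entirely in $A$ or entirely in $B$; combined with $\canonical$ being closed one shows $\overline{\canonical(A)}=\canonical(A)\cup\{x\}$ and similarly for $B$, whence $\cell{S}\setminus\{x\}=\canonical(A)\sqcup\canonical(B)$ is a genuine separation, making $x$ a cut point of $\cell{S}$. To derive a contradiction I would rule out cut points of $\cell{S}$ by transplanting the Mayer--Vietoris and Alexander-duality computation of Proposition~\ref{prop:nocut} to the intersection cell $\cell{S}=\bigcap_{s\in S}(T+s)$, using Theorem~\ref{upperthm5} to identify $\partial T$ with the closed surface $\mathcal{S}$ and thereby control the local complementary components of $\cell{S}$ in $\partial T$. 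An alternative route is to repeat the chain-of-arcs construction from the proof of Theorem~\ref{upperthm5}(ii) with $\partial\start$ replaced by the locally connected continuum $\cell{S}_\start$, which is available since $\cell{S}_\start$ is a manifold or ball.

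The hardest step is precisely the last one: Proposition~\ref{prop:nocut} is stated for $\zn$-tiles and its proof uses the global complement structure of such a tile, so the challenge is to adapt the duality argument to the much smaller set $\cell{S}$, exploiting only the local manifold structure of $\partial T$ guaranteed by Theorem~\ref{upperthm5}. Once cut points of $\cell{S}$ are excluded, the Moore-type theorem appropriate to the dimension of $\cell{S}_\start$ applies and delivers $\cell{S}\cong\cell{S}_\start$, completing the proof.
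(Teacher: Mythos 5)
There is a genuine gap at precisely the step you single out as the hardest one. Your plan is to apply a Moore-type theorem to the restricted map $\canonical|_{\cell{S}_\start}$ and to obtain the non-separation hypothesis by showing that $\cell{S}$ has no cut point via a transplanted version of Proposition~\ref{prop:nocut}. This cannot work as described. The duality computation in Proposition~\ref{prop:nocut} produces a contradiction only because $\partial\start$ is an irreducible separator of $\mathbb{S}^n$: that is what forces $\check H^{n-1}(A\cup B)=\tilde H_0(\mathbb{S}^n\setminus(A\cup B))=\mathbb{Z}$ while the summands $\check H^{n-1}(A)$ and $\check H^{n-1}(B)$ vanish. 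The cell $\cell{S}=\bigcap_{s\in S}(T+s)$ does not separate $\mathbb{R}^3$ at all, so in the transplanted Mayer--Vietoris sequence every relevant term is zero and no contradiction appears. Moreover, the intermediate claim is not obviously true before the theorem is proved: a monotone upper semi-continuous image of a disk can have a cut point (collapse a spanning arc of the disk to a point and the quotient is a wedge of two disks), so ``$\cell{S}$ has no cut point'' is essentially as strong as the conclusion you are after. Your fallback---rerunning the chain-of-arcs argument of Theorem~\ref{upperthm5}(ii) inside $\cell{S}_\start$---has the same circularity, since that argument invokes Lemma~\ref{lem:simpleclosed} and hence already needs the absence of separating points in the image. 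A smaller additional issue is that Proposition~\ref{RSb} is stated for closed 2-manifolds, so applying it to the disk $\cell{S}_\start$ would require a relative version the paper does not supply.

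The paper's proof avoids all of this by never applying a decomposition theorem to the individual cell. The cases where $\cell{S}_\start$ is a point, an arc, a circle, or the whole closed surface are dispatched by elementary facts about monotone quotients (plus Theorem~\ref{upperthm5}(i) in the surface case). In the remaining case $\cell{S}_\start$ is a disk with boundary circle $\partial_2\cell{S}_\start=\cellstar{\delta S}$, and the argument analyzes its image $Q\,\partial_2\cell{S}_\start$, which is a point or a circle. If it is a point, Theorem~\ref{upperthm5}(ii) applied to the \emph{global} map $\canonical|_{\partial\start}$ forces the preimage of that point to contain one complementary component of the circle in $\partial\start$, giving either $\cell{S}$ degenerate or a contradiction with Lemma~\ref{compcomp}. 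If it is a circle, unicoherence of the disk, Theorem~\ref{tilingcomplex}, and a null-homotopy pushed forward by $Q$ into $\partial T\cong\mathcal{S}$ identify $\cell{S}$ as the closure of a complementary disk of that circle in $\partial T$. To salvage your local approach you would need a substitute for this boundary-circle analysis; as written, the non-separation step does not go through.
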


\begin{proof}
Let $S\in\complex{T}$ be nondegenerate and assume, without loss of generality, that $0\in S$. This implies that $\cellstar{S}\subset\partial M$. Assume that $\cellstar{S}$ is a closed manifold or a ball inside of the surface $\partial\start \cong \mathcal{S}$. Thus $\cellstar{S}$ is either a closed surface, a closed disk, a circle, an arc, or a point. We have to show that $\cell{S}$ has the required properties. To this end we will use the fact that $\cell{S}=Q\cellstar{S}$ is a monotone quotient of $\cellstar{S}$ by Theorem~\ref{all:connectedness}~(i).

We first dispose of the easy cases. If $\cellstar{S}$ is a point, observing that the monotone quotient of a point is a point, we gain that $\cell{S}=Q\cellstar{S}$ is a point. Similarly, if $\cellstar{S}$ is an arc, the fact that the monotone quotient of an arc is either an arc or a point yields that $\cell{S}$ is an arc or a point. The case where $\cellstar{S}$ is a circle is also settled because it is well-known that the monotone quotient of a circle is a circle or a point. Finally, if $\cellstar{S}$ is a closed surface,  we have $\cellstar{S}=\mathcal{S}$ as a closed surface has no proper subsurface. Thus Theorem~\ref{upperthm5}~(i) yields that $\cell{S}\cong \mathcal{S}$.

It remains to consider the case where $\cellstar{S}$ is a closed disk. Since $M$ is combinatorial, this implies that $|S|=2$ and  $\partial_2 \cellstar{S} = \cellstar{\delta S}$. Thus, as $G_2(M)=\cell{\complexzero{M}^2}_\start=\partial M \cong \mathcal{S}$, the boundary $\partial_2\cellstar{S}$ is a circle and by Theorem~\ref{all:connectedness}~(ii) the canonical quotient $Q \partial_2 \cellstar{S}$ of $\partial_2 \cellstar{S}$ is monotone.  Therefore, $Q \partial_2 \cellstar{S}$ is either a point or a circle. We treat these alternatives separately.

Assume first that $Q \partial_2 \cellstar{S}$ is a point $x$. By Theorem~\ref{upperthm5}~(ii), the preimage $(\canonical|_{\partial\start})\inv(x)$ cannot be a path separator of $\start$. As this preimage contains the  circle $\partial_2 \cellstar{S}$ (which is a path separator by our semi-contractibility assumption), it therefore has to contain one of the two complementary components of $\partial_2 \cellstar{S}$.  Consequently, either $\cellstar{S}\subset(\canonical|_{\partial\start})\inv(x)$ or $\partial\start\setminus \cellstar{S}\subset(\canonical|_{\partial\start})\inv(x)$. If $\cellstar{S}\subset(\canonical|_{\partial\start})\inv(x)$ we have $Q\cellstar{S}=\cell{S}=\{x\}$ and we are done. If, on the other hand, $\partial\start\setminus \cellstar{S}\subset(\canonical|_{\partial\start})\inv(x)$ we gain $\cell{S}=\canonical(\cellstar{S})=\canonical(\partial\start)=\partial T$. 
As $S=\{0,s\}$ for some $s\in\mathbb{Z}^3\setminus\{0\}$ this implies that $\partial T = \cell{S} = \cell{\{0,s\}} \subset \partial (T +s)$. Thus $\operatorname{int}(T)$ is a bounded component of $\rn\setminus(T+s)$. As this contradicts Lemma~\ref{compcomp}, this situation does not occur. 

Now assume that $Q \partial_2 \cellstar{S}$ is a circle. 
As the monotone image of a disk cannot be a circle (see {\it e.g.}~\cite[I, \S4, Exercise~9]{Daverman:07}; observe that a disk is unicoherent but a circle is not)  $\canonical \cellstar{S}\setminus \canonical \partial_2 \cellstar{S}=\cell{S}\setminus \canonical \partial_2 \cellstar{S}\not=\emptyset$.  Since $M$ is combinatorial, Theorem~\ref{tilingcomplex}~(i) implies that $\partial_2 \cell{S}\subset\canonical \partial_2 \cellstar{S}$, hence, $\cell{S}\setminus \partial_2 \cell{S}\not=\emptyset$. As $\cell{S}$ is a proper subset of $\partial T$ (otherwise we get a contradiction to Lemma~\ref{compcomp} as above) we conclude that $\partial_2  \cell{S}\subset \canonical \partial_2 \cellstar{S}$ is a separator of $\partial T \cong \mathcal{S}$. This implies that $\partial_2  \cell{S}=\canonical \partial_2 \cellstar{S}$ since a proper subset of a circle cannot be a separator of $\mathcal{S}$.
Recall that a circle $c$ in a closed surface is homotopic to a single point if and only if it has two complementary components at least one of which is a disk. In particular, if $H:\mathbb{S}^1\times[0,1] \to D$ is a homotopy that deforms $c$ to a point then $D$ has to contain the closure of at least one of these complementary disks.
Since the circle $\partial_2 \cellstar{S}$ bounds the disk $\cellstar{S}$ there is a homotopy $H:\mathbb{S}^1\times[0,1] \to \cellstar{S}$ that deforms $\partial_2 \cellstar{S}$ to a single point in the closed surface $\partial M \cong \mathcal{S}$. Applying $Q$ to $H$ yields a homotopy $Q\circ H:\mathbb{S}^1\times[0,1] \to \cell{S}$ that deforms the circle $\partial_2 \cell{S}=\canonical \partial_2 \cellstar{S}$ to a single point in the closed surface $\partial T \cong \mathcal{S}$. As  $\cell{S}$ is the closure of one of the two complementary components of $\partial_2 \cell{S}$ it must be a closed disk.
%
%
\end{proof}

To check that $\operatorname{int}(T)$ is connected, the following easy criterion is often applicable ({\it cf.}~\cite[Proposition~13.1]{Bandt:12}).

\begin{lemma}\label{BandtIntConnected}
Let $T=T(A,\digits)$ be a self-affine $\mathbb{Z}^n$-tile. If there is a connected set $E\subset\operatorname{int}(T)$ such that $E \cap A\inv(E+d)\neq\emptyset$ holds for each $d\in \digits$ then $\operatorname{ int}(T)$ is connected.
\end{lemma}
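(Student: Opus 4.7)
The plan is to use the iterated set equation to spread the connected set $E$ throughout $\operatorname{int}(T)$ as a single connected, dense subfamily, and then exploit openness of components of $\operatorname{int}(T)$ to conclude that it is in fact connected.

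First I would iterate \eqref{setequation} to get $T=\bigcup_{d\in\digits_k}A^{-k}(T+d)$ with $\digits_k$ as in \eqref{digitsk}, set $E_d^{(k)}=A^{-k}(E+d)$, and form $\tilde E_k=\bigcup_{j=0}^k\bigcup_{d\in\digits_j}E_d^{(j)}$ and $\tilde E_\infty=\bigcup_{k\ge 0}\tilde E_k$. Each $E_d^{(k)}$ is a connected homeomorphic image of $E$; and since $E\subset\operatorname{int}(T)$, it lies in $A^{-k}(\operatorname{int}(T)+d)=\operatorname{int}(A^{-k}(T+d))\subset\operatorname{int}(T)$, so $\tilde E_\infty\subset\operatorname{int}(T)$.

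The heart of the argument is an induction on $k$ showing that each $\tilde E_k$ is connected, with base case $\tilde E_0=E$. Using the decomposition $\digits_{k+1}=\digits+A\digits_k$, I would write any $d\in\digits_{k+1}$ as $d=d_0+Ad''$ with $d_0\in\digits$ and $d''\in\digits_k$; applying $A^{-(k+1)}$ and translating by $A^{-k}d''$ to the hypothesis $E\cap A\inv(E+d_0)\ne\emptyset$ yields $E_{d''}^{(k)}\cap E_d^{(k+1)}\ne\emptyset$. Thus every newly introduced piece at level $k+1$ meets $\tilde E_k$, so $\tilde E_{k+1}$ is connected; consequently $\tilde E_\infty$ is connected as a nested union of connected sets.

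To finish, I would note that $\tilde E_\infty$ is dense in $T$: by \eqref{eq:norm} the diameters of the subtiles $A^{-k}(T+d)$ tend to $0$, and each such subtile contains the nonempty piece $E_d^{(k)}\subset\tilde E_\infty$. Now let $U$ be the component of $\operatorname{int}(T)$ containing the connected set $\tilde E_\infty$. Components of an open subset of $\mathbb{R}^n$ are open, so any other nonempty component $V$ would contain an open ball $B\subset T$ which, by density of $\tilde E_\infty$ in $T$, must meet $\tilde E_\infty\subset U$, a contradiction. Hence $\operatorname{int}(T)=U$ is connected. The main (and really only) obstacle is the inductive step, and the hypothesis $E\cap A\inv(E+d)\ne\emptyset$ has been custom-designed precisely to ensure that each newly attached piece links back to the previous stage.
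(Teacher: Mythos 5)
Your proof is correct and follows essentially the same route as the paper: iterate the set equation and use the hypothesis $E\cap A^{-1}(E+d)\neq\emptyset$ together with the decomposition $\digits_{k+1}=\digits+A\digits_k$ to chain the translated copies $A^{-k}(E+d)$ into a single connected set by induction on $k$ (your phrase ``applying $A^{-(k+1)}$ and translating by $A^{-k}d''$'' should read ``applying $x\mapsto A^{-k}(x+d'')$'', but the displayed conclusion $E_{d''}^{(k)}\cap E_d^{(k+1)}\neq\emptyset$ is the correct one). Your write-up is in fact more complete than the paper's one-line argument, which simply asserts that this nested union equals $\operatorname{int}(T)$; your closing step --- the union is a connected subset of $\operatorname{int}(T)$ that is dense in $T$, and the components of the open set $\operatorname{int}(T)$ are open, so there can be only one --- supplies exactly what is needed, and is genuinely necessary since for a non-open $E$ the union is a proper subset of $\operatorname{int}(T)$.
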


\begin{proof}
We note that  $\operatorname{int}(T)=\bigcup_{k \in \mathbb{N}}\left ( \bigcup_{i=1}^k \bigcup_{d \in \digits_i}A^{-i}(E + d)\right )$ with $\digits_i$ as in \eqref{digitsk} is a nested union of open sets each of which is, by induction, connected.
\end{proof}

It is important to note that the homeomorphisms asserted in Theorems~\ref{upperthm5} and~\ref{th:complex}  are usually \emph{not} $\canonical$ since $\canonical$ is not necessarily injective. However, we used $\canonical$ to construct these homeomorphisms with the help of \emph{Moore's decomposition theorem} (Proposition~\ref{RSb}). 

Theorem~\ref{th:complex} requires $S$ to contain at least 2 elements. Under the given conditions we cannot expect $T=\cell{\{0\}}$ to be homeomorphic to a 3-ball even if the same is true for $\start$. 
Indeed, in Section~\ref{sec:wild} we give an example of a monotone model which is homeomorphic to a 3-ball but whose underlying self-affine $\mathbb{Z}^3$-tile is \emph{wild} and indeed not even simply connected, even though its boundary can be shown to be a 2-sphere by applying Theorem~\ref{uppercor}. In studying this example we shall prove the following result (see Proposition~\ref{prop:84}). 

\begin{theorem}\label{crumpled}
There exists a self-affine $\mathbb{Z}^3$-tile whose boundary is a 2-sphere, but which is \emph{not} homeomorphic to a 3-ball (a self-affine wild crumpled cube).
\end{theorem}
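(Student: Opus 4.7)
The plan is to exhibit an explicit self-affine $\mathbb{Z}^3$-tile $T=T(A,\mathcal{D})$ together with a combinatorial monotone model $M$ whose boundary is homeomorphic to $\mathbb{S}^2$. Applying Theorem~\ref{uppercor} to $M$ will then immediately yield $\partial T\cong \mathbb{S}^2$. To complete the proof one must show that $T$ itself is not homeomorphic to a 3-ball; the natural route is to detect a nontrivial element of $\pi_1(T)$, for once $\pi_1(T)\neq 0$ the 3-ball $\mathbb{D}^3$ is ruled out (a 3-ball is contractible).

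First I would search for an expanding integer matrix $A$ and a complete residue system $\mathcal{D}\subset\mathbb{Z}^3$ for $\mathbb{Z}^3/A\mathbb{Z}^3$ whose neighbor graphs $\Gamma_i$ (Remark~\ref{rem:gifs}) have the combinatorial structure of a polyhedral 3-ball. Concretely, one chooses $|\mathcal{D}|$ not too small so that the subdivision operator $P$ produces a rich neighbor set, but so that the neighbor structure $\complex{T}$ can still be realized by a polyhedral tile $M$ (a union of small boxes) that tiles $\mathbb{R}^3$ by $\mathbb{Z}^3$-translates, with all cells $\cell{S}_M$ connected and $\complex{M}=\complex{T}$. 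Checking that such an $M$ is an ideal tile in the sense of Definition~\ref{def:approx} will, together with Theorem~\ref{bondingtheorem}, guarantee that $M$ carries the structure of a monotone model of $T$. Checking that $\partial M\cong \mathbb{S}^2$ is then a finite polyhedral computation.

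The heart of the proof is the construction of a loop in $T$ that is nullhomotopic in $\mathbb{R}^3$ but not in $T$. The idea is to arrange $A$ so that $A^{-1}$ is sufficiently anisotropic in a chosen direction that iterating the set equation $AT=T+\mathcal{D}$ generates arbitrarily thin pinch points between subtiles. Using the generalized set equation (Theorem~\ref{th:genset}) and the walk description (Definition~\ref{df:walk}), I would identify a sequence of walks $(w^{(k)})$ whose limit points accumulate to a single ``wild'' point $x_0\in\partial T$ at which the cross-section of $T$ is pinched to a Cantor-set of arcs. Around such a pinch one can find a small circle $c\subset \operatorname{int}(T)$ linking the pinch, and any spanning disk for $c$ inside $\mathbb{R}^3$ must pass through the accumulation point, hence cannot lie in $T$. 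This forces $[c]\neq 0$ in $\pi_1(T)$. The semi-contractibility condition is preserved since the model itself is a 3-ball; thus $\partial T$ remains a sphere even though $T$ is wild. Alternatively, one argues via linking numbers in $\mathbb{R}^3\setminus T$ in the spirit of the Alexander horned sphere construction, which admits a self-similar description naturally suited to the self-affine framework.

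The main obstacle is verifying that the pinch-point construction actually produces a nontrivial homotopy class rather than a loop that merely looks wild but can still be contracted inside $T$ through the tentacles. This must be argued using the recursive structure: every attempted contraction of $c$ inside $T$ would, by the self-similarity $AT=T+\mathcal{D}$, yield a contraction of a smaller copy of the same loop around a deeper-level pinch, leading to an infinite regress. Formalizing this regress rigorously, using the walk machinery of Section~\ref{sec:nsw} to track how spanning disks must interact with the nested subtiles $A^{-k}(T+d)$, is the delicate step that makes the example work.
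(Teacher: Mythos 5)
Your overall architecture matches the paper's: construct an explicit $(A,\mathcal{D})$, build an ideal tile / monotone model with spherical boundary, invoke Theorem~\ref{uppercor} to get $\partial T\cong\mathbb{S}^2$, and then rule out the 3-ball by exhibiting a homotopically essential loop. The paper does exactly this in Section~\ref{sec:wild}: it takes $A=9I$ and a digit set obtained from a $9\times9\times9$ block of digits by cutting out two horn-shaped channels and re-attaching them in the adjacent $\mathbb{Z}^3$-translate, uses the unit cube $Z=[0,1]^3$ as the ideal tile, and then runs the classical Alexander horned sphere argument on a loop surrounding one of the horns.

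There are, however, two genuine gaps in your proposal. First, the theorem is an existence statement and you never actually produce a pair $(A,\mathcal{D})$; ``I would search for\dots'' is a plan, not a construction, and every verification you list (complete residue system, tiling property, $\complex{Z}=\complex{T}$, connectedness of $\operatorname{int}(T)$) can only be carried out on a concrete example. Second, and more seriously, your proposed mechanism for wildness does not work. A ``pinch point'' where the cross-section of $T$ degenerates to a Cantor set of arcs is the wrong kind of singularity: it would threaten local connectivity or create cut points, not a nontrivial fundamental group. Your key step --- that any disk spanning the small circle $c$ must pass through the accumulation point $x_0$ and ``hence cannot lie in $T$'' --- is a non sequitur, because $x_0\in\partial T\subset T$, so a disk through $x_0$ may perfectly well lie in $T$; moreover a loop cannot ``link'' a single point, since linking is a phenomenon between cycles of complementary positive dimensions. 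What actually produces the wildness in the paper is \emph{interlocking}: the digit set encodes a pair of almost-clasped horns whose tips, under the self-affine subdivision, recursively sprout further almost-clasped horns, reproducing Alexander's construction at every scale; the essential loop is one encircling a horn, and it cannot be slid off because of the infinite chain of clasps. Your closing aside about ``linking numbers in the spirit of the Alexander horned sphere'' points in the right direction, but that interlocking structure --- absent from your anisotropic pinching construction --- is precisely what must be built into $\mathcal{D}$ and then exploited; without it the infinite-regress argument you sketch has nothing to bite on, since an unlinked tentacle can simply be contracted.
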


\section{Ideal tiles}\label{sec:aa}

In this section we define {\em ideal tiles}. These tiles can be constructed quite easily and they give rise to monotone models for self-affine $\zn$-tiles.

\subsection{Ideal tiles}\label{sec:bond}
Up to this point it is unclear how one could construct nontrivial models for a given self-affine $\mathbb{Z}^n$-tile. We now give necessary conditions that define an \emph{ideal tile} for a self-affine $\mathbb{Z}^n$-tile which allows to construct a monotone model (see Definition~\ref{def:model2}) for that tile.

\begin{definition}[Ideal tile]\label{def:approx}
 A $\mathbb{Z}^n$-tile $Z$  is an \emph{ideal tile} for a self-affine $\zn$-tile $T$, if $Z$ has connected interior and the following conditions hold.
 \begin{enumerate}
\item[(i)] $\complex{Z}=\complex{T}$.
\item[(ii)]  For each $S\in\complex{Z}$ the set $\cell{S}_Z$ is connected and homeomorphic to $\cell{P(S)}_Z$.
\item[(iii)] For each $S\in\complex{Z}$ each homeomorphism from $\cell{\delta S}_Z$ to $\cell{\delta P(S)}_Z$ extends to a homeomorphism from $\cell{S}_Z$ to $\cell{P(S)}_Z$.
\end{enumerate}
\end{definition}

\begin{remark}
Note that (ii) could be replaced by the following weaker statement.
\begin{enumerate}
\item[(ii)'] 
For each $S\in\complex{Z}$ the set $\cell{S}_Z$ is connected and $\cell{S}_Z=\emptyset$ if and only if $\cell{P(S)}_Z=\emptyset$. 
 \end{enumerate}
Indeed, in the proof of Theorem~\ref{bondingtheorem} (see Section~\ref{sec:polyeasy}) we see that (ii)' and (iii) imply (ii). However, to verify (iii) one needs homeomorphisms from $\cell{\delta S}_Z$ to $\cell{\delta P(S)}_Z$ and these are constructed by patching together homeomorphisms from $\cell{S'}_Z$ to $\cell{P(S')}_Z$. Thus one needs
(ii) in order to verify (iii). For this reason we decided to state Definition~\ref{def:approx} the way we did. 
\end{remark}

Note that since $Z$ is a $\zn$-tile one only needs to check the above hypotheses for all sets $S\subset\zn$ containing $0$. Condition (i) can be checked algorithmically by using so-called {\em boundary} and {\em vertex graphs} ({\em cf.\ e.g.}~\cite{ST:03} and see Remark~\ref{rem:gifs}). In most cases ideal tiles are chosen to be polyhedra. Then conditions (ii) and (iii) can be checked by direct inspection for $n=3$. For higher dimensions, in Section~\ref{sec:ballapprox} we show how to check (ii) by techniques from algebraic topology in certain cases. Lemma~\ref{pinched-ball} shows that (iii) is true for large classes of topological spaces.


\begin{example}\label{ex:idealKnuth}
Let $K=K(A,\digits)$ be Knuth's twin-dragon as defined in \eqref{eq:twin} and let $Z=M$, where $M$ is the parallelogram with vertices $(-\frac34,-\frac12)^t,(-\frac14,\frac12)^t,(\frac34,\frac12)^t,(\frac14,-\frac12)^t$ defined in Example~\ref{ex:model}. We now show that $Z$ satisfies the conditions of Definition~\ref{def:approx}. Firstly, it is clear that $Z$ has connected interior. From Example~\ref{ex:twinNeighbor} we get that $\complex{Z}=\complex{T}$ and item (i) follows. To prove (ii) we
start with the choice $S=\{0\}$. Obviously, $\cell{\{0\}}_Z$ is homeomorphic to $\mathbb{D}^2$. As we saw in Section~\ref{sec:subdivision}, $P(\{0\})=\{\{d\} \mid d \in \digits\}=\{\{(0,0)^t\}, \{(1,0)^t\}\}$. Thus
\[
\cell{P(\{0\})}_Z = \cell{ \{(0,0)^t\}, \{(1,0)^t\}}_Z = Z \cup (Z + (1,0)^t),
\]
which is a union of two parallelograms intersecting in an edge, hence, $\cell{P(\{0\})}_Z$ is homeomorphic to $\mathbb{D}^2$ and thus also to $\cell{\{0\}}_Z$. By translation invariance this implies that $\cell{P(\{s\})}_Z$ is homeomorphic to $\cell{\{s\}}_Z$ for each $s\in \mathbb{Z}^2$. 

To check (ii) for all $S$ with two elements we can confine ourselves again to sets $S$ containing $0$. In view of 
Figure~\ref{fig:twinNeighbors}, there are only six choices of such sets $S$ with nonempty $\cell{S}_Z$. We explain the verification of (ii) for the choice $S=\{(0,0)^t,(0,1)^t\}$ (the other choices can be treated in the same way). First, it is clear (see Figure~\ref{fig:twinNeighbors} again) that $\cell{\{(0,0)^t,(0,1)^t\}}_Z$ is the line connecting $(-\frac14,\frac12)^t$ and $(\frac14,\frac12)^t$. By direct calculation the complex $P(S)$ is easily seen to be equal to
\[
P(S) = \{\{(0,0)^t,(-1,-1)^t\},  \{(0,0)^t,(0,-1)^t\},   \{(1,0)^t,(0,-1)^t\} \},
\]
thus $\cell{P(S)}_Z$ is a union of three consecutive lines and, hence, $\cell{S}_Z$ and $\cell{P(S)}_Z$ are both homeomorphic to an arc. All nonempty cells corresponding to sets with three elements are single points and (ii) is checked easily also for this case (again there are six instances to check).

It remains to check item (iii). However, in our setting the cells are either disks, arcs, or points. Moreover, we know that $Z$ is combinatorial, so $\delta$ is nothing but the boundary operator. Since it is well-known that homeomorphisms from circles to circles can be extended to disks, and homeomorphisms from pairs of points to paris of points can be extended to arcs, (iii) follows immediately. Thus $Z$ is an ideal tile of $K$.
\end{example}

Further examples of ideal tiles are given in Figures~\ref{fig:T0T1} and~\ref{fig:T0G}.

\subsection{Ideal tiles and monotone models}\label{sec:polyeasy}

We now show that each ideal tile is a monotone model up to translation. To this end we need to prove the following lemma which shows that  $P$ behaves nicely with respect to the simplicial boundary operator $\delta$ defined in \eqref{xstar1} and \eqref{xstar1a}.

\begin{lemma}\label{Pdelta}
Let $T=T(A,\digits)$ be a self-affine $\zn$-tile. For each $S\in\complex{T}$ we have 
$
\delta P(S) = P(\delta S).
$
\end{lemma}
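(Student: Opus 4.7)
My plan is to establish the equality $\delta P(S) = P(\delta S)$ by a direct double inclusion, exploiting throughout the fact that $\digits$ is a complete set of residue class representatives modulo $A\mathbb{Z}^n$. This residue-class property ensures that each $y \in \mathbb{Z}^n$ has a unique representation $y = d + Ax$ with $d \in \digits$, $x \in \mathbb{Z}^n$, and this unique decomposition will be the key tool for unpacking the extra element added by $\delta$ in terms suited to the operator $P$.

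For the forward inclusion $P(\delta S) \subseteq \delta P(S)$, which I expect to be essentially mechanical, I will take an element of $P(\delta S)$. By definition it has the form $(q+A)(S \cup \{x\})$ for some $x \in \mathbb{Z}^n \setminus S$ with $S \cup \{x\} \in \complex{T}$ and some $q \in \digits^{S \cup \{x\}}$ with $(q+A)(S \cup \{x\}) \in \complex{T}$. Writing $p := q|_S$ and $d := q(x)$, the element splits as $(p+A)(S) \cup \{d + Ax\}$. Face-closedness of $\complex{T}$ (any nonempty subset of a cell in $\complex{T}$ is again in $\complex{T}$) then places $(p+A)(S)$ in $P(S)$, and unique representability combined with $x \notin S$ prevents $d+Ax$ from being of the form $p'(s)+As$ for any $s \in S$ and $p' \in \digits^S$, hence from lying in $\bigcup_{C' \in P(S)} C'$. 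Thus the element belongs to $\delta P(S)$.

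The reverse inclusion $\delta P(S) \subseteq P(\delta S)$ is the delicate direction. Given $C \cup \{y\} \in \delta P(S)$ with $C = (p+A)(S) \in P(S)$ and $y \notin \bigcup_{C' \in P(S)} C'$, I will write $y = d + Ax$ uniquely and aim to produce a cell $S' = S \cup \{x\}$ in $\delta S$ together with $q \in \digits^{S'}$ such that $(q+A)(S') = C \cup \{y\}$; the natural candidate is $q|_S = p$ and $q(x) = d$. The main obstacle, and the crux of the argument, is ruling out the possibility $x \in S$. The trick will be: if $x \in S$, define $p' \in \digits^S$ by $p' = p$ on $S \setminus \{x\}$ and $p'(x) = d$; then $(p'+A)(S)$ is contained in $C \cup \{y\} \in \complex{T}$, so by face-closedness $(p'+A)(S) \in P(S)$, whence $y = p'(x) + Ax \in \bigcup_{C' \in P(S)} C'$, a contradiction.

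Once $x \notin S$ is secured, the candidate $q$ as above satisfies $(q+A)(S') = C \cup \{y\}$, which by hypothesis lies in $\complex{T}$. To conclude I still need $S' \in \complex{T}$, and for this I will invoke the generalized set equation \eqref{sset}: since $\cell{(q+A)(S')} \subset A^{-1}\cell{P(S')} = \cell{S'}$ is nonempty, so is $\cell{S'}$. Hence $S' \in \delta S$ and the element lies in $P(\delta S)$, completing the proof.
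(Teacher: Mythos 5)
Your proof is correct. The paper establishes the lemma by a single chain of set identities: it expands $P(\delta S)=\bigcup_{s\in\zn\setminus S}P(S\cup\{s\})$, splits each $q\in\digits^{S\cup\{s\}}$ into its restriction to $S$ and the one digit attached to $s$, and then reindexes the pairs $(s,d)$ with $s\in\zn\setminus S$, $d\in\digits$ by $s'=As+d\in\zn\setminus(AS+\digits)$ --- exactly the unique-residue decomposition that drives your argument. Your element-wise double inclusion thus rests on the same key fact, but it makes explicit a point the paper's final step passes over silently: by definition $\delta P(S)$ only excludes extra vertices lying in $\bigcup_{C'\in P(S)}C'$, which can be a \emph{proper} subset of $AS+\digits$ (since $P(S)$ retains only those $(p+A)(S)$ that lie in $\complex{T}$), whereas the paper's reindexing excludes all of $AS+\digits$. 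Your ``$x\in S$'' contradiction --- replacing $p(x)$ by $d$ and using that $\complex{T}$ is closed under nonempty subsets --- is precisely what is needed to see that no element of $\delta P(S)$ can have its extra vertex in $(AS+\digits)\setminus\bigcup_{C'\in P(S)}C'$, so in this respect your write-up is more complete than the paper's. One small repair: to conclude $S'=S\cup\{x\}\in\complex{T}$ you invoke \eqref{sset} for $S'$, but that equation is only stated for cells already known to be in $\complex{T}$, so as cited the step is circular; replace it by the direct containment $\emptyset\neq\cell{(q+A)(S')}=\bigcap_{t\in S'}(T+q(t)+At)\subseteq\bigcap_{t\in S'}A(T+t)=A\cell{S'}$, which yields $\cell{S'}\neq\emptyset$ immediately.
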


\begin{proof}
Since $\digits$ is a complete set of residue classes of $\zn/A\zn$ we have (recall the definition of the simplicial boundary operator $\delta$ for complexes in \eqref{xstar1a})
\begin{align*}
P(\delta S) &= \bigcup_{s\in \zn \setminus S} P(S \cup \{ s \})\\
&= \bigcup_{s\in \zn \setminus S} \{(p+A)(S \cup \{ s \}) \in \complex{T} \mid p \in \digits^{S \cup \{s\} }  \} \\
&= \bigcup_{s\in \zn \setminus S}\;\bigcup_{d\in \digits} \{(p+A)(S) \cup \{A s + d\}) \in \complex{T} \mid p \in \digits^{S}  \} \\
&= \bigcup_{s'\in \zn \setminus (AS+\digits)} \{(p+A)(S) \cup \{s'\}) \in \complex{T} \mid p \in \digits^{S}  \} \\
&= \delta(P(S)). \qedhere
\end{align*}
\end{proof}

We can now state and prove the main theorem of this section.

\begin{theorem}\label{bondingtheorem}
Let $Z$ be an ideal tile for a self-affine $\mathbb{Z}^n$-tile $T$ and $u\in\operatorname{int}(Z)$ then there is a homeomorphism $F$ such that $(Z-u,F)$ is a monotone model for $T$.
\end{theorem}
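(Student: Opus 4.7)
The plan is to build $F$ from a compatible, $\zn$-equivariant family of cell-homeomorphisms between $M := Z-u$ and its $P$-subdivision. Note that $M$ inherits every ideal-tile property of $Z$ by translation, is a $\zn$-tile, and satisfies $0 \in \operatorname{int}(M)$. Once we produce a homeomorphism $F:\rn\to\rn$ with $F(M) = M + \digits$, $F(0) = 0$, and $A^{-1}F$ being $\zn$-equivariant, the pair $(M,F)$ will automatically be a monotone model: condition (i) of Definition~\ref{def:approx} gives $\complex{M} = \complex{T}$, and condition (ii) gives connectedness of each $\cell{S}_M$.

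The core step is to construct, for every $S \in \complex{T}$, a homeomorphism $h_S : \cell{S}_M \to \cell{P(S)}_M$ satisfying
\begin{enumerate}
\item[(a)] $h_{S'}|_{\cell{S}_M} = h_S$ whenever $S' \subset S$ (face compatibility), and
\item[(b)] $h_{S+z}(x+z) = h_S(x) + Az$ for every $x\in\cell{S}_M$, $z\in\zn$ ($\zn$-equivariance).
\end{enumerate}
We do this by downward induction on $|S|$. Fix a set $\mathcal{R}\subset\complex{T}$ of representatives of the $\zn$-orbits under $S\mapsto S+z$. A direct computation from the definitions shows that both $\delta$ and $P$ are $\zn$-equivariant: $\delta(S+z)=\delta(S)+z$ and $P(S+z)=P(S)+Az$. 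For $S\in\mathcal{R}$ with $|S|$ maximal, $\delta S=\emptyset$ and condition (ii) provides any homeomorphism $h_S$. For smaller $|S|$ with $S\in\mathcal{R}$, the inductive hypothesis supplies $h_{S'}$ for every $S'\in\delta S$; these glue along common subfaces by (a) into a homeomorphism $\cell{\delta S}_M \to \cell{\delta P(S)}_M$, and Lemma~\ref{Pdelta} identifies $\cell{\delta P(S)}_M = \cell{P(\delta S)}_M$, so condition (iii) extends it to $h_S$. For $S = S_0 + z \notin \mathcal{R}$ with $S_0 \in \mathcal{R}$, define $h_S$ by the equivariance formula (b); the $\zn$-equivariance of $P$ guarantees the target cell is correct.

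The maps $h_{\{z\}}$ assemble into $F:\rn\to\rn$ by $F|_{M+z} := h_{\{z\}}$, with consistency on overlaps $(M+z_1)\cap(M+z_2) = \cell{\{z_1,z_2\}}_M$ enforced by (a), and the identity $h_{\{z\}}(x) = h_{\{0\}}(x-z) + Az$ coming from (b) showing that $A^{-1}F$ is $\zn$-equivariant with $F(\zn) = A\zn$. Injectivity follows because $\digits$ is a complete set of residues modulo $A\zn$, so the interior images $\operatorname{int}(M + d + Az)$ for $d\in\digits$, $z\in\zn$ are pairwise disjoint, matching the disjoint interiors $\operatorname{int}(M+z)$ of the domain tessellation; continuity on tile boundaries together with compactness then upgrades $F$ to a homeomorphism. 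To arrange $F(0)=0$ we use the nonuniqueness in (iii) at the last inductive step: post-compose $h_{\{0\}}$ with an ambient isotopy of $M + \digits$ fixing $\partial(M+\digits)$ that sends $h_{\{0\}}(0)$ to $0$ (both lie in $\operatorname{int}(M+\digits)$ because $0\in\operatorname{int}(M)$), and re-extend equivariantly.

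The main obstacle is coordinating the inductive extension (iii) with $\zn$-equivariance (b): one must verify that the boundary data prescribed on a representative $S\in\mathcal{R}$ by the previously built $h_{S'}$'s are consistent with the values (b) would assign on the whole orbit of $S$, and that the arbitrary interior extension produced by (iii), once propagated across the orbit by (b), agrees on every shared face of neighboring cells. Both consistency checks reduce to the $\zn$-equivariance of $\delta$ and $P$ together with Lemma~\ref{Pdelta}, so the orbit-by-orbit construction closes up without conflict.
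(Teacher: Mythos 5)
Your proof is correct and follows essentially the same route as the paper's: an inductive, orbit-by-orbit construction of cell homeomorphisms $\cell{S}_M\to\cell{P(S)}_M$ starting from the maximal simplices, glued via the compatibility on faces, extended across each cell by property~(iii) together with Lemma~\ref{Pdelta}, propagated $\zn$-equivariantly, and finally normalized so that $0$ is fixed using that $0\in\operatorname{int}(M)\cap\digits$ and $\operatorname{int}(M)$ is connected. The only differences are organizational (downward induction on $|S|$ rather than the paper's filtration by iterated $\delta$-preimages, and a slightly more explicit check that the glued map is a global homeomorphism of $\mathbb{R}^n$), neither of which changes the substance of the argument.
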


\begin{proof}
By Definition~\ref{def:approx} each translate of $Z$ is again an ideal tile of $T$, hence, $M=Z-u$ is an ideal tile of $T$. To prove the theorem we first construct a $\zn$-equivariant homeomorphism $f$ fixing $0$ so that $\Bee=Af$ satisfies $\Bee M = M +\digits$. Let  
\[
C_{-1}=\{S\subset\mathbb{Z}^n\mid \cell{S}_M=\emptyset \},
\]
and inductively set
\[
C_{\ell} = \{ S\in\complex{M} \mid {\delta}S \subset C_{\ell-1} \}  \qquad (\ell \ge 0).
\]
Note that by compactness of $M$, $S\in C_\ell$ for $\ell \ge 0$ implies that $S$ is finite. By condition~(ii) of Definition~\ref{def:approx}, $\cell{P(S)}_M=\emptyset$ for each $S\in C_{-1}$. Thus it is trivial to define $f$ on $\cell{C_{-1}}_M$ in a way that it is $\zn$-equivariant and satisfies $f(\cell{S}_M)=A^{-1}\cell{P(S)}_M$ for each $S\in C_{-1}$.

\begin{figure}[ht]
\includegraphics[height=2.5cm]{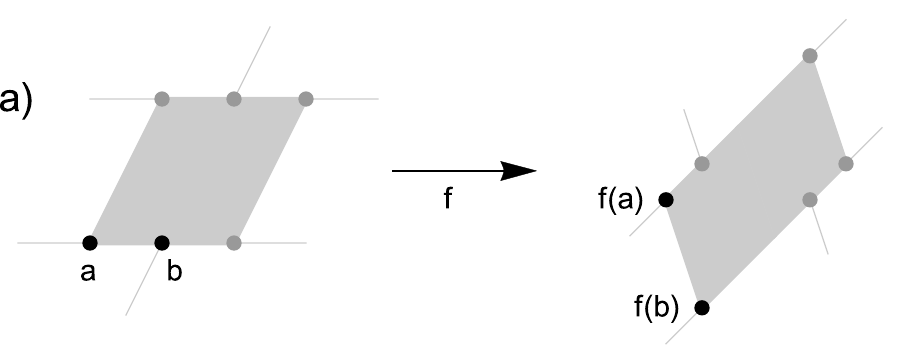}\hskip 1cm    
\includegraphics[height=2.5cm]{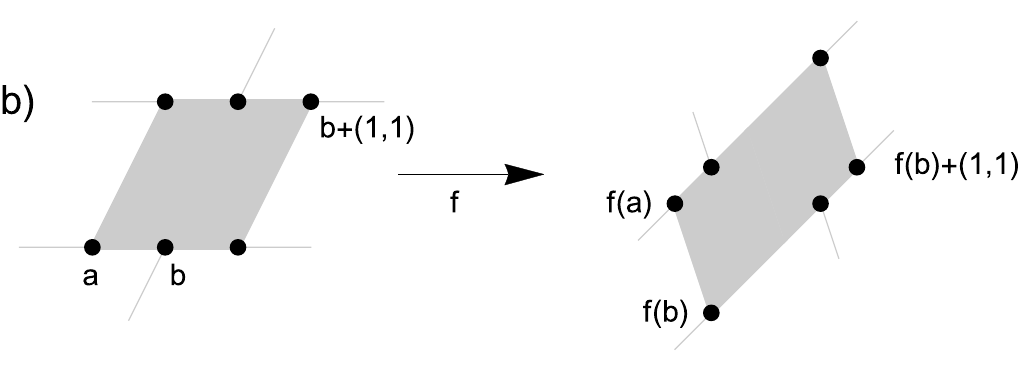}    
\includegraphics[height=2.5cm]{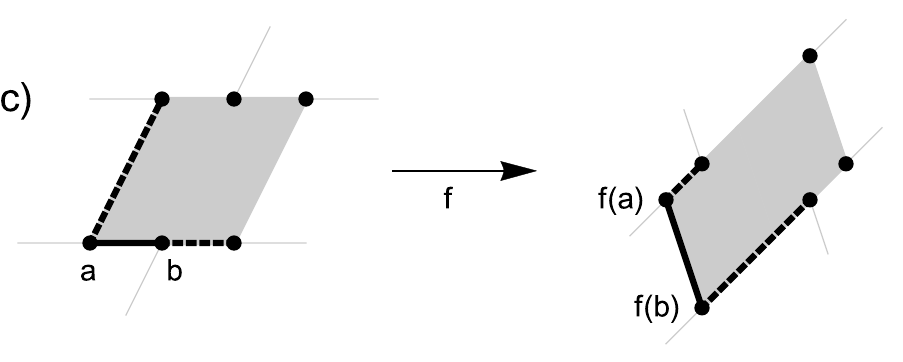}\hskip 1cm    
\includegraphics[height=2.5cm]{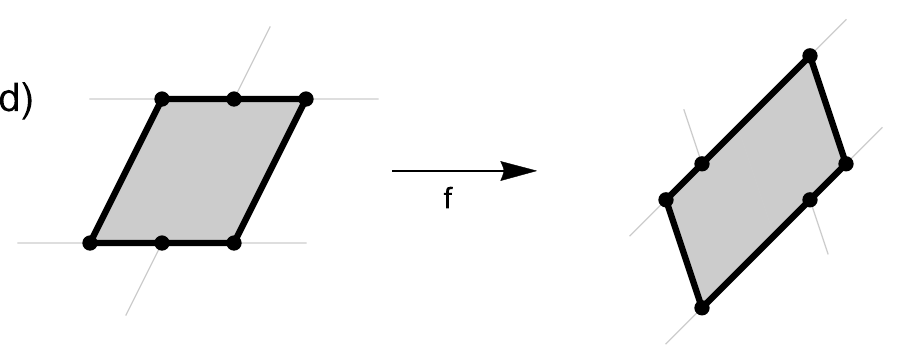}  
\caption{Illustration of the proof of Theorem~\ref{bondingtheorem} for the ideal tile $Z$ of Knuth's twin-dragon (see Example~\ref{ex:idealKnuth}). In this case $E_0$ contains two inequivalent elements corresponding to the points $a$ and $b$ in a). On these points $f$ can be defined as indicated. $f$ can then be extended equivariantly to the remaining cells $\cell{S}$ with $S\in C_0$ as in b). In c) we see that $E_1$ consists of three elements corresponding to three cells which are lines. By property (iii) of Definition~\ref{def:approx}, $f$ can now be extended to these three lines. In d) we extend $f$ equivariantly to the remaining cells $\cell{S}$ with $S\in C_1$.
\label{fig:proof}}
\end{figure}

Now assume inductively that the homeomorphism $f$ has been defined $\mathbb{Z}^n$-equivariantly on $\cell{C_{\ell-1}}_M$ such that $f(\cell{S}_M)=A^{-1}\cell{P(S)}_M$ holds for each $S\in C_{\ell-1}$. Choose a maximal set $E_{\ell}$ of pairwise \emph{inequivalent} elements of $C_{\ell}\setminus C_{\ell-1}$, that is, $E_{\ell}$ contains precisely one element of $S + \zn$ for each $S \in C_{\ell}\setminus C_{\ell-1}$.
Let $S\in E_{\ell}$. By definition, $\delta S$ is a union of cells of $C_{\ell-1}$. 
The induction hypothesis yields that each cell in ${\cell{{\delta}S}}_M$ is mapped homeomorphically to the according cell in $A^{-1}\cell{P(\delta S)}_M$. Thus $f|_{{\cell{{\delta}S}}_M}$ is a $\zn$-equivariant homeomorphism onto $A\inv\cell{P(\delta S)}_M=A\inv\cell{\delta(P(S))}_M$ (the equality follows from Lemma~\ref{Pdelta}). 
Now $f$ is defined from $\cell{\delta S}_M$ to $A\inv\cell{\delta(P(S))}_M$ in an appropriate way. By condition (iii) of the definition of the ideal tile $M$ we may extend the domain of $f$ to $\cell{S}_M$ in a way that $f$ maps $\cell{S}_M$ homeomorphically to $A\inv\cell{P(S)}_M$. 
Doing this for each of the finitely many elements of $E_{\ell}$ and extending $f$ equivariantly to $\cell{C_{\ell}}_M$ concludes the induction step. This process is illustrated in Figure~\ref{fig:proof}.

At the end of this construction we arrive at $\cell{S}_M=\cell{\{0\}}_M=M$ and $fM=A\inv(M+\digits)$ which implies that $FM=AfM=M+\digits$. In this case we need to make sure that $0$ is fixed by $f$. However, as $0\in \operatorname{int}(M)$ and  $0\in \digits$ we gain $0\in \operatorname{int}(A\inv(M+\digits))=\operatorname{int}(\cell{P(\{0\})}_M)$. Thus by the connectedness of $\operatorname{int}(M)$ we can define $f$ on $\operatorname{int}(M)$ in a way that $f(0)=0$. Thus $(M,F)$ is a model for $T$. 

Finally, Definition~\ref{def:approx}~(i) and~(ii) imply that this model is monotone.
\end{proof}

As Example~\ref{ex:idealKnuth} shows that the twin-dragon $K$ admits an ideal tile and, by Theorem~\ref{bondingtheorem}, a monotone model which is homeomorphic to $\mathbb{D}^2$, Theorem~\ref{planar} yields the following well-known result.

\begin{proposition}
Knuth's twin-dragon $K$ is homeomorphic to $\mathbb{D}^2$.
\end{proposition}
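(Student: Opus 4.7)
The plan is to invoke the chain of results already assembled in the paper. First, I will recall from Example~\ref{ex:idealKnuth} that the parallelogram $Z$ with vertices $(-\tfrac34,-\tfrac12)^t,(-\tfrac14,\tfrac12)^t,(\tfrac34,\tfrac12)^t,(\tfrac14,-\tfrac12)^t$ is an ideal tile for $K$ in the sense of Definition~\ref{def:approx}: the three conditions (connectedness of all cells plus matching with their subdivisions, coincidence of neighbor structures, and extendability of homeomorphisms across cells) have already been verified there by inspection.

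Next I will apply Theorem~\ref{bondingtheorem}. Picking any interior point $u\in\operatorname{int}(Z)$, that theorem produces a homeomorphism $F:\mathbb{R}^2\to\mathbb{R}^2$ such that $(M,F):=(Z-u,F)$ is a monotone model for $K$. The underlying set $M$ is just a translated parallelogram and is therefore homeomorphic to $\mathbb{D}^2$.

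Finally I will apply Theorem~\ref{planar}. Since $K$ admits a monotone model homeomorphic to $\mathbb{D}^2$, the theorem gives $K\cong\mathbb{D}^2$ directly.

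There is essentially no obstacle to overcome once Example~\ref{ex:idealKnuth} and Theorems~\ref{bondingtheorem} and~\ref{planar} are in hand; the only care needed is to note that translating the ideal tile by $-u$ does not destroy the property of being an ideal tile (which is immediate from the $\mathbb{Z}^2$-equivariance built into Definition~\ref{def:approx} and the fact that the neighbor and subdivision structures only depend on the tile up to translation).
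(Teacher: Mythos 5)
Your proposal is correct and follows exactly the same route as the paper: Example~\ref{ex:idealKnuth} supplies the ideal tile, Theorem~\ref{bondingtheorem} converts it into a monotone model homeomorphic to $\mathbb{D}^2$, and Theorem~\ref{planar} then yields $K\cong\mathbb{D}^2$. The only addition you make is the (correct and harmless) remark that translating the ideal tile preserves its defining properties, which the paper leaves implicit.
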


The following lemma is useful because it can be used to check property~(iii) of the definition of \emph{ideal tile} (which is Definition~\ref{def:approx}) in nonpathological examples.

\begin{lemma}\label{pinched-ball}
Let $X$ be a contractible union of cones over $n$-spheres (for varying $n\ge 1$) which pairwise intersect only at single points on their boundaries. The \emph{boundary} of $X$ is the union of the bases of the cones which comprise it. Then $X$ has the property that any self-homeomorphism of its boundary can be extended to a self-homeomorphism of $X$.
\end{lemma}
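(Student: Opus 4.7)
The plan is to construct the extension $\tilde h$ cone by cone, using the cone structure of each $C_i$ to propagate $h|_{\partial C_i}$ inward. The scheme has three steps: first, show that $h$ permutes the collection $\{\partial C_i\}$, yielding an index bijection $\sigma$ with $h(\partial C_i) = \partial C_{\sigma(i)}$; second, extend each $h_i := h|_{\partial C_i} : \partial C_i \to \partial C_{\sigma(i)}$ over its cone by the standard rule $[x,t] \mapsto [h_i(x), t]$; third, verify that these extensions paste into a global self-homeomorphism.

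The main obstacle will be the first step, identifying the spheres $\partial C_i$ as a topological invariant of $\partial X$. I would characterize each $\partial C_i$ intrinsically as a \emph{block}, that is, a maximal connected subset of $\partial X$ in which every pair of points lies on a common simple closed curve. Any two points of a single sphere $\partial C_i$ lie on a great circle (this uses $n_i \ge 1$), so $\partial C_i$ is contained in a block. Conversely, suppose $p \in \partial C_i$ and $q \in \partial C_j$ with $i \ne j$ lie on a common simple closed curve $\gamma \subset \partial X$: then $\gamma$ must cross between the two spheres twice, traversing a touching point both going and returning. Contractibility of $X$, together with the fact that the pairwise and higher intersections of the cones are single points (hence contractible), forces the incidence combinatorics of the cover $\{C_i\}$ (via a nerve-theorem style argument) to be tree-like; any round trip as above would therefore have to reuse a touching point, contradicting simplicity of $\gamma$. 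Hence the blocks of $\partial X$ are precisely the $\partial C_i$, and since $h$ is a homeomorphism it permutes blocks, producing $\sigma$ and the restricted homeomorphisms $h_i$.

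The remaining two steps are routine. Writing each cone as $C_i = (\partial C_i \times [0,1]) / (\partial C_i \times \{0\})$ with apex $a_i$ corresponding to the collapsed fiber, the formula $\tilde h_i([x,t]) = [h_i(x), t]$ defines a homeomorphism $C_i \to C_{\sigma(i)}$ that sends $a_i$ to $a_{\sigma(i)}$ and restricts to $h_i$ on $\partial C_i$. On any nonempty intersection $C_i \cap C_j$, which is a single boundary point $p$ by hypothesis, both $\tilde h_i(p)$ and $\tilde h_j(p)$ coincide with $h(p)$, so the pieces combine into a continuous map $\tilde h : X \to X$ agreeing with $h$ on $\partial X$. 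Applying the same construction to $h^{-1}$ yields a two-sided inverse, so $\tilde h$ is a self-homeomorphism of $X$ extending $h$, as desired.
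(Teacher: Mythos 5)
Your proposal is correct and follows essentially the same route as the paper: identify the spheres $\partial C_i$ as topologically distinguished pieces of $\partial X$ (the paper does this tersely ``by considering cut points,'' while your block/tree-like-nerve argument is a fleshed-out version of the same idea), then cone each restricted homeomorphism and patch, using that the cones meet only at single boundary points. No gaps; your treatment of the permutation step is in fact more detailed than the paper's.
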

\begin{proof}
Let $C_i$ be the cones which comprise $X$.  Suppose $\psi:\partial X \to \partial X$ is a homeomorphism.  Then, by considering cut points in the boundary, we see that $\psi\mid_{\partial C_i}$ is a homeomorphism onto $\partial C_{i'}$ for some unique choice of $i'$.  Coning the map  $\psi\mid_{\partial C_i}$, we obtain an extension homeomorphism $\bar\psi\mid_{C_i}: C_i \to {C_{i'}}$ satisfying $\bar\psi\mid_{\partial C_i}= \psi\mid_{\partial C_i}.$ By again considering cut points on the boundary, we see that the map $i \to i'$ is a permutation. Since the $C_i$ only meet at single boundary points, the maps $\bar\psi\mid_{\operatorname{int}(C_i)}$ have disjoint images.  Thus the homeomorphisms $\bar\psi\mid_{C_i}$ patch together to form the desired homeomorphism $\bar\psi.$  
\end{proof}

\begin{figure}[ht]
\includegraphics[height=2.3cm]{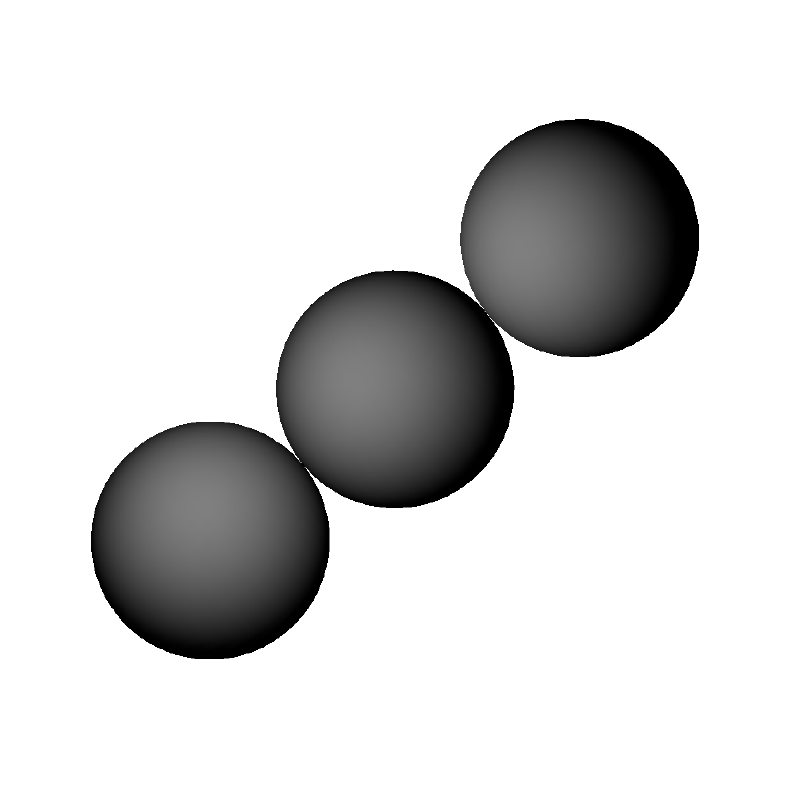}\hskip 1cm    
\includegraphics[height=2.3cm]{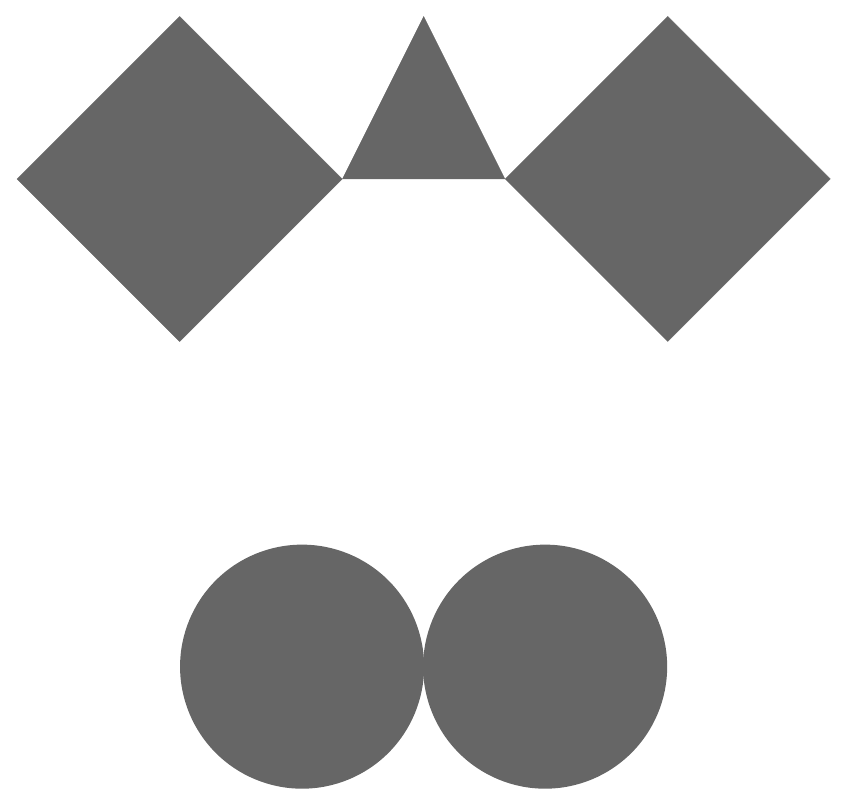} \hskip 1cm    
\includegraphics[height=2.3cm]{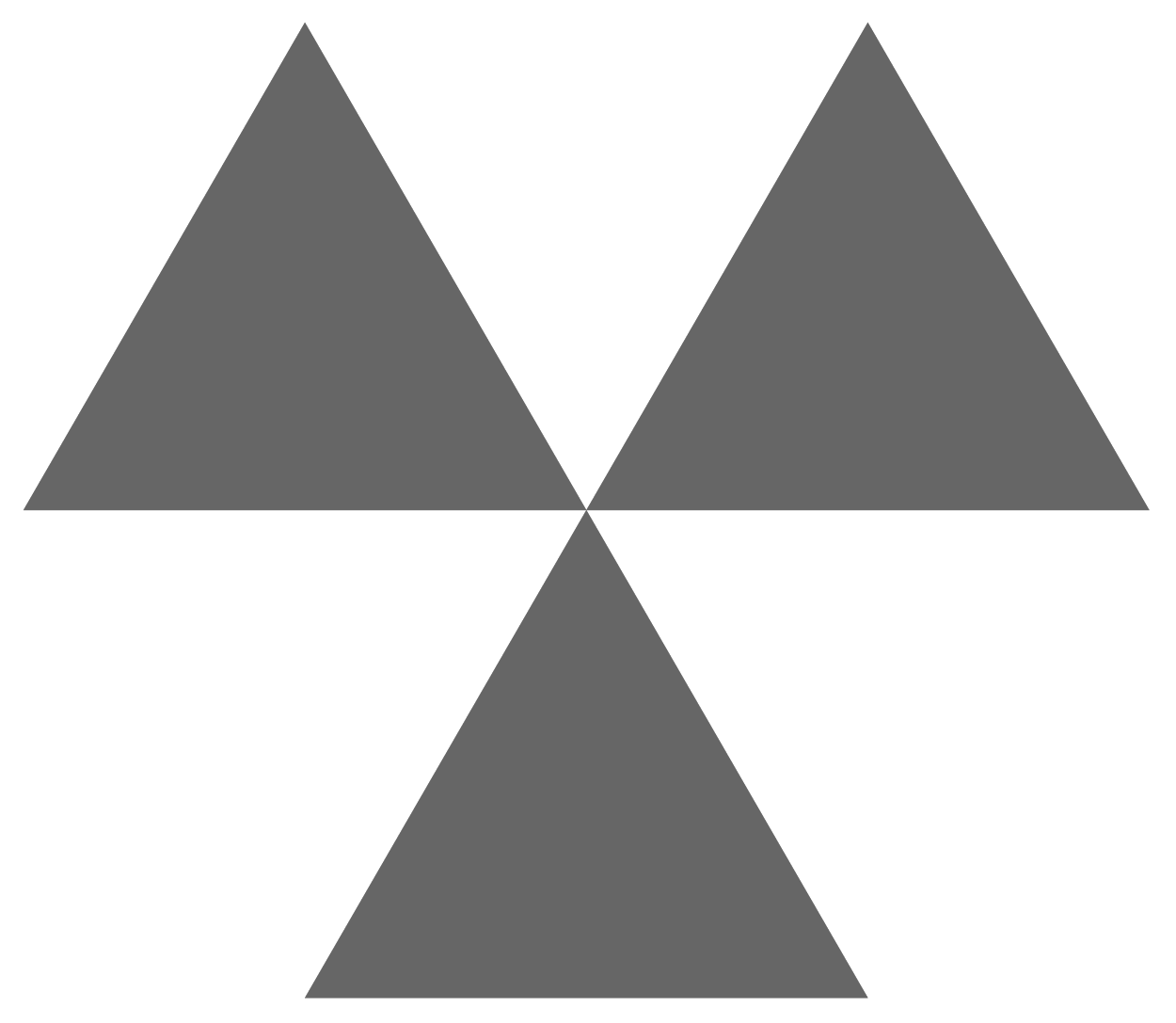}    
\caption{Four examples satisfying the conditions of Lemma~\ref{pinched-ball}.
\label{fig:6.2}}
\end{figure}

Figure~\ref{fig:6.2} contains examples for  contractible unions of cones over spheres which pairwise intersect only at single points on their boundaries. These are balls of varying dimension arranged either in a row or as a bouquet.

\subsection{Checking whether a simplicial complex is a sphere or a ball}\label{sec:ballapprox} 

Let $T$ be a self-affine $\zn$-tile. In Theorem~\ref{bondingtheorem} we require that  $Z$ is an ideal tile for $T$. For low dimensions $n$, condition~(ii) of Definition~\ref{def:approx} can be checked easily by direct inspection. However, in higher dimensions this may be tricky and we are forced to use a systematic approach based on methods from classical algebraic topology.  Such an approach is discussed in the present section. Indeed, for many instances, the ideal tile $Z$ can be chosen to be a triangulable complex such that each nonempty set ${\cell{S}}_Z$ is a closed ball that is the closure of a single cell of this complex (see {\it e.g.} the examples discussed in Sections~\ref{sec:tame} and~\ref{sec:wild}). We shall discuss how one can check Property~(ii) of Definition~\ref{def:approx} in this case. 

In particular, we have to check that ${\cell{P(S)}}_Z$ is a ball of the same dimension as ${\cell{S}}_Z$. By translation invariance it suffices to check this for all $S\in\complex{Z}$ containing $0$. The intersections ${\cell{S}}_Z$ as well as ${\cell{P(S)}}_Z$ can be nonempty only for finitely many of these sets since $Z$ as well as $\cell{P(\{0\})}_Z$ is compact. Thus there are only finitely many instances to check. By definition, ${\cell{P(S)}}_Z$ is a triangulable complex made up of finitely many triangulable complexes of the form $\cell{S'}_Z$. We have to check that the realization of this simplicial complex is a ball. To this end we first check that the realization of the boundary of this complex is a sphere. If the complex is embedded in a suitable space one can apply a result of Cannon~\cite{Cannon:73} (see Section~\ref{sec:cannon}) to prove that this sphere bounds a ball. We note that for finite simplicial complexes this criterion can be turned into a finite algorithm.

Thus we are left with checking whether the realization of a given simplicial complex is a sphere. Since this is important for several different complexes occurring throughout this paper, for example for complexes of the form $\partial[S]_M$ (see {\it e.g.} Proposition~\ref{prop:finiteunion}), we now switch to a general setting. 

In particular, let $K$ be a simplicial complex and $\sigma$ be one of its cells. Recall that the \emph{star} of $\sigma$ in $K$, denoted by $\operatorname{ st}(\sigma)$, is defined to be the set of all simplices in $K$ that have $\sigma$ as a face. Moreover, the \emph{link} of $\sigma$ in $K$, denoted by $\operatorname{ lk}(\sigma)$, is given by
\[
\operatorname{ lk}(\sigma) = \operatorname{ cl}(\operatorname{ st}(\sigma)) \setminus \operatorname{ st}(\operatorname{ cl}(\sigma)),
\]
where $\operatorname{ cl}(X)$ is the smallest subcomplex of $K$ that contains each simplex in $X$.

As we first have to check that the realization of $K$ is a manifold, we need the following result ({\it cf.} \cite[Theorem~8.10.2]{Daverman-Venema:09}; note that this is a consequence of Cannon's Double Suspension Theorem, see~\cite{Cannon:78}).

\begin{lemma}\label{linklemma}
The realization of a simplicial complex $K$ is a topological $n$-manifold if and only if, for each $k$-simplex $\sigma\in K$, $\operatorname{ lk}(\sigma)$ has the homology of $\mathbb{S}^{n-k-1}$ ({\it i.e.}, $H_i(\operatorname{ lk}(\sigma);\mathbb{Z})=0$ for $i\neq n-k-1$ and $H_{n-k-1}(\operatorname{ lk}(\sigma);\mathbb{Z})=\mathbb{Z}$) and, for each vertex $v \in K$,  $\operatorname{ lk}(v)$ is simply connected.
\end{lemma}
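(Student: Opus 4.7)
My plan is to check the manifold condition point-by-point using the local structure of a simplicial complex. For a point $p$ in the relative interior of a $k$-simplex $\sigma$ of $K$, a regular neighborhood of $p$ in the realization $|K|$ is homeomorphic to the open cone on $\partial\sigma \ast \operatorname{lk}(\sigma) \cong \Sigma^k\operatorname{lk}(\sigma)$, the $k$-fold (unreduced) suspension of $\operatorname{lk}(\sigma)$. Hence $|K|$ is a topological $n$-manifold at $p$ if and only if $\Sigma^k\operatorname{lk}(\sigma)$ is homeomorphic to $\mathbb{S}^{n-1}$. The whole proof reduces to translating the hypotheses of the lemma into information about $\Sigma^k\operatorname{lk}(\sigma)$ and then invoking classical sphere recognition theorems.

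For the forward direction, assume $|K|$ is a topological $n$-manifold. For $p$ in the interior of $\sigma$, the local homology $H_*(|K|,|K|\setminus\{p\})$ agrees with $H_*(\mathbb{R}^n,\mathbb{R}^n\setminus\{0\})$. Excising to the regular neighborhood identified above and using the cone--suspension relation, this local homology is isomorphic to $\tilde H_{*-1}(\Sigma^k\operatorname{lk}(\sigma))$. Iterating the suspension isomorphism shifts degrees by $k$ and forces $\operatorname{lk}(\sigma)$ to have the reduced integral homology of $\mathbb{S}^{n-k-1}$. For a vertex $v$, a small Euclidean neighborhood of $v$ (minus $v$) deformation retracts onto $\operatorname{lk}(v)$; since a punctured $\mathbb{R}^n$ is simply connected for $n\ge 3$, so is $\operatorname{lk}(v)$.

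For the backward direction, I must show that each $\Sigma^k\operatorname{lk}(\sigma)$ is homeomorphic to $\mathbb{S}^{n-1}$. The suspension isomorphism combined with the hypothesis immediately gives that it is a homology $(n-1)$-sphere. For $k=0$ (vertex case), the hypothesis states precisely that $\operatorname{lk}(v)$ is a simply connected integral homology $(n-1)$-sphere, and the Generalized Poincar\'e Conjecture (Smale in high dimensions, Freedman in dimension four, Perelman in dimension three) supplies the homeomorphism to $\mathbb{S}^{n-1}$. For $k\ge 1$, a suspension of a connected space is automatically simply connected, so $\Sigma^k\operatorname{lk}(\sigma)$ is a simply connected homology $(n-1)$-sphere. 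For $k\ge 2$, Cannon's Double Suspension Theorem~\cite{Cannon:78} then directly yields that $\Sigma^k\operatorname{lk}(\sigma)$ is a topological $(n-1)$-sphere. The remaining case $k=1$ is handled by induction on the dimension $n$: the link of an edge in $K$ is canonically identified with the link of an endpoint of that edge inside the subcomplex $\operatorname{lk}(v)$, and since the vertex case of the present lemma has already shown $\operatorname{lk}(v)\cong\mathbb{S}^{n-1}$, the edge link is recognized as a sphere inside a genuine $(n-1)$-sphere and the argument bootstraps through strictly lower-dimensional complexes.

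The main obstacle is the edge case $k=1$. A naive single suspension of a non-simply-connected homology $(n-2)$-sphere may fail even to be a topological manifold, so one cannot conclude directly from the Double Suspension Theorem. The subtle point is to thread the induction through links-inside-links so that the simply-connectedness hypothesis (which is only imposed on vertex links of $K$) is combined with Cannon's theorem and the Generalized Poincar\'e Conjecture at each dimension in the right order.
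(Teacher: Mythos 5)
Your reduction rests on the claim that $|K|$ is a manifold at a point $p$ in the interior of a $k$-simplex $\sigma$ if and only if $\Sigma^k\operatorname{lk}(\sigma)\cong\mathbb{S}^{n-1}$. Only the ``if'' half of that claim is true: the open cone on a compactum can be homeomorphic to $\mathbb{R}^n$ without the compactum being a sphere, and this is precisely the phenomenon the Double Suspension Theorem exposes. Concretely, let $H$ be a non-simply-connected homology $3$-sphere and triangulate $\Sigma^2 H$ as the join $\mathbb{S}^1\ast H$. Every hypothesis of the lemma holds (vertex links are $\Sigma H$, a simply connected homology $4$-sphere; edge links in the $\mathbb{S}^1$ factor are $H$, a homology $3$-sphere; all other links are honest spheres or joins thereof), and the realization is indeed the manifold $\mathbb{S}^5$ by Cannon--Edwards. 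Yet $\operatorname{lk}(v)=\Sigma H$ is \emph{not} homeomorphic to $\mathbb{S}^4$ --- it is not even a manifold, so your $k=0$ step cannot invoke the Generalized Poincar\'e Conjecture, which requires the space to be a manifold homotopy equivalent to a sphere, not merely a simply connected homology sphere. Likewise $\Sigma^1\operatorname{lk}(e)=\Sigma H\not\cong\mathbb{S}^4$ for the edges $e$ of the $\mathbb{S}^1$ factor, so the $k=1$ case you flag as ``the main obstacle'' is not an obstacle to be overcome by a cleverer induction: the statement you are trying to bootstrap is simply false. The same example also defeats the proposed links-inside-links induction, because the simple-connectivity hypothesis is imposed only on vertex links of $K$ and does not propagate to vertex links of $\operatorname{lk}(v)$ (which are edge links of $K$).

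The correct argument is genuinely deeper and does not show that the local cones are cones on spheres; rather, one shows that the hypotheses make $|K|$ a resolvable generalized $n$-manifold and then applies Edwards' Cell-like Approximation Theorem (equivalently, the full polyhedral manifold recognition theorem of Edwards, of which the double suspension result is the key special case). The paper itself offers no proof: it cites \cite[Theorem~8.10.2]{Daverman-Venema:09} and notes that the lemma is a consequence of the Double Suspension Theorem \cite{Cannon:78}. Your forward direction (local homology plus excision gives the homology of the links, and the punctured neighborhood of a vertex retracts to its link) is essentially fine, but the backward direction, which is the substance of the lemma, does not go through as written.
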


To check that $K$ is a ball we have to proceed as follows (for all classical theorems from algebraic topology we are using here, we refer the reader {\it e.g.} to Hatcher~\cite{Hatcher:02}):
\begin{itemize} 
\item[(i)] Check that the simplicial complex $K$ is a manifold. By Lemma~\ref{linklemma} it suffices to calculate homology groups and the fundamental group of certain links.  This can be done by using the Mayer-Vietoris-Sequence and the Seifert-van Kampen Theorem, respectively. 

\item[(ii)] Check that $K$ has trivial homology groups and trivial fundamental group. Again we use  the Mayer-Vietoris-Sequence and the Seifert-van Kampen Theorem. 

\item[(iii)] The Theorems of Hurewicz and Whitehead now imply that $K$ is homotopy equivalent to a ball. 

\item[(iv)] The generalized Poincar\'e Theorem proved by Smale~\cite{Smale:61}, Freedman~\cite{Freedman:82}, and Perelman (see \cite{Cao-Zhu:06}) yields that $K$ is a sphere. 
\end{itemize}

Summing up, checking that $K$ is a sphere of appropriate dimension is achieved by calculating homology groups and fundamental groups of simplicial complexes.

\section{Self-affine $\mathbb{Z}^n$-tiles that are homeomorphic to a ball}\label{sec:ball}

Let $T$ be a self-affine $\zn$-tile. In Theorem~\ref{upperthm5} we gave a criterion for $\partial T$ to be homeomorphic to a manifold (see also Theorem~\ref{upperthm4}).  In the present section we shall assume that $\partial T$ is homeomorphic to $\mathbb{S}^{n-1}$ and give criteria under which this implies that the tile $T$ itself is homeomorphic to the closed $n$-dimensional disk $\mathbb{D}^n$.

\subsection{Cannon's criterion and fundamental neighborhoods}\label{sec:cannon}

We start with some terminology.

\begin{definition}[1-LCC]\label{def:1lcc}
A set $X\subset \mathbb{R}^n$ is said to be {\em 1-LCC} if for each $x\in\rn\setminus X$ and each neighborhood $U$ of $x$ there is a neighborhood $V\subset U$ of $x$ such that every loop in $V\setminus X$ is contractible in $U\setminus X$.
\end{definition}

\begin{definition}[Locally spherical]\label{def:locspher}
An $(n-1)$-sphere $\mathcal{S} \subset \mathbb{R}^n$ is said to be \emph{locally spherical} if each $p\in \mathcal{S}$ has a neighborhood basis $\{U_m \mid m\in  \mathbb{N}\}$ such that $\partial U_m \cong \mathbb{S}^{n-1}$ and  $\partial U_m \setminus \mathcal{S}$ is simply connected.
\end{definition}

If $n=3$ the simple connectedness of $\partial U_m \setminus\mathcal{S}$ is equivalent to the fact that $\partial U_m \cap \mathcal{S}$ is connected. Just to give the reader a feeling for this definition we consider the standard unit ball $\mathcal{S}$ in $\mathbb{R}^3$. For each $p\in\mathcal{S}$ we can select an arbitrarily small sphere centered at $p$ that intersects $\mathcal{S}$ in a circle. Since a circle is connected, $\mathcal{S}$ is locally spherical. To see what can go wrong consider Alexander's Horned Sphere $\mathcal{A}$ ({\it cf. e.g.}~\cite[Example~2B.2, page~170ff]{Hatcher:02}). If we choose $p\in \mathcal{A}$ in a way that $p$ is located on an accumulation point of the ``horns'' of $\mathcal{A}$, each small sphere around $p$ intersects $\mathcal{A}$ in a disconnected set, the components coming from each side on which the horns approach $p$. These are hints pointing at the following criterion of Cannon~\cite{Cannon:73}.

\begin{proposition}[{\emph{cf.}~\cite[5.1 Theorem]{Cannon:73}}]\label{prop:cc}
If $\mathcal{S}$ is an $(n-1)$-sphere in $\mathbb{R}^n$ that is locally spherical then $\mathcal{S}$ is 1-LCC.
\end{proposition}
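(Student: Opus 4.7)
The plan is to prove the $1$-LCC property by a purely local argument near points of $\mathcal{S}$, since the condition is vacuous at points $x \in \mathbb{R}^n \setminus \mathcal{S}$ at positive distance from the closed set $\mathcal{S}$. For such a point $p \in \mathcal{S}$ and a neighborhood $U$ of $p$, I would build the required smaller neighborhood $V$ so that any loop in $V \setminus \mathcal{S}$ contracts in $U \setminus \mathcal{S}$ by a two-stage sweep: first push the loop outward onto a boundary sphere coming from the locally spherical neighborhood basis, then kill it on that sphere using the simple-connectedness hypothesis.

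Concretely, using local sphericity I would choose nested neighborhoods of $p$,
\[
\overline{V} \subset U_m \subset \overline{U_m} \subset U,
\]
with both $V = U_{m'}$ and $U_m$ taken from the basis provided by local sphericity, so that $\partial U_m \cong \mathbb{S}^{n-1}$ and $\partial U_m \setminus \mathcal{S}$ is simply connected. Given a loop $\gamma$ in $V \setminus \mathcal{S}$, the Jordan--Brouwer separation theorem places $\gamma$ in a single component $A$ of $\mathbb{R}^n \setminus \mathcal{S}$, and the entire contraction will take place inside $U_m \cap A$. In stage one, push $\gamma$ outward through the shell $\overline{U_m} \setminus \operatorname{int}(V)$, staying in $A$, until it sits on $\partial U_m \cap A$. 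In stage two, use that $\partial U_m \setminus \mathcal{S}$ is simply connected, so $\gamma$ bounds a singular disk there. Concatenating the two stages yields the null-homotopy in $U \setminus \mathcal{S}$.

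The main obstacle is the pushing stage. For smoothly or tamely embedded $\mathcal{S}$ it is routine via a collar on $\partial U_m$. For possibly wild $\mathcal{S}$ one cannot rely on smooth or PL machinery, and the argument must be iterated: at each point of $\mathcal{S}$ lying in the shell $\overline{U_m} \setminus \operatorname{int}(V)$, local sphericity supplies further tiny spheres, and one pushes $\gamma$ across one annular region at a time, absorbing any local complication of $\mathcal{S}$ into a region small enough to be bypassed using the hypothesis. Compactness of $\gamma$ keeps the number of steps finite. An alternative route is a Seifert--van Kampen argument for the decomposition $\mathbb{R}^n \setminus \mathcal{S} = (\overline{U_m} \setminus \mathcal{S}) \cup ((\mathbb{R}^n \setminus \operatorname{int}(U_m)) \setminus \mathcal{S})$ with intersection $\partial U_m \setminus \mathcal{S}$, aiming to show that each component of $U_m \setminus \mathcal{S}$ is simply connected; but again wildness forces careful attention to open thickenings and to path-connectedness of intersections. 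In either formulation, all the genuine difficulty lies in navigating around the potential wildness of $\mathcal{S}$, and this is exactly what locally spherical is designed to rule out (as it would fail at limit points of the ``horns'' in the classical Alexander horned sphere).
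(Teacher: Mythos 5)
First, a point of orientation: the paper does not prove Proposition~\ref{prop:cc} at all --- it is imported from Cannon's work (the ``5.1 Theorem'' of \cite{Cannon:73}) and used as a black box. So the only question is whether your sketch stands on its own as a proof. (Your decision to verify the condition at points of $\mathcal{S}$ rather than of its complement is the right reading of Definition~\ref{def:1lcc}, which as literally stated would be vacuous for a closed set.)

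Your sketch does not close the argument: the entire content of the theorem is concentrated in your ``stage one,'' and the justification you offer there is not a proof. You need to homotope a loop $\gamma \subset V \setminus \mathcal{S}$, within $U \setminus \mathcal{S}$, onto $\partial U_m \setminus \mathcal{S}$. But a loop in $U_m \setminus \mathcal{S}$ can a priori be trapped by the portion of $\mathcal{S}$ lying inside $\overline{U_m}$ --- this is exactly what happens for a small loop encircling interlocked horns of Alexander's sphere --- and the only hypothesis available to untrap it is the simple connectivity of the boundary spheres of \emph{other, smaller} basis neighborhoods centered at \emph{other} points of $\mathcal{S}$. Your sketch never explains how that hypothesis acts on the homotopy. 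The ``one annular region at a time, finitely many steps by compactness of $\gamma$'' reasoning controls only the $1$-dimensional loop, whereas what must be controlled is the intersection with $\mathcal{S}$ of the $2$-dimensional singular disk traced out by the homotopy; arranging for small subdisks of that disk to be pushed off $\mathcal{S}$ is precisely the $1$-LCC property being proved, so as it stands the argument is circular. The Seifert--van Kampen alternative fares no better: besides being applied to closed rather than open pieces, it would express the fundamental group of the complement as a pushout involving the unknown groups $\pi_1(U_m \setminus \mathcal{S})$ --- again the very quantity one is trying to control. What a complete proof must supply, and what your sweep does not, is a surgery scheme for a singular disk bounded by $\gamma$ (taken, say, in a round ball $V$): cover the disk's intersection with $\mathcal{S}$ by small spherical basis neighborhoods, trade the portions of the disk inside each such neighborhood for disks lying in its boundary sphere minus $\mathcal{S}$, and control the overlaps of these neighborhoods by an inductive refinement. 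That machinery is the substance of Cannon's theorem and cannot be replaced by an appeal to compactness of the loop.
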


We combine this criterion with the following result.

\begin{proposition}\label{prop:bing}
If an $(n-1)$-sphere $\mathcal{S}$  in $\mathbb{R}^n$ is 1-LCC then $\mathcal{S}$ can be mapped to the standard $n$-sphere by a homeomorphism from $\rn$ to itself. In particular, $\mathcal{S}$ bounds a closed $n$-ball.
\end{proposition}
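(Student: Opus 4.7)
The plan is to deduce Proposition~\ref{prop:bing} by first showing that the 1-LCC hypothesis forces the embedding of $\mathcal{S}$ into $\mathbb{R}^n$ to be \emph{flat} (equivalently, \emph{tame}: it extends to an embedding of a collar $\mathbb{S}^{n-1}\times[-1,1]\hookrightarrow\mathbb{R}^n$), and then invoking the Generalized Schoenflies Theorem to promote flatness to the desired global conclusion.

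First I would observe that since $\mathcal{S}\cong\mathbb{S}^{n-1}$ is a compact $(n-1)$-manifold, by Alexander duality it separates $\mathbb{R}^n$ into two components, a bounded interior $U$ and the unbounded exterior $V$. Definition~\ref{def:1lcc} is symmetric in the two components, so $\mathcal{S}$ is 1-LCC when viewed from both sides. This is exactly the hypothesis of the classical taming theorems for codimension-one embeddings. For $n=3$ one invokes Bing's Side Approximation Theorem (together with its tameness corollary, see the classical treatment of Bing), which asserts that a 2-sphere in $\mathbb{R}^3$ that is 1-LCC from both complementary sides is tame. For $n\ge 5$ one uses the corresponding taming theorem of Cannon and Edwards that grew out of the double suspension and recognition problem machinery; for $n=4$ the relevant statement is due to Quinn. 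In all dimensions the upshot is the same: the 1-LCC hypothesis promotes $\mathcal{S}$ to a locally flat, hence bicollared, $(n-1)$-sphere in $\mathbb{R}^n$.

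Once $\mathcal{S}$ is bicollared, I would apply the Generalized Schoenflies Theorem of Brown and Mazur: a bicollared embedding of $\mathbb{S}^{n-1}$ into $\mathbb{S}^n$ bounds two closed $n$-balls. Compactifying $\mathbb{R}^n$ to $\mathbb{S}^n$ by adding a point at infinity to the unbounded complementary component, we find that $\mathcal{S}\subset\mathbb{S}^n$ bounds $n$-balls $B_1,B_2$ with $B_1\cup B_2=\mathbb{S}^n$ and $B_1\cap B_2=\mathcal{S}$. In particular the bounded component of $\mathbb{R}^n\setminus\mathcal{S}$, together with $\mathcal{S}$, forms a closed $n$-ball, proving the second assertion. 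For the first assertion, pick a homeomorphism $\mathbb{S}^n\to\mathbb{S}^n$ carrying $\mathcal{S}$ to the standard equatorial $(n-1)$-sphere (this exists because any two bicollared $(n-1)$-spheres in $\mathbb{S}^n$ are ambiently homeomorphic, by the standard Schoenflies corollary), and arrange that it fixes the added point at infinity; the restriction to $\mathbb{R}^n$ is then the required self-homeomorphism.

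The main obstacle is the first step: deriving tameness of the embedding from the 1-LCC hypothesis. In all dimensions $n\ge 3$ this is a deep theorem of geometric topology (Bing for $n=3$, Cannon--Edwards for $n\ge 5$, Quinn for $n=4$), and there is no elementary proof. Once tameness is in hand, the passage to the Schoenflies conclusion is standard and short. Thus the proof is essentially a citation of these classical taming theorems followed by an application of the Generalized Schoenflies Theorem.
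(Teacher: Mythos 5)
Your proposal is correct and follows essentially the same route as the paper, which simply cites the classical 1-LCC taming theorems (Bing for $n=3$, Freedman--Quinn for $n=4$, and Daverman, \cite[Theorem~7.6.5]{Daverman-Venema:09}, for $n\ge 5$) without further argument; your added step of passing from local flatness to the standard embedding via the Generalized Schoenflies Theorem is exactly what those citations implicitly package. The only cosmetic difference is attribution for $n\ge 5$ (the paper credits Daverman rather than Cannon--Edwards), which does not affect the mathematics.
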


Bing~\cite{Bing:61} proved this result for $n=3$, for $n=4$ it is proved by Freedman and Quinn~\cite{FQ:90}, and for $n\ge 5$ it is due to Daverman (see \cite[Theorem~7.6.5]{Daverman-Venema:09}). We use this to prove the following theorem.

\begin{theorem}\label{thm:ballchar}
Let $T$ be a self-affine $\mathbb{Z}^3$-tile with connected interior and $1$-LCC boundary. Then $T$ is a self-affine 3-manifold if and only if it admits a semi-contractible monotone model with a boundary that is a closed 2-manifold. 
\end{theorem}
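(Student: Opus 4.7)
The plan is to prove both directions of the biconditional. For the \emph{``only if''} direction, the natural candidate is the trivial model $(M, F) = (T, A)$. One checks that $(T, A)$ satisfies Definition~\ref{def:model1}: the self-affine relation $AT = T + \digits$ is the defining equation, and $A^{-1} A$ is the identity, hence $\zn$-equivariant and fixes $0$. The canonical quotient $Q = \lim_{k\to\infty} A^{-k}A^{k}$ is then the identity, so every point preimage $Q^{-1}(x) = \{x\}$ lies in an arbitrarily small contractible neighborhood of $\partial T$, yielding semi-contractibility. Since $M = T$, the neighbor structures $\complex{M} = \complex{T}$ match trivially, and since $T$ is a $3$-manifold with boundary, $\partial M = \partial T$ is a closed $2$-manifold. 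What remains is monotonicity: $\cellstar{S} = \cell{S}$ must be connected for every $S \subset \mathbb{Z}^3$. For this I would invoke the $1$-LCC hypothesis together with the manifold hypothesis on $T$: these combine to show $\partial T$ is locally flat in $\mathbb{R}^3$ (by Bing's work, or by an analogue of Proposition~\ref{prop:bing}), and one then uses local product coordinates on $\partial T$ together with the tiling structure to argue that each intersection $\bigcap_{s\in S}(T+s)$ is connected.

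For the \emph{``if''} direction, assume $T$ admits such a model $M$. Because $T$ has connected interior, Theorem~\ref{upperthm5}(i) applies directly and gives $\partial T \cong \partial M$, a closed $2$-manifold. Combined with the $1$-LCC hypothesis, the plan is to promote this topological identification to a locally flat embedding in $\mathbb{R}^3$. If $\partial M \cong \mathbb{S}^2$ this is precisely the content of Proposition~\ref{prop:bing}; for higher-genus closed surfaces, the analogous taming result follows from the approximation theorems for $1$-LCC embeddings of codimension one (Bing and Daverman). Once $\partial T$ is locally flat, Lemma~\ref{compconnected} together with Lemma~\ref{lem:irredsep} shows that $\partial T$ separates $\mathbb{R}^3$ into exactly two components having $\partial T$ as their common locally flat boundary. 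Since $T$ is the closure of its connected interior and $\mu(\partial T)=0$, it coincides with the closure of the bounded component, and hence is a topological $3$-manifold with boundary $\partial T$. Consequently $T$ is a self-affine $3$-manifold.

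The hard part will be the monotonicity step in the \emph{``only if''} direction: verifying that the cells $\cell{S}$ of the trivial model are connected. While this is heuristically plausible for tame $3$-manifold tiles, a rigorous global connectedness argument requires a careful interplay of the local product structure provided by $1$-LCC and the combinatorics of the $\mathbb{Z}^3$-tiling; one might for instance chain a connectedness statement from $|S|=2$ (common walls between adjacent manifold tiles) up to arbitrary $|S|$ using the inductive subdivision in Theorem~\ref{th:genset}. A secondary obstacle lies in the \emph{``if''} direction in upgrading Proposition~\ref{prop:bing} from spheres to arbitrary closed $2$-manifolds; this is known from Bing's and Daverman's work, but the precise reference must be chosen with some care.
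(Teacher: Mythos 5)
Your \emph{``if''} direction is correct and is essentially the paper's argument: Theorem~\ref{upperthm5}(i) gives $\partial T\cong\partial M$, the $1$-LCC hypothesis plus the local nature of that property and Proposition~\ref{prop:bing} give local flatness of $\partial T$ in $\mathbb{R}^3$, and then $T$, being the closure of one of the two complementary components of a locally flat closed surface (Lemmas~\ref{compconnected} and~\ref{lem:irredsep}), is a $3$-manifold with boundary. Your worry about upgrading Proposition~\ref{prop:bing} from spheres to higher genus is handled exactly as you suggest and as the paper does: one only needs the \emph{local} conclusion $(U,\partial T\cap U)\cong(\mathbb{R}^3,\mathbb{R}^2)$, which follows from $1$-LCC pointwise.

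The \emph{``only if''} direction, however, contains a genuine gap that you have flagged but not closed: the monotonicity of the trivial model $(T,A)$, i.e.\ the connectedness of every cell $\cell{S}=\bigcap_{s\in S}(T+s)$. This is not a cosmetic issue. By Theorem~\ref{all:connectedness}, \emph{any} monotone model forces all cells $\cell{S}$ of $T$ to be connected (they are monotone quotients of the connected sets $\cellstar{S}$), so the converse implication is logically equivalent to proving that a self-affine $3$-manifold with connected interior and $1$-LCC boundary has connected intersections with all its translates --- you cannot dodge the problem by choosing a cleverer model. Your proposed route (chain connectedness up from $|S|=2$ using the subdivision of Theorem~\ref{th:genset}) does not obviously work: the set equation writes $\cell{S}$ as a finite union of shrunken cells $A^{-1}\cell{S'}$, and a union of connected sets is connected only if the pieces chain together, which is precisely what is in question. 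Even for $|S|=2$ it is not clear that two $3$-ball tiles in a $\mathbb{Z}^3$-tiling cannot meet in two disjoint disks; in the planar case the paper leans on a specific argument from the literature (cited after Theorem~\ref{planar}) and no $3$-dimensional analogue is supplied here. For what it is worth, the paper's own proof of Theorem~\ref{thm:ballchar} only writes out the \emph{``if''} direction and is silent on the converse, so you are not missing a trick that the paper reveals --- but as a self-contained proof your proposal is incomplete at exactly this point, and you should either supply the connectedness argument or restrict the claim to the direction you can prove.
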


\begin{proof} By Theorem~\ref{upperthm5} $\partial T$ is a 1-LCC surface. As 1-LCC is a local property Proposition~\ref{prop:bing} implies that $\partial T\subset \mathbb{R}^3$ looks locally like $\mathbb{R}^2$ embedded in $\mathbb{R}^3$, {\it i.e.}, for each $x\in \partial T$ there is a neighborhood $U$ such that $(U,\partial T\cap U) \cong (\mathbb{R}^3,\mathbb{R}^2)$. Thus $\partial T$ bounds the closed 3-manifold $T$. 
\end{proof}

We will need the next corollary, which is just a combination of Propositions~\ref{prop:cc} and~\ref{prop:bing}.

\begin{corollary}\label{cor:cc}
If $\mathcal{S}$ is an $(n-1)$-sphere in $\mathbb{R}^n$ that is locally spherical then $\mathcal{S}$ can be mapped to the standard $n$-sphere by a homeomorphism from $\rn$ to itself. Hence, $\mathcal{S}$ bounds a closed $n$-ball.\end{corollary}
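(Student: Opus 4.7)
The plan is to chain the two preceding propositions in a single short argument, treating this as a formal corollary rather than a result requiring fresh input. I would first invoke Proposition~\ref{prop:cc} (Cannon's criterion) applied to $\mathcal{S}$: by hypothesis $\mathcal{S}\subset\mathbb{R}^n$ is a locally spherical $(n-1)$-sphere, which is precisely the input that Proposition~\ref{prop:cc} needs in order to conclude that $\mathcal{S}$ is 1-LCC. No verification is required here beyond observing that the local-neighborhood basis condition in Definition~\ref{def:locspher} is exactly the hypothesis of Cannon's theorem.

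Next I would feed this conclusion into Proposition~\ref{prop:bing}. Since $\mathcal{S}$ is now known to be a 1-LCC $(n-1)$-sphere in $\mathbb{R}^n$, Proposition~\ref{prop:bing} supplies a homeomorphism $h:\mathbb{R}^n\to\mathbb{R}^n$ carrying $\mathcal{S}$ onto the standard $(n-1)$-sphere $\mathbb{S}^{n-1}\subset\mathbb{R}^n$. This is the main content of the corollary and is immediate from the chaining of the two results.

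Finally, for the bounding statement, I would note that the standard $(n-1)$-sphere bounds the standard closed $n$-ball $\mathbb{D}^n\subset\mathbb{R}^n$, so $h^{-1}(\mathbb{D}^n)$ is a closed $n$-ball in $\mathbb{R}^n$ with boundary $\mathcal{S}$. Since nothing in this argument is nontrivial once Propositions~\ref{prop:cc} and~\ref{prop:bing} are in hand, there is no genuine obstacle; the only point worth making explicit is that the conclusion about bounding a ball is already packaged into Proposition~\ref{prop:bing}, so one could equally well simply cite it verbatim. The entire proof can therefore be written in two or three sentences.
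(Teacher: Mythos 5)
Your proposal is correct and coincides with the paper's own treatment: the authors state explicitly that this corollary is ``just a combination of Propositions~\ref{prop:cc} and~\ref{prop:bing},'' which is precisely the chaining you describe. Nothing further is needed.
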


The self-affine structure of $T$ produces natural candidates for neighborhood bases as follows. 

\begin{definition}[Fundamental neighborhood]
Let $T$ be a self-affine $\zn$-tile and let $S \subset \mathbb{Z}^n$ be given in a way that
\begin{equation}\label{maximal}
\cell{S} \neq\emptyset \quad\hbox{and}\quad \langle S \cup \{s\}\rangle = \emptyset
\end{equation}
holds for each $s \in \mathbb{Z}^n \setminus S$. In this case we call the union
$[S]= \bigcup_{s\in S}(T + s) $ for $S \subset \zn$ the \emph{fundamental neighborhood} of $\cell{S}$.
\end{definition}

\begin{lemma}\label{Alemma}
Let $T$ be a self-affine $\zn$-tile. The set 
$$
\mathcal{A}=\{ A^{-k} L\mid L \hbox{ a fundamental neighborhood of some } \cell{S} \hbox{ with }S\subset\zn,\,  \cell{S} \neq\emptyset \hbox{ and } \langle S \cup \{s\}\rangle = \emptyset \}
$$ 
forms a basis for the topology of $\mathbb{R}^n$. In particular, each $x\in \partial T$ admits a neighborhood basis made up of elements of $\mathcal{A}$.
\end{lemma}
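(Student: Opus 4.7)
The plan is to proceed in two stages: first show that every point of $\rn$ lies in the interior of some fundamental neighborhood, and then use the self-affine structure of $T$ to produce arbitrarily small members of $\mathcal{A}$ at any given point.

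For the first stage, fix $x\in\rn$ and let $S_x=\{s\in\zn\mid x\in T+s\}$. Since $T$ is compact and the family $\{T+s\}_{s\in\zn}$ is locally finite, $S_x$ is finite, and every tile $T+t$ with $t\notin S_x$ has positive distance to $x$; hence for $\varepsilon>0$ small enough the ball $B_\varepsilon(x)$ meets only the tiles indexed by $S_x$ and is therefore contained in $[S_x]$, giving $x\in\operatorname{int}([S_x])$. In general $S_x$ need not satisfy the maximality condition \eqref{maximal}, but starting from it I would iteratively adjoin elements $t\notin S$ with $\cell{S\cup\{t\}}\neq\emptyset$; since for any fixed $s_0\in S_x$ only finitely many $t\in\zn$ give rise to $\cell{\{s_0,t\}}\neq\emptyset$ (again by compactness of $T$), this process terminates at a maximal $S\supseteq S_x$ with $\cell{S}\neq\emptyset$. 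The corresponding $L=[S]$ is then a fundamental neighborhood, and $x\in\operatorname{int}([S_x])\subseteq\operatorname{int}(L)$ because $[S]\supseteq[S_x]$.

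For the second stage, apply the first-stage construction at the rescaled point $A^k x$ to obtain a fundamental neighborhood $L^{(k)}$ with $A^k x\in\operatorname{int}(L^{(k)})$; then $A^{-k}L^{(k)}\in\mathcal{A}$ and $x\in\operatorname{int}(A^{-k}L^{(k)})$ because $A^{-k}$ is a homeomorphism. To see that these neighborhoods shrink to $x$, note that any set $S$ satisfying \eqref{maximal} with $s_0\in S$ lies in the finite set $\{t\in\zn\mid \cell{\{s_0,t\}}\neq\emptyset\}$; hence, up to $\zn$-translation, there are only finitely many fundamental neighborhoods, and their diameters are uniformly bounded by some constant $C$. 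Using the adapted norm from \eqref{eq:norm} we get
\[
\operatorname{diam}(A^{-k}L^{(k)})\le \|A^{-k}\|\cdot \operatorname{diam}(L^{(k)})\le \|A^{-1}\|^{k}\,C \longrightarrow 0,
\]
so any open neighborhood of $x$ contains $A^{-k}L^{(k)}$ for all sufficiently large $k$. This gives the desired (closed) neighborhood basis at every $x\in\rn$, and the assertion for $x\in\partial T$ is then the indicated special case.

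The main technical point is the first stage: one must not let the possible failure of maximality of $S_x$ obstruct the argument. The resolution is the simple monotonicity observation that $\operatorname{int}([S])$ only grows when $S$ is enlarged, so any maximal extension of $S_x$ automatically inherits the interior property from $S_x$ itself.
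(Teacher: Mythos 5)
Your proof is correct. The paper states this lemma without giving a proof, so there is nothing to compare it against directly; the argument you give is the intended one, and your key device --- enlarging $S_x=\{s\in\zn\mid x\in T+s\}$ to a maximal set satisfying \eqref{maximal}, which only increases $[\,\cdot\,]$ and hence preserves $x\in\operatorname{int}([S])$ --- is exactly the finite-extension step the paper uses in its proof of Lemma~\ref{lem:pred}, while the shrinking of diameters via the adapted norm \eqref{eq:norm} is the standard contraction estimate used throughout the paper.
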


\begin{definition}[Level of a neighborhood]\label{def:level}
If $N\in \mathcal{A}$ is given in a way that $A^k N$ is a fundamental neighborhood of some $\cell{S}$ with $S\subset\zn$,  $\cell{S} \neq\emptyset$ and $\langle S \cup \{s\}\rangle = \emptyset$ for each $s\in \zn\setminus S$, we say that $N$ is {\em a neighborhood of level $k$} and write $\operatorname{ level}(N)=k$. 
\end{definition}

Lemma~\ref{Alemma} implies that the set 
$
\mathcal{B}=\{N \in \mathcal{A}\mid \operatorname{ int}(N) \cap \partial T\neq\emptyset \}
$
contains a neighborhood basis for each $x\in \partial T$. Our aim is to provide an algorithm that allows to check whether this neighborhood basis can always be chosen in a way that it meets the conditions of Corollary~\ref{cor:cc}. To this end we define the following equivalence relation on $\mathcal{B}$.

\begin{definition}[Equivalent neighborhoods]\label{def:neighbor}
Let $T$ be a self-affine $\zn$-tile, let $N_1,N_2\in\mathcal{B}$ be given and set $k_i=\operatorname{ level}(N_i)$ for $i\in\{1,2\}$. If there exists $u\in \mathbb{Z}^n$ such that
\[
A^{k_1}N_1=A^{k_2}N_2+u \quad\hbox{and}\quad A^{k_1}(N_1\cap \partial T)=A^{k_2}(N_2\cap \partial T)+u
\]
we say that \emph{$N_1$ is equivalent to $N_2$}. In this case we write $N_1\sim N_2$ to indicate that the equivalence of neighborhoods is an equivalence relation.
\end{definition}

If $N\in \mathcal{B}$ with $\operatorname{ level}(N)\ge 1$ is given, we often need a larger neighborhood in $\mathcal{B}$ that contains $N$. To this end we define
\[
\operatorname{ Parents}(N) = \{N'\in\mathcal{B}\mid N\subset N' \hbox{ and }
\operatorname{ level}(N')=\operatorname{ level}(N)-1  \}.
\]
We need the following result.

\begin{lemma}\label{lem:pred}
If $N\in\mathcal{B}$ with $\operatorname{ level}(N)\ge 1$ then $\operatorname{ Parents}(N)\neq\emptyset$.
\end{lemma}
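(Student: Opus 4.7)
The plan is to construct a candidate parent $N'$ explicitly by ``collapsing'' the digit structure of $S$ by one level and then extending to achieve maximality. I would first write $N = A^{-k}[S]$ with $S$ satisfying \eqref{maximal} and fix a witness $x \in \operatorname{int}(N) \cap \partial T$, which exists because $N \in \mathcal{B}$; this witness is what will eventually certify that the constructed $N'$ also lies in $\mathcal{B}$. Next I would use that $\mathcal{D}$ is a complete set of residues modulo $A\mathbb{Z}^n$ to introduce the projection $\phi : \mathbb{Z}^n \to \mathbb{Z}^n$ defined by $\phi(s) = A^{-1}(s - d(s))$, where $d(s) \in \mathcal{D}$ is the unique representative of $s$ modulo $A\mathbb{Z}^n$. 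Setting $S_0 = \phi(S)$, the identity $T + s = T + d(s) + A\phi(s) \subset A(T + \phi(s))$ gives the key containment $[S] \subset A[S_0]$, and hence $N \subset A^{-(k-1)}[S_0]$; moreover, for any $y \in \langle S \rangle$ one has $A^{-1}y \in T + \phi(s)$ for every $s \in S$, so that $\langle S_0 \rangle \neq \emptyset$.

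The step I expect to be the main obstacle is the extension of $S_0$ to a set $S'$ fulfilling the \emph{strong} maximality condition demanded by \eqref{maximal}, since $\phi(S)$ itself is in general too small. My plan is to consider the collection of all finite supersets $S'' \supset S_0$ with $\langle S''\rangle \neq \emptyset$. Compactness of $T$ bounds the cardinality of such an $S''$ uniformly, because every $s' \in S''$ gives a translate $T + s'$ containing the fixed point $A^{-1}y$ and only finitely many integer translates of $T$ can contain a given point. Hence I can select $S'$ of maximum cardinality in this collection; a short argument then yields \eqref{maximal} for $S'$, since if some $s'' \notin S'$ satisfied $\langle S' \cup \{s''\}\rangle \neq \emptyset$, then $S' \cup \{s''\}$ would be a strictly larger superset of $S_0$ in the same collection, contradicting the choice of $S'$.

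Finally I set $N' = A^{-(k-1)}[S']$. By construction $\operatorname{level}(N') = k - 1$, and the chain of inclusions $N = A^{-k}[S] \subset A^{-k}\,A[S_0] = A^{-(k-1)}[S_0] \subset A^{-(k-1)}[S'] = N'$ holds, the last step because $S_0 \subset S'$. Since $N \subset N'$ forces $\operatorname{int}(N) \subset \operatorname{int}(N')$, the witness $x$ certifies $N' \in \mathcal{B}$, and therefore $N' \in \operatorname{Parents}(N)$, which completes the proof.
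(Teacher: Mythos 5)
Your proof is correct and takes essentially the same route as the paper's: you pass to the set of level-$(k-1)$ ancestors (your $\phi$ is exactly the ancestor map of Section~\ref{walkz} written out explicitly) and then enlarge that set until the maximality condition \eqref{maximal} holds, the process terminating because only boundedly many integer translates of $T$ can share a common point. One small imprecision worth fixing: a superset $S''\supset S_0$ with $\langle S''\rangle\neq\emptyset$ need not contain $A^{-1}y$ in its common intersection, but replacing $A^{-1}y$ by an arbitrary point of $\bigcap_{s\in S''}(T+s)$ yields the same uniform bound on $|S''|$, so the maximum-cardinality selection goes through unchanged.
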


\begin{proof}
Let $k=\operatorname{ level}(N)$ and choose $S$ in a way that $N=A^{-k}[S]$. For each $s\in \mathbb{Z}^d$ there is a unique $s'(s) \in \mathbb{Z}^d$ such that  $A^{-k}(T+s)$ is contained in $A^{-(k-1)}(T+s')$ (see Section~\ref{walkz}). Let $S'=\{s'(s)\mid s\in S)\}$. If $S'$ satisfies the maximality condition in \eqref{maximal} we are done, if not, successively add elements of $\mathbb{Z}^d$ to $S'$ until it satisfies this condition. Since $\mathbb{Z}^n$ is discrete and $T$ is compact at most finitely many elements have to be added.
\end{proof}

\subsection{The in-out graph} \label{sec:in}

Using the operator $\emph{Parents}$ we can define an infinite directed graph $\mathcal{I}$ whose nodes are the elements of $\mathcal{B}$ and whose edges are defined by
\[
N\to N'\qquad \Longleftrightarrow \qquad N' \in \operatorname{ Parents}(N).
\]
The following lemma shows that equivalent nodes in $\mathcal{I}$ have equivalent sets of predecessors with respect to the equivalence relation defined in Definition~\ref{def:neighbor}.

\begin{lemma}\label{lem:equivalence}
Let $N_1',N_2'\in \mathcal{B}$ with $N_1'\sim N_2'$. If $N_1\to N_1'$ is an edge in $\mathcal{I}$ then there is $N_2\in \mathcal{B}$ such that $N_1\sim N_2$ and $N_2 \to N_2'$ is an edge in $\mathcal{I}$.
\end{lemma}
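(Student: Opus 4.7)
The plan is to produce $N_2$ explicitly as a translate-and-rescale of $N_1$ dictated by the equivalence $N_1' \sim N_2'$. Set $k_1 := \operatorname{level}(N_1)$ and $k_i' := \operatorname{level}(N_i')$ for $i \in \{1,2\}$, so that the hypothesized edge $N_1 \to N_1'$ gives $k_1 = k_1' + 1$; set $k_2 := k_2' + 1$. Write $A^{k_1'}N_1' = [S_1']$, $A^{k_2'}N_2' = [S_2']$, and $A^{k_1}N_1 = [S_1]$ for the fundamental neighborhoods at the relevant base levels, and let $u \in \mathbb{Z}^n$ be the translation realizing $N_1' \sim N_2'$; in particular $[S_1'] = [S_2'] + u$, so (uniqueness of the indexing set up to the tiling property) $S_1' = S_2' + u$.

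Next I would define the candidate $N_2 := A^{-k_2}[S_1 - Au]$. Since $[S_1 - Au] = [S_1] - Au$ is a $\mathbb{Z}^n$-translate of the fundamental neighborhood $[S_1]$, it is itself a fundamental neighborhood and $N_2 \in \mathcal{A}$ sits at level $k_2$. The inclusion $N_1 \subset N_1'$ reads $[S_1] \subset A[S_1']$; subtracting $Au$ and using $A[S_1'] - Au = A[S_1' - u] = A[S_2']$ gives $[S_1 - Au] \subset A[S_2']$, which is exactly $A^{k_2}N_2 \subset A \cdot A^{k_2'}N_2' = A^{k_2}N_2'$, i.e., $N_2 \subset N_2'$.

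The equivalence $N_1 \sim N_2$ with translation $v := Au \in \mathbb{Z}^n$ is then essentially built into the construction: the tile identity $A^{k_1}N_1 = [S_1] = [S_1 - Au] + Au = A^{k_2}N_2 + v$ is immediate. For the boundary identity, I would start from $[S_1'] \cap A^{k_1'}\partial T = [S_1'] \cap (A^{k_2'}\partial T + u)$, which is a rewriting of the boundary part of $N_1' \sim N_2'$ using $[S_1'] = A^{k_2'}N_2' + u$; multiplying this equation by $A$ and intersecting both sides with $[S_1] \subset A[S_1']$ yields $[S_1] \cap A^{k_1}\partial T = [S_1] \cap (A^{k_2}\partial T + Au)$. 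Rewriting the right-hand side as $(A^{k_2}N_2 + Au) \cap (A^{k_2}\partial T + Au) = A^{k_2}(N_2 \cap \partial T) + Au$ delivers $A^{k_1}(N_1 \cap \partial T) = A^{k_2}(N_2 \cap \partial T) + v$ as required.

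Finally, to confirm $N_2 \in \mathcal{B}$, I would rerun the same intersection argument with $\operatorname{int}([S_1])$ in place of $[S_1]$; this transports the nonemptiness of $\operatorname{int}(N_1) \cap \partial T$ (which holds because $N_1 \in \mathcal{B}$) over to $\operatorname{int}(N_2) \cap \partial T$ via the affine identification $A^{k_2}\operatorname{int}(N_2) = A^{k_1}\operatorname{int}(N_1) - Au$. The main step requiring care is the passage of the boundary-equivalence from level $k_1'$ up to level $k_1 = k_1' + 1$, where one must verify that multiplying by $A$ and then restricting to the proper subset $[S_1] \subset A[S_1']$ preserves the identity; apart from this bookkeeping between the two scales the argument is purely algebraic translation-and-rescaling.
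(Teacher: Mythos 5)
Your proposal is correct and follows essentially the same route as the paper: both define $N_2$ as the $\mathbb{Z}^n$-translate of $N_1$ by the vector realizing $N_1'\sim N_2'$ (the paper writes $N_2=A^{-k_2}(A^{k_1}N_1-u)$ with $k_i=\operatorname{level}(N_i')$, which is your $A^{-k_2}[S_1-Au]$), and both verify the boundary identity by the same intersect-with-the-parent computation. Your version is slightly more explicit about rescaling the equivalence to the correct levels and about checking $N_2\in\mathcal{B}$, but the argument is the same.
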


\begin{proof}
Let $k_i:=\operatorname{ level}(N_i')$. As $N_1'\sim N_2'$ there is $u\in\mathbb{Z}^d$ such that
\begin{align}
A^{k_1}N_1'&=A^{k_2}N_2'+u, \label{eq:equiv1}\\
A^{k_1}(N_1'\cap \partial T)&=A^{k_2}(N_2'\cap \partial T)+u. \label{eq:equiv2}
\end{align}
We will show that
$
N_2 := A^{-k_2}(A^{k_1}N_1-u)
$
satisfies the requirements of our lemma. It is clear that $N_2\in\mathcal{A}$ with $\operatorname{ level}(N_2)=k_2+1$. Moreover, as $N_1 \subset N_1'$ equation \eqref{eq:equiv1} implies that $N_2\subset N_2'$. It remains to show that $N_2 \in \mathcal{B}$ and $N_1\sim N_2$. To this end observe that \eqref{eq:equiv2} yields
\begin{align*}
A^{k_1}(N_1\cap \partial T) &= A^{k_1}(N_1\cap N_1'\cap \partial T) 
= A^{k_1}N_1 \cap A^{k_1}(N_1'\cap \partial T) \\
&= A^{k_1}N_1 \cap (A^{k_2}(N_2'\cap \partial T) + u) 
= (A^{k_2}N_2+u) \cap (A^{k_2}(N_2'\cap \partial T) + u) \\
&= A^{k_2}(N_2 \cap N_2' \cap \partial T)+ u
=A^{k_2}(N_2 \cap \partial T) + u.
\end{align*}
From this we get the desired properties.
\end{proof}

\begin{definition}[In-out graph]\label{def:inout}
For $N\in \mathcal{B}$ denote by $\geometric{N}$ the equivalence class of $N$ with respect to the equivalence relation $\sim$. The \emph{in-out graph} is a directed graph $\geometric{\mathcal{I}}$ which is defined as follows.
\begin{itemize}
\item The nodes of $\geometric{\mathcal{I}}$ are the equivalence classes $\{\geometric{N}\mid N\in \mathcal{B}\}$.
\item There is a directed edge $\geometric{N}\to\geometric{N'}$ in $\geometric{\mathcal{I}}$ if there is an edge $N\to N'$ in $\mathcal{I}$.
\end{itemize}
\end{definition}

\begin{lemma}
The in-out graph $\geometric{\mathcal{I}}$ is finite.
\end{lemma}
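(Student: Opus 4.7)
The plan is to show that each equivalence class in $\geometric{\mathcal{I}}$ is encoded by a finite amount of combinatorial data. First, if $N\in\mathcal{B}$ has level $k$ then $A^k N=[S]$ is a fundamental neighborhood for some $S\subset\mathbb{Z}^n$ satisfying \eqref{maximal}, and $A^k(N\cap\partial T)=[S]\cap A^k\partial T$. Thus, directly from Definition~\ref{def:neighbor}, two elements of $\mathcal{B}$ are equivalent if and only if the associated pairs $([S_i],\,[S_i]\cap A^{k_i}\partial T)$ agree up to a $\mathbb{Z}^n$-translation, so it suffices to bound the number of such pairs modulo $\mathbb{Z}^n$-translation.

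For the first coordinate, the set $N_T=\{s\in\mathbb{Z}^n\setminus\{0\}: T\cap(T+s)\neq\emptyset\}$ of neighbors of $T$ is finite by compactness of $T$, and every nonempty cell $S\in\complex{T}$ containing $0$ is a subset of $\{0\}\cup N_T$. Consequently $\complex{T}$ has only finitely many cells up to $\mathbb{Z}^n$-translation, and in particular there are only finitely many translation classes of maximal cells $S$, hence finitely many translation classes of fundamental neighborhoods $[S]$.

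For the second coordinate, fix a representative fundamental neighborhood $[S]$ and consider the finite ``vicinity''
\[
V([S])=\{s\in\mathbb{Z}^n:(T+s)\cap[S]\neq\emptyset\}\subseteq S+(N_T\cup\{0\}).
\]
The crucial point is that, since $A^k T=T+\digits_k$ and $A^k T$ tiles $\mathbb{R}^n$ by $A^k\mathbb{Z}^n$-translates, a point $x$ lies in $\partial(A^k T)=A^k\partial T$ precisely when the set $S(x)=\{s\in\mathbb{Z}^n: x\in T+s\}$ of tiles containing $x$ meets both $\digits_k$ and its complement in $\mathbb{Z}^n$. For every $x\in[S]$ one has $S(x)\subseteq V([S])$, so the set $[S]\cap A^k\partial T$ depends on $k$ only through the subset $V([S])\cap \digits_k$ of the finite set $V([S])$, giving at most $2^{|V([S])|}$ possibilities as $k$ varies.

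Combining these two finiteness statements shows that the collection of pairs $([S],\,[S]\cap A^k\partial T)$ is finite modulo $\mathbb{Z}^n$-translation, so $\geometric{\mathcal{I}}$ has finitely many nodes. The main technical point is the combinatorial description of $\partial(A^k T)\cap[S]$ in terms of $V([S])\cap\digits_k$; verifying it reduces to the standard fact that in the $\mathbb{Z}^n$-tiling by $T$ a sufficiently small neighborhood of a point $x$ is covered by the tiles of $S(x)$ (since any other tile accumulating at $x$ would, by closedness, contain $x$), together with the observation that $A^k T$ is the origin super-tile of the $A^k\mathbb{Z}^n$-tiling by $A^k\mathbb{Z}^n$-translates of itself.
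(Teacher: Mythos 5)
Your proof is correct and follows essentially the same route as the paper's: both arguments encode each equivalence class by the translation class of the maximal cell $S$ together with the trace of $\digits_k$ on a finite vicinity of $S$ (the paper's $\beta(S),\gamma(S)$ built from $S+\mathcal{N}$ are exactly your $V([S])\cap\digits_k$ and its complement), using the identity $A^k\partial T=[\digits_k]\cap[\mathbb{Z}^n\setminus\digits_k]$. The only cosmetic difference is that the paper bounds everything inside a ball of radius $4\operatorname{diam}(T)$ after normalizing by a minimal element, whereas you count subsets of the finite neighbor set directly.
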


\begin{proof}
Choose some order on $\mathbb{Z}^n$, set 
\begin{equation}\label{eq:neighbors}
\mathcal{N}=\{s\in\zn\mid\cell{\{0,s\}}\not=\emptyset\}, 
\end{equation}
and let $\digits_k$ be defined as in \eqref{digitsk}. For each finite set $Y\subset \mathbb{Z}^n$ define the
functions
\begin{equation*}
\alpha(Y)=Y-u,\quad
\beta(Y)=((Y+\mathcal{N})\cap\mathcal{D}_k)-u,\quad
\gamma(Y)=((Y+\mathcal{N})\cap(\mathbb{Z}^n\setminus \mathcal{D}_k))-u,
\end{equation*}
where $u\in Y$ is chosen to be minimal with respect to this order. Let $N=A^{-k}[S]$ be an element of $\mathcal{B}$. As $\cell{S} \neq\emptyset$ we have $\operatorname{ diam}(S) \le \operatorname{ diam}([S]) \le 2\cdot\operatorname{ diam}(T)$. Moreover, as $\langle \{s, s+ v\} \rangle \neq\emptyset$ holds for each \emph{neighbor} $v\in \mathcal{N}$ we get
\begin{equation}\label{eq:bound2}
\operatorname{ diam}(S+\mathcal{N}) \le \operatorname{ diam}([S+\mathcal{N}]) \le 4\cdot\operatorname{ diam}(T).
\end{equation}
Now pick $u\in S$ minimal with respect to the above order on $\mathbb{Z}^d$. Then
\begin{equation}\label{Ak1}
A^{k}N - u = [S] -u = [\alpha(S)].
\end{equation}
Moreover, as $A^k \partial T = [\mathcal{D}_k] \cap [\mathbb{Z}^n\setminus \mathcal{D}_k]$ we get $A^k(N\cap \partial T) = [S]\cap[\mathcal{D}_k] \cap [\mathbb{Z}^n\setminus \mathcal{D}_k]$. As $[S]\cap (T+x)=\emptyset$ whenever $x\not\in S+\mathcal{N}$, this implies that $A^k(N\cap \partial T)= [S]\cap[(S+\mathcal{N})\cap\mathcal{D}_k] \cap (S+\mathcal{N})\cap(\mathbb{Z}^n\setminus \mathcal{D}_k)]$. Subtracting $u$ this yields
\begin{equation}
A^k(N\cap \partial T) - u = [\alpha(S)] \cap [\beta(S)] \cap [\gamma(S)]. \label{Ak2}
\end{equation}
From \eqref{Ak1} and \eqref{Ak2} we see that each equivalence class $\geometric{N}$ is completely characterized by the sets $\alpha(S)$, $\beta(S)$, $\gamma(S)$. As these sets are contained in the finite set $S+\mathcal{N}-u$, the estimate in \eqref{eq:bound2} implies that they are contained in the ball of radius $4\cdot\operatorname{ diam}(T)$ around the origin. Thus there are only finitely many choices for these sets.
\end{proof}

Using Lemma~\ref{lem:equivalence} we can provide the following algorithm to calculate $\geometric{\mathcal{I}}$.

\begin{proposition}\label{inoutalgorithm}
The in-out graph $\geometric{\mathcal{I}}$ can be constructed by the following finite recurrence process.
\begin{description}
\item[\rm Recurrence start] The equivalence class $\geometric{N}$ of each fundamental neighborhood $N$ contained in $\mathcal{B}$ is a node of $\geometric{\mathcal{I}}$.

\item[\rm Recurrence step] Suppose that $\geometric{N'}$ is a node of $\geometric{\mathcal{I}}$. For all $N$ satisfying $N' \in \operatorname{ Parents}(N)$ the  node $\geometric{N}$ together with the edge $\geometric{N} \to \geometric{N'}$ belong to $\geometric{\mathcal{I}}$.

\item[\rm End of recurrence] Iterate until no new nodes occur in a recurrence step.
\end{description}
\end{proposition}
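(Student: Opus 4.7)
My plan is to verify three things: soundness (the described process only yields legitimate nodes and edges of $\geometric{\mathcal{I}}$), completeness (every node of $\geometric{\mathcal{I}}$ is produced after finitely many iterations), and termination (the recurrence halts after finitely many steps). Soundness is essentially built into the definitions: the recurrence start produces equivalence classes $\geometric{N}$ of fundamental neighborhoods lying in $\mathcal{B}$, which are nodes of $\geometric{\mathcal{I}}$ by Definition~\ref{def:inout}, and the recurrence step only adds a class together with an edge arising from a pre-existing edge $N\to N'$ in $\mathcal{I}$, which descends to a genuine edge in $\geometric{\mathcal{I}}$ by the same definition.

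For completeness I would induct on $\operatorname{level}(N)$ to show that every node $\geometric{N}$ of $\geometric{\mathcal{I}}$ is eventually produced. If $\operatorname{level}(N)=0$ then $N$ itself is a fundamental neighborhood in $\mathcal{B}$ and its class is added at the recurrence start. If $\operatorname{level}(N)=k\ge 1$ then Lemma~\ref{lem:pred} yields a parent $N'$ with $N\to N'$ in $\mathcal{I}$ and $\operatorname{level}(N')=k-1$, so by the inductive hypothesis $\geometric{N'}$ has already been added to the graph via some representative $N^{*}\sim N'$. Lemma~\ref{lem:equivalence}, applied with $N_1:=N$, $N_1':=N'$, and $N_2':=N^{*}$, then produces $N_2=:N^{**}$ with $N^{**}\sim N$ and $N^{**}\to N^{*}$ in $\mathcal{I}$. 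Hence $N^{*}\in\operatorname{Parents}(N^{**})$, so when the recurrence step is executed for the representative $N^{*}$ of $\geometric{N'}$ the class $\geometric{N}=\geometric{N^{**}}$ is added together with the required edge $\geometric{N}\to\geometric{N'}$.

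For termination I need two ingredients. First, each recurrence step applied to a fixed representative $N'$ creates only finitely many new equivalence classes: any $N$ with $N'\in\operatorname{Parents}(N)$ satisfies $N\subset N'$ and $\operatorname{level}(N)=\operatorname{level}(N')+1$, hence $N=A^{-(\operatorname{level}(N')+1)}[S]$ for some $S\subset\mathbb{Z}^n$ with $[S]\subset A^{\operatorname{level}(N')+1}N'$, a fixed bounded set; since $S$ must additionally satisfy the maximality condition~\eqref{maximal}, there are only finitely many admissible $S$. Second, the preceding lemma already shows that $\geometric{\mathcal{I}}$ is finite, so only finitely many new equivalence classes can ever appear; combining these two facts, at some iteration no new node is produced and the recurrence halts.

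The main obstacle is the step I described as ``representative independence'' in the completeness argument: a priori the recurrence step depends on which $N^{*}\in\geometric{N'}$ one selects, and it is exactly Lemma~\ref{lem:equivalence} which guarantees that different representatives of the same class yield the same family of child equivalence classes, so that the inductive argument goes through regardless of which representative happened to be stored when $\geometric{N'}$ was first added. Once this point is accepted, the remaining verifications are bookkeeping with Lemma~\ref{lem:pred} and the finiteness result.
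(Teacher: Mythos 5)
Your proof is correct and follows essentially the same route as the paper's: the paper establishes completeness by taking a node of $\geometric{\mathcal{I}}$ of minimal level missing from the constructed graph and invoking Lemma~\ref{lem:pred} together with Lemma~\ref{lem:equivalence}, which is precisely your induction on $\operatorname{level}(N)$ in contrapositive form, while soundness and termination (the latter from the finiteness of $\geometric{\mathcal{I}}$ proved just before) are treated as immediate. The ``representative independence'' issue you isolate is exactly the point for which the paper cites Lemma~\ref{lem:equivalence}, so your write-up matches the intended argument.
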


\begin{proof}
Let $R$ be the graph constructed by this recurrence process. Obviously, each node of $R$ is also a node of $\geometric{\mathcal{I}}$. Suppose that there is a node $\geometric{N}$ of $\geometric{\mathcal{I}}$ that is not a node of $R$. Choose $N\in \mathcal{B}$ in a way that $\operatorname{ level}(N)$ is minimal with this property. As $R$ contains all equivalence classes of fundamental neighborhoods we have $\operatorname{ level}(N)\ge 1$. Let $N' \in \operatorname{ Parents}(N)$. Then $\geometric{N'}\in R$ by the choice of $N$. The recurrence step above now implies together with Lemma~\ref{lem:equivalence} that $\geometric{N}$ is a node of $\geometric{\mathcal{I}}$, a contradiction. Thus $R$ and $\geometric{\mathcal{I}}$ have the same set of nodes. Since the edges are defined in the same way, the result follows.
\end{proof}

\subsection{Results on self-affine balls}\label{sec:br} We can now prove the following theorem.

\begin{theorem}\label{thm:ball-algorithm}
Let $T$ be a self-affine $\zn$-tile. If $\partial T$ is an $(n-1)$-sphere in $\mathbb{R}^n$ and each loop in the in-out graph $\geometric{\mathcal{I}}$ contains a node $\geometric{N}$ such that
\begin{itemize}
\item[(i)] $\partial N \cong \mathbb{S}^{n-1}$,
\item[(ii)] $\partial N \setminus \partial T$ is simply connected,
\end{itemize}
then $\partial T$ is locally spherical and thus $T$ is homeomorphic to $\mathbb{D}^n$.
\end{theorem}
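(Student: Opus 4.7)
The plan is to establish that $\partial T$ is locally spherical in the sense of Definition~\ref{def:locspher} and then conclude via Corollary~\ref{cor:cc}. Fix an arbitrary $p\in\partial T$. Using Lemma~\ref{Alemma} together with Lemma~\ref{lem:pred}, I produce a nested shrinking neighborhood basis $N_0\supset N_1\supset N_2\supset\cdots$ of $p$ with $N_k\in\mathcal{B}$ and $\operatorname{level}(N_k)=k$ for every $k\ge 0$; concretely, $N_k$ is the fundamental neighborhood at level $k$ whose defining set $S_k\subset\mathbb{Z}^n$ consists of all $s$ with $A^{-k}(T+s)\ni p$. The inclusions $N_{k+1}\subset N_k$ yield an infinite directed walk $\cdots\to N_2\to N_1\to N_0$ in $\mathcal{I}$, which projects to an infinite walk $\cdots\to\geometric{N_2}\to\geometric{N_1}\to\geometric{N_0}$ in the finite graph $\geometric{\mathcal{I}}$.

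Since $\geometric{\mathcal{I}}$ is finite, some vertex appears infinitely often in this walk. Each closed sub-walk between consecutive recurrences of that vertex decomposes into edge-disjoint simple directed cycles of $\geometric{\mathcal{I}}$, each of whose vertices lies on the sub-walk. By hypothesis, every such cycle contains a \emph{good} node $\geometric{N^{*}}$ satisfying $\partial N^{*}\cong\mathbb{S}^{n-1}$ and $\partial N^{*}\setminus\partial T$ simply connected. Consequently the walk $(\geometric{N_k})_{k\ge 0}$ visits good nodes cofinally, and extracting a subsequence $N_{k_1},N_{k_2},\ldots$ with $k_i\to\infty$ gives nested neighborhoods of $p$ whose equivalence classes are all good.

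The two defining properties of a good node are invariants of the relation $\sim$: if $N\sim N'$ with levels $k,k'$, then Definition~\ref{def:neighbor} provides an affine homeomorphism of $\mathbb{R}^n$ (linear part $A^{k-k'}$ composed with a suitable translation) sending the pair $(N,\partial T)$ onto $(N',\partial T)$, hence also $(\partial N,\partial N\setminus\partial T)$ onto $(\partial N',\partial N'\setminus\partial T)$. Therefore each $N_{k_i}$ itself satisfies $\partial N_{k_i}\cong\mathbb{S}^{n-1}$ and $\partial N_{k_i}\setminus\partial T$ is simply connected. Because $\operatorname{diam}(N_{k_i})\le\|A^{-1}\|^{k_i}\operatorname{diam}(N_0)\to 0$ by~\eqref{eq:norm}, the family $\{N_{k_i}\}_{i\ge 1}$ is a neighborhood basis of $p$ of exactly the type required by Definition~\ref{def:locspher}. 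Hence $\partial T$ is locally spherical at $p$, and since $p$ was arbitrary, $\partial T$ is locally spherical. Corollary~\ref{cor:cc} then supplies a homeomorphism of $\mathbb{R}^n$ mapping $\partial T$ onto the standard $(n-1)$-sphere; this homeomorphism carries the compact set $T$ (whose boundary is $\partial T$ and which equals the closure of its interior) onto $\mathbb{D}^n$, so $T\cong\mathbb{D}^n$.

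The main obstacle I anticipate is the combinatorial argument in the second paragraph: one must guarantee that the particular walk determined by the chosen point $p$---and not merely some abstract walk in $\geometric{\mathcal{I}}$---encounters good nodes cofinally. The cycle decomposition of closed walks in finite digraphs, combined with the observation that every vertex of a cycle in such a decomposition lies on the original closed walk, is precisely the mechanism that converts the global hypothesis (every loop contains a good node) into the cofinal visits needed to obtain a basis of good neighborhoods at $p$.
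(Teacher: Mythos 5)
Your argument is correct and follows essentially the same route as the paper's proof: build a walk of neighborhoods of $p$ in $\mathcal{I}$, project it to the finite graph $\geometric{\mathcal{I}}$, use the loop hypothesis to find good nodes at arbitrarily deep levels, and conclude via Corollary~\ref{cor:cc}; your explicit treatment of the cycle decomposition and of the $\sim$-invariance of conditions (i)--(ii) only spells out what the paper leaves implicit. One small caveat: the set $S_k=\{s\mid p\in A^{-k}(T+s)\}$ need not satisfy the maximality condition \eqref{maximal}, so your ``concrete'' $N_k$ may fail to be a fundamental neighborhood --- but since you also invoke Lemma~\ref{Alemma} (as does the paper) to supply the neighborhood basis, this does not affect the argument.
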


\begin{proof}
Let $|\geometric{\mathcal{I}}|$ be the number of nodes in $\geometric{\mathcal{I}}$ and assume that $k>|\geometric{\mathcal{I}}|$. Let $N\in\mathcal{B}$ be a neighborhood of an element $x\in \partial T$ with $\operatorname{ level}(N)=k$ and let
$
N\to N_{k-1} \to\cdots\to N_1\to N_{0} 
$
be a walk in the graph $\mathcal{I}$. The associated walk in $\geometric{\mathcal{I}}$ is
$
\geometric{N}\to \geometric{N_{k-1}} \to\cdots\to \geometric{N_1}\to \geometric{N_{0}}.
$
As $k>|\geometric{\mathcal{I}}|$ the first $|\geometric{\mathcal{I}}|$ edges of this walk contain a loop. Thus, by assumption, there is $\ell \in \{k-|\geometric{\mathcal{I}}|,\ldots, k\}$ such that $\partial N_\ell \setminus \partial T$  is simply connected and $\partial N_\ell \cong \mathbb{S}^{n-1}$.
As $k$ was arbitrary and $\ell\in\{k-|\geometric{\mathcal{I}}|,\ldots, k\}$, we constructed an arbitrarily small neighborhood $N_\ell$ of $x$ that satisfies the properties of Corollary~\ref{cor:cc}. Since $x\in \partial T$ was arbitrary, this proves the result.
\end{proof}

For $n=3$ we can simplify this by using the remark after Definition~\ref{def:locspher}. 

\begin{theorem}\label{cor:ball-algorithm}
Let $T$ be a self-affine $\mathbb{Z}^3$-tile. If $\partial T$ is a $2$-sphere in $\mathbb{R}^3$ and each loop in the in-out graph $\geometric{\mathcal{I}}$ contains a node $N$ such that
\begin{itemize}
\item[(i)] $\partial N \cong \mathbb{S}^{2}$,
\item[(ii)] $\partial N\cap  \partial T$ is connected,
\end{itemize}
then $\partial T$ is locally spherical and thus tame. Consequently, $T$ is homeomorphic to $\mathbb{D}^3$.
\end{theorem}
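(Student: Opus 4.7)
The strategy is to derive this statement from Theorem~\ref{thm:ball-algorithm} specialized to $n=3$ by reconciling the two formulations of condition~(ii): the theorem requires $\partial N\setminus \partial T$ to be simply connected, whereas the corollary only asks that $\partial N\cap \partial T$ be connected. The remark following Definition~\ref{def:locspher} already asserts that these are equivalent in dimension three; my plan is to verify this equivalence carefully and then invoke Theorem~\ref{thm:ball-algorithm}.

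To establish the equivalence, fix a node $\geometric{N}$ satisfying condition~(i) of the corollary, so that $\partial N\cong \mathbb{S}^2$, and set $C=\partial N\cap \partial T$. Since $C$ is a compact subset of the $2$-sphere $\partial N$ and $\partial N\setminus \partial T=\partial N\setminus C$ is its open complement there, the Alexander Duality formula \eqref{alexander} applied inside $\partial N\cong \mathbb{S}^2$ yields
\[
H_1(\partial N\setminus C)\;\cong\;\tilde H^0(C).
\]
Hence $C$ is connected if and only if $H_1(\partial N\setminus C)$ vanishes. Because $\partial N\setminus C$ is an open subset of $\mathbb{S}^2$, it is an open surface, and each of its components is homotopy equivalent to a $1$-dimensional CW complex (a wedge of circles, possibly trivial). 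Consequently, vanishing of $H_1$ is equivalent to each component being simply connected, which is precisely the componentwise reading of ``simply connected'' implicit in Definition~\ref{def:locspher} and made explicit in the remark following it.

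With the two versions of condition~(ii) shown to agree, the hypotheses of Theorem~\ref{thm:ball-algorithm} for $n=3$ hold verbatim, and that theorem yields that $\partial T$ is locally spherical. Proposition~\ref{prop:cc} then promotes local sphericity to the $1$-LCC property, and Proposition~\ref{prop:bing} (Bing's theorem in dimension three) upgrades $1$-LCC to tameness: the sphere $\partial T$ can be mapped to the standard $2$-sphere by an ambient homeomorphism of $\mathbb{R}^3$, so it bounds a closed $3$-ball. Since $T$ is the closure of its interior and is a $\mathbb{Z}^3$-tile, the bounded complementary region of $\partial T$ is $\operatorname{int}(T)$, hence $T\cong \mathbb{D}^3$. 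The only delicate point in the whole argument is the componentwise interpretation of simple connectedness in Definition~\ref{def:locspher}; once that convention is adopted, the Alexander duality step closes the gap and the remainder is a direct appeal to already-established results.
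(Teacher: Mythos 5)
Your proposal is correct and follows essentially the same route as the paper: the paper proves Theorem~\ref{cor:ball-algorithm} simply by specializing Theorem~\ref{thm:ball-algorithm} to $n=3$ and citing the remark after Definition~\ref{def:locspher}, which asserts exactly the equivalence between simple connectedness of $\partial N\setminus\partial T$ and connectedness of $\partial N\cap\partial T$ when $\partial N\cong\mathbb{S}^2$. Your Alexander-duality argument (with the componentwise reading of simple connectedness, which is indeed the intended one) supplies the justification for that remark that the paper leaves implicit, so it is a faithful, slightly more detailed version of the paper's argument.
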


\begin{remark}\label{rem:basis}
It seems that the neighborhood basis $\mathcal{B}$ leads to satisfactory results if the self-affine $\zn$-tile $T$ has only face-neighbors ({\it i.e.}, neighbors that intersect $T$ is an $(n-1)$-dimensional set; see Section~\ref{sec:tame}). However, as Cannon's criterion is necessary and sufficient, for tiles that are homeomorphic to balls neighborhood bases with the locally spherical property always exist.
In Section~\ref{sec:Gelbrich} we consider a tile with ``degenerate'' neighbors. To show that this tile is homeomorphic to a ball, we will have to change the fundamental neighborhoods. 

As being locally spherical is a local property, the results of the present section can be adapted to check whether $T$ is homeomorphic to another manifold with boundary (see Section~\ref{sec:torus}).  
\end{remark}

\section{Examples}\label{sec:examples}

We now illustrate our theory by examples of self-affine $\mathbb{Z}^3$-tiles. In Section~\ref{sec:tame} we give a first example of a tile that is homeomorphic to a 3-ball. Section~\ref{sec:wild} gives an example of a self-affine $\mathbb{Z}^3$-tile whose boundary is a wild sphere, \emph{i.e.}, this self-affine $\mathbb{Z}^3$-tile is an example of a \emph{crumpled cube}. Section~\ref{sec:Gelbrich} is devoted to a tile that was already studied in 1996 by Gelbrich~\cite{Gelbrich:96}. We can now show that it is homeomorphic to a 3-ball. 
Finally, in Section~\ref{sec:torus} we state an existence result for a self-affine $\mathbb{Z}^3$-tile whose boundary is a surface of genus $g$ for each $g\in \mathbb{N}$. All the computer aided proofs in this section have been checked independently by {\tt sage} and {\tt Mathematica}. In particular, for the example in Section~\ref{sec:tame} we work out detailed proofs. 

\subsection{A self-affine $\mathbb{Z}^3$-tile that is homeomorphic to a 3-ball}\label{sec:tame}
Let
\begin{equation}\label{ex:data}
A=\begin{pmatrix}
0&0&-4\\
1&0&-2\\
0&1&-1
\end{pmatrix} \quad\hbox{and}\quad
\mathcal{D}=\left\{
\begin{pmatrix}
0\\
0\\
0
\end{pmatrix},
\begin{pmatrix}
1\\
0\\
0
\end{pmatrix},
\begin{pmatrix}
2\\
0\\
0
\end{pmatrix},
\begin{pmatrix}
3\\
0\\
0
\end{pmatrix}
\right\},
\end{equation}
and let $T\subset \mathbb{R}^3$  be the unique nonempty compact set satisfying $AT=T+\mathcal{D}$. As
$A$ is expanding and $\digits$ is a complete set of residue class representatives of $\mathbb{Z}^3/A\mathbb{Z}^3$ the set $T$ is a self-affine tile. Moreover, \cite[Corollary~6.2]{LW:97} yields that $T$ tiles $\mathbb{R}^3$ by $\mathbb{Z}^3$-translates making $T$ a self-affine $\mathbb{Z}^3$-tile. An image
of $T$ is depicted in Figure~\ref{fig:tile}. In this section we shall prove the following result.
\begin{figure}[h]
\includegraphics[height=4.5cm]{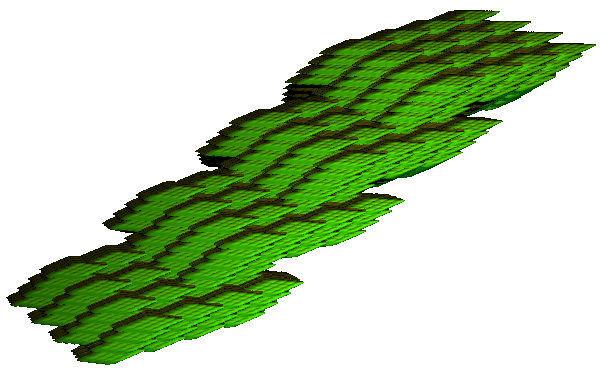}\hskip 1cm
\caption{\label{fig:tile} A self-affine $\mathbb{Z}^3$-tile that is homeomorphic to a closed ball.}
\end{figure}

\begin{theorem}\label{th:ex}
Let $T$ be the self-affine $\mathbb{Z}^3$-tile defined by the set equation $AT=T+\mathcal{D}$ with $A$ and $\mathcal{D}$ as in \eqref{ex:data}. $T$ is homeomorphic to the closed 3-ball $\mathbb{D}^3$.
\end{theorem}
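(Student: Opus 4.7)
The plan is to apply the machinery developed in Sections~\ref{sec:aa} and~\ref{sec:ball} in sequence: first exhibit an explicit polyhedral ideal tile $Z$ for $T$, invoke Theorem~\ref{bondingtheorem} to promote it to a semi-contractible monotone model $M$ with $\partial M\cong \mathbb{S}^2$, use Theorem~\ref{uppercor} to transfer this to $\partial T\cong \mathbb{S}^2$, and finally verify the hypotheses of Theorem~\ref{cor:ball-algorithm} via a finite inspection of the in-out graph $\overline{\mathcal{I}}$.

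First I would compute the neighbor structure $\complex{T}$ directly from $A$ and $\mathcal{D}$ using the graph-directed iterated function system described in Remark~\ref{rem:gifs}: build the graphs $\Gamma_i$ algorithmically and read off $\complex{T}^i_0$ for $i=2,3,4,\dots$. For the given matrix this will yield only finitely many relevant cells, and in particular $\complex{T}^i=\emptyset$ for $i$ exceeding some small bound. Next I would describe explicitly a polyhedral $\mathbb{Z}^3$-tile $Z$ (for example a suitable ``bent brick'' in the spirit of Example~\ref{ex:idealKnuth}) whose combinatorial adjacency matches $\complex{T}$ cell-for-cell, so that condition~(i) of Definition~\ref{def:approx} holds. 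Connectedness of $\mathrm{int}(T)$ is handled separately via Lemma~\ref{BandtIntConnected}: I would exhibit a small connected set $E\subset\mathrm{int}(T)$ (a line segment joining the fixed points of a few $\varphi_d$ works in this shear-type example) and verify $E\cap A^{-1}(E+d)\neq\emptyset$ for each $d\in\mathcal{D}$.

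Second, I would verify properties (ii) and (iii) of Definition~\ref{def:approx} for $Z$. By the choice of $Z$ each nonempty cell $\cell{S}_Z$ is a closed ball (of dimension $3,2,1,$ or $0$ according to $|S|$), and the subdivision $\cell{P(S)}_Z$ is a union of such balls meeting along lower-dimensional faces. To confirm that $\cell{S}_Z$ and $\cell{P(S)}_Z$ are homeomorphic balls, I would apply the algebraic-topology recipe of Section~\ref{sec:ballapprox}: check Lemma~\ref{linklemma} on links to see $\cell{P(S)}_Z$ is a manifold with boundary, then run Mayer--Vietoris/Seifert--van Kampen on the evident cover by the sub-balls $A^{-1}(\cell{S'}_Z)$ to see the relevant reduced homology and fundamental groups vanish, which (together with the generalized Poincar\'e theorem as cited) identifies $\cell{P(S)}_Z$ with a ball. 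Property~(iii), extension of boundary homeomorphisms, then follows from Lemma~\ref{pinched-ball} because each $\cell{P(S)}_Z$ is a contractible union of balls meeting along lower-dimensional ball-faces. With (i)--(iii) in hand, Theorem~\ref{bondingtheorem} gives a monotone model $(M,F)$ with $M=Z-u$, and by construction $\partial M$ is homeomorphic to $\mathbb{S}^2$, so $M$ is automatically semi-contractible.

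Third, Theorem~\ref{uppercor} then applies (connected interior plus monotone model with spherical boundary) to yield $\partial T\cong \mathbb{S}^2$. To upgrade this to $T\cong \mathbb{D}^3$ I would invoke Theorem~\ref{cor:ball-algorithm}: construct the in-out graph $\overline{\mathcal{I}}$ by the finite recurrence of Proposition~\ref{inoutalgorithm}, starting from the equivalence classes of fundamental neighborhoods $[S]$ with $\cell{S}\neq\emptyset$ and $S$ maximal. For each equivalence class $\overline{N}$ I would compute $\partial N$ from the combinatorial data (using the identification with the analogous union $[S]_M$ in the model) and check whether $\partial N\cong \mathbb{S}^2$ and $\partial N\cap \partial T$ is connected; the latter reduces, via Proposition~\ref{prop:finiteunion} applied to the model side, to a purely combinatorial check on the adjacency pattern of the tiles appearing in $[S]$.

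The main obstacle I anticipate is the finite but potentially bulky combinatorial verification in the last step: one must enumerate all equivalence classes of neighborhoods in $\overline{\mathcal{I}}$ and check that every directed cycle passes through a ``good'' node satisfying (i) and (ii) of Theorem~\ref{cor:ball-algorithm}. If some cycle consists entirely of ``bad'' nodes (for instance neighborhoods whose boundary hits $\partial T$ in a disconnected set because two tile-copies touch only along an edge or a point), one would have to enlarge the neighborhood basis as indicated in Remark~\ref{rem:basis}, replacing the offending fundamental neighborhoods by thicker unions of $\mathbb{Z}^3$-translates and rerunning Proposition~\ref{inoutalgorithm}. For the particular matrix in \eqref{ex:data}, whose digit set lies along a single coordinate axis and whose neighbor structure (to be computed) consists predominantly of face-neighbors, I expect the first choice of $\mathcal{B}$ to succeed and the verification to be handled by the finite computer check announced in the introductory paragraph of Section~\ref{sec:examples}.
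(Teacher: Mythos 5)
Your proposal follows essentially the same route as the paper's proof: compute $\complex{T}$ via the graphs $\Gamma_i$, exhibit a polyhedral ideal tile (the paper uses a prism), invoke Theorem~\ref{bondingtheorem} and Theorem~\ref{uppercor} to get $\partial T\cong\mathbb{S}^2$ after checking interior connectedness with Lemma~\ref{BandtIntConnected}, and then run the in-out graph algorithm of Theorem~\ref{cor:ball-algorithm}, with Proposition~\ref{prop:finiteunion} handling the spherical boundaries of the fundamental neighborhoods. The only divergences are implementation details (the paper verifies Definition~\ref{def:approx}(ii) by direct inspection rather than the Section~\ref{sec:ballapprox} machinery, and its connected set $E$ is a chain of thirty level-4 subtiles rather than segments between fixed points), and your anticipation that the default neighborhood basis suffices for this face-neighbor example is confirmed by the paper.
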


Moreover, we are able to establish the following topological characterization result for the cells of $T$ (the finite graphs $\Gamma_2$, $\Gamma_3$, and $\Gamma_4$ are defined in Remark~\ref{rem:gifs} and will be constructed explicitly).

\begin{proposition}\label{intersectioncharacterization}
Let $T$ be the self-affine $\mathbb{Z}^3$-tile defined by $AT=T+\mathcal{D}$ with $A$ and $\mathcal{D}$ as in \eqref{ex:data}, and let $S\subset\mathbb{Z}^3$ with $0\in S$ be given.
\begin{itemize}
\item If $|S|=2$ then $\langle S\rangle\cong \mathbb{D}^2$ if $S$ is a node of the graph $\Gamma_2$ and $\langle S\rangle=\emptyset$ otherwise.
\item If $|S|=3$ then $\langle S\rangle\cong[0,1]$ if $S$ is a node of the graph $\Gamma_3$ and $\langle S\rangle=\emptyset$ otherwise.
\item If $|S|=4$ then $\cell{S}\cong\{0\}$ if $S$ is a node of the graph $\Gamma_4$ and $\langle S\rangle=\emptyset$ otherwise.
\item If $|S|\ge 5$ then $\langle S\rangle=\emptyset$.
\end{itemize}
\end{proposition}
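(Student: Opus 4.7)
The plan is to reduce this cell-by-cell classification to an explicit polyhedral ideal tile via the functorial framework of Section~\ref{sec:aa} and the cell-lifting Theorem~\ref{th:complex}. First I would algorithmically compute the graphs $\Gamma_i$ as described in Remark~\ref{rem:gifs} for $i=2,3,4$ and verify that $\Gamma_i=\emptyset$ for $i\ge 5$; this settles the last bullet point and simultaneously enumerates the nodes of $\complexzero{T}^i$ for $i=2,3,4$.

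Second, I would exhibit an explicit polyhedral $\mathbb{Z}^3$-tile $Z$ (a suitable modification of a parallelepiped) with $\complex{Z}=\complex{T}$ whose cells $\cell{S}_Z$ are, respectively, closed 2-disks, arcs, and single points for $|S|=2,3,4$. Condition (i) of Definition~\ref{def:approx} is then immediate; condition (ii) can be verified by direct geometric inspection once the subdivision operator $P$ is applied to each cell type; condition (iii) follows from Lemma~\ref{pinched-ball}, since disks, arcs, and points are each (degenerate) cones over spheres. By Theorem~\ref{bondingtheorem} a translate $(M,F)=(Z-u,F)$ with $u\in\operatorname{int}(Z)$ is then a monotone model for $T$, and $\partial M=\partial Z$ is a polyhedral $2$-sphere, so $M$ is automatically semi-contractible.

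Third, using Lemma~\ref{BandtIntConnected} I would check that $\operatorname{int}(T)$ is connected by choosing a small connected set $E\subset\operatorname{int}(T)$ meeting each $A^{-1}(E+d)$; Theorem~\ref{uppercor} then gives $\partial T\cong\mathbb{S}^2$. Combinatoriality of both $M$ and $T$ follows by inspection of the finitely many cell types, so Theorem~\ref{th:complex} applies to every $S\in\complex{T}$ with $2\le |S|\le 4$, yielding that $\cell{S}$ is either homeomorphic to $\cell{S}_M$ or degenerate.

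The main obstacle will be ruling out the degenerate case for $|S|=2$ and $|S|=3$: since $S$ lies in some $\Gamma_i$, the set $\cell{S}$ is nonempty, so degeneracy here means collapse to a single point. To exclude this I would exploit the graph-directed IFS structure of Remark~\ref{rem:gifs}: using the finite graphs $\Gamma_2$ and $\Gamma_3$ computed in the first step, I would produce, for each relevant node $S$, two distinct walks $w,w'\in W(S)$ whose limit points $\repmap{w},\repmap{w'}$ are separated, which is straightforward once the graphs are known to contain branchings (since $A^{-1}$ is a strict contraction in the norm of \eqref{eq:norm}, distinct branches yield distinct limit points). This forces $|\cell{S}|\ge 2$, which combined with Theorem~\ref{th:complex} gives $\cell{S}\cong\cell{S}_M$ and completes the classification.
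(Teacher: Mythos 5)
Your proposal follows essentially the same route as the paper: compute the graphs $\Gamma_i$ and check $\Gamma_i=\emptyset$ for $i\ge 5$, construct an explicit polyhedral ideal tile (the paper uses a prism), obtain a combinatorial monotone model via Theorem~\ref{bondingtheorem}, verify that $\operatorname{int}(T)$ is connected so that Theorems~\ref{uppercor} and~\ref{th:complex} apply, and finally exclude the degenerate alternative for $|S|=2,3$ using the branching structure of $\Gamma_2$ and $\Gamma_3$ (the paper argues that infinitely many walks emanate from each node, hence $\cell{S}$ is infinite). The only caution is your parenthetical claim that distinct branches automatically yield distinct limit points --- in general they need not, which is precisely where $Q$ fails to be injective --- so the separation of the two chosen limit points must be checked explicitly, as your plan in fact proposes to do.
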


Our tools in the proofs of these results will be Theorem~\ref{uppercor}, Theorem~\ref{th:complex}, and Theorem~\ref{cor:ball-algorithm}.

\medskip

\noindent
{\bf The neighbor structure of $T$.}  We will construct an ideal tile that can be used as a monotone model for $T$ by Theorem~\ref{bondingtheorem}. According to Definition~\ref{def:approx}, such an ideal tile has to have the same neighbor structure as $T$. Thus, we first determine the neighbor structure $\complex{T}$ of $T$.

\begin{figure}[ht]
\hskip 0.5cm
\xymatrix{
*[o][F-]{\st{\=100} } \ar[rr]^{1,2,3} \ar[d]^{0,1,2,3}
&
&*[o][F-]{\st{\=1\=10} } \ar[rr]^{2,3} \ar[rdd]^(0.3){1,2,3}
&
&*[o][F-]{\st{\=2\=1\=1} } \ar@/^6ex/[dddd]^{3}
\\
*[o][F-]{\st{0\=10} } \ar[rr]^(0.4){2,3} \ar[dd]^{1,2,3}
&
&*[o][F-]{\st{\=20\=1} } \ar@/_2ex/[dd]_{2,3} \ar[rr]^{3}
&
&*[o][F-]{\st{101} } \ar[u]^{0,1} \ar[ld]^{0}
\\
&*[o][F-]{\st{111} } \ar[uur]^(0.2){0}
&
&*[o][F-]{\st{\=1\=1\=1} } \ar[ddl]_(0.2){3}
&
\\
*[o][F-]{\st{\=10\=1} } \ar[d]^{2,3} \ar[ru]^{3}
&
&*[o][F-]{\st{201} } \ar@/_2ex/[uu]_{0,1} \ar[ll]_{0}
&
&*[o][F-]{\st{010} } \ar[ll]_(0.4){0,1} \ar[uu]^{0,1,2}
\\
*[o][F-]{\st{211} } \ar@/^6ex/[uuuu]^{0}
&
&*[o][F-]{\st{110} } \ar[ll]_{0,1} \ar[luu]^(0.3){0,1,2}
&
&*[o][F-]{\st{100} } \ar[ll]_{0,1,2} \ar[u]^{0,1,2,3}
}
\caption{The directed graph $\Gamma_2$ of 2-fold intersections of $T$. The triple $abc$ stands for the node $\{(0,0,0)^t,(a,b,c)^t\}$ and $\bar a=-a$. Thus $abc$ corresponds to the nonempty 2-fold intersection $T \cap (T + (a,b,c)^t)$.\label{double-graph}}
\end{figure}
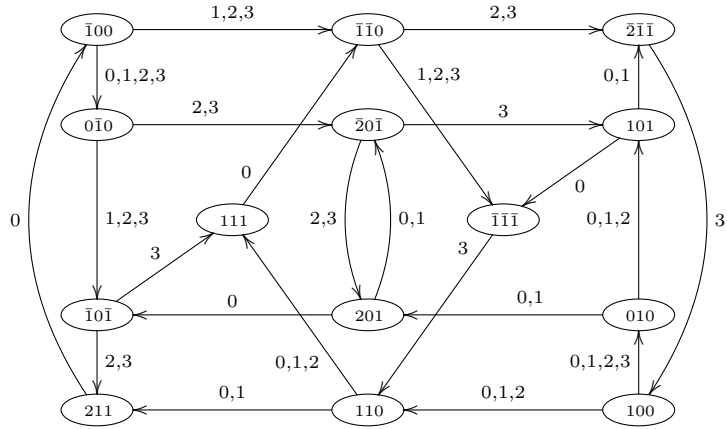

To this end we have to check which of the sets $\cell{S}$ with $S\subset\mathbb{Z}^3$ are nonempty. By translation invariance, we can confine ourselves to sets $\cell{S}$ with $0\in S$ and to characterize all nonempty cells $\cell{S}$ with $0\in S$, we use the algorithm mentioned in Remark~\ref{rem:gifs}. In particular, it suffices to construct the graphs $\Gamma_i$ ($i\ge 1$). From these graphs we can read off the nonempty cells $\cell{S}$ directly: they just correspond to their nodes. The graphs $\Gamma_i$ may be constructed by standard algorithms (see for instance~\cite{ST:03}). Indeed, $\Gamma_1$ being the graph with the single node $\{0\}$ with $4$ self-loops, we obtain the Graphs $\Gamma_2$ and $\Gamma_3$ depicted in Figures~\ref{double-graph} and~\ref{triple-graph}, respectively. Moreover, the nodes of $\Gamma_4$ are contained in Table~\ref{t4table} and $\Gamma_i$ is empty for $i\ge5$.
By inspecting the nodes of these graphs we can find all sets $S\subset\mathbb{Z}^3$ containing $0$ that correspond to a nonempty intersection $\langle S\rangle$ and, hence, by translation invariance, the neighbor structure $\complex{T}$ of $T$.

\begin{figure}[ht]
\hskip 0.5cm 
\xymatrix{
&&&&&&&\\
&
&
*[o][F-]{\state{0\=10}{201} } \ar@/^4ex/[rrd]^{1}
&
&
&
*[o][F-]{\state{101}{211} } \ar[r]^{0}
& *[o][F-]{\state{\=2\=2\=1}{\=100} }  \ar@/^7ex/[ddddddd]_{3}
&\\
&*[o][F-]{\state{\=100}{010} } \ar[rd]^{1,2} \ar[r]^{0,1,2} \ar[ru]^(0.3){0,1}
&*[o][F-]{\state{0\=10}{101} }  \ar `lu^l[lu]`^d[llu]`^r[lldddddddd]_{1}`^ru[ldddddddd][ddddddd]
&*[o][F-]{\state{\=20\=1}{\=100} } \ar[ld]_{3} \ar[l]_{3} \ar[lu]_{2,3}
&
*[o][F-]{\state{\=20\=1}{\=10\=1} } \ar[rd]^{2,3} \ar[r]^{3} \ar[ru]^{3}
&*[o][F-]{\state{101}{111} } \ar@/^4ex/[dd]^{0}
&*[o][F-]{\state{010}{110} } \ar[ld]^{0,1} \ar[l]_{0,1,2} \ar[lu]_{0,1}
&\\
&*[o][F-]{\state{\=2\=1\=1}{\=1\=1\=1} }\ar[dd]^{3}
&*[o][F-]{\state{\=1\=10}{101} } \ar[l]^{1}
&
&
&
*[o][F-]{\state{201}{211} } \ar@/^4ex/[llu]^{0}
&
&\\
&&
*[o][F-]{\state{010}{211} } \ar@/^4ex/[rrd]^{0}
&
&
&
*[o][F-]{\state{\=2\=1\=1}{\=1\=10} } \ar[r]^{3}
& *[o][F-]{\state{\=1\=1\=1}{100} }\ar[uu]^{3}
&\\
&*[o][F-]{\state{100}{110} }\ar[rd]^{0,1} \ar[r]^{0,1,2} \ar[ru]^{0,1}
&*[o][F-]{\state{010}{111} } \ar@/^4ex/[uu]^{0}
&*[o][F-]{\state{\=10\=1}{100} } \ar[ld]_{2} \ar[l]_{3} \ar[lu]_{2,3}
&
*[o][F-]{\state{\=100}{101} } \ar[rd]^{0,1} \ar[r]^{0} \ar[ru]^{1}
&*[o][F-]{\state{\=1\=1\=1}{0\=10} }\ar@/^4ex/[dd]^{3}
&*[o][F-]{\state{\=1\=10}{\=100} } \ar[ld]^{2,3} \ar[l]_{1,2,3} \ar[lu]_{2,3}
&\\
&*[o][F-]{\state{\=100}{111} }  \ar[dd]^{0}
&*[o][F-]{\state{110}{211} } \ar[l]^{0}
&
&
&
*[o][F-]{\state{\=2\=1\=1}{0\=1}0 } \ar@/^4ex/[llu]^{3}
&
&\\
&&
*[o][F-]{\state{\=2\=1\=1}{\=20\=1} }\ar@/^4ex/[rrd]^{3}
&
&
&
*[o][F-]{\state{\=10\=1}{110} } \ar[r]^{2}
& *[o][F-]{\state{111}{211} } \ar[uu]^{0}
&\\
&*[o][F-]{\state{\=1\=10}{0\=10} } \ar[rd]^{2,3} \ar[r]^{1,2,3} \ar[ru]^{2,3}
&*[o][F-]{\state{\=1\=1\=1}{\=10\=1} } \ar@/^4ex/[uu]^{3}
&*[o][F-]{\state{101}{201} } \ar[ld]_{0} \ar[l]_{0} \ar[lu]_{0,1}
&
*[o][F-]{\state{100}{201} } \ar[rd]^{0,1} \ar[r]^{0} \ar[ru]^{0}
&*[o][F-]{\state{\=10\=1}{010} } \ar `rd^r[rd]`^u[rrd]`^l[rruuuuuuuu]_{2}`^ld[ruuuuuuuu][uuuuuuu]
&*[o][F-]{\state{0\=10}{100} } \ar[ld]^(0.3){2,3} \ar[l]_{1,2,3} \ar[lu]_{1,2}
&\\
&*[o][F-]{\state{100}{211} } \ar@/^7ex/[uuuuuuu]_{0}
&*[o][F-]{\state{\=2\=1\=1}{\=10\=1}  }\ar[l]^{3}
&
&
&
*[o][F-]{\state{\=20\=1}{010} }\ar@/^4ex/[llu]^{2}
&
&\\
&&&&&&&
}
\caption{The directed graph $\Gamma_3$ of 3-fold intersections of $T$. Here a node $a_1b_1c_1\atop a_2b_2c_2$ corresponds to the intersection $T \cap (T+(a_1,b_1,c_1)^t) \cap (T+(a_2,b_2,c_2)^t) $.\label{triple-graph}}
\end{figure}

\begin{table}
\centering
\begin{tabular}{|c|c|c|c|c|c|c|c|}
\hline
\multicolumn{8}{|c|}{The nodes of $\Gamma_4$}\\
\hline
\vectv{\=2\=1\=1}{\=20\=1}{\=10\=1} &
\vectv{\=2\=1\=1}{\=20\=1}{\=100} &
\vectv{\=2\=1\=1}{\=1\=1\=1}{\=10\=1} &
\vectv{\=2\=1\=1}{\=1\=1\=1}{0\=10} &
\vectv{\=2\=1\=1}{\=1\=10}{\=100} &
\vectv{\=2\=1\=1}{\=1\=10}{0\=10} &
\vectv{\=20\=1}{\=10\=1}{010} &
\vectv{\=20\=1}{\=100}{010} \\
\hline
\vectv{\=1\=1\=1}{\=10\=1}{100} &
\vectv{\=1\=1\=1}{0\=10}{100} &
\vectv{\=1\=10}{\=100}{101} &
\vectv{\=1\=10}{0\=10}{101} &
\vectv{\=10\=1}{010}{110} &
\vectv{\=10\=1}{100}{110} &
\vectv{\=100}{010}{111} &
\vectv{\=100}{101}{111} \\
\hline
\vectv{0\=10}{100}{201} &
\vectv{0\=10}{101}{201} &
\vectv{010}{110}{211} &
\vectv{010}{111}{211} &
\vectv{100}{110}{211} &
\vectv{100}{201}{211} &
\vectv{101}{111}{211} &
\vectv{101}{201}{211}
\\
\hline
\end{tabular}

\bigskip

\caption{This table contains the nodes of $\Gamma_4$. Each node corresponds to a nonempty 4-fold intersection. There is one infinite walk starting from each node. This implies that each nonempty 4-fold intersection is a single point.}
\label{t4table}
\end{table}

\medskip

\noindent
{\bf Constructing an ideal tile for $T$.} We now build an ideal tile $Z$ of $T$. By inspecting the neighbor structure $\complex{T}$ of  $T$, we discover that choosing $Z$ to be equal to the prism spanned by the vectors $(0,1,0)^t, (1,\frac54,0)^t,(\frac32,\frac12,1)^t$ is a good candidate. The prism $Z$ and $Z_1=\cell{P(\{0\})}_Z=A\inv(Z+\mathcal{D})$ are shown in Figure~\ref{fig:T0T1}.
\begin{figure}[ht]
\includegraphics[height=5cm]{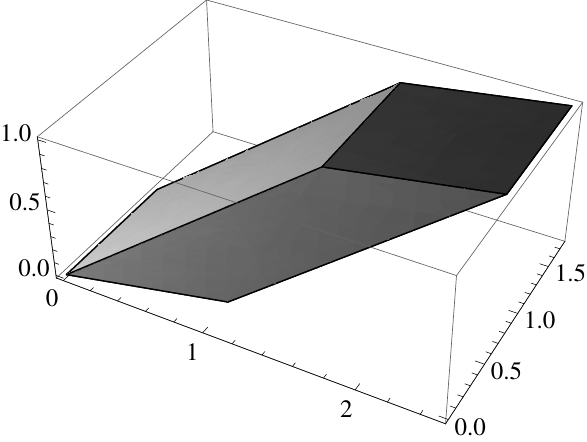}\hskip 1cm     
\includegraphics[height=5cm]{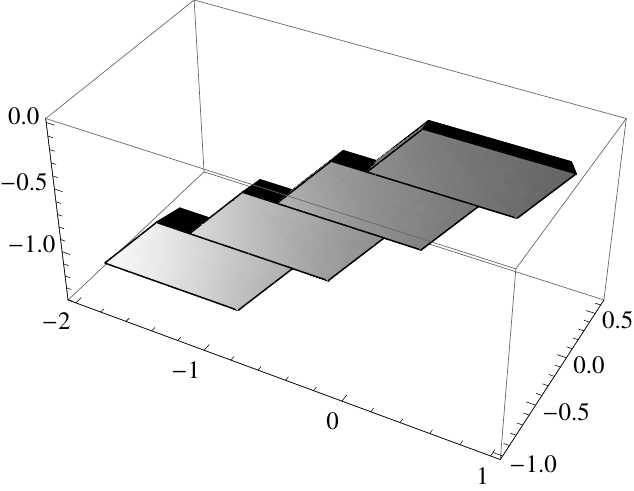}
\caption{The ideal tile $Z$ of $T$ and $\cell{P(\{0\})}_Z$.\label{fig:T0T1}}
\end{figure}
It is easy to see that $Z$ is a $\mathbb{Z}^3$-tile of $\mathbb{R}^3$ with connected interior. Thus it suffices to check items (i), (ii), and (iii) of Definition~\ref{def:approx} to make sure that $Z$ is an ideal tile for $T$.

In order to verify item~(i) we observe that the nonempty intersections $\cell{S}_Z$ with $0\in S$ can easily be determined as $Z$ is an explicitly given prism in $\mathbb{R}^3$. Comparing the collections of nonempty sets $\cell{S}$ and $\cell{S}_Z$ we obtain that $\complex{T}=\complex{Z}$ and item~(i) is verified.

To check item (ii) we need to make sure that $\cell{S}_Z$ and $\cell{P(S)}_Z$ are connected and that $\cell{S}_Z\cong\cell{P(S)}_Z$ holds for each $S\subset \mathbb{Z}^3$. Since $Z$ and $\cell{P(\{0\})}_Z$ are explicitly given polyhedra (see Figure~\ref{fig:T0T1}) it is a routine calculation to check (ii). Indeed it is easy to see that all the nonempty sets $\cell{S}_Z$ and $\cell{P(S)}_Z$ are balls of dimension $4-|S|$.

To check (iii) we observe that $Z$ is combinatorial, {\it i.e.}, for each $S\subset\mathbb{Z}^3$ with $|S|=i$, $i\ge 1$ we have $\cell{\delta S}_Z=\partial_i\cell{S}_Z$ and $\cell{\delta P(S)}_Z=\partial_i\cell{P(S)}_Z$. As each homeomorphism between the spheres $\cell{\delta S}_Z$ and $\cell{\delta P(S)}_Z$ extends to a homeomorphism between the balls $\cell{S}_Z$ and $\cell{P(S)}_Z$ item~(iii) is shown.

Therefore $Z$ is an ideal tile for $T$.

\medskip

\noindent
{\bf Sphere checking.} We are now in a position to prove that $\partial T$ is homeomorphic to the 2-sphere. 
In the preceding paragraph we proved that the prism $Z$ is an ideal tile for $T$. Thus, by Theorem~\ref{bondingtheorem} there is $u\in \mathbb{Z}^3$ such that $M=Z-u$ is a monotone model for $T$. As $\partial M$ is a 2-sphere, this monotone model is semi-contractible in the sense of Definition~\ref{def:semicontr}. To apply Theorem~\ref{uppercor} it therefore remains to check that $\operatorname{int}(T)$ is connected. To this end we use Lemma~\ref{BandtIntConnected}. To construct the set $E$ with the properties required in this lemma, note that a subtile $A^{-k}(T+s)$ of $T$ ($s\in \mathbb{Z}^3$) is contained in the interior of $T$ if and only if each of its \emph{neighbors} $A^{-k}(T+s+v)$ ($v\in \mathcal{N}$ with $\mathcal{N}$ as in \eqref{eq:neighbors}) is a subtile of $T$. The tile $T$ has $4^4$ subtiles of the shape $A^{-4}(T+s)$ ($s\in \mathbb{Z}^3$). Examining their neighbors, from this collection we select those which are contained in $\operatorname{ int}(T)$. It turns out that these are the 30 subtiles $A^{-4}(T+s)$ with $s\in I_4$ given by Table~\ref{int4Table}. 
\begin{table}
\centering
\begin{tabular}{|c|c|c|c|c|c|c|c|c|c|}
\hline
\multicolumn{5}{|c|}{The elements of $I_4:=\{s\in \mathbb{Z}^3\mid A^{-4}(T+s)\subset \operatorname{ int(T)}\}$ }\\
\hline
\vect{(-10,-4,-2)} &
\vect{(-10,-4,-1)} &
\vect{(-9,-4,-2)} &
\vect{(-9,-4,-1)} &
\vect{(-9,-4,0)} \\
\hline
\vect{(-8,-3,-2)} &
\vect{(-8,-3,-1)} &
\vect{(-8,-3,0)} &
\vect{(-7,-3,-1)} &
\vect{(-7,-3,0)} \\
\hline
\vect{(-6,-2,-1)} &
\vect{(-6,-2,0)} &
\vect{(-5,-2,-1)} &
\vect{(-5,-2,0)} &
\vect{(-5,-2,1)} \\
\hline
\vect{(-4,-1,-1)} &
\vect{(-4,-1,0)} &
\vect{(-4,-1,1)} &
\vect{(-3,-1,0)} &
\vect{(-3,-1,1)} \\
\hline
\vect{(-2,0,0)} &
\vect{(-2,0,1)} &
\vect{(-1,0,0)} &
\vect{(-1,0,1)} &
\vect{(-1,0,2)} \\
\hline
\vect{(0,1,0)} &
\vect{(0,1,1)} &
\vect{(0,1,2)} &
\vect{(1,1,1)} &
\vect{(1,1,2)} \\
\hline
\end{tabular}

\bigskip

\caption{The translates $s\in\mathbb{Z}^3$ corresponding to the subtiles $A^{-4}(T+s)$ lying in the interior of $T$.}
\label{int4Table}
\end{table}
Now set $E:= \bigcup_{s\in I_4} A^{-4}(T+s)$. As $A^{-4}(T+s)$ is connected for each $s\in I_4$, we check that $E$ is connected by showing that $\{A^{-4}(T+s)\mid s\in I_4\}$ forms a \emph{chain}. In other words, define a graph $C$ whose nodes are the elements of $I_4$. There is an edge between $s_1$ and $s_2$ if and only if
\begin{equation}\label{nono}
(T+s_1) \cap (T+s_2)\neq\emptyset.
\end{equation}
We have to show that $C$ is a connected graph. As we know that \eqref{nono} holds if and only if $s_1-s_2\in \mathcal{N}$ it is easy to set up this graph and to verify it is connected. It is now straightforward to show that $E \cap \varphi_d(E)\neq\emptyset$ for each $d\in\digits$. Applying Lemma~\ref{BandtIntConnected} we conclude that $\operatorname{ int}(T)$ is connected. 

Summing up, we may invoke Theorem~\ref{uppercor} and have thus proved the following result.

\begin{proposition}
Let $T$ be the self-affine $\mathbb{Z}^3$-tile defined by $AT=T+\mathcal{D}$ with $A$ and $\mathcal{D}$ as in \eqref{ex:data}. Then $\partial T$ is homeomorphic to the sphere $\mathbb{S}^2$.
\end{proposition}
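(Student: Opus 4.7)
My plan is to apply Theorem~\ref{uppercor}, which reduces the problem to producing a monotone model $M$ of $T$ whose boundary is a $2$-sphere and verifying that $\operatorname{int}(T)$ is connected. The monotone model will be produced from an ideal tile via Theorem~\ref{bondingtheorem}, so most of the work goes into exhibiting an ideal tile $Z$ for $T$ that is homeomorphic to a $3$-ball with spherical boundary.

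First I would determine the neighbor structure $\complex{T}$ of $T$ by running the standard algorithm of Remark~\ref{rem:gifs} to construct the graphs $\Gamma_i$. This produces a complete list of all nonempty intersections $\cell{S}$ with $0\in S$ up to $|S|=4$ (since these graphs turn out to be empty for $i\ge 5$). Guided by the set of $1$-neighbors read off from $\Gamma_2$, I would next propose a concrete polyhedron $Z$---here the prism spanned by $(0,1,0)^t$, $(1,\tfrac54,0)^t$, $(\tfrac32,\tfrac12,1)^t$ suggests itself because it tiles $\mathbb{R}^3$ by $\mathbb{Z}^3$-translates and its face, edge, and vertex contacts can be arranged to match the combinatorial data in $\Gamma_2$, $\Gamma_3$, $\Gamma_4$.

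Next I would check the three conditions of Definition~\ref{def:approx}. Condition (i), $\complex{Z}=\complex{T}$, reduces to a finite inspection of how the $\mathbb{Z}^3$-translates of the prism $Z$ meet. For (ii), I would inspect both $Z$ and the subdivided tile $\cell{P(\{0\})}_Z = A^{-1}(Z+\mathcal{D})$ and verify directly that for each nonempty $\cell{S}_Z$ the subdivided cell $\cell{P(S)}_Z$ is a ball of the same dimension $4-|S|$ as $\cell{S}_Z$, hence connected and homeomorphic to it. For (iii), I would use that $Z$ is combinatorial, so $\cell{\delta S}_Z$ and $\cell{\delta P(S)}_Z$ are the topological boundary spheres of the corresponding balls, and invoke the standard fact (or Lemma~\ref{pinched-ball}) that any self-homeomorphism of the boundary sphere of a ball extends to a self-homeomorphism of the ball. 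Theorem~\ref{bondingtheorem} then yields a homeomorphism $F$ and a vector $u\in\operatorname{int}(Z)$ such that $(M,F):=(Z-u,F)$ is a monotone model for $T$; the boundary $\partial M$ is a $2$-sphere and hence $M$ is automatically semi-contractible.

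Finally, to apply Theorem~\ref{uppercor} I need $\operatorname{int}(T)$ connected, and for this I would use Lemma~\ref{BandtIntConnected}. The concrete $E$ I would construct is the union $E=\bigcup_{s\in I_4} A^{-4}(T+s)$, where $I_4$ consists of those level-$4$ subtiles all of whose $\mathcal{N}$-neighbors (with $\mathcal{N}$ as in \eqref{eq:neighbors}, obtained from $\Gamma_2$) are also subtiles of $T$; this forces $A^{-4}(T+s)\subset\operatorname{int}(T)$ for each $s\in I_4$. I would then check by a finite computation that the ``adjacency graph'' on $I_4$ with edges $\{s_1,s_2\}$ whenever $s_1-s_2\in\mathcal{N}$ is connected, so $E$ is connected, and that $E\cap A^{-1}(E+d)\ne\emptyset$ for every $d\in\mathcal{D}$. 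Lemma~\ref{BandtIntConnected} then yields connectedness of $\operatorname{int}(T)$ and Theorem~\ref{uppercor} finishes the proof. The main obstacle is the bookkeeping in step two, namely confirming that the chosen prism really reproduces every cell of $\complex{T}$ with the correct incidences; once that matching is established, conditions (ii) and (iii) are essentially automatic.
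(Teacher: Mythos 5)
Your proposal is correct and follows essentially the same route as the paper: computing $\complex{T}$ via the graphs $\Gamma_i$, verifying that the same prism $Z$ is an ideal tile, invoking Theorem~\ref{bondingtheorem} to obtain a semi-contractible monotone model with spherical boundary, establishing connectedness of $\operatorname{int}(T)$ via Lemma~\ref{BandtIntConnected} with the same set $E$ built from the level-$4$ interior subtiles, and concluding with Theorem~\ref{uppercor}. No substantive differences from the paper's argument.
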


Recall that, being a translate of $\cell{S}_Z$, each nonempty $\cell{S}_M$ is a ball of dimension $4-|S|$. Moreover, from each node in the graphs $\Gamma_2$ and $\Gamma_3$ there lead away infinitely many different infinite walks. Thus the sets $\cell{S}$ with $S$ being a node of these graphs, contain infinitely many points (and, hence, are nondegenerate). The sets $\cell{S}$, with $S$ being a node of $\Gamma_4$ are single points. Thus, since we already saw that $Z$ and, hence, $M$, is combinatorial, Theorem~\ref{th:complex} implies Proposition~\ref{intersectioncharacterization}.

\medskip

\noindent
{\bf Ball checking.}
In order prove that $T$ is homeomorphic to a ball we want to apply Theorem~\ref{cor:ball-algorithm}. To this end we have to construct the in-out graph $\geometric{\mathcal{I}}$ which can be computed by the algorithm described in Proposition~\ref{inoutalgorithm}. In the present example there exist $24$ fundamental neighborhoods in $\mathcal{B}$, one for each node of $\Gamma_4$ (see Table~\ref{t4table}). As these lie in pairwise different equivalence classes (in the sense of Definition~\ref{def:neighbor}), the recurrence starts with 24 nodes. After eight recurrence steps we arrive at the in-out graph $\geometric{\mathcal{I}}$ which has $2438$ nodes. We now have to verify conditions (i) and (ii) of Theorem~\ref{cor:ball-algorithm} to prove that $T$ is homeomorphic to a ball.

As $\Gamma_5$ is empty and each node of $\Gamma_3$ is a subset of a node of $\Gamma_4$, each set $S\subset \mathbb{Z}^3$ with the property $\cell{S}\not=\emptyset$ and $\cell{S\cup\{s\}}=\emptyset$ for all $s\in \mathbb{Z}^3\setminus S$ has exactly $4$ elements. Therefore, each fundamental neighborhood can be written as $[S] + u$, with $S\in \Gamma_4$
and $u\in \mathbb{Z}^3$ and, hence, each node $\geometric{N}$ of $\geometric{\mathcal{I}}$ is of the form $N = A^{-k}([S]+u)$, with $k\in \mathbb{N}$, $u\in\mathbb{Z}^n$, and $S\in \Gamma_4$. This implies that $N$ is homeomorphic to $[S]$ for some $S\in \Gamma_4$ and checking condition~(i) of  Theorem~\ref{cor:ball-algorithm} amounts to checking whether $\partial[S]\cong \mathbb{S}^2$ holds for each of the $24$ nodes of $\Gamma_4$.
To prove that $\partial[S]\cong \mathbb{S}^2$ we may use Proposition~\ref{prop:finiteunion}. To check the conditions of this proposition, it remains to verify the following items for each $S\in \Gamma_4$: 
\begin{itemize}
\item[(a)] $\partial[S]_M \cong \mathbb{S}^2$.
\item[(b)] $\operatorname{int}([S])$ is connected.
\item[(c)] $\mathbb{R}^3\setminus[S]$ is connected.
\end{itemize}
As $[S]_\start$ is a union of four prisms one can check (a) by direct inspection or standard methods (see Section~\ref{sec:ballapprox}). To check (b) observe that $T$ tiles $\mathbb{R}^3$ by $\mathbb{Z}^3$-translates. Thus the  definition of the fundamental neighborhood implies that the singleton $\cell{S}$ is contained in the interior of $[S]$ and, hence, there is a small open $B$ ball centered in $\cell{S}$ that is contained in $\operatorname{ int}([S])$. As $\operatorname{int}(T)$ is connected and $B$ contains inner points of $T+s$ for each $s\in S$, the interior of $[S]$ is connected. As the tiling $T+\mathbb{Z}^3$ is locally finite, also (c) can be checked combinatorially by using the connectedness of $\operatorname{int}(T)$.

Condition~(ii) of  Theorem~\ref{cor:ball-algorithm} has to be checked for each of the $2438$ nodes of $\geometric{\mathcal{I}}$. We explain how this is done for a given node of $\geometric{\mathcal{I}}$. Let $\geometric{N}$ be a node of $\geometric{\mathcal{I}}$ and consider $\partial N \cap \partial T$. Suppose $N= A^{-k}([S]+u)$, then there exist $S_1,\ldots, S_m\subset \mathbb{Z}^n$ such that
\[
\partial N \cap \partial T = A^{-k}\bigcup_{i=1}^m \langle S_i \rangle.
\]
As $\langle S_i \rangle$ is connected for each $i\in\{1,\ldots,m\}$, this set is connected if $\{S_1,\ldots,S_m\}$ forms a chain. In other words, define a graph $C(N)$ whose nodes are the sets $S_1,\ldots, S_m$ and there is an undirected edge between $S_i$ and $S_j$ if and only if $\langle S_i \rangle \cap \langle S_j\rangle  =\langle S_i \cup S_j\rangle \neq\emptyset$. All the information required to construct this graph is contained in Proposition~\ref{intersectioncharacterization}. The set $\partial N \cap \partial T$ is connected if and only if $C(N)$ is a connected finite graph. We checked connectedness for each node of $\geometric{\mathcal{I}}$ with the aid of {\tt sage} and {\tt Mathematica}. It turns out that in each walk of length 2 of $\mathcal{I}$ there is at least one node satisfying condition~(ii) of  Theorem~\ref{cor:ball-algorithm}.

Summing up, in each loop of $\geometric{\mathcal{I}}$ there is at least one node satisfying the conditions of Theorem~\ref{cor:ball-algorithm}. This proves that $T$ is homeomorphic to a closed ball and Theorem~\ref{th:ex} is established.

\subsection{A self-affine $\mathbb{Z}^3$-tile whose boundary is a wild horned sphere}\label{sec:wild}

Let $A=9I=\operatorname{ diag}(9,9,9)$ be the $3\times 3$ diagonal matrix with the number $9$ in the main diagonal. We define the set of digits $\mathcal{D}$ as follows. Let
$
C := \{(x_1,x_2,x_3)^t\mid 0\le x_1,x_2,x_3 \le 8\}
$
be the \emph{basic cube}. We construct the digit set by attaching and cutting out \emph{horns} from $C$. For the \emph{upper horns} set
\[
\begin{array}{rcl}
H_1^{u} &:=& \{(1,4,x_3)^t\mid 0\le x_3\le 6\} \cup \{(x_1,4,6)^t\mid 2\le x_1\le 7\},\\
H_2^{u} &:=& \{(7,4,x_3)^t\mid 0\le x_3\le 4\}.
\end{array}
\]
The \emph{lower horns} we define by  $H_i^{l} := \{(x_2,x_1,8-x_3)^t\mid (x_1,x_2,x_3)^t\in H_i^{u} \}$ for $i\in \{1,2\}$. Then
\begin{equation}\label{horneddigits}
\begin{array}{rcl}
\mathcal{D} &=& (C \cup (H_1^{u} + (9,0,0)^t)\cup (H_2^{u} + (9,0,0)^t)\cup (H_1^{l} - (9,0,0)^t)\cup (H_2^{l} - (9,0,0)^t))\\ &&
\hskip 0.2cm\setminus (H_1^{u}\cup H_2^{u}\cup H_1^{l}\cup H_2^{l}).
\end{array}
\end{equation}
It is easy to see that $\mathcal{D}$ has $9^3$ elements and is a complete set of coset representatives of $\mathbb{Z}^3/A\mathbb{Z}^3$. Moreover, using well-known algorithms ({\it cf.\ e.g.}\ Vince~\cite{Vince:00}), one checks that $T=T(A,\mathcal{D})$ is a self-affine $\mathbb{Z}^3$-tile. The image of $\cell{P(\{0\})}_\start$ on the left side of Figure~\ref{fighorned} gives a suggestive geometric visualization of the digit set $\mathcal{D}$. 

\begin{figure}
\includegraphics[height=8cm]{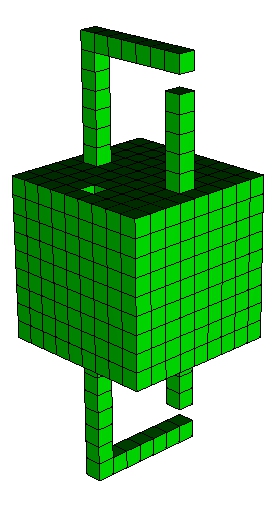}\hskip 1cm     
\includegraphics[height=8.3cm]{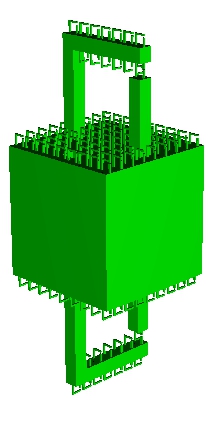}
\caption{A geometric visualization of the digits $\digits$ and an image of the self-affine {\em crumpled cube} $T=T(A,\digits)$ defined in Section~\ref{sec:wild}.\label{fighorned}}
\end{figure}

\begin{proposition}\label{prop:84}
Let the self-affine $\mathbb{Z}^3$-tile $AT=T+\mathcal{D}$  with $A=9I=\operatorname{diag}(9,9,9)$ and $\mathcal{D}$ defined as in \eqref{horneddigits} be given.
\begin{itemize}
\item[(i)] $\partial T$ is homeomorphic to $\mathbb{S}^2$.
\item[(ii)] $T$ is \emph{not} homeomorphic to a ball.
\end{itemize}
\end{proposition}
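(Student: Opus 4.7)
For part (i) my plan is to apply Theorem~\ref{uppercor}, which requires exhibiting a monotone model of $T$ with spherical boundary. Guided by the shape of the digit set $\digits$ in~\eqref{horneddigits}, I take as ideal tile $Z$ the polyhedron obtained by attaching to the standard unit cube $[-\tfrac12,\tfrac12]^3$ the unit-thickness rectangular prisms matching the horns $H_i^u,H_i^l$ of $\digits$ (and carving out the corresponding indentations), translated so that $0\in\operatorname{int}(Z)$. Since $Z$ is assembled from the very same unit lattice cubes that make up $\digits$, it is a $\mathbb{Z}^3$-tile with $\complex{Z}=\complex{T}$; the nonempty cells $\cell{S}_Z$ and $\cell{P(S)}_Z$ are polyhedral balls of complementary dimension, and the homeomorphism-extension clause (iii) of Definition~\ref{def:approx} follows from Lemma~\ref{pinched-ball}. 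Theorem~\ref{bondingtheorem} then furnishes a monotone model $(M,F)$ for $T$ with $\partial M\cong\mathbb{S}^2$, and a model with spherical boundary is automatically semi-contractible. Connectedness of $\operatorname{int}(T)$ follows from Lemma~\ref{BandtIntConnected} using a small cube $E\subset\operatorname{int}(T)$ centered at the origin, since the basic cube $C\subset\digits$ guarantees $E\cap \varphi_d(E)\neq\emptyset$ for every $d\in\digits$. Theorem~\ref{uppercor} now yields $\partial T\cong\mathbb{S}^2$.

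For part (ii) the plan is to prove that $T$ is not simply connected; since every topological $3$-ball is simply connected, this rules out $T\cong\mathbb{D}^3$. The horns are engineered so that the pair $H_1^u,H_2^u$ of upper horns of $T$ and the symmetric pair of lower horns of the neighboring tile $T+(1,0,0)^t$ form a Hopf-linked configuration: the horizontal arm of $H_1^u$ hooks around the vertical strand $H_2^u$, exactly as in the first stage of Alexander's horned sphere (or equivalently of Antoine's necklace after one iteration). The set equation $AT=T+\digits$ then reproduces this linked configuration inside every level-$k$ subtile $A^{-k}(T+d)$ with $d\in \digits_k$, at scale $9^{-k}$.

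I will then exhibit a small simple closed curve $\gamma\subset\operatorname{int}(T)$ encircling the neck of one of the level-one horns so as to link nontrivially with its companion horn. To prove $\gamma$ is not null-homotopic in $T$, I will argue as follows. Any singular disk $\Phi\colon \mathbb{D}^2\to T$ bounding $\gamma$ is compact, so there is a largest level $k$ such that $\Phi(\mathbb{D}^2)$ is covered by finitely many subtiles $A^{-k}(T+d)$. By self-similarity each of the two horns linked with $\gamma$ contains a scaled copy of the same linked pair, carrying an obstructing loop at the next level that $\Phi$ must also avoid; iterating produces an infinite nested sequence of obstructions whose intersection is a wild Cantor set embedded in $T$ of Antoine's-necklace type. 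The classical fact that the complement of Antoine's necklace in $\mathbb{R}^3$ is not simply connected then contradicts the existence of $\Phi$.

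The main obstacle is the last step: passing from the Hopf-linking present at each finite stage of the iteration to the genuine topological wildness of the limit horn set. Rigorously this amounts to identifying $\bigcap_k A^{-k}(\text{horn-chain at level }k)$ as an Antoine's-necklace-style wild Cantor set inside $T$ and importing the classical fundamental-group computation for its complement; the self-affine set equation provides the uniform geometric control across all scales needed to carry this out.
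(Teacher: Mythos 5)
Your overall architecture for part (i) is the same as the paper's (ideal tile $\Rightarrow$ Theorem~\ref{bondingtheorem} $\Rightarrow$ monotone model with spherical boundary $\Rightarrow$ connectedness of $\operatorname{int}(T)$ $\Rightarrow$ Theorem~\ref{uppercor}), but two of your concrete choices do not work as stated. First, the paper takes the ideal tile to be the \emph{plain} unit cube $Z=[0,1]^3$; the point is that its cells are ordinary faces, edges and vertices, so conditions (ii) and (iii) of Definition~\ref{def:approx} are easy to verify, and the only real work is computing the graphs $\Gamma_i$ of $T$ and checking $\complex{Z}=\complex{T}$. Your horned polyhedron is essentially the first iterate $A^{-1}(Z+\mathcal{D})$; it is not wrong in principle, but it makes every verification strictly harder, and the claim that $\complex{Z}=\complex{T}$ ``since $Z$ is assembled from the same lattice cubes as $\mathcal{D}$'' is not an argument --- the neighbor structure of $T$ still has to be computed and compared. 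Second, your verification of connectedness of $\operatorname{int}(T)$ fails: the origin is a corner of $T$, not an interior point, and more importantly Lemma~\ref{BandtIntConnected} requires $E\cap A^{-1}(E+d)\neq\emptyset$ for \emph{every} $d\in\mathcal{D}$; since $A^{-1}(E+d)$ is a set of diameter $\operatorname{diam}(E)/9$ located near $d/9$, and the points $d/9$ spread over the whole tile, no ``small cube'' $E$ can meet all of them. One needs a large connected $E$, e.g.\ the paper's network consisting of the midpoints of all $9^3$ subcubes $\varphi_d(Z)$ joined by arcs through shared faces inside $\operatorname{int}(T)$.

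For part (ii) your target statement differs from the paper's: the paper shows that $\mathbb{R}^3\setminus T$ is not simply connected by the classical Alexander horned sphere argument, whereas you aim to show $\pi_1(T)\neq 1$. Your target is actually the logically cleaner route to ``$T\not\cong\mathbb{D}^3$'' (a wildly embedded ball, like the Alexander horned ball, can have non--simply connected complement), and it matches the paper's assertion elsewhere that $T$ is ``not even simply connected.'' However, your geometric setup is confused in ways that would derail a rigorous write-up. The clasp is formed by $H_1^u$ and $H_2^u$ linking \emph{with each other} (the horizontal arm of $H_1^u$ hooks over the post $H_2^u$), not by the upper horns of $T$ linking with the lower horns of $T+(1,0,0)^t$ --- those live in different unit cells. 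Moreover, a loop $\gamma\subset\operatorname{int}(T)$ cannot ``encircle the neck of one of $T$'s own horns'': to detect $\pi_1(T)$ you must encircle one of the \emph{grooves} of $T$, i.e.\ a horn of a neighboring tile reaching into $T$; encircling $T$'s own protruding horns detects $\pi_1(\mathbb{R}^3\setminus T)$ instead. Finally, the step ``there is a largest level $k$ such that $\Phi(\mathbb{D}^2)$ is covered by finitely many subtiles'' is vacuous ($T$ is covered by its level-$k$ subtiles for every $k$), and the appeal to Antoine's necklace is a detour --- the relevant classical model is the nested-clasp obstruction of the Alexander horned sphere. To be fair, the paper's own proof of (ii) is a one-sentence reference to that classical argument, so the genuinely hard step (propagating the linking obstruction through all scales of the self-affine subdivision) is left unproved in both; you at least flag it explicitly as the main obstacle.
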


We omit routine calculations in the proof that follows. 

\begin{proof}
To prove (i) we will apply Theorem~\ref{uppercor}. To this end we first construct a monotone model.  By Theorem~\ref{bondingtheorem} it suffices to come up with an ideal tile for $T$. Set
$Z=[0,1]^3$. To show that $Z$ is an ideal tile we need to verify conditions (i), (ii) and (iii) of Definition~\ref{def:approx} ($\operatorname{int}(Z)$ is obviously connected). To check that $\complex{Z}=\complex{T}$ we have to describe which set $\cell{S}_Z$ with $0\in S$ is nonempty. As $Z$ is the unit cube, this is an easy task. To characterize the nonempty sets $\cell{S}$ we can construct the graphs $\Gamma_i$ as we did in Section~\ref{sec:tame} (indeed, in the present situation it would even be possible to check this directly). Comparing the two characterizations one sees that $\complex{Z}=\complex{T}$.  As $Z$ is a cube and $\cell{P(\{0\})}_Z$ is the complex depicted on the left hand side of Figure~\ref{fighorned}, it is easy to verify conditions (ii) and (iii) of Definition~\ref{def:approx} and it follows that $Z$ is an ideal tile. Thus, Theorem~\ref{bondingtheorem} shows that $T$ admits a monotone model $(M,F)$ with $\partial M \cong \mathbb{S}^2$. It remains to show that $\operatorname{ int}(T)$ is connected. In view of Lemma~\ref{BandtIntConnected} we construct a connected set $E\subset \operatorname{ int}(T)$ with the property that $E \cap \varphi_d(E) \neq\emptyset$ for each $d\in \mathcal{D}$. It is easy to see that the midpoint of the cube $\varphi_d(Z)$ is an element of $\operatorname{int}(T)$. For each face of $\varphi_d(Z)$ that is also contained in another
cube $\varphi_{d'}(Z)$ connect the midpoint of $\varphi_d(Z)$ to the midpoint of this face by an arc that is contained in $\operatorname{ int}(T)$. Call the union of all these arcs $Y_d$ and set $E=\bigcup_{d\in \mathcal{D}} Y_d$. One easily checks that $E$ has the required properties. 

To prove (ii) we proceed as in the classical proof for Alexander's Horned Sphere and show that the complement $\mathbb{R}^3\setminus T$ is not simply connected since we cannot homotope out a loop that surrounds one of the horns of $T$ (see {\em e.g.}~\cite[Example 2B.2, page~170ff]{Hatcher:02}).
\end{proof}

This apparently is the first example of a self-affine wild \emph{crumpled cube} that tiles $\mathbb{R}^3$ and therefore proves Theorem~\ref{crumpled}. It is of interest in the study of possible embedding types of spheres that admit a tiling of $\mathbb{R}^3$. We refer to Tang~\cite{Tang:04} (particularly to Question 2 on p.~422 of this paper) and the references given there.

\subsection{Gelbrich's twin-dragon}\label{sec:Gelbrich} This set is defined as
$T=T(A,\mathcal{D})$ with
\begin{equation}\label{ex2:data}
A=\begin{pmatrix}
0&0&2\\
1&0&1\\
0&1&-1
\end{pmatrix} \quad\hbox{and}\quad
\mathcal{D}=\left\{
\begin{pmatrix}
0\\
0\\
0
\end{pmatrix},
\begin{pmatrix}
1\\
0\\
0
\end{pmatrix}
\right\}.
\end{equation}
Again it is easy to check that $T$, which is depicted in Figure~\ref{fig:T0G}, is a self-affine $\mathbb{Z}^3$-tile. In his paper, Gelbrich~\cite{Gelbrich:96} asked whether $T$ is homeomorphic to a closed 3-dimensional ball. We are now able to answer his question in the affirmative.
\begin{figure}[ht]
\includegraphics[height=4.5cm]{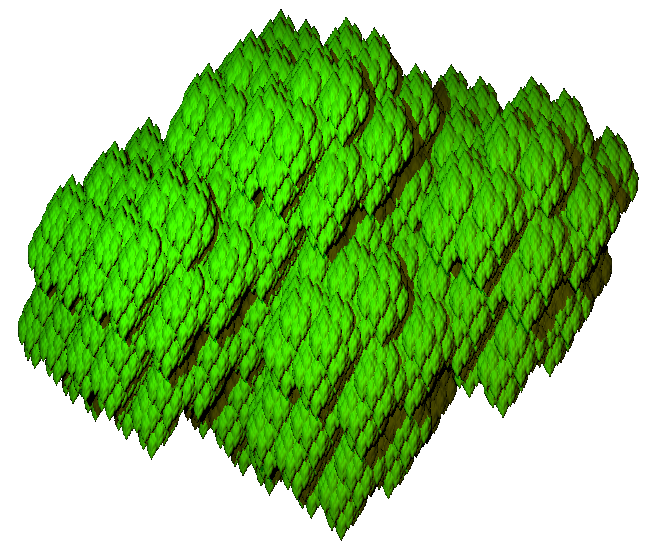}\hskip 1cm     
\includegraphics[height=4.5cm]{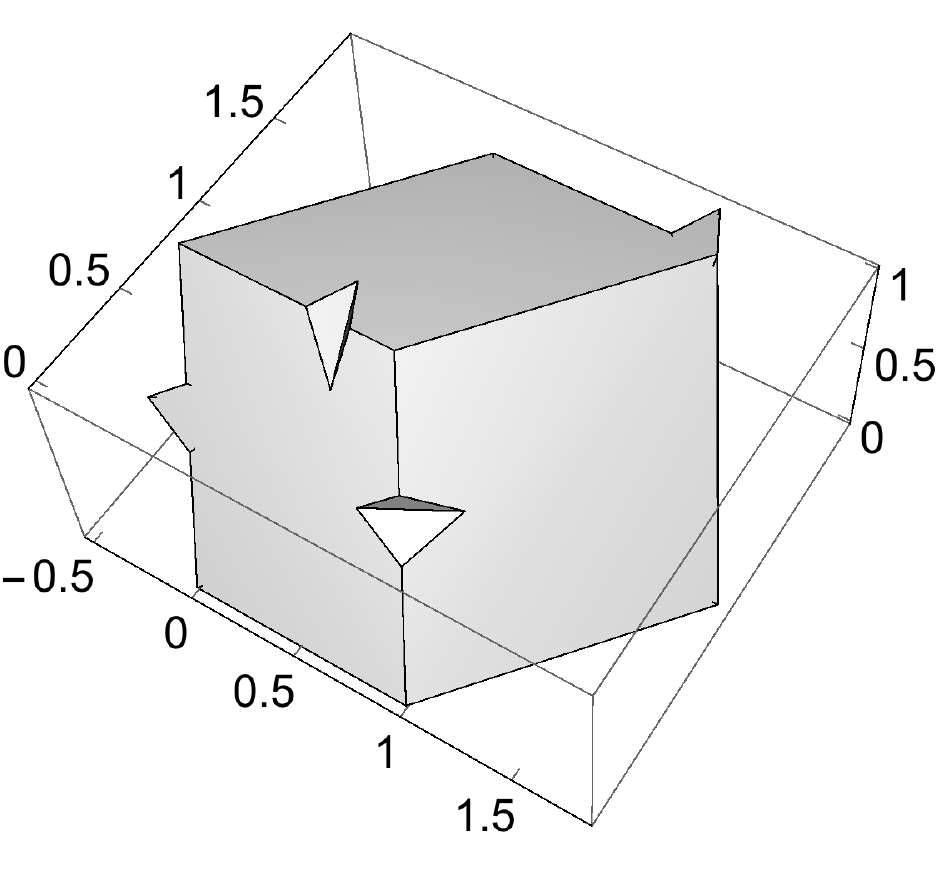}
\caption{Gelbrich's twin-dragon $T$ (left) and its ideal tile $Z$ (right).\label{fig:T0G}}
\end{figure}

\begin{theorem}\label{th:gelbrich}
Let $T$ be Gelbrich's twin-dragon which is defined by the set equation $AT=T+\mathcal{D}$ with $A$ and $\mathcal{D}$ given as in \eqref{ex2:data}. $T$ is homeomorphic to the closed 3-ball $\mathbb{D}^3$.
\end{theorem}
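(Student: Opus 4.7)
The plan is to follow the template used for the tile in Section~\ref{sec:tame}, with the modifications to the fundamental neighborhoods foreshadowed in Remark~\ref{rem:basis}. First, I would build an ideal tile $Z$ (the polyhedron shown on the right of Figure~\ref{fig:T0G}) and show it satisfies the three conditions of Definition~\ref{def:approx}: (i)~$\complex{Z}=\complex{T}$, by computing the neighbor graphs $\Gamma_i$ for $T$ as in Remark~\ref{rem:gifs} and comparing their nodes with the nonempty cells $\cell{S}_Z$ obtained by direct inspection of the polyhedron $Z$; (ii)~each nonempty $\cell{S}_Z$ is connected and homeomorphic to $\cell{P(S)}_Z$, again by direct inspection since both $Z$ and $\cell{P(\{0\})}_Z=A^{-1}(Z+\digits)$ are explicit polyhedra; and (iii)~homeomorphism extension, which follows from Lemma~\ref{pinched-ball} after observing that $Z$ is combinatorial, so each $\cell{\delta S}_Z$ is a sphere bounding the ball $\cell{S}_Z$. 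Theorem~\ref{bondingtheorem} then yields a monotone model $(M,F)$ of $T$ with $\partial M\cong \mathbb{S}^2$.

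Next I would apply Theorem~\ref{uppercor} to conclude $\partial T\cong\mathbb{S}^2$. For this I need $\operatorname{int}(T)$ connected, which I would verify via Lemma~\ref{BandtIntConnected} by exhibiting some $k\in\mathbb{N}$ and a set $I_k\subset\mathbb{Z}^3$ of translates such that $E=\bigcup_{s\in I_k}A^{-k}(T+s)\subset\operatorname{int}(T)$ is connected and satisfies $E\cap A^{-1}(E+d)\neq\emptyset$ for every $d\in\digits$. Connectedness of $E$ is checked combinatorially using the neighbor set $\mathcal{N}$ derived from $\Gamma_2$, as was done in Section~\ref{sec:tame}.

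The main obstacle is promoting the $2$-sphere $\partial T$ to a tame embedding, which is where Gelbrich's twin-dragon departs from the tile of Section~\ref{sec:tame}. As noted in Remark~\ref{rem:basis}, $T$ has \emph{degenerate} neighbors, i.e.\ some translates $T+s$ meet $T$ in sets of dimension strictly less than $2$. For such an $s$ the standard fundamental neighborhood $[S]$ containing the degenerate intersection $\cell{S}$ will typically not have $\partial[S]\cong\mathbb{S}^{2}$, so hypothesis~(i) of Theorem~\ref{cor:ball-algorithm} can fail on some nodes of $\geometric{\mathcal{I}}$. The remedy is to enlarge the fundamental neighborhood of each degenerate cell $\cell{S}$ to $[S']$, where $S'\supset S$ is the minimal set of translates such that $\operatorname{int}([S'])$ is an open $3$-manifold neighborhood of $\cell{S}$ and $\partial[S']\cong \mathbb{S}^{2}$; this can be verified for the finitely many equivalence classes of degenerate cells by the sphere-recognition procedure of Section~\ref{sec:ballapprox} applied to the model $\start$ and by Proposition~\ref{prop:finiteunion}. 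With this modified collection of fundamental neighborhoods, the basis property of Lemma~\ref{Alemma} and the recursive construction of Proposition~\ref{inoutalgorithm} still apply, producing a finite in-out graph $\geometric{\mathcal{I}}$.

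Finally, I would invoke Theorem~\ref{cor:ball-algorithm} with this modified graph: a computer verification (analogous to the one carried out for the 2438 nodes in Section~\ref{sec:tame}, and cross-checked with \texttt{sage} and \texttt{Mathematica}) shows that every loop of $\geometric{\mathcal{I}}$ contains a node $\geometric{N}$ with $\partial N\cong\mathbb{S}^{2}$ and $\partial N\cap \partial T$ connected. By Theorem~\ref{cor:ball-algorithm} the sphere $\partial T$ is then locally spherical, hence tamely embedded by Corollary~\ref{cor:cc}, and $T\cong\mathbb{D}^{3}$. The hard part will be identifying the right enlarged neighborhoods around the degenerate intersections so that both conditions of Theorem~\ref{cor:ball-algorithm} can be verified; once this is done the remaining combinatorial checks parallel those of Section~\ref{sec:tame}.
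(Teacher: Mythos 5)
Your overall strategy coincides with the paper's: construct an ideal tile $Z$, invoke Theorem~\ref{bondingtheorem} to get a monotone model with spherical boundary, check $\operatorname{int}(T)$ connected via Lemma~\ref{BandtIntConnected}, conclude $\partial T\cong\mathbb{S}^2$ from Theorem~\ref{uppercor}, and then run the ball-checking algorithm with modified fundamental neighborhoods. Two points, however, deserve attention. First, a minor one: your justification of condition~(iii) of Definition~\ref{def:approx} asserts that each $\cell{\delta S}_Z$ is a sphere bounding the ball $\cell{S}_Z$; for Gelbrich's ideal tile this is false --- four of the $2$-fold intersections $\cell{\{0,s\}}_Z$ are unions of two disks meeting at a single point, which is precisely why Lemma~\ref{pinched-ball} (rather than the ball/sphere extension argument of Section~\ref{sec:tame}) is invoked. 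You cite the right lemma, but the stated reason for applying it is wrong.

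The substantive gap is in your modification of the fundamental neighborhoods. You propose to enlarge each problematic $[S]$ to $[S']$ for a \emph{minimal superset $S'$ of whole $\mathbb{Z}^3$-translates}. This cannot achieve what is needed: by translation invariance of the neighbor structure, every tile $T+s$ in any finite union $[S']$ has the same degenerate (single-point) neighbors, so $[S']$ always retains single-point contacts with tiles outside it along its boundary. As the paper notes, it is exactly these single-point contacts that force $\partial N\cap\partial T$ to be disconnected for too many neighborhoods, so condition~(ii) of Theorem~\ref{cor:ball-algorithm} would fail on enough nodes that not every loop of $\geometric{\mathcal{I}}$ contains a good one. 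The paper's actual remedy is finer-grained: it passes to the $7^{\rm th}$ subdivision $S_7$ of $[S]$, deletes the subpieces $B^i_7$ whose level-$7$ subtiles realize single-point contacts with tiles outside $[S]$, adjoins the subpieces $B^o_7$ containing the isolated points of $[\mathbb{Z}^3\setminus S_7]\cap(T+s)$ for $s\in S$, and adds an ad hoc correction $C_7$ for two specific choices of $S$ to restore spherical boundary. Only after this surgery at the subdivision level do the resulting neighborhoods (an in-out graph with $9414$ nodes) avoid all single-point intersections and pass both conditions of Theorem~\ref{cor:ball-algorithm}. So the "hard part" you correctly flag at the end is not just hard --- the particular enlargement scheme you propose is structurally unable to solve it, and a subdivision-and-carve construction is required instead.
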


\begin{proof}[Sketch of the proof] This example is more complicated than the one studied in Section~\ref{sec:tame}. It has 18 neighbors, four of which correspond to single point intersections. For this reason it is harder to construct an ideal tile $Z$ for $T$ that makes Theorem~\ref{uppercor} applicable. We choose $Z$ as follows. Let $P$ be the prism spanned by the vectors $(1,0,0)^t, (\frac45,1,0)^t,(-\frac12,\frac45,1)^t$ and let
\begin{align*}
\Sigma_1 &:= \operatorname{ convex hull}\Big\{\Big(-\frac1{5}, \frac8{25}, \frac2{5}\Big)^t,\Big(-\frac3{10}, \frac{12}{25}, \frac35\Big)^t,\Big(-\frac9{100}, \frac35, \frac12\Big)^t,\Big(-\frac9{20}, \frac25, \frac12\Big)^t\Big\}, \\
 \Sigma_2 &:= \operatorname{ convex hull}\Big\{\Big(\frac{11}{10}, \frac95, 1\Big)^t,\Big(\frac{13}{10}, \frac95, 1\Big)^t,\Big(\frac{13}{10}, \frac{41}{25}, \frac45\Big)^t,\Big({\frac65, 2, 1}\Big)^t  \Big  \}.
\end{align*}
Then we set
\[
Z := P \cup \Sigma_1 \cup \Sigma_2 \setminus ((\Sigma_1 + (1,0,0)^t) \cup (\Sigma_2 - (1,1,0)^t)).
\]
A picture of $Z$ is provided in Figure~\ref{fig:T0G}. It can be checked by a lengthy but simple direct (computer aided) calculation that $Z$ satisfies the conditions Definition~\ref{def:approx} and is therefore an ideal tile for $T$: As for condition (i) it is again a matter of calculating the graphs $\Gamma_i$ $(i\ge 1)$ by known algorithms and comparing the results with the intersection structure of the polyhedron $Z$. As for condition (ii) it turns out that four of the intersections $\langle \{0,s\} \rangle_Z$ are the union of two disks intersecting in a single point so that we are not in the situation covered by Section~\ref{sec:ballapprox}. Nevertheless, condition (ii) can be checked easily by direct inspection and condition (iii) follows from Lemma~\ref{pinched-ball}.  Thus $Z$ is an ideal tile for $T$ and, hence, Theorem~\ref{bondingtheorem} implies that there is a monotone model $(M,F)$ for $T$ whose boundary is homeomorphic to $\mathbb{S}^2$.

To apply Theorem~\ref{upperthm5} it therefore remains to check that $\operatorname{int}(T)$ is connected. This is again done with the help of Lemma~\ref{BandtIntConnected}. Summing up we obtain that Gelbrich's tile $T$ satisfies $\partial T \cong \mathbb{S}^2$.

Also, running the ball-checking algorithm of Section~\ref{sec:ball} is more tricky in this case (see Remark~\ref{rem:basis}). Indeed, we have to define the fundamental neighborhoods in the following way. Let $[S]$ with $0\in S$, $|S|=4$, and $\cell{S}\not=\emptyset$ and let $\digits_k$ be defined as in \eqref{digitsk}. Then, to $S$ we associate $S_7=\{ z\in A^7s+d \mid s\in S,\, d\in \digits_7\}$, hence, the union $A^{-7}[S_7]$ corresponds to the $7^{\rm th}$ subdivision of $[S]$. Clearly $[S]=A^{-7}[S_7]$. 
We now have to avoid all single point intersections of $[S]$ with tiles $T+r$, $r\not\in S$ because otherwise too many neighborhoods have disconnected intersection with $\partial T$. To this end let 
$$
B^i_7:=\{z \in S_7 \mid \exists r\in \mathbb{Z}^3\setminus S, \, 
|A^{-7}[S_7] \cap (T+r)|=1 \hbox{ and } A^{-7}[S_7\setminus\{z\}] \cap (T+r)=\emptyset\}
$$
be those elements of $S_7$ whose corresponding subpieces have single point intersections with tiles outside $[S]$. Moreover, let
$$
B^o_7:=\{z \in \mathbb{Z}^3\setminus S_7 \mid \exists s\in S, \, 
A^{-7}[\mathbb{Z}^3\setminus S_7] \cap (T+s) \hbox{ contains an isolated point }x \hbox{ with } x\in A^{-7}[\{z\}] \}.
$$
As $[(S_7\setminus B^i_7)\cup B^o_7]$ doesn't always have spherical boundary, we have to add a subpiece for some choices of $S$. In particular, set
\[
C_7 := \begin{cases}
\{(-31,13, 31)^t\},& \hbox{for } S =\{(1, -1, -1)^t, (0, 0, 0)^t, (1, 0, -1)^t, (2, 0, -1)^t\}, \\ 
\{(-59,3,30)^t\},& \hbox{for } S= \{(1, 0, 0)^t, (1, 1, 0)^t, (0, 0, 0)^t, (2, 0, -1)^t \}, \\
\emptyset,& \hbox{otherwise},\\
\end{cases}
\]
and choose the set $
\{ A^{-7}[(S_7\setminus B^i_7)\cup B^o_7 \cup C_7] \mid 0\in S,\, |S|=4, \hbox{ and } \cell{S}\not=\emptyset \}
$
as the set of fundamental neighborhoods.
%
%
Running the ball checking algorithm with these fundamental neighborhoods yields an in-out graph with 9414 nodes. The fact that the fundamental neighborhoods have spherical boundary can be checked by Proposition~\ref{prop:finiteunion} (note that $N$ is of the form $A^{-7}[S']$ for some $S'\subset \mathbb{Z}^3$; the fact that $\partial [S']_Z \cong \mathbb{S}^2$ can be checked by using the methods discussed in Section~\ref{sec:ballapprox}), and the connectedness of their intersections with $\partial T$ are treated in the same way as in Section~\ref{sec:tame}.
\end{proof}

\subsection{Self-affine $\mathbb{Z}^3$-tiles whose boundary is a surface of positive genus}\label{sec:torus} We first construct a self-affine $\mathbb{Z}^3$-tile whose boundary is a torus. Let $A=6I=\operatorname{diag}(6,6,6)$ and define the digit set as follows. First set 
$
C=\{(x_1,x_2,x_3)^t \mid 0 \le x_1,x_2,x_3\le 5\},
$
\[
C_1 = \{(3,x_2,x_3)^t,(3,4,5)^t,(3,5,5)^t 
\mid 
2\le x_2\le 3,\, 3\le x_3\le 5\},
\]
and $C_2 = \{(x_1,5-x_2,5-x_3)^t \mid (x_1,x_2,x_3)^t\in C_1\}$. The sets $C_1$ and $C_2$ cut out a ``hole'' from the
``cube'' $C$. To make this a digit set that forms a complete set of residue classes of $\mathbb{Z}^3/A\mathbb{Z}^3$ 
we need to insert $C_1$ and $C_2$ at another place. Indeed, we define the digit set by
\begin{equation}\label{torusdigits}
\digits=(C \cup (C_1 + (0,6,0)^t) \cup (C_2-(0,6,0)^t)) \setminus (C_1\cup C_2).
\end{equation}
The self-affine $\mathbb{Z}^3$-tile $T=T(A,\digits)$ is depicted in Figure~\ref{fig:torus}. From these pictures it is plausible to assume that $T$ is a solid torus. Using Theorem~\ref{spherecor} we can prove the following result.

\begin{figure}
\includegraphics[height=8cm]{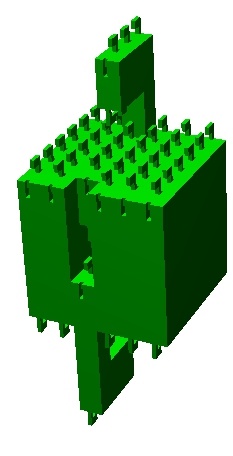} \hskip 1cm    
\includegraphics[height=8cm]{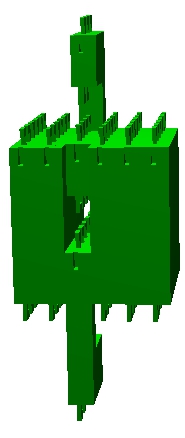}
\caption{Two images of the self-affine torus $T$ defined in Section~\ref{sec:torus}. In the right one, the hole is visible.\label{fig:torus}}
\end{figure}

\begin{proposition}\label{prop:torus}
Let the self-affine $\mathbb{Z}^3$-tile $AT=T+\mathcal{D}$  with $A=6I=\operatorname{ diag}(6,6,6)$ and $\mathcal{D}$ defined as in \eqref{torusdigits} be given. Then $\partial T$ is a 2-torus.
\end{proposition}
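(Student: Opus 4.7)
The plan is to follow the strategy from Section~\ref{sec:tame}, now aimed at a monotone model whose boundary is a torus rather than a sphere. The target theorem is Theorem~\ref{upperthm5}, whose hypotheses are that $T$ has connected interior and admits a semi-contractible monotone model $M$ with $\partial M\cong\mathbb{T}^2$.

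First I construct an explicit ideal tile $Z$ in the sense of Definition~\ref{def:approx}. The natural candidate is a polyhedral solid torus shaped like the unit cube $[0,1]^3$ with a tunnel carved through it, following the pattern of the digit-set defect $C_1\cup C_2$: one removes rescaled copies of the unit cubes positioned at $C_1,C_2$ and glues on compensating pieces at the locations $C_1+6e_2,\,C_2-6e_2$, so that topologically $Z$ is a solid torus with its handle along $e_2$. I then verify the three conditions of Definition~\ref{def:approx}: for~(i) one constructs the graphs $\Gamma_2,\Gamma_3,\ldots$ of $T$ by the standard algorithm (see Remark~\ref{rem:gifs}) and reads off the neighbor structure of $Z$ from its explicit polyhedral description, and checks that the two coincide; (ii) reduces to a finite inspection of the intersections $\cell{S}_Z$ and $\cell{P(S)}_Z$, which are all finite unions of convex polytopes; (iii) follows from Lemma~\ref{pinched-ball} since each cell in question is a contractible union of balls glued along single boundary points. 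Theorem~\ref{bondingtheorem} then produces a monotone model $(M,F)$ for $T$ with $\partial M\cong\mathbb{T}^2$.

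Second, since $\partial M$ has positive genus, semi-contractibility is no longer automatic and must be checked. The preimages $(\canonical|_{\partial M})^{-1}(x)$ decompose, by Proposition~\ref{imageLemma}(iii), as unions of nested intersections $\limptstar{w'}$ over walks $w'\in W(\{0\})$ with $w'_T=x$; the diameters of the approximants $\stagestar{w'}{k}$ collapse at the rate $\|A^{-1}\|=\tfrac16$, and the combinatorial spread of a preimage through adjacent cells is bounded by a finite inspection of the graphs $\Gamma_i$. Every configuration that can arise lies inside a local flat patch of the polyhedral torus $\partial M$, and hence inside an embedded disk, so no preimage carries a homotopically essential loop. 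This verification is the main obstacle of the proof: it is exactly where the torus case diverges from the spherical case of Sections~\ref{sec:tame} and~\ref{sec:Gelbrich}.

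Third, connectedness of $\operatorname{int}(T)$ is obtained from Lemma~\ref{BandtIntConnected} as in Section~\ref{sec:tame}: one exhibits a connected union $E\subset\operatorname{int}(T)$ of sufficiently refined interior subtiles and verifies $E\cap\varphi_d(E)\neq\emptyset$ for every $d\in\digits$. Combining all three ingredients, Theorem~\ref{upperthm5} yields $\partial T\cong\partial M\cong\mathbb{T}^2$. Aside from the semi-contractibility verification, each step is a routine but tedious adaptation of the spherical case.
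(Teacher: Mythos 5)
Your proposal follows exactly the route the paper intends: the paper gives no written proof of Proposition~\ref{prop:torus} beyond the remark that it follows from Theorem~\ref{spherecor} (= Theorem~\ref{upperthm5}), i.e.\ by building a solid-torus ideal tile, invoking Theorem~\ref{bondingtheorem} to get a monotone model, and checking semi-contractibility and connectedness of the interior. Your identification of the semi-contractibility of the genus-one model as the one genuinely new verification (absent in the spherical examples) is the correct reading of where the work lies, so this is the same approach with the omitted details filled in plausibly.
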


To construct this self-affine torus $T$, starting with a $6\times 6 \times 6$ cube, we cut out a hole and -- to compensate for the digits killed by digging this hole -- we added a ``half handle'' on the top and on the bottom of the cube. To construct boundary surfaces of genus $g$, we have to dig $g$ holes and to add $2g$ such ``half handles''. Since, after seeing the case $g=1$ above, this construction is straightforward we omit the details for the proof of the following result.

\begin{proposition}\label{prop:g}
For each genus $g\in \mathbb{N}$ there is a self-affine $\mathbb{Z}^3$-tile $T$ whose boundary is a surface of genus $g$.
\end{proposition}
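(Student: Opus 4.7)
The plan is to imitate the construction of Section~\ref{sec:torus} but with $g$ holes and $2g$ half-handles placed so that they do not interfere with one another. First I would choose an expansion matrix of the form $A=NI=\operatorname{diag}(N,N,N)$ with $N$ sufficiently large (depending on $g$) so that the $N\times N\times N$ basic cube $C=\{(x_1,x_2,x_3)^t\mid 0\le x_i\le N-1\}$ has enough room to accommodate $g$ pairwise disjoint ``tunnels'' $C_1^{(1)},C_2^{(1)},\ldots,C_1^{(g)},C_2^{(g)}$ of the type introduced in Section~\ref{sec:torus}, where each pair $(C_1^{(j)},C_2^{(j)})$ is arranged to cut out one hole (analogous to the single pair $C_1,C_2$ used for $g=1$). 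Taking $N$ large enough also guarantees that translates of the appropriate ``half handles'' can be placed on opposite faces of the cube so that the resulting digit set
\[
\digits = \Big(C \cup \bigcup_{j=1}^{g}\big((C_1^{(j)}+(0,N,0)^t)\cup (C_2^{(j)}-(0,N,0)^t)\big)\Big) \setminus \bigcup_{j=1}^{g}(C_1^{(j)}\cup C_2^{(j)})
\]
remains a complete set of coset representatives of $\mathbb{Z}^3/A\mathbb{Z}^3$. This latter property is verified exactly as in the $g=1$ case: each translated piece fills the coset of a piece that was removed. Standard algorithms then confirm that $T=T(A,\digits)$ is a self-affine $\mathbb{Z}^3$-tile.

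Next I would build an ideal tile $Z$ with the required topology by taking the unit cube $[0,1]^3$ and attaching/removing solid rectangular handle-and-tunnel pieces in correspondence with the digit modifications, so that $Z$ is a tame polyhedral handlebody of genus $g$ and $\partial Z$ is a closed orientable surface of genus $g$. The conditions of Definition~\ref{def:approx} are then checked as in the examples of Sections~\ref{sec:tame} and~\ref{sec:wild}: condition~(i) amounts to matching the neighbor graphs $\Gamma_i$ of $T$ (computed algorithmically as in Remark~\ref{rem:gifs}) against the directly readable cell structure of $Z$; condition~(ii) is verified by direct inspection using the algebraic-topology machinery of Section~\ref{sec:ballapprox} when needed; and condition~(iii) follows from Lemma~\ref{pinched-ball} together with the fact that $Z$ is combinatorial. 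Thus Theorem~\ref{bondingtheorem} delivers a monotone model $(M,F)$ for $T$ with $\partial M$ the genus-$g$ surface $\mathcal{S}_g$.

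To apply Theorem~\ref{upperthm5} I would then verify that $\operatorname{int}(T)$ is connected by using Lemma~\ref{BandtIntConnected}: produce a connected set $E\subset \operatorname{int}(T)$ by linking the barycenters of the translates $\varphi_d(Z)$ $(d\in\digits)$ through the shared faces, just as in the proofs of Proposition~\ref{prop:84} and Theorem~\ref{th:ex}. In addition, one must check the semi-contractibility of $M$, which for $g\ge 1$ is nontrivial since $\partial M$ is no longer simply connected; this can be done by noting that each point preimage of $\canonical|_{\partial M}$ sits inside one of the finitely many flat faces of $\partial Z$ and is therefore contained in a disk on $\partial M$. With semi-contractibility in hand, Theorem~\ref{upperthm5}~(i) yields $\partial T\cong \mathcal{S}_g$.

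The main obstacle, in my view, is the geometric bookkeeping required to place the $g$ tunnels and $2g$ half-handles disjointly while keeping the digit set a complete set of residues and keeping the ideal tile combinatorial. This forces $N$ to grow with $g$ and requires a careful (but routine) verification that $\complex{Z}=\complex{T}$ in each case. The topological payoff---semi-contractibility and the application of Moore-type decomposition arguments through Theorem~\ref{upperthm5}---is then identical to the arguments already used for $g=1$ and $g=0$, so no fundamentally new topological ingredient is needed.
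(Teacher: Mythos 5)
Your proposal is correct and follows essentially the same route as the paper, which likewise obtains genus $g$ by digging $g$ holes in a sufficiently large cube and adding $2g$ half-handles to restore a complete residue system, then passing through an ideal tile, Theorem~\ref{bondingtheorem}, and Theorem~\ref{upperthm5}; the paper in fact omits the details as ``straightforward'' after the $g=1$ case. Your explicit attention to semi-contractibility of the genus-$g$ model, which the paper leaves implicit, is a welcome addition rather than a deviation.
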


Although we do not want to go into details, we mention that it is possible to show that $T$ is a self-affine 3-manifold by adapting the ball-checking algorithm provided in Section~\ref{sec:ball} (see in particular Remark~\ref{rem:basis}). The fundamental neighborhoods can be chosen to be cubes here. Thus each 3-dimensional handlebody is homeomorphic to a self-affine $\mathbb{Z}^3$-tile.

\section{Self-affine manifolds in dimension 4 and higher}\label{sec:n}

For dimension $n\ge 4$ we can still recognize self-affine $\zn$-tiles whose boundaries are {\it generalized manifolds} (see Definition~\ref{def:gm}). In the corresponding recognition theorem we need again that each point preimage of the quotient map $Q$ behaves nicely (see Theorem~\ref{upperthm3}). However, in dimension $n\ge4$, one needs more effort to guarantee that the boundary of a self-affine $\zn$-tile is a manifold.  If $n\ge 6$, the appropriate condition is the {\it disjoint disks property} (DDP) made famous by Cannon and Edwards \cite{Cannon:78,Edwards:80} (see Definition~\ref{d:dpp} and the recognition theorem in Theorem~\ref{upperthm4}).

\subsection{Cell-like maps}\label{sec:cellike}

To prove our higher dimensional results we need some preliminaries. In particular, we need to make sure that $\canonical$ is a cellular mapping in the following sense. Recall that a closed $n$-cell is the image of an $n$-dimensional closed ball under an attaching map.

\begin{definition}[Cellular and cell-like]\label{cellcell}
A compact subset $K$  of an $n$-manifold $\mathcal{M}$ is {\em cellular} in $\mathcal{M}$ if $K$ is the intersection of a properly nested decreasing sequence of closed $n$-cells in $\mathcal{M}$, {\em i.e.}, if there is a sequence $(C_i)_{i\ge 1}$ of $n$-cells such that $C_{i+1}\subset \operatorname{ int}(C_i)$ and $K=\bigcap_{i\ge 1} C_i$.  A space $X$ is {\em cell-like} if there is an embedding $\iota$ of $X$ in a manifold $\mathcal{M}$ such that $\iota(X)$ is cellular in $\mathcal{M}$.  A mapping is {\em cellular} or {\em cell-like} if its point preimages are cellular or cell-like, respectively.
\end{definition}

A simple diagonalization argument gives the following lemma.

\begin{lemma}\label{pbintcellular}
A set is cellular if it is the intersection of a properly nested decreasing sequence of cellular sets.  
\end{lemma}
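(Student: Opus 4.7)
The plan is a standard diagonalization through the cellular witnesses of each $K_i$. Let $\mathcal{M}$ denote the ambient manifold in which the $K_i$ are cellular, fix a metric on $\mathcal{M}$, and write $N_\epsilon(X)$ for the open $\epsilon$-neighborhood of $X\subset\mathcal{M}$. Since each $K_i$ is cellular, I may fix a properly nested decreasing sequence of closed $n$-cells $(C_{i,j})_{j\ge 1}$ in $\mathcal{M}$ with $C_{i,j+1}\subset\operatorname{int}(C_{i,j})$ and $K_i=\bigcap_{j\ge 1} C_{i,j}$. The goal is to extract from these witnesses a single properly nested sequence of closed $n$-cells $D_k=C_{k,j_k}$ with $\bigcap_{k\ge 1} D_k = K$, where $K=\bigcap_{i\ge 1}K_i$.

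I would construct $(D_k)$ inductively. Set $D_1=C_{1,1}$. Given $D_k$, note that $K_{k+1}\subset\operatorname{int}(K_k)\subset\operatorname{int}(D_k)$, so $\operatorname{int}(D_k)\cap N_{1/(k+1)}(K_{k+1})$ is an open neighborhood of the compact set $K_{k+1}=\bigcap_j C_{k+1,j}$. By the usual finite-intersection argument for nested compacta inside an open set, some $C_{k+1,j_{k+1}}$ lies in this neighborhood; set $D_{k+1}=C_{k+1,j_{k+1}}$. Then $D_{k+1}$ is a closed $n$-cell with $D_{k+1}\subset\operatorname{int}(D_k)$ and the additional control $D_{k+1}\subset N_{1/(k+1)}(K_{k+1})$, so the sequence $(D_k)$ is properly nested as required.

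The last step is to check $\bigcap_k D_k=K$. The inclusion $K\subset\bigcap_k D_k$ is immediate since $K\subset K_k\subset D_k$ for every $k$. Conversely, if $x\notin K$ then $x\notin K_{i_0}$ for some $i_0$; choose $\epsilon>0$ with $x\notin N_\epsilon(K_{i_0})$, and for any $k\ge i_0$ with $1/k<\epsilon$ we have
\[
D_k\subset N_{1/k}(K_k)\subset N_{1/k}(K_{i_0})\subset N_\epsilon(K_{i_0}),
\]
which excludes $x$. Hence $\bigcap_k D_k \subset K$, and $K$ is cellular in $\mathcal{M}$.

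The only real point to get right, and what I expect to be the main obstacle, is ensuring that the diagonal intersection collapses onto $K$ rather than onto something strictly larger; this is exactly what the supplementary constraint $D_k\subset N_{1/k}(K_k)$ guarantees, and its simultaneous achievability with the nesting condition $D_{k+1}\subset\operatorname{int}(D_k)$ is the reason the argument is ``simple'' diagonalization.
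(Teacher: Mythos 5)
Your proof is correct and is exactly the argument the paper has in mind: the paper gives no details, stating only that ``a simple diagonalization argument'' yields the lemma, and your construction of the diagonal sequence $D_k=C_{k,j_k}$ with the auxiliary control $D_k\subset N_{1/k}(K_k)$ is the standard careful implementation of that diagonalization. The one point you rightly flag---forcing the diagonal intersection down to $K$ rather than something larger---is handled correctly by the neighborhood constraint, so nothing is missing.
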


To formulate the result on the cellularity of $\canonical$ we need one more definition.

\begin{definition}[Boundary star]\label{def:boundaryball}

Let $\start$ be a monotone model for a self-affine $\mathbb{Z}^n$-tile $T$. For $y\in \partial \start$ and $k\in \mathbb{N}$ define the {\em boundary star} for $y$ of level $k$ by 
\begin{equation*}
\displaystyle{\boundarystarstar{y}{k}=\bigcup_{\substack{w \in \partial W \\ y \in \stagestar{w}{k}}} \stagestar{w}{k}.}
\end{equation*}
\end{definition}

One checks that $\boundarystarstar{y}{k}$ is the closed star of $y$ in the complex $\partial \start$ induced by the sets $(F\inv)^{k}\cellstar{S}$. The following lemma contains a more convenient representation for boundary stars.

\begin{lemma}\label{walkextension}
If $(\start,\Bee)$ is a monotone model for the self-affine $\zn$-tile $T$ satisfying $\complex{M}=\complex{T}$ and $y\in \partial\start$ then
\begin{equation}\label{boundarystareq}
\boundarystarstar{y}{k}=\bigcup_{\substack{w \in \partial W \\ \canonical y = \limpt{w}}} \stagestar{w}{k}.
\end{equation}
\end{lemma}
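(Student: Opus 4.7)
The plan is to prove the equality by two separate containments, each reduced to a matching of the $k$-prefixes of the indexing walks, since $\stagestar{w}{k}=(F^{-1})^{(k)}\cellstar{\pi_k(w)}$ depends on $w$ only through $\pi_k(w)$.

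For $\subseteq$, suppose $w\in\partial W$ with $y\in\stagestar{w}{k}$, equivalently $F^{(k)}(y)\in\cellstar{\pi_k(w)}$. I would extend the $k$-prefix of $w$ to a walk $w'\in\partial W$ converging to $\canonical y$. Set $\pi_j(w')=\pi_j(w)$ for $j\le k$; at each subsequent stage $j\ge k$, the set equation $F\cellstar{\pi_j(w')}=\cellstar{P(\pi_j(w'))}$ from Theorem~\ref{th:genset2} places $F^{(j+1)}(y)$ in $\cellstar{S_{j+1}}$ for some $S_{j+1}\in P(\pi_j(w'))$, and I declare $\pi_{j+1}(w')=S_{j+1}$. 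The resulting walk $w'$ lies in $\partial W$ (same starting pair as $w$) and satisfies $y\in\bigcap_{j}\stagestar{w'}{j}=\limptstar{w'}$, so Proposition~\ref{imageLemma}(iii) gives $\canonical y=\limpt{w'}$. Hence $w'$ is an admissible index for the right-hand union and $\stagestar{w'}{k}=\stagestar{w}{k}$.

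For $\supseteq$, suppose $w'\in\partial W$ with $\canonical y=\limpt{w'}$ and write $S=\pi_k(w')$. Lemma~\ref{hproplem} yields $\canonical F^{(k)}(y)=A^k\canonical y\in\cell{S}$, and Proposition~\ref{imageLemma}(ii) produces some $z\in\cellstar{S}$ sharing a $\canonical$-image with $F^{(k)}(y)$. The remaining task is to show $F^{(k)}(y)$ itself lies in $\cellstar{S}$, for then $y\in\stagestar{w'}{k}$ and the choice $w=w'$ finishes the argument. The plan for this step is to apply Lemma~\ref{nonemptyintersection} to the full family $\{w''\in\partial W:\limpt{w''}=\canonical y\}$: since these walks share the limit $\canonical y$, their compact limit sets $\limptstar{w''}$ possess the finite intersection property and hence a common point in $\partial M$. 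Combining this with the fiber decomposition $\canonical\inv(\canonical y)\cap\partial M=\bigcup_{w''}\limptstar{w''}$ (obtained from the proof of Theorem~\ref{all:connectedness}(i)) and with the connectivity of $\cellstar{S}$ supplied by the monotone-model hypothesis will pin $F^{(k)}(y)$ to $\cellstar{S}$.

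The hard part will be precisely this last step: ruling out the a priori possibility that $F^{(k)}(y)$ inhabits some cell $\cellstar{S'}$ distinct from $\cellstar{S}$ but carrying the same $\canonical$-image. The rigidity of $\complex{M}=\complex{T}$ together with the simultaneous constraints coming from \emph{every} walk in $\partial W$ limiting to $\canonical y$, imposed through Lemma~\ref{nonemptyintersection}, is what is expected to force the cell assignment to be unambiguous and close the gap.
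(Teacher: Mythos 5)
Your first containment is exactly the paper's argument: the paper proves precisely the claim that for each $w\in\partial W$ with $y\in\stagestar{w}{k}$ there exists $w'\in\partial W$ with $\stagestar{w'}{k}=\stagestar{w}{k}$ and $\canonical y=\limpt{w'}$, by freezing the $k$-prefix and extending it one level at a time via the set equation \eqref{xstar3}, and then invoking Proposition~\ref{imageLemma}~(iii). That half of your proposal is complete and correct.

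The genuine gap is in your reverse containment, and you have named it yourself without closing it. You correctly reduce $\supseteq$ to showing that $F^{(k)}(y)\in\cellstar{\pi_k(w')}$ for \emph{every} $w'\in\partial W$ with $\limpt{w'}=\canonical y$; since $\stagestar{w'}{k}$ depends only on $\pi_k(w')$ and every cell in the union defining $\boundarystarstar{y}{k}$ contains $y$, this is indeed what must be shown. But the tools you propose do not deliver it. Lemma~\ref{nonemptyintersection}, applied to the family $C=\{w''\in\partial W\mid \limpt{w''}=\canonical y\}$, produces a point common to all the sets $\limptstar{w''}$; it does not say that this common point is $y$. Because $F^{-1}$ need not be a contraction, the sets $\limptstar{w''}$ can be nondegenerate continua, so $y$ can sit in $\limptstar{w_1}$ for one admissible walk while lying outside $\stagestar{w_2}{k}$ for another admissible walk $w_2$ with the same limit point; connectedness of the cells $\cellstar{S}$ and the hypothesis $\complex{M}=\complex{T}$ do not by themselves exclude this. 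So the ``hard part'' you flag is a genuinely unresolved step, not a routine verification, and your sketch gives no mechanism that would resolve it. Be aware also that the paper's own proof offers no help here: it consists solely of the prefix-extension argument (your first half) and simply asserts that the lemma follows from it, so there is no counterpart in the paper to your second paragraph. To complete the proposal you would have to prove directly that $\limpt{w'}=\canonical y$ forces $y\in\stagestar{w'}{k}$, or else content yourself with the single inclusion that the extension argument actually establishes.
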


\begin{proof}
The lemma follows if we prove that for each $w \in \partial W$ with $y \in \stagestar{w}{k}$  there exists $w' \in \partial W$ with  $\stagestar{w'}{k}=\stagestar{w}{k}$ and $\canonical y = \limpt{w'}$. To construct $w'$, let $S_i=\pi_i(w)$ for $1 \leq i \leq k$.  
Inductively, assume that for $1\leq i < j$ a choice of $S_{i} \in P(S_{i-1})$ has been made such that $y \in (F\inv)^{(i)} \cellstar{S_i}.$  By the generalized set equation for models in \eqref{xstar3}, we may choose $S_j \in P(S_{j-1})$ such that $y \in (F\inv)^{(j)}\cellstar{S_j}.$  Then $w'=(S_i)$ satisfies $\pi_k(w')=\pi_k(w)$ and, hence, $\stagestar{w'}{k}=\stagestar{w}{k}$. Moreover, we have $y\in \limptstar{w'}$ by the definition of $\limptstar{w'}$ and $\canonical y \in \limpt{w'}$ by Proposition~\ref{imageLemma}~(iii).
\end{proof}

\begin{proposition}\label{cellprop}
Let $(\start,F)$ be a monotone model for the self-affine $\zn$-tile $T$.  Assume that all but finitely many boundary stars of $M$ are cellular and that $\partial M$ is an $(n-1)$-manifold. Then $\canonical|_{\partial \start}$ is a cellular map.
\end{proposition}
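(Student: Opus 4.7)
The plan is to show that for each $x\in\partial T$ the point preimage $(\canonical|_{\partial\start})^{-1}(x)$ can be written as the intersection of a properly nested decreasing sequence of cellular subsets of the $(n-1)$-manifold $\partial\start$, so that cellularity follows from Lemma~\ref{pbintcellular}. The candidate cellular sets will be (a subsequence of) the boundary stars $\boundarystarstar{y_0}{k}$ at a fixed base point $y_0\in(\canonical|_{\partial\start})^{-1}(x)$.

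First I will establish the identity
\[
(\canonical|_{\partial\start})^{-1}(x)=\bigcap_{k\ge 1}\boundarystarstar{y_0}{k}.
\]
Lemma~\ref{walkextension} rewrites the boundary star as $\boundarystarstar{y_0}{k}=\bigcup_{w\in\partial W,\,w_T=x}\stagestar{w}{k}$, so the inclusion ``$\subset$'' is immediate from Proposition~\ref{imageLemma}(iii). For ``$\supset$'', given $y$ in the intersection choose for each $k$ a walk $w^{(k)}\in\partial W$ with $w^{(k)}_T=x$ and $y\in\stagestar{w^{(k)}}{k}$; by compactness of the inverse limit $\partial W$ a subsequence converges to some $w^*\in\partial W$ with $w^*_T=x$, and because $\pi_k$ has discrete range, eventually $\pi_{k}(w^{(k_j)})=\pi_k(w^*)$, so $y\in\stagestar{w^*}{k}$ for every $k$; hence $y\in\limptstar{w^*}$ and $Q(y)=x$ by Proposition~\ref{imageLemma}(iii).

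Next I strengthen this to $(\canonical|_{\partial\start})^{-1}(x)\subset\operatorname{int}_{\partial\start}\boundarystarstar{y_0}{k}$. Let $y\in(\canonical|_{\partial\start})^{-1}(x)$ and let $C=\stagestar{w}{k}$ be any level-$k$ cell of $\partial\start$ meeting $y$. By Proposition~\ref{imageLemma}(ii) together with Lemma~\ref{hproplem} we have $\canonical(C)=w_k$, so $x=\canonical(y)\in w_k$. The generalized set equation (Theorem~\ref{th:genset}) subdivides $w_k$ into level-$(k+j)$ cells of $T$, so by picking at each further stage a subdivision containing $x$ I can extend the finite walk $\pi_k(w)$ to an infinite walk $w'$ with $\pi_k(w')=\pi_k(w)$ and $\limpt{w'}=x$; then $C=\stagestar{w'}{k}\subset\boundarystarstar{y_0}{k}$. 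Thus every level-$k$ cell through $y$ belongs to the star, which places $y$ in its relative interior.

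To conclude, I extract a properly nested cellular subsequence. By the hypothesis only finitely many boundary stars fail to be cellular, so $\{k:\boundarystarstar{y_0}{k}\text{ is cellular}\}$ is cofinite. Combining the displayed identity with compactness of $\partial\start$, every open neighborhood of $(\canonical|_{\partial\start})^{-1}(x)$ eventually contains all $\boundarystarstar{y_0}{k}$; and by the previous paragraph $\operatorname{int}\boundarystarstar{y_0}{k_i}$ is such a neighborhood. I can therefore pick $k_1<k_2<\cdots$ inductively so that each $\boundarystarstar{y_0}{k_i}$ is cellular and $\boundarystarstar{y_0}{k_{i+1}}\subset\operatorname{int}\boundarystarstar{y_0}{k_i}$. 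This is a properly nested decreasing sequence of cellular sets whose intersection is the preimage, and Lemma~\ref{pbintcellular} gives the cellularity. The main obstacle will be the interior-containment step, which is where the combinatorics of the subdivision and the compatibility of $\canonical$ with walks must be used carefully; once that is in place, the cellular-neighborhood-basis argument in the final step is routine.
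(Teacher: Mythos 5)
Your proof is correct and follows the same overall architecture as the paper's: both realize the fibre $(\canonical|_{\partial\start})^{-1}(x)$ as the intersection of the nested compact sets $\boundarystarstar{y_0}{k}$ (the paper writes these as $\stagestar{P}{k}$ with $P=\{w\in\partial W\mid \limpt{w}=x\}$ and only identifies them with boundary stars at the very end via Lemma~\ref{walkextension}), establish that the fibre lies in the relative interior of each of these sets so that a properly nested cellular subsequence can be extracted, and conclude with Lemma~\ref{pbintcellular}. The one step where you genuinely diverge is the interior containment. The paper argues by contradiction: if the fibre met $\partial_2\stagestar{P}{j}$ for some $j$, there would be a walk $w'$ with $\limpt{w'}\neq x$ whose cells meet the fibre at every level, and Lemma~\ref{nonemptyintersection} would then force $\limpt{w'}=x$. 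You instead argue directly that every level-$k$ cell $\stagestar{w}{k}$ containing a fibre point is itself contained in the star --- by computing $\canonical\stagestar{w}{k}=\stage{w}{k}\ni x$ and extending the finite walk $\pi_k(w)$ to an infinite walk with limit point $x$ (this is precisely the mechanism in the proof of Lemma~\ref{walkextension}, applied at an arbitrary fibre point rather than at $y_0$) --- and then use that the level-$k$ cells form a finite closed cover of $\partial\start$, so a point all of whose cells lie in the star lies in the star's relative interior. Your route is more constructive and avoids Lemma~\ref{nonemptyintersection}; the paper's route avoids spelling out the covering/interior observation. Both are sound, and your version additionally makes explicit the compactness argument in the inverse limit $\partial W$ that the paper leaves implicit when asserting $\limptstar{P}=(\canonical|_{\partial\start})^{-1}(x)$.
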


\begin{proof}
We have to show that $(\canonical\mid_{\partial \start})\inv(x)$ is cellular for $x\in \partial T$. Setting $P=\{w \in \partial W \mid \limpt{w}=x\}$ Proposition~\ref{imageLemma}~(iii) implies that $\limptstar{P}=(\canonical\mid_{\partial \start})\inv(x)$. Now,  $\limptstar{P}=\bigcap_{k\ge 1} \stagestar{P}{k}$, with $\stagestar{P}{k}=\bigcup_{w \in P}(\Bee\inv)^{(k)}\cellstar{\pi_k(w)}$, is the intersection of a nested sequence.

Suppose that $\limptstar{P}\cap \partial_2 \stagestar{P}{j} \neq\emptyset$ for some $j\ge 0$. Since $(\stagestar{P}{k})$ is a nested sequence containing $\limptstar{P}$, we have that $\limptstar{P} \cap \partial_2 \stagestar{P}{k}\neq\emptyset$ for each $k\ge j$. As $\stagestar{\partial W}{k}$
covers $\partial \start$ for each fixed $k$ by the set equation for $\cellstar{S}$ in \eqref{xstar3}, there is a walk $w' \in \partial W$ such that $\canonical\limptstar{w'}\neq x $ but  $\limptstar{P} \cap \stagestar{w'}{k} \neq\emptyset$ holds for each $k\ge j$. Thus $\limptstar{P}\cap \limptstar{w'} \neq\emptyset$, which implies that there is some $w\in P$ satisfying $\limptstar{w'}\cap \limptstar{w} \neq\emptyset$. However, by Lemma~\ref{nonemptyintersection} this yields that $\canonical \limptstar{w'}=\limpt{w'}=\limpt{w}=x$, a contradiction. Thus for each $k\ge j$ we have $\limptstar{P}\subset \operatorname{ int}(\stagestar{P}{k})$ (where the interior is taken relative to $\partial M$) and we may choose a properly nested subsequence of $(\stagestar{P}{k})$.

In view of Lemma~\ref{pbintcellular} it remains to prove that $\stagestar{P}{k}$ is cellular for large $k$. To this end let $y \in (\canonical|_{\partial\start})\inv(x)$. Then Lemma~\ref{walkextension} implies that
\[
\stagestar{P}{k}= \bigcup_{w\in P} \stagestar{w}{k} =  \bigcup_{\substack{w \in \partial W \\ x = \limpt{w}}} \stagestar{w}{k}=\bigcup_{\substack{w \in \partial W \\ \canonical y = \limpt{w}}}\stagestar{w}{k} = \boundarystarstar{y}{k}
\]
and $\stagestar{P}{k}$ is cellular for large $k$ by the assumption that all but finitely many boundary stars are cellular.
\end{proof}

\subsection{Manifolds and the disjoint disks property}\label{sec:higherdims}

In the present section we deal with generalizations of Theorem~\ref{upperthm5} to higher dimensions. We are able to give a checkable criterion for the boundary of a self-affine $\mathbb{Z}^n$-tile $T$ to be a generalized manifold. 

\begin{definition}[Generalized $n$-manifold; see {\it e.g.}~\cite{Cannon:80}]\label{def:gm}
A space $X$ is a {\em generalized $n$-manifold} if it has the following properties.
\begin{itemize}
\item $X$ is a {\em Euclidean neighborhood retract} (ENR), {\em i.e.}, for some integer $n$ it embeds in $\mathbb{R}^n$ as a retract of an open subset of $\mathbb{R}^n$.
\item $X$ is a {\em homology $n$-manifold}, {\em i.e.}, $H_\ast(X,X\setminus\{x\};\mathbb{Z}) \cong H_\ast(\mathbb{R}^n,\mathbb{R}^n\setminus\{0\};\mathbb{Z})$ for each $x\in X$.
\end{itemize}
A generalized $n$-manifold is called {\em resolvable} if it is a proper cell-like upper semi-continuous decomposition of an $n$-manifold.
\end{definition}

\begin{theorem}\label{upperthm3}
Let $T$ be a self-affine $\mathbb{Z}^n$-tile which admits a monotone model $\start$. Assume that all but finitely many boundary stars of $\start$ are cellular and $\partial M$ is a manifold. Then $\partial T$ is a generalized $(n-1)$-manifold with $\canonical|_{\partial \start}:\partial \start\to \partial T$ a cellular quotient map from the manifold $ \partial \start$. In other words, $\partial \start$ is a cell-like resolution of the generalized manifold $\partial T$.
\end{theorem}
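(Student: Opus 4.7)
The proof has two phases. Phase one establishes that $\canonical|_{\partial\start}$ is a cellular quotient map; phase two extracts the generalized $(n-1)$-manifold structure of $\partial T$ from general principles about cell-like images of topological manifolds.

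For phase one, I would apply Proposition~\ref{cellprop} essentially verbatim. Its hypotheses match ours exactly: $(\start,F)$ is a monotone model, $\partial\start$ is an $(n-1)$-manifold, and all but finitely many boundary stars are cellular. The proposition delivers that every point preimage of $\canonical|_{\partial\start}$ is cellular in $\partial\start$, hence in particular cell-like. Surjectivity $\canonical(\partial\start)=\partial T$ is Theorem~\ref{upperthm}~(ii), and compactness of $\partial\start$ together with Hausdorffness of $\partial T$ forces $\canonical|_{\partial\start}$ to be closed, hence a quotient map. Theorem~\ref{all:connectedness}~(iii) already records that this quotient is monotone, so we have a cellular monotone quotient map as asserted.

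For phase two, I would invoke the classical theorem that a cell-like surjection from a topological $m$-manifold onto a finite-dimensional compact metric space produces a generalized $m$-manifold ({\em cf.}~Daverman, \emph{Decompositions of Manifolds}, Chapter~IV). The ENR property of $\partial T$ is inherited from $\partial\start$ under cell-like quotients of compact ENRs, while the local homology condition
\[
H_\ast(\partial T,\partial T\setminus\{x\};\mathbb{Z})\cong H_\ast(\mathbb{R}^{n-1},\mathbb{R}^{n-1}\setminus\{0\};\mathbb{Z})
\]
at each $x\in\partial T$ follows from the Vietoris--Begle mapping theorem: the fibers of $\canonical|_{\partial\start}$ over a small neighborhood of $x$ are cell-like and hence acyclic, so excision plus the Vietoris--Begle comparison transports the local homology of a Euclidean chart in $\partial\start$ down to $\partial T$. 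This yields both items of Definition~\ref{def:gm}, and by construction $\partial\start$ is a cell-like resolution of $\partial T$.

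The main obstacle is not in the present theorem but is already absorbed into Proposition~\ref{cellprop}: verifying cellularity of the boundary stars is the substantive local-geometric work, after which the present statement reduces to a clean invocation of that proposition together with standard decomposition-theoretic machinery. A secondary subtlety is the ENR claim, which needs that $\partial\start$ itself is an ENR (automatic for a compact topological manifold) and that the cell-like decomposition be upper semi-continuous, which here follows from Lemma~\ref{lem:upperCrit} applied to the closed map $\canonical|_{\partial\start}$.
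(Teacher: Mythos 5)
Your proposal is correct and follows essentially the same route as the paper: both reduce phase one to a direct application of Proposition~\ref{cellprop}, and both obtain the generalized-manifold conclusion from the standard decomposition-theoretic facts that a cell-like image of a compact manifold is an ENR (the paper cites Lacher) and that a resolvable space is a homology manifold (the paper cites Daverman--Venema, where you unwind the local homology via Vietoris--Begle). Your added remarks on closedness, upper semi-continuity, and finite-dimensionality of $\partial T\subset\mathbb{R}^n$ are consistent with, and slightly more explicit than, the paper's terse citation-based argument.
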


\begin{proof}
By assumption, $\start$ satisfies the conditions of Proposition~\ref{cellprop} and thus $\canonical|_{\partial \start}:\partial \start\to \partial T$ is a cellular quotient map. Lacher~\cite[(11.2)~Corollary]{Lacher:77} implies that a cell-like image of a compact manifold is an ENR, and \cite[Proposition~8.5.1]{Daverman-Venema:09} states that every $n$-dimensional resolvable space is an $n$-dimensional homology manifold. This implies the result.
\end{proof}

\begin{remark}\label{remboundaryfinite}
Let $(\start,F)$ be a monotone model for a self-affine $\zn$-tile $T=T(A,\digits)$. Although $(F^{-1})^{(k)}([S]_\start)$, $S\subset \mathbb{Z}^n$, do not necessarily form a basis for the topology of $\mathbb{R}^n$, by the same arguments as in Section~\ref{sec:ball}, a finite in-out graph $\geometric{\mathcal{I}}_\start$ can be constructed also for $M$. Since two boundary stars $B_1$ and $B_2$ are homeomorphic ({\it i.e.}, equivalent) if $A^{k_1}B_1 = A^{k_2}B_2 + u$, it suffices to check cellularity of boundary stars only for one representative of each equivalence class. The finiteness of $\geometric{\mathcal{I}}_\start$ immediately implies that there are only finitely many such equivalence classes to check. To verify cellularity of a given boundary star, the methods from Section~\ref{sec:ballapprox} can be used.
\end{remark}

To make the step from a generalized manifold to a topological manifold, we need the well-known {\it disjoint disks property} ({\it cf.} \cite{Cannon:78}).

\begin{definition}[Disjoint disks property]\label{d:dpp}
A metric space $(X,d)$ has the {\em disjoint disks property} if for every pair of maps $g_1,g_2:\mathbb{D}^2\to X$ and every $\varepsilon > 0$ there exist maps $g_1',g_2':\mathbb{D}^2\to X$ such that $\max\{d(g_1,g_1'),d(g_2,g_2')\}<\varepsilon$ and $g'_1(\mathbb{D}^2) \cap g'_2(\mathbb{D}^2)=\emptyset$.
\end{definition}

Generalizing Theorem~\ref{upperthm5} this allows us to state a result on self-affine $\zn$-tiles of dimension $n\ge 6$ whose boundary is a manifold.

\begin{theorem}\label{upperthm4}
For $n\ge 6$, let $T$ be a self-affine $\mathbb{Z}^n$-tile which admits a monotone model $\start$. Assume that all but finitely many boundary stars of $\start$ are cellular and $\partial M$ is a manifold. If $\partial T$ satisfies the disjoint disks property then $\partial T$ is an $(n-1)$-manifold.
\end{theorem}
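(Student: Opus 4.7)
The plan is to reduce this to Edwards' celebrated cell-like approximation theorem by invoking the structure provided by Theorem~\ref{upperthm3}. Indeed, the hypotheses here are precisely those of Theorem~\ref{upperthm3}, so I would start by applying that result to obtain that $\partial T$ is a generalized $(n-1)$-manifold and that $\canonical|_{\partial \start}\colon \partial \start \to \partial T$ is a cell-like quotient map whose domain $\partial \start$ is a topological $(n-1)$-manifold. In particular, this exhibits $\partial T$ as a cell-like resolution of a topological manifold, so $\partial T$ is a \emph{resolvable} generalized $(n-1)$-manifold in the sense of Definition~\ref{def:gm}.

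Once this is set up, the rest is a direct appeal to Edwards' cell-like approximation theorem (see, e.g., \cite{Edwards:80} or \cite[Theorem~7.4.1]{Daverman-Venema:09}), which asserts that a resolvable generalized $m$-manifold with $m \ge 5$ is a topological $m$-manifold if and only if it satisfies the disjoint disks property. Since $n \ge 6$ by hypothesis, we have $m = n-1 \ge 5$, so we are in the range where the theorem applies. The disjoint disks property is assumed for $\partial T$, so the conclusion is immediate: $\partial T$ is a topological $(n-1)$-manifold.

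There is essentially no technical obstacle in the argument itself, provided Theorem~\ref{upperthm3} is in hand; the entire weight of the statement is carried by Edwards' theorem, whose proof is deep but which we are entitled to quote. The only points to verify carefully when writing up the proof are that the notions of ``cell-like resolution'' and ``resolvable generalized manifold'' line up with those in the reference one cites (the definitions in Section~\ref{sec:cellike} and Definition~\ref{def:gm} are the standard ones), and that the dimension count really gives $n-1 \ge 5$, so that both the generalized Poincar\'e theorem inputs and Edwards' theorem are available in the required form.
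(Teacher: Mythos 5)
Your argument is exactly the paper's: invoke Theorem~\ref{upperthm3} to see that $\partial T$ is a resolvable generalized $(n-1)$-manifold, then apply Edwards' Cell-like Approximation Theorem, valid since $n-1\ge 5$ and the disjoint disks property is assumed. The proposal is correct and follows the same route as the paper's proof.
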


\begin{proof}
Theorem~\ref{upperthm3} yields that $\partial T$ is a resolvable generalized $(n-1)$-manifold. As we assume that $\partial T$ satisfies the disjoint disks property, Edwards' Cell-like Approximation Theorem ({\it cf.}~\cite{Edwards:80}) implies that $\partial T$ is a manifold (see also \cite{Daverman:07} for $n-1>5$ and \cite{DH:07} for $n-1=5$).
\end{proof}

Combining this theorem with the algorithm Theorem~\ref{thm:ball-algorithm} allows us to algorithmically recognize which self-affine $\zn$-tiles are $n$-balls subject to the disjoint disks property.

For dimensions less than 5 the disjoint disks property is not suited to detect manifolds, so we cannot use it for boundaries of self-affine $\zn$-tiles of dimension $n < 6$. In Daverman and Repov\v{s}~\cite{DR:92} alternatives for the disjoint disks property for 3-manifolds are proposed; for 4-manifolds no such alternatives seem to be known so far. 

Given that a self-affine manifold tiles itself by arbitrarily small copies of itself, it seems that its topology cannot be very complicated.  To be more precise, we offer the following conjecture:

\begin{conjecture}
\label{II}\label{ndimconj}
Every self-affine $n$-manifold is homeomorphic to an $n$-dimensional handlebody.
\end{conjecture}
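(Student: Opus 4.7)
The plan is to reduce the conjecture to a homotopy-theoretic statement about $T$ and then upgrade this to a homeomorphism with a handlebody $\natural_{i=1}^{g}(\mathbb{S}^{1}\times\mathbb{D}^{n-1})$ by dimension-specific manifold classification theorems. The driving intuition is that the recursive self-similarity $T=\bigcup_{d\in\digits}A^{-1}(T+d)$, iterated via the sets $\digits_{k}$ defined in \eqref{digitsk}, forces $T$ to have an extraordinarily simple homotopy type controlled by the finite combinatorial data of the tiling complex $\complex{T}$ and the finite graphs $\Gamma_{i}$ of Remark~\ref{rem:gifs}.

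First I would set up a Mayer--Vietoris and van Kampen apparatus on the subdivision $T=\bigcup_{d\in\digits}A^{-1}(T+d)$. Each piece $A^{-1}(T+d)$ is homeomorphic to $T$, and pairwise and higher intersections are precisely the rescaled cells $A^{-1}\cell{S}$ for $S\in\complex{T}$ via Theorem~\ref{th:genset}. Running Mayer--Vietoris against the nerve of this cover and inducting on the combinatorial complexity measured by the $\Gamma_{i}$, one would argue that the reduced homology $\tilde{H}_{j}(T;\mathbb{Z})$ vanishes for $j\ge 2$; the inductive step reduces $\tilde{H}_{j}(T)$ to higher homology of the intersections $\cell{S}$, and by Theorem~\ref{all:connectedness} together with Proposition~\ref{imageLemma} these inherit sufficiently simple homology from the monotone model. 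Simultaneously, van Kampen applied to the same cover, combined with connectedness of $\cell{S}$, would express $\pi_{1}(T)$ as an iterated amalgamated product along subgroups arising from lower-dimensional cells; a careful analysis of the $\Gamma_{i}$ should show that these amalgamations collapse to yield a free group. Combined with Whitehead's theorem, one concludes that $T$ is homotopy equivalent to a finite wedge of circles.

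To upgrade this homotopy equivalence to a homeomorphism with a handlebody, one proceeds dimension by dimension. In dimension $2$ the conclusion is Theorem~\ref{planar}. In dimension $3$ the classical characterization of handlebodies as irreducible compact orientable $3$-manifolds with free fundamental group closes the argument, provided one verifies irreducibility: this follows from the $1$-LCC conclusion of the boundary (in the spirit of Theorem~\ref{thm:ballchar} applied to spheres inside $T$) and an innermost-sphere argument. In dimensions $n\ge 5$, given a compact $n$-manifold with tamely embedded boundary that is homotopy equivalent to a wedge of $g$ circles, surgery and the $s$-cobordism theorem produce a handle decomposition with one $0$-handle and $g$ $1$-handles — which is the definition of a handlebody — and no Whitehead torsion obstruction arises since $\operatorname{Wh}$ of a free group vanishes. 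Dimension four requires the Freedman topological category and is the most delicate.

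The main obstacle is the freeness of $\pi_{1}(T)$. Vanishing of higher homology is plausible by an induction driven by the finite graphs $\Gamma_{i}$, but ruling out relations in $\pi_{1}(T)$ coming from high-codimension intersections is genuinely subtle: one must show that $\pi_{1}(\cell{S})\to\pi_{1}(T)$ is trivial, or at least that the van Kampen amalgamations introduce no new relations at arbitrarily fine subdivision levels. This amounts to a strong geometric tameness statement about point preimages of $\canonical$ in the monotone model, in the spirit of Definition~\ref{def:semicontr} but at all scales and all codimensions; extracting such tameness from the raw self-similarity alone is the crux of the conjecture. A secondary obstacle, beyond dimension three, is guaranteeing that $\partial T$ is bicollared, which requires extending the $1$-LCC / locally-spherical criteria of Section~\ref{sec:ball} to higher dimensions in a form strong enough to license the dimension-specific upgrades above — precisely the disjoint-disks-property issue already flagged in Theorem~\ref{upperthm4}.
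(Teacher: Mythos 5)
The statement you are addressing is not a theorem of the paper but an open conjecture (Conjecture~\ref{ndimconj}); the paper offers no proof, so there is nothing to compare your argument against except the question of whether it actually closes the problem. It does not: what you have written is a research programme whose two central steps are asserted rather than established, and you concede as much yourself. The first and decisive gap is the claim that the van Kampen amalgamations over the cells $\cell{S}$ ``collapse to yield a free group.'' No mechanism is given for this, and the paper's own wild example (Section~\ref{sec:wild}, Theorem~\ref{crumpled}) shows that the recursive self-similarity $AT=T+\digits$ by itself does \emph{not} force a simple homotopy type: there the tile admits a monotone model homeomorphic to a $3$-ball and has $2$-sphere boundary, yet is not simply connected. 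So any proof of the conjecture must use the hypothesis that $T$ is a manifold in an essential, quantitative way when controlling $\pi_1$ at arbitrarily fine subdivision scales, and your sketch does not indicate where or how that hypothesis enters the Mayer--Vietoris/van Kampen induction. This is precisely the ``strong geometric tameness statement about point preimages of $Q$ at all scales'' that you correctly identify as the crux --- identifying the crux is not the same as resolving it.

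The second gap is the upgrade from homotopy type to homeomorphism type. Even granting that $T$ is homotopy equivalent to a wedge of $g$ circles, a compact $n$-manifold with that homotopy type need not be a handlebody: for $n\ge 5$ one can take $\mathbb{S}^1\times C$ for $C$ a compact contractible manifold with non-simply-connected boundary, which has the homotopy type of a circle but is not $\mathbb{S}^1\times\mathbb{D}^{n-1}$. The surgery/$s$-cobordism argument you invoke therefore needs control of $\partial T$ and of the inclusion $\partial T\hookrightarrow T$, not merely of the interior homotopy type; in dimension $3$ you still owe the irreducibility argument, and in dimension $4$ you offer nothing. None of these boundary-control statements are available in the paper beyond the conditional criteria of Theorems~\ref{thm:ball-algorithm} and~\ref{upperthm4}, which require hypotheses (local sphericality, the disjoint disks property) that you would have to derive from the self-affine structure --- again, an open problem rather than a lemma. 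In short, the proposal is a reasonable outline of how one might attack the conjecture, but every step that would constitute actual progress is left as an acknowledged obstacle.
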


\medskip

\noindent
{\bf Acknowledgement.} We thank the referee for carefully reading the manuscript and valuable comments that helped to considerably improve the readability of this paper.

\bibliographystyle{abbrv}
\bibliography{BallLike}

\begin{thebibliography}{10}

\bibitem{AL:11}
S.~Akiyama and B.~Loridant.
\newblock Boundary parametrization of self-affine tiles.
\newblock {\em J. Math. Soc. Japan}, 63(2):525--579, 2011.

\bibitem{Akiyama-Thuswaldner:05}
S.~Akiyama and J.~M. Thuswaldner.
\newblock The topological structure of fractal tilings generated by quadratic
  number systems.
\newblock {\em Comput. Math. Appl.}, 49(9-10):1439--1485, 2005.

\bibitem{Bandt:12}
C.~Bandt.
\newblock Combinatorial topology of three-dimensional self-affine tiles.
\newblock preprint, available under \url{http://arxiv.org/pdf/1002.0710.pdf}.

\bibitem{BG:94}
C.~Bandt and G.~Gelbrich.
\newblock Classification of self-affine lattice tilings.
\newblock {\em J. London Math. Soc. (2)}, 50(3):581--593, 1994.

\bibitem{BM:09}
C.~Bandt and M.~Mesing.
\newblock Self-affine fractals of finite type.
\newblock In {\em Convex and fractal geometry}, volume~84 of {\em Banach Center
  Publ.}, pages 131--148. Polish Acad. Sci. Inst. Math., Warsaw, 2009.

\bibitem{BW:01}
C.~Bandt and Y.~Wang.
\newblock Disk-like self-affine tiles in {$\Bbb R^2$}.
\newblock {\em Discrete Comput. Geom.}, 26, 2001.

\bibitem{BBLT:06}
G.~Barat, V.~Berth{\'e}, P.~Liardet, and J.~Thuswaldner.
\newblock Dynamical directions in numeration.
\newblock {\em Ann. Inst. Fourier (Grenoble)}, 56(7):1987--2092, 2006.
\newblock Num{\'e}ration, pavages, substitutions.

\bibitem{Bing:61}
R.~H. Bing.
\newblock A surface is tame if its complement is {$1$}-{ULC}.
\newblock {\em Trans. Amer. Math. Soc.}, 101:294--305, 1961.

\bibitem{Cannon:73}
J.~W. Cannon.
\newblock {${\rm ULC}$} properties in neighbourhoods of embedded surfaces and
  curves in {$E^{3}$}.
\newblock {\em Canad. J. Math.}, 25:31--73, 1973.

\bibitem{Cannon:78}
J.~W. Cannon.
\newblock Shrinking cell-like decompositions of manifolds. {C}odimension three.
\newblock {\em Ann. of Math. (2)}, 110(1):83--112, 1979.

\bibitem{Cannon:80}
J.~W. Cannon.
\newblock The characterization of topological manifolds of dimension {$n\geq
  5$}.
\newblock In {\em Proceedings of the {I}nternational {C}ongress of
  {M}athematicians ({H}elsinki, 1978)}, pages 449--454, Helsinki, 1980. Acad.
  Sci. Fennica.

\bibitem{Cao-Zhu:06}
H.-D. Cao and X.-P. Zhu.
\newblock A complete proof of the {P}oincar\'e and geometrization
  conjectures---application of the {H}amilton-{P}erelman theory of the {R}icci
  flow.
\newblock {\em Asian J. Math.}, 10(2):165--492, 2006.

\bibitem{Curry:06}
E.~Curry.
\newblock Radix representations, self-affine tiles, and multivariable wavelets.
\newblock {\em Proc. Amer. Math. Soc.}, 134(8):2411--2418 (electronic), 2006.

\bibitem{Daverman:07}
R.~J. Daverman.
\newblock {\em Decompositions of manifolds}.
\newblock AMS Chelsea Publishing, Providence, RI, 2007.
\newblock Reprint of the 1986 original.

\bibitem{DH:07}
R.~J. Daverman and D.~M. Halverson.
\newblock The cell-like approximation theorem in dimension 5.
\newblock {\em Fund. Math.}, 197:81--121, 2007.

\bibitem{DR:92}
R.~J. Daverman and D.~Repov{\v{s}}.
\newblock General position properties that characterize {$3$}-manifolds.
\newblock {\em Canad. J. Math.}, 44(2):234--251, 1992.

\bibitem{Daverman-Venema:09}
R.~J. Daverman and G.~A. Venema.
\newblock {\em Embeddings in manifolds}, volume 106 of {\em Graduate Studies in
  Mathematics}.
\newblock American Mathematical Society, Providence, RI, 2009.

\bibitem{DJN:12}
D.-W. Deng, T.~Jiang, and S.-M. Ngai.
\newblock Structure of planar integral self-affine tilings.
\newblock {\em Math. Nachr.}, 285(4):447--475, 2012.

\bibitem{Dold:72}
A.~Dold.
\newblock {\em Lectures on algebraic topology}.
\newblock Springer-Verlag, New York, 1972.
\newblock Die Grundlehren der mathematischen Wissenschaften, Band 200.

\bibitem{Edwards:80}
R.~D. Edwards.
\newblock The topology of manifolds and cell-like maps.
\newblock In {\em Proceedings of the {I}nternational {C}ongress of
  {M}athematicians ({H}elsinki, 1978)}, pages 111--127, Helsinki, 1980. Acad.
  Sci. Fennica.

\bibitem{Falconer:97}
K.~J. Falconer.
\newblock {\em Techniques in Fractal Geometry}.
\newblock John Wiley and Sons, Chichester, New York, Weinheim, Brisbane,
  Singapore, Toronto, 1997.

\bibitem{Freedman:82}
M.~H. Freedman.
\newblock The topology of four-dimensional manifolds.
\newblock {\em J. Differential Geom.}, 17(3):357--453, 1982.

\bibitem{FQ:90}
M.~H. Freedman and F.~Quinn.
\newblock {\em Topology of 4-manifolds}, volume~39 of {\em Princeton
  Mathematical Series}.
\newblock Princeton University Press, Princeton, NJ, 1990.

\bibitem{GY:06}
J.-P. Gabardo and X.~Yu.
\newblock Natural tiling, lattice tiling and {L}ebesgue measure of integral
  self-affine tiles.
\newblock {\em J. London Math. Soc. (2)}, 74(1):184--204, 2006.

\bibitem{Gelbrich:96}
G.~Gelbrich.
\newblock Self-affine lattice reptiles with two pieces in {${\bf R}^n$}.
\newblock {\em Math. Nachr.}, 178:129--134, 1996.

\bibitem{GH:94}
K.~Gr{\"o}chenig and A.~Haas.
\newblock Self-similar lattice tilings.
\newblock {\em J. Fourier Anal. Appl.}, 1(2):131--170, 1994.

\bibitem{GM:92}
K.~Gr{\"o}chenig and W.~R. Madych.
\newblock Multiresolution analysis, {H}aar bases, and self-similar tilings of
  {${\bf R}^n$}.
\newblock {\em IEEE Trans. Inform. Theory}, 38(2, part 2):556--568, 1992.

\bibitem{HSV:94}
D.~Hacon, N.~C. Saldanha, and J.~J.~P. Veerman.
\newblock Remarks on self-affine tilings.
\newblock {\em Experiment. Math.}, 3(4):317--327, 1994.

\bibitem{Hatcher:02}
A.~Hatcher.
\newblock {\em Algebraic topology}.
\newblock Cambridge University Press, Cambridge, 2002.

\bibitem{Hutchinson:81}
J.~E. Hutchinson.
\newblock Fractals and self-similarity.
\newblock {\em Indiana Univ. Math. J.}, 30(5):713--747, 1981.

\bibitem{Kenyon:92}
R.~Kenyon.
\newblock Self-replicating tilings.
\newblock In {\em Symbolic dynamics and its applications ({N}ew {H}aven, {CT},
  1991)}, volume 135 of {\em Contemp. Math.}, pages 239--263. Amer. Math. Soc.,
  Providence, RI, 1992.

\bibitem{KL:00}
I.~Kirat and K.-S. Lau.
\newblock On the connectedness of self-affine tiles.
\newblock {\em J. London Math. Soc. (2)}, 62(1):291--304, 2000.

\bibitem{Knuth:98}
D.~E. Knuth.
\newblock {\em The art of computer programming. {V}ol. 2}.
\newblock Addison-Wesley, Reading, MA, 1998.
\newblock Seminumerical algorithms, Third edition [of MR0286318].

\bibitem{Kuratowski:68}
K.~Kuratowski.
\newblock {\em Topology. {V}ol. {II}}.
\newblock New edition, revised and augmented. Translated from the French by A.
  Kirkor. Academic Press, New York, 1968.

\bibitem{Lacher:77}
R.~C. Lacher.
\newblock Cell-like mappings and their generalizations.
\newblock {\em Bull. Amer. Math. Soc.}, 83(4):495--552, 1977.

\bibitem{LW:96}
J.~C. Lagarias and Y.~Wang.
\newblock Integral self-affine tiles in {$\bold R^n$}. {I}. {S}tandard and
  nonstandard digit sets.
\newblock {\em J. London Math. Soc. (2)}, 54(1):161--179, 1996.

\bibitem{LW:96a}
J.~C. Lagarias and Y.~Wang.
\newblock Self-affine tiles in {${\bf R}^n$}.
\newblock {\em Adv. Math.}, 121(1):21--49, 1996.

\bibitem{LW:97}
J.~C. Lagarias and Y.~Wang.
\newblock Integral self-affine tiles in {${\bf R}^n$}. {II}. {L}attice tilings.
\newblock {\em J. Fourier Anal. Appl.}, 3(1):83--102, 1997.

\bibitem{Lai-Lau-Rao:10}
C.-K. Lai, K.-S. Lau, and H.~Rao.
\newblock Spectral structure of digit sets of self-similar tiles on {${\Bbb
  R}^1$}.
\newblock {\em Trans. Amer. Math. Soc.}, 365(7):3831--3850, 2013.

\bibitem{LL:07}
K.-S. Leung and K.-S. Lau.
\newblock Disklikeness of planar self-affine tiles.
\newblock {\em Trans. Amer. Math. Soc.}, 359(7):3337--3355, 2007.

\bibitem{LRT:02}
J.~Luo, H.~Rao, and B.~Tan.
\newblock Topological structure of self-similar sets.
\newblock {\em Fractals}, 10(2):223--227, 2002.

\bibitem{Luo-Zhou:04}
J.~Luo and Z.~L. Zhou.
\newblock Disk-like tiles derived from complex bases.
\newblock {\em Acta Math. Sin. (Engl. Ser.)}, 20(4):731--738, 2004.

\bibitem{Malone:00}
D.~Malone.
\newblock {\em Solutions to dilation equations}.
\newblock Ph.{D}. {T}hesis, University of Dublin, 2000.

\bibitem{Massey:78}
W.~S. Massey.
\newblock {\em Homology and cohomology theory}.
\newblock Marcel Dekker Inc., New York, 1978.
\newblock An approach based on Alexander-Spanier cochains, Monographs and
  Textbooks in Pure and Applied Mathematics, Vol. 46.

\bibitem{Moore:25}
R.~L. Moore.
\newblock Concerning upper semi-continuous collections of continua.
\newblock {\em Trans. Amer. Math. Soc.}, 27(4):416--428, 1925.

\bibitem{Roberts-Steenrod:38}
J.~H. Roberts and N.~E. Steenrod.
\newblock Monotone transformations of two-dimensional manifolds.
\newblock {\em Ann. of Math. (2)}, 39(4):851--862, 1938.

\bibitem{Rudnik:92}
K.~Rudnik.
\newblock Self-similar metric inverse limits of invariant geometric inverse
  sequences.
\newblock {\em Topology Appl.}, 48(1):1--17, 1992.

\bibitem{ST:03}
K.~Scheicher and J.~M. Thuswaldner.
\newblock Neighbours of self-affine tiles in lattice tilings.
\newblock In {\em Fractals in {G}raz 2001}, Trends Math., pages 241--262.
  Birkh\"auser, Basel, 2003.

\bibitem{Smale:61}
S.~Smale.
\newblock Generalized {P}oincar\'e's conjecture in dimensions greater than
  four.
\newblock {\em Ann. of Math. (2)}, 74, 1961.

\bibitem{SW:99}
R.~S. Strichartz and Y.~Wang.
\newblock Geometry of self-affine tiles. {I}.
\newblock {\em Indiana Univ. Math. J.}, 48(1):1--23, 1999.

\bibitem{Tang:04}
T.-M. Tang.
\newblock Crumpled cube and solid horned sphere space fillers.
\newblock {\em Discrete Comput. Geom.}, 31(3):421--433, 2004.

\bibitem{Thurston:89}
W.~Thurston.
\newblock Groups, tilings and finite state automata.
\newblock in: AMS Colloquium Lecture, 1989.

\bibitem{Vince:00}
A.~Vince.
\newblock Digit tiling of {E}uclidean space.
\newblock In {\em Directions in mathematical quasicrystals}, volume~13 of {\em
  CRM Monogr. Ser.}, pages 329--370. Amer. Math. Soc., Providence, RI, 2000.

\bibitem{Wang:99}
Y.~Wang.
\newblock Self-affine tiles.
\newblock In {\em Advances in wavelets ({H}ong {K}ong, 1997)}, pages 261--282.
  Springer, Singapore, 1999.

\end{thebibliography}
\end{document}